\begin{document}
\sloppy

\title{
    High Probability and Risk-Averse Guarantees\\
    for a Stochastic %Saddle Point Problems
    Accelerated Primal-Dual Method}

\author{\textbf{Yassine Laguel}\thanks{Department of Management Science and Information Systems, Rutgers Business School, Rutgers University. \\ \indent\;\; Piscataway, NJ, USA.}
\hspace{1cm}
\textbf{Necdet Serhat Aybat}\thanks{Department of Industrial and Manufacturing Engineering. Pennsylvania State University
\\
\indent\;\; University Park, PA, USA. \\
\indent\; Corresponding author: yassine.laguel@rutgers.edu}
\hspace{1cm}
\textbf{Mert Gürbüzbalaban}$^{*}$
}

\maketitle

\theoremstyle{definition}
\newtheorem{conj}{Conjecture}[section]
\newtheorem{definition}{Definition}[section]
\newtheorem{example}{Example}[section]

\newtheorem*{note}{\textbf{Note}}
\newtheorem*{exmp*}{\textbf{Examples}}

\numberwithin{equation}{section}
\newtheorem{theorem}{Theorem}
\newtheorem{thm}{\textbf{Theorem}}
\newtheorem{lem}[thm]{\textbf{Lemma}}
\newtheorem{prop}[thm]{\textbf{Proposition}}
\newtheorem{cor}[thm]{\textbf{Corollary}}
\newtheorem{corollary}[theorem]{Corollary}
\newtheorem{rema}[thm]{\textbf{Remark}}%[section]
\newtheorem{assumption}{Assumption}
\newcommand{\mtodo}[1]{}  % for arxiv wrap-up
\newcommand{\mg}[1]{{\color{black} #1}}  % for arxiv wrap-up

\newtheorem{innercustomassumption}{Assumption}

\begin{abstract}
We consider {stochastic} strongly-convex-strongly-concave (SCSC) saddle point (SP) problems which frequently arise in applications {ranging} from distributionally robust learning to game theory and %{supervised learning with non-separable regularizers}.
fairness in machine learning.
We focus on the recently developed stochastic accelerated primal-dual algorithm (SAPD), which admits optimal complexity in several settings as an accelerated algorithm. We provide high probability guarantees for convergence to a neighborhood of the saddle point that reflects accelerated convergence behavior. We also provide an analytical formula for the limiting covariance matrix of the iterates for a class of {stochastic} SCSC quadratic problems where the gradient noise is additive and Gaussian. This allows us to develop lower bounds for this class of quadratic problems which show that our analysis is tight in terms of %its
\saa{the high probability bound} dependency to the parameters. We also provide a risk-averse convergence analysis characterizing {the} ``Conditional Value at Risk'', %, $\chi^2$-divergence
the ``Entropic Value at Risk''{, and the $\chi^2$-divergence} of the distance to the saddle point, highlighting the trade-offs between the bias and the risk associated with an approximate solution obtained by terminating the algorithm at any iteration.
\end{abstract}

% \keywords{stochastic min-max optimization, high-probability guarantees, risk measures, accelerated primal-dual methods}

%==========================================================================================
\section{Introduction}
We consider strongly convex/strongly concave (SCSC) saddle point problems of the form:
\begin{equation}\label{rso:eq:main_problem}
    \min_{x\in \setX} \max_{y \in \setY} \obj(x,y) \triangleq f(x) + \sadobj(x,y) - g(y),
\end{equation} 
where $\mathcal{X}$ and $\mathcal{Y}$ are finite-dimensional Euclidean spaces, $f:\mathcal{X}\to\mathbb{R}\sa{\cup\{+\infty\}}$ and $g:\mathcal{Y}\times\mathbb{R}\sa{\cup\{+\infty\}}$ are closed convex functions, and $\Phi:\mathcal{X}\times\mathcal{Y}\to\mathbb{R}$ is a smooth convex-concave function such that $\cL(x,y)$ is strongly convex in $x$ and strongly concave in $y$, i.e., SCSC -- see Assumption~\ref{rso:assumption:str_cvx_smooth} for details. 
Throughout the paper, we assume that $f$ and $g$ are strongly convex; since $\cL$ is SCSC, we emphasize that this assumption is without loss of generality as strong convexity/concavity can be transferred from $\Phi$ to $f$ and $g$ by adding and subtracting simple quadratics.\looseness=-1

\yassine{Such saddle point}~\sa{(SP)} problems arise in many applications and \saa{different} contexts. In unconstrained and constrained optimization problems, saddle-point formulations arise naturally when the problems are \sa{reformulated as a minimax problem} based on the Lagrangian duality. 
Furthermore, the SP formulation in~\eqref{rso:eq:main_problem} encompasses many key problems such as \emph{robust optimization}~\citep{ben2009robust} -- here %$\cY$ represents
\sa{$g$ is selected to be the indicator function of an uncertainty set} from which nature (adversary) picks an uncertain model parameter $y$, and the objective is to choose $x\in\cX$ that minimizes the worst-case cost $\max_{y\in\cY}\cL(x,y)$, i.e., a two-player zero-sum game. 
Other applications \saa{involving SCSC problems} include but are not limited to \emph{supervised learning} with non-separable regularizers {(where $\Phi(x,y)$ may not be bilinear)} \citep{palaniappan2016stochastic}, \emph{fairness} in machine learning \citep{liu2022partial}, \emph{unsupervised learning} \citep{palaniappan2016stochastic} and various \emph{image processing} problems, e.g., denoising, \citep{chambolle2011first}.

In this work, we are interested in %stochastic 
SCSC problems where the partial gradients \sa{$\nabla_x \Phi$ and $\nabla_y \Phi$} are not deterministically available; \sa{but, instead we postulate that their stochastic estimates $\widetilde\grad_x\Phi$ and $\widetilde\grad_y\Phi$ are accessible}. 
Such a setting arises frequently in large-scale optimization and machine learning applications where the gradients are estimated from either streaming data or from random samples of data (see e.g. \citep{zhu2023distributionally,gurbuzbalaban2022stochastic,bottou2018optimization}). 
First-order (FO) methods that rely on stochastic estimates of the gradient information have been the leading computational approach for computing low-to-medium-accuracy solutions \mg{for these problems} because of their cheap iterations and mild dependence on the problem dimension and data size. 
In this paper, our focus will be on first-order primal-dual algorithms that rely on stochastic gradient estimates for solving \eqref{rso:eq:main_problem}.

%================================================================================
\paragraph{Existing relevant work.} 
Stochastic primal-dual algorithms for solving SP problems generate a sequence of primal and dual iterate pairs $z_n = (x_n, y_n) \in\cX\times\cY\triangleq\cZ $ starting from an initial point $(x_0, y_0) \in\dom f\times \dom g\triangleq Z$. 
Two popular metrics to \saa{assess} the quality of a random solution $(\hat{x}, \hat{y})$ returned by a stochastic algorithm are the \emph{expected gap} and the \sa{\emph{expected squared distance}} defined as  
\begin{equation}
\label{eq:gap}
\cG(\hat{x},\hat{y})\triangleq\mathbb{E}\big[ \sup_{(x,y)\in \cX\times \cY} \big\{ \mathcal{L}(\hat{x}, y)  - \mathcal{L}(x, \sa{\hat{y}}) \big\} \big],\quad
{\mathcal{D}(\hat{x},\hat{y})\triangleq\mathbb{E}[\norm{\hat{x}-\optx}^2+\norm{\hat{y}-\opty}^2]},
\end{equation}
respectively, where \saa{$(\optx, \opty)$} denotes the \textit{unique} saddle point of \eqref{rso:eq:main_problem}, due to the strong convexity of $f$ and $g$. 
The iteration complexity of \sa{FO-methods} in these two metrics depend naturally on the block Lipschitz constants $L_{xx}$, \saa{$L_{xy}$}, $L_{yy}$ and \saa{$L_{yx}$}, i.e., Lipschitz constants of $\nabla_x\Phi(\cdot, y)$, \saa{$\nabla_x\Phi(x, \cdot)$,} $\nabla_y\Phi(x, \cdot)$ and $\nabla_y\Phi(\cdot, y)$ as well as on the strong convexity constants $\mu_x$ and $\mu_y$ of the functions $f$ and $g$. 
In particular, \cite{fallah2020optimal} show that a multi-stage variant of Stochastic Gradient Descent Ascent (SGDA) algorithm \saa{generates $(x_\epsilon,y_\epsilon)$ such that $\mathcal{D}(x_\epsilon,y_\epsilon)\leq \epsilon$ within $\cO(\kappa^2\ln(1/\epsilon)+\frac{\delta^2}{{\mu^2}}\frac{1}{\epsilon})$ gradient oracle calls,} where $\delta^2 = \max\sa{\{}\delta_x^2,\delta_y^2\sa{\}}$, while $\delta_x^2$ and $\delta_y^2$ are bounds on the variance of the stochastic gradients \sa{$\widetilde\grad_x\Phi$ and $\widetilde\grad_y\Phi$,} respectively; $\mu\triangleq\min\{\mu_x,\mu_y\}$ and $L\triangleq\max\{L_{xx},\saa{L_{xy}, L_{yx}},L_{yy}\}$ are the worst-case strong convexity and Lipschitz constants and $\kappa \triangleq L/\mu$ is defined as the \emph{condition number}. 
SGDA \sa{analyzed in \citep{fallah2020optimal}} consists of Jacobi-\sa{type} updates in the sense that stochastic gradient descent and ascent steps are taken simultaneously. 
Jacobi-\sa{type} updates are easier to analyze than Gauss-Seidel updates in general, and can be viewed as solving a structured variational inequality~\sa{(VI)} problem, \sa{for which there are many existing techniques that} directly apply, \sa{e.g.,} \citep{gidel2018variational,chen2017accelerated}. 
\cite{zhang2022near} consider %it is shown that for 
deterministic SCSC problems, when gradient descent ascent (GDA) with Gauss-Seidel-\sa{type} updates \sa{is considered, \saa{i.e.,} the primal and dual variables are updated in an alternating fashion using the most recent information obtained from the previous update step.} %\sa{The results in \citep{zhang2022near} show that} 
{Their results show that} an accelerated \sa{asymptotic} convergence rate, i.e., iteration complexity scales linearly with $\kappa$ instead of $\kappa^2$, can be obtained \sa{for the Gauss-Seidel variant of GDA}.
However, as discussed in \citep{zhang2022near}, this comes with the price that Gauss-Seidel style updates greatly complicate the analysis because every iteration of an alternating algorithm is a composition of two half updates. 
\sa{Furthermore, extending the acceleration result to non-asymptotic rates requires using momentum terms in either the primal or the dual updates, and this further complicates the convergence analysis.} 
\sa{\cite{fallah2020optimal} also considered using momentum terms both in the primal and dual updates, and show that the multi-stage \emph{Stochastic Optimistic Gradient Descent Ascent} (OGDA) algorithm using Jacobi-type updates} achieves an iteration complexity of $\cO(\kappa\ln(1/\epsilon)+\frac{\delta^2}{\sa{\mu^2}\epsilon})$ in expected \sa{squared} distance.
\mg{There are also several other algorithms that can achieve the accelerated rate, \saa{i.e., $\log(1/\epsilon)$ has the coefficient} $\kappa$ instead of $\kappa^2$ - see, e.g.~\citep{beznosikov2022smooth}. 
We call this term that depend on the condition number as \emph{initialization bias} since it captures how fast the error due to initial conditions decay and reflects the behavior of the algorithm in the noiseless setting.} 
\sa{Among the algorithms that achieve an accelerated rate, the most closely related work to ours is \citep{zhang2021robust} which} \mg{develops} a stochastic accelerated primal-dual (SAPD) algorithm with Gauss-Seidel type updates. SAPD \sa{using a momentum acceleration only in the dual variable} \saa{can generate $(x_\epsilon,y_\epsilon)$ such that $\mathbb{E}[\mu_x\norm{x_\epsilon-\optx}^2+\mu_y\norm{y_\epsilon-\opty}^2]\leq \epsilon$ within}
$\mathcal{O}\Big( \Big(
\frac{L_{xx}}{\mu_x} + \frac{L_{yx}}{\sqrt{\mu_x\mu_y}} + \frac{L_{yy}}{\mu_y}
+ \Big( \frac{\delta_x^2}{{\mu_x}} +  \frac{\delta_y^2}{{\mu_y}} \Big)\frac{1}{\epsilon}\Big) \log(\frac{1}{\varepsilon}) \Big)
$ \saa{iterations;} \sa{this result implies ${\tilde{\cO}(\kappa\ln(\varepsilon^{-1})+ \sa{\mu^{-2}}\delta^2 \varepsilon^{-1})}$ in expected \sa{squared} distance.} 
This complexity is optimal for bilinear problems. 
To our knowledge, SAPD is also the fastest single-loop algorithm for solving stochastic SCSC problems \sa{in the form of \eqref{rso:eq:main_problem}} that are non-bilinear. 
\sa{Furthermore, using acceleration only in one update, as opposed to in both variables~\citep{fallah2020optimal}, leads to smaller variance accumulation \mg{(see \cite{zhang2021robust} for more details)}. }

\begin{figure}[t]
\centering
\includegraphics[width=0.99\textwidth, 
                trim={4cm 0 1.5cm 0},]{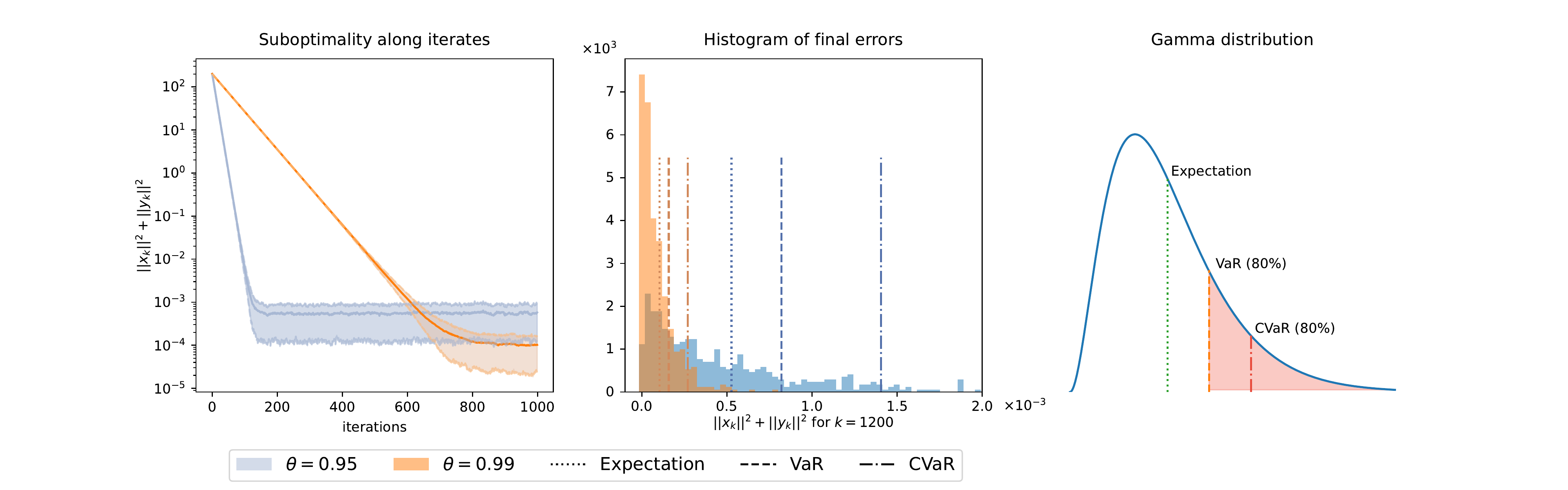}
\caption{
\textbf{\saa{(Left)}} Convergence of SAPD on the saddle point problem $\min_{x\in\mathbb{R}} \max_{y\in\mathbb{R}} x^2/2 + xy + y^2/2$, initialized at $x_0=y_0=10$ with momentum parameters $\theta=0.95$ and $\theta=0.99$. 
\textbf{\saa{(Middle)}} Histogram of the distribution of the SAPD iterates $(x_n, y_n)$ after $n=1000$ iterations for 500 runs, with corresponding momentum parameters $\theta=0.95$ and $\theta=0.99$.
\textbf{\saa{(Right)}} Illustration of the expectation $\mathbb{E}(X)$, $p$-th quantile ($\mbox{VaR}_{1-p}(X)$) and $\mbox{CVaR}_p(X)$ of a gamma distributed random variable $X$ with shape parameter $3$ and scale parameter $5$ for $p=80\%$.}
\label{fig:gamma-density} 
% \vspace*{-10mm}
\end{figure}
\vspace*{-1mm}
%\ytl{Figure Corrected.}

\mg{While the aforementioned results provide performance guarantees in expectation based on the metrics \saa{defined in}~\eqref{eq:gap} and their variants, unfortunately} \sa{having guarantees in these} metrics 
do not allow us to control tail events, i.e., the expected gap and distance can be smaller than a given target threshold $\varepsilon>0$; but, the iterates can still be arbitrarily far away from the saddle point with a non-zero probability. 
In this context, high probability guarantees are key in the sense that they allow us to control tail probabilities and quantify how many iterations are needed for the iterates to be in a neighborhood of the saddle point with a given probability level $p\in(0,1)$. 
\mg{This is illustrated on the right panel of Figure \ref{fig:gamma-density}} where we run the SAPD algorithm for two different values of the momentum parameter $\theta=0.95$ and $\theta=0.99$ for a toy problem $\min_{x\in\mathbb{R}} \max_{y\in\mathbb{R}} \frac{1}{2}x^2 + xy + \frac{1}{2}y^2$  with strong convexity parameters $\mu_x=\mu_y=1$ admitting a saddle point at $\optx=\opty=0$. 
SAPD is initialized at $(x_0,y_0)=(10,10)$, primal and dual stepsizes are chosen according to the Chambolle-Pock parametrization as suggested in \citep{zhang2021robust}. 
For simplicity, the stochastic gradients $\widetilde\nabla_x\Phi$ and $\widetilde\nabla_y\Phi$ are \saa{set to $\nabla_x\Phi$ and $\nabla_y\Phi$ perturbed with} additive i.i.d. Gaussian $\cN(0, 0.1)$ noise, \saa{and we assume that $\widetilde\nabla_x\Phi$ and $\widetilde\nabla_y\Phi$ are independent from each other and also} from the past history of the algorithm.
For each parameter choice, we run SAPD \saa{for 500 runs, and for $n=1000$ iterations for every run --see} \saa{Figure~\ref{fig:gamma-density} (\textbf{Left}),} and plot the distribution of the squared distance \saa{of $(x_n,y_n)$ to the unique saddle point $(\optx,\opty)$, i.e., $E_n \triangleq \saa{\|x_n - \optx\|^2 + \|y_n-\opty\|^2}$, in Figure~\ref{fig:gamma-density} (\textbf{Middle})}. 
As we can see, the random error $E_n$ can take significantly larger values than its expectation $\mathcal{D}(x_n, y_n) = \mathbb{E}[E_n]$. 
This motivates estimating the $p$-th quantile of the error $E_n$, which is also called the \emph{value at risk} at \saa{level $p$, {traditionally abbreviated as $\var_{p}(E_n)$ in the financial literature.}
% abbreviated as $\var_{p}(E_n)$, following notations of~\cite{rockafellar2013superquantiles}}. 
While $\var_{p}(E_n)$ represents a worst-case error $E_n$ associated with a probability $p$, it does not capture the behavior if that worst-case threshold is ever crossed. 
Conditional value at risk (CVaR) at level $p$, on the other hand, is an alternative risk measure that can be used characterizing the expected error if that worst-case threshold is ever crossed. 
CVaR is in fact a coherent risk measure with some desirable properties~\citep{rockafellar2013superquantiles}. 
In \saa{Fig.~\ref{fig:gamma-density}(\textbf{Middle}),} we report $\var_{p}(E_n)$ and $\cvar_{p}(E_n)$ for $p=80\%$ and $n=1000$. We can see that $\var$ and $\cvar$ capture the tail behavior better compared to expectation. 
Similar behavior can be seen on other distributions, \saa{e.g., in Figure~\ref{fig:gamma-density}(\textbf{Right})}, we illustrate the expectation, \saa{$\var_p$, i.e., $p$-th quantile,} and $\cvar_{p}$ of a gamma-distributed random variable with shape parameter $3$, and scale parameter $5$ corresponding to $p=\%80$. 
We see that $\cvar$ estimates the average of the tails after the $p$-th quantile; therefore, it is useful for capturing the average risk associated to tail events beyond the $p$-th quantile. 
In addition to CVaR, there are also other coherent risk measures such as entropic value at risk (EVaR) and $\chi^2$-divergence which have been of interest in the study of stochastic optimization algorithms as they can provide risk-averse guarantees capturing the worst-case tail behavior and deviations from the mean performance \citep{can2022entropic}}.\looseness=-1

While high-probability guarantees (VaR guarantees) \mg{and risk guarantees in terms of risk measures such as CVaR and EVaR} are available in the optimization setting for the iterates of stochastic gradient descent-like methods \citep{harvey2019tight,rakhlin2012making,davis2020high,can2022entropic}, \sa{results in a similar nature} are \mg{considerably} more limited in the SP setting. 
Among existing results, \mg{\cite{juditsky2011solving} obtained high-probability guarantees for the stochastic mirror-prox algorithm for solving stochastic VIs with Lipschitz and monotone operators. 
This algorithm can be used to solve smooth stochastic convex/concave SP problems (which corresponds to the case $f=g=0$ with \saa{$\Phi(x,y)$ being smooth, convex in $x$ and concave in $y$}) and implies that with probability $p\in(0,1)$, after $n$ iterations, the gap metric for the VI will be bounded by $\mathcal{O}(\frac{1}{\sqrt{n}} + \log(\frac{1}{1-p})\frac{1}{n})$ assuming that the domain is bounded and the stochastic gradient noise is light-tailed with a sub-Gaussian distribution. 
In \citep{gorbunov2022clipped}, it is shown  that the same high-probability results as in \citep{juditsky2011solving} can be attained by clipping the gradients properly without the sub-Gaussian and bounded domain assumptions.}
In another line of work \citep{yan-high-proba}, it is shown for the \mg{SGDA} algorithm that the expected gap $\mathcal{G}(x_n, y_n)\leq \varepsilon$ with probability at least $p\in(0,1)$ after $n=\mathcal{O}\left(\frac{1}{\varepsilon}\log(\frac{1}{1-p}) + \frac{\delta^2}{\mu\varepsilon}\log(\frac{1}{1-p})\right)$ \sa{oracle calls} for possibly non-smooth SCSC problems. 
Also, in \citep{wood-dallanese}, high probability bounds are given for online algorithms applied to stochastic saddle point problems where the objective is time-varying and is revealed in a sequential manner, and the data distribution over which stochastic gradients are estimated depends on the decision variables. 
However, these high-probability guarantees are obtained for \textit{non-accelerated} algorithms \mg{with Jacobi-style updates}; therefore, the high probability bounds do not \sa{exhibit} accelerated decay of the initialization \sa{bias}, {and scale as $\kappa^2$, i.e., quadratically with the condition number $\kappa$, instead of a linear scaling}. 
\mg{Also, to our knowledge, high-probability bounds for SP algorithms with Gauss-Seidel style updates are not available in the literature even if they do not incorporate momentum, see e.g. the survey by \cite{beznosikov2022smooth}). Similarly, we are not aware of any risk guarantees (in terms of CVaR and EVaR of the performance metric over iterations) for any \saa{primal-dual algorithm for solving stochastic SP problems}.}
%================================================================================
\paragraph{Contributions.}
In this paper, we present a risk-averse analysis of the \sapdname~method~\citep{zhang2021robust} to solve saddle point problems of the form~\eqref{rso:eq:main_problem}.
A key novelty of our work lies in providing the first analysis of an accelerated algorithm for SCSC problems with high probability guarantees, where our bounds reflect the accelerated decay of the initialization bias scaling linearly with the condition number $\kappa$. 
More specifically, our high-probability bounds provided in Section \ref{rso:sec:high_prob_bound_sapd} imply that given target accuracy $\varepsilon>0$, \sapdname, with a proper choice of parameters that we \ylfourth{explicit,} can generate a solution $(x_n, y_n)$ that satisfies $\mu_x \|x_n - \optx\|^2 + \mu_y \|y_n - \opty\|^2 \leq \varepsilon$ with probability $p \in (0,1)$ after 
\begin{equation}
    \scalebox{0.9}{$ 
 n=   \bigOh
    \Bigg(
        \bigg[
            \frac{\Lxx}{\mux} + \frac{\Lyx}{\sqrt{\mux\muy}} + \frac{\Lyy}{\muy} + \left(1 + \frac{\Lxy}{\Lyx} + \frac{\Lxy^2}{\Lyx^2}\right) 
            \begin{aligned}[t]
                %&
                \max\left(\frac{\proxyx^2}{\mux}, \frac{\proxyy^2}{\muy}\right) 
            \frac{\ylfourth{1 + \log\left(\frac{1}{1-p}\right)}}{\varepsilon}
        \bigg]  %\\
                %& 
                \log\bigg(
                \frac{(1 + \frac{\Lxy^2}{\Lyx^2}) \mathcal{W}_0%(\mux \squarednormtwo{x_0 - \optx} + \muy \squarednormtwo{y_0 - \opty})
                }{\varepsilon}
            \bigg)
    \Bigg)
            \end{aligned}$}\label{intro-complexity}
\end{equation}
iterations where $\mathcal{W}_0 \triangleq\mux \squarednormtwo{x_0 - \optx} + \muy \squarednormtwo{y_0 - \opty}$. %\nsa{in the variance part, can we remove max? YL: I don't see how. What would you like to have in place ?} 
{When the partial gradients $\nabla_x \Phi$ and $\nabla_y \Phi$ are continuously differentiable (which is the case for bilinear problems and for many SCSC problems arising in practice \citep{zhang2021robust,palaniappan2016stochastic,chambolle2011first})}, then we can take $\Lxy = \Lyx$ (as discussed in Remark \ref{rema-Lxy-Lyx-equal}) and the complexity \eqref{intro-complexity} simplifies to 
{\small
\[    
\scalebox{1.}{$n= \bigOh
                    \left(
                        \left[\frac{\Lxx}{\mux} + \frac{\Lyx}{\sqrt{\mux\muy}} + \frac{\Lyy}{\muy} +  \max\left(\frac{\proxyx^2}{\mux}, \frac{\proxyy^2}{\muy}\right) \frac{\ylfourth{1 + \log\left(\frac{1}{1-p}\right)}}{\varepsilon}\right]  \log\left(\frac{1}{\varepsilon}\right)
                    \right)
            $},
          % $ 
\]
}%
hiding constants depending on the initialization. 
Simplifying the terms further, this implies $n = \mathcal{O}\left(\kappa \log(\frac{1}{\varepsilon}) + (1 + \log(\frac{1}{1-p})) \frac{\delta^2 \log(1/\varepsilon)}{\mu\varepsilon}\right)$ iterations are sufficient where $\delta^2 = \max(\delta_x^2, \delta_y^2)$. 
To achieve this, under a light-tail (subGaussian) assumption for the norm of the gradient noise, we develop concentration inequalities tailored to the specific \emph{Gauss-Seidel} structure of ~\sapdname. 
In particular, the Gauss-Seidel type updates and the use of a momentum term complicate the analysis significantly where the evolution of the iterates and the performance metric over the iterations need to be studied with respect to a non-standard filtration for having the right measurability properties (as discussed in Section \ref{rso:sec:proof_main_thms} in detail). 
A crucial step for the development of these results is a new Lyapunov function $V_n$ we construct that has favorable contraction properties.
To our knowledge, these are the first high-probability guarantees for a SP algorithm with Gauss-Seidel style updates and first high-probability guarantees that are accelerated in the sense that the dependency of the iteration complexity to the initialization bias scales linearly with the condition number $\kappa$. 
We also provide finite-time risk guarantees, where we measure the risk in terms of the CVaR, EVaR and $\chi^2$-divergence of the distance to the saddle point. 
In addition, we provide an in-depth analysis of the behavior of~\sapdname~on a class of quadratic problems subject to i.i.d. isotropic Gaussian noise where we can characterize the behavior of the distribution of the iterates explicitly. 
In particular, we derive an analytical formula for the limiting covariance matrix of~\sapdname's iterates, which demonstrates the tightness of our high probability bounds with respect to several parameter choices in \sapdname. 
To our knowledge, these are the first risk-averse guarantees that quantify the risk associated \saa{with an \textit{approximate} solution generated by a primal-dual algorithm for SP problems.}\looseness=-1 
% \vspace*{-3mm}
%================================================================================
\paragraph{Notations.}
Throughout this manuscript, $\cX=\Rn$ and $\cY=\Rm$ denote finite dimensional vector spaces equipped with the Euclidean norm $\norm{u} \defineq \dotproduct{u}{u}^{\frac{1}{2}}$, and $\cZ\triangleq\cX\times\cY$. We adopted $\integers_{++}$ for \mg{positive integers} and $\integers_+=\integers_{++}\cup\{0\}$. 
For $A,B \in  \R^{n\times n}$, we denote $\vectorize(A) \in \R^{n^2}$ the vector composed of the \sa{vertical} concatenation of the columns of $A$, and $A\otimes B$ the \sa{Kronecker} product of $A$ and $B$. 
We let %\sa{$\norm{A}_{\sa{F}}$ and 
$\norm{A}$ denote the %\textit{Frobenius} and 
spectral norm of $A$ and let $\rho(A)$ denote the \textit{spectral radius} of $A$, i.e., \mg{the largest modulus of the eigenvalues of $A$}.
For a finite sequence of reals $x_1, \dots, x_n$ (resp. matrices $X_1, \dots, X_n$), we denote $\diag(x_1 \dots, x_n)$ (resp. $\diag(X_1 \dots, X_n)$) the  associated (block) diagonal matrix. 
%We recall that for all $A$, $\rho(A) \leq \|A\|_F$.
If $A$ is diagonalizable, $\spectrum(A)$ denotes the set of the eigenvalues of $A$. 
For any convex set $C$, \sa{$\cvxindicator_C$ denotes the indicator function of $C$, i.e., $\cvxindicator_C(x)=0$ if $x\in C$, and equal to $+\infty$ otherwise.}
For a given proper, closed and convex function  $\varphi \from \setX \to \R \cup\{+\infty\}$, $\prox{\varphi}(\cdot)$ denotes the associated \textit{proximal operator}: $x \mapsto \argmin_{u \in \setX} \varphi(u) + \frac{1}{2} \squarednormtwo{u - x}$. We use the Landau notation to describe the asymptotic behavior of functions. 
That is, for $u\in \R\cup \{\pm \infty\}$, a function $f(x) = \littleOh(g(x))$ in a neighborhood of $u$ if $\frac{f(x)}{g(x)} \to 0$ as $x \to u$, whereas $f(x) = \bigOh(g(x))$ if there exist a positive constant $C$ such that $|f(x)| \leq C|g(x)|$ in some neighborhood of $u$. 
Similarly, we say $f(x)=\Theta(g(x))$, if $f(x) = \bigOh(g(x))$ and $g(x) = \bigOh(f(x))$. 
\mg{Given random vectors $U_n:\Omega \to \R^d$ for $n\geq 0$, we let $U_n \convergesindistributionto U$ if $U_n$ converges in distribution to another random vector $U:\Omega \to \mathbb{R}^d$. \looseness=-1}

%==========================================================================================

%==========================================================================================
\section{\texorpdfstring{\mg{Preliminaries and Background}}{Preliminaries and Background}}
%================================================================================
\subsection{Stochastic Accelerated Primal-Dual (SAPD) Method}\label{rso:sec:admissible_params}
%================================================================================

SAPD, displayed in Algorithm \ref{rso:algo:sapd_algorithm}, is a stochastic accelerated primal-dual method developed in \citep{zhang2021robust} which uses stochastic estimates $\tilde \nabla _x \Phi$ and $\tilde \nabla _y \Phi$ of the partial gradients $\nabla _x \Phi$ and $\nabla _y \Phi$. 
SAPD extends the accelerated primal-dual method~(APD) proposed in \citep{hamedani2021primal} to the stochastic setting, which itself is an extension of the Chambolle-Pock (CP) method developed for bilinear couplings $\Phi$. 
Given primal and dual stepsizes $\tau$ and $\sigma$ and a number of iterations $n$, SAPD applies momentum averaging to the partial gradients with respect to \sa{the dual variable}, and \sa{updates the primal and the dual variables in an alternating fashion computing proximal-gradient steps.} 
\begin{algorithm}[h!]
	\caption{SAPD Algorithm}
	\label{rso:algo:sapd_algorithm}
{\small
\begin{algorithmic}[1]
        \Require{Parameters $\tau, \sigma, \theta$. Starting point $(x_0, y_0)$. Horizon \saa{$n$}.}
        \Initialize{
        $x_{-1} \leftarrow x_0$,\quad $y_{-1} \leftarrow y_0$,\quad $\approxq_0\gets \mathbf{0}$\\
        }
		\For{$k \geq 0$}
		    \State $\approxs_k \leftarrow \approxgradphiy(x_k, y_k, \omega^y_k) + \theta \approxq_k$ \Comment{Momentum averaging}
		    \State $y_{k+1} \leftarrow \prox{\sigma g}(y_k + \sigma \approxs_k)$
		    \State $x_{k+1} \leftarrow \prox{\tau f}(x_k - \tau \approxgradphix(x_k, y_{k+1}, \omega^x_{k}))$
                \State $\approxq_{k+1} \leftarrow \approxgradphiy(x_{k+1}, y_{k+1}, \omega^y_{k+1}) - \approxgradphiy(x_{k}, y_{k}, \omega^y_{k})$
		\EndFor
		\Return \saa{$(x_n,y_n)$}
\end{algorithmic}}%
\end{algorithm}
The \sa{high-probability convergence guarantees} of SAPD derived in this paper {rely} on several standard assumptions on $f$, $g$, $\Phi$, and the noisy estimates $\tilde \nabla _x \Phi$ and $\tilde \nabla _y \Phi$ of the partial gradients of $\Phi$.
\sa{The} first assumption \sa{on} the smoothness properties of the coupling function $\Phi$ is standard for first-order methods (see e.g..~\citep{mokhtari2020unified,gidel2018variational,ZhangJ_2021}). 
\begin{assumption}\label{rso:assumption:str_cvx_smooth} 
$f \from \cX \to \R \cup\{+\infty\}$ and $g \from \cY \to \R \cup\{+\infty\}$ are strongly convex with convexity modulii $\mu_x,\mu_y>0$, respectively; and $\sadobj \from \Rd \times \Rd \to \R$ is continuously differentiable on an open set containing $\dom f\times \dom g$ such that
    \begin{enumerate}
        \item[(i)] $\sadobj(\cdot, y)$ is convex on $\dom f$, for all $y \in
        \dom g$;
        \item[(ii)] $\sadobj(x, \cdot)$ is concave on $\dom g$, for all $x \in 
        \dom f$;
        \item[(iii)] there exist $\Lxx, \Lyy \geq 0$ and $\Lxy, \Lyx >0$ such that
            \[
            \begin{split}
                \norm{\gradphix(x,y) - \gradphix(\bar x, \bar y)} &\leq \Lxx \norm{x - \bar x} + \Lxy \norm{y - \bar y}, \\
                \norm{\gradphiy(x,y) - \gradphiy(\bar x, \bar y)} &\leq \Lyx \norm{x - \bar x} + \Lyy \norm{y - \bar y},
            \end{split}
            \]
        for all $(x,y), (\bar x, \bar y) \in \dom f\times \dom g$.
    \end{enumerate}
\end{assumption}
\sa{By strong convexity/strong concavity of $\cL$ from Assumption~\ref{rso:assumption:str_cvx_smooth}, the problem in~\eqref{rso:eq:main_problem} admits a unique saddle point $\optz \defineq (\optx, \opty)$ which satisfies
\begin{equation}\label{rso:eq:solution}
 %$   
 \obj(\optx, y) \leq \obj(\optx, \opty) \leq \obj(x, \opty)\quad \forall (x,y) \in 
    \cX \times \cY.%$.
\end{equation}
}
Following the literature on stochastic saddle-point algorithms~\citep{nemirovski2009robust,juditsky2011solving,chen2017accelerated}, we assume that only (noisy) stochastic estimates $\approxgradphiy(x_k, y_k, \omega_k^y), \approxgradphix(x_k, y_{k+1}, \omega_k^x)$ of the partial gradients $\gradphiy(x_k, y_k), \gradphix(x_k, y_{k+1})$ are available, where $\omega_k^x, \omega_k^y$ are random variables that are being revealed sequentially. 
{Specifically}, we let $\left(\omega^x_k\right)_{k \geq 0}$,  $\left(\omega^y_k\right)_{k\geq 0}$ be two sequences of random variables revealed in the following order \mg{in time which is the natural order for the SAPD updates}: 
\[
    \omega_0^y \rightarrow \omega_0^x \rightarrow \omega_1^y \rightarrow \omega_1^x \rightarrow \omega_2^y \rightarrow \cdots \;,
\]
and we let $(\sigmafield_{k}^{y})_{k\geq 0}$ and $(\sigmafield_{k}^{x})_{k\geq 0}$ denote the associated filtrations, \sa{i.e., $\sigmafield_{0}^y = \sigma(\omega_{0}^y)$ \mg{is the sigma algebra generated by the random variable $\omega_{0}^y$}, $\sigmafield_{0}^x = \sigma(\omega_{0}^y, \omega_{0}^x)$ and}
\[
\begin{split}
    % & 
    % \sigmafield_{0}^y = \sigma(\omega_{0}^y), 
    %     \quad \sigmafield_{0}^x = \sigma(\omega_{0}^y, \omega_{0}^x), \\
    \sigmafield_{k}^y = \sigma(\sigmafield_{k-1}^x, \sigma(\omega_{k}^y)), 
        \quad \sigmafield_{k}^x = \sigma(\sigmafield_{k}^y, \sigma(\omega_{k}^x)),\quad \forall~k\geq 1. 
\end{split}    
\]
For any $k\geq 0$, we introduce the following random variables %which 
to represent the gradient noise: 
\[
\noisegrady{k} \triangleq \approxgradphiy(x_k,y_{k},\omega_k^y) - \gradphiy(x_k,y_{k}),\quad
\noisegradx{k} \triangleq \approxgradphix(x_k,y_{k+1}, \omega_k^x) - \gradphix(x_k,y_{k+1}).
\]
Often times, stochastic gradients are assumed to be unbiased with a bounded variance conditional on the history of the iterates. 
Such an assumption is standard in the study of stochastic optimization algorithms and stochastic approximation theory \citep{harold1997stochastic} and frequently arises in the context of stochastic gradient methods that estimate the gradients from randomly sampled subsets of data \citep{bottou2018optimization}. 
\begin{assumption}\label{rso:assumption:unbiased}
For any $k \geq 0$, there exists scalars $\ylfourth{\nu_x}, \ylfourth{\nu_y}>0$ such that %the gradient noise satisfies
   % \begin{equation*} 
      $$  \expectation\left[\noisegrady{k} | \sigmafield_{k-1}^x \right] = 0, \quad
        \expectation\left[ \noisegradx{k} |\sigmafield_{k}^y \right] = 0, \quad 
        \expectation\left[ \| \noisegrady{k}\|^2 | \sigmafield_{k-1}^{x} \right] \leq \ylfourth{\nu_y}^2, \quad \expectation\left[ \| \noisegradx{k}\|^2 |\sigmafield_{k}^y \right] \leq \ylfourth{\nu_x}^2.$$          
    %\end{equation*}
\end{assumption}   
Under Assumptions \ref{rso:assumption:str_cvx_smooth} and \ref{rso:assumption:unbiased}, given a set of parameters $(\tau,\sigma,\theta)$, \sapdname~iterates $(x_k, y_k)$ {were shown to converge to a neighborhood of the solution linearly in expectation where the size of the neighborhood gets smaller when the gradient noise levels $\ylfourth{\nu_x},\ylfourth{\nu_y}\geq 0$ gets smaller~\citep{zhang2021robust}; in particular, in the absence of noise (when $\ylfourth{\nu_x}=\ylfourth{\nu_y}=0$)}, \mg{the iterates $(x_k, y_k)$ converges to \saa{$(\optx, \opty)$}} %\nsa{Recall that this is not true in general. It only holds for quadratics. I fixed it.} 
at a linear rate $\rho \in (0,1)$ provided that there exists some $\alpha \in [0, \sigma^{-1})$ for which the following inequality holds: 
{\small
\begin{equation}\label{rso:eq:matrix_inequality_params}
    \left(\begin{array}{ccccc}
        \frac{1}{\tau}+\mu_x-\frac{1}{\rho \tau} & 0 & 0 & 0 & 0 \\
        0 & \frac{1}{\sigma}+\mu_y-\frac{1}{\rho \sigma} & \left(\frac{\theta}{\rho}-1\right) L_{y x} & \left(\frac{\theta}{\rho}-1\right) L_{y y} & 0 \\
        0 & \left(\frac{\theta}{\rho}-1\right) L_{y x} & \frac{1}{\tau}-L_{x x} & 0 & -\frac{\theta}{\rho} L_{y x} \\ 
        0 & \left(\frac{\theta}{\rho}-1\right) L_{y y} & 0 & \frac{1}{\sigma}-\alpha & -\frac{\theta}{\rho} L_{y y} \\
        0 & 0 & -\frac{\theta}{\rho} L_{y x} & -\frac{\theta}{\rho} L_{y y} & \frac{\alpha}{\rho}
    \end{array}\right) \succeq 0.
\end{equation}}% 
An important class of solutions to the matrix inequality in \eqref{rso:eq:matrix_inequality_params} takes the following form: \sa{Given \saa{an arbitrary} $c\in(0,1]$, choose}
\begin{equation}\label{rso:eq:cb_params} 
    \tau = \frac{1-\theta}{\theta \mu_x}, \quad \sigma = \frac{1-\theta}{\theta \muy}, \quad \theta \geq \sa{\bar{\theta}_c},
\end{equation}
for some $\sa{\bar{\theta}_c}\in(0,1)$ \yassine{explicitly given in~\citep[Corollary 1]{zhang2021robust}} \sa{-- $(\tau,\sigma,\theta)$ satisfying \eqref{rso:eq:cb_params} solves~\eqref{rso:eq:matrix_inequality_params} with $\rho=\theta$ and $\alpha=\frac{c}{\sigma}-\sqrt{\theta}L_{yy}$} {with $c \in (0,1]$}. 
\sa{SAPD generalizes the primal-dual algorithm~CP proposed in~\citep{chambolle2011first} -- CP algorithm can solve SP problems with a \textit{bilinear} coupling function $\Phi$ when a deterministic first-order oracle for $\Phi$ exists; indeed, for bilinear coupling functions \saa{with deterministic first-order oracles}, SAPD reduces to the CP algorithm.} 
It is shown in~\citep{chambolle2011first} that for a particular value\footnote{see \citep[\sa{Eq.(48)}]{chambolle2011first}.} of $\theta$, \sa{the choice of primal and stepsizes $(\tau,\sigma)$ according to \eqref{rso:eq:cb_params}} achieves acceleration of the CP algorithm proposed in~\citep{chambolle2011first}. 
\sa{For SAPD, \cite{zhang2021robust} {study the squared distance of iterates to the saddle point in expectation} and extends the same acceleration result to the case when $\Phi$ is not bilinear and when one has only access to a stochastic first-order oracle rather than a deterministic one.}

As we focus on SCSC problems, we can rely on the squared distance of the iterates $(x_n, y_n)$ to the solution $(\optx, \opty)$ to quantify sub-optimality. 
Precisely, sub-optimality will be measured in terms of \saa{a} weighted squared distance to the solution, i.e., 
\begin{equation}\label{def-metric}
    \cD_n \defineq \frac{1}{2\tau} \|x_{n} - \optx \|^2 + \frac{1}{2} \Big( 
    \frac{1}{\sigma}-\alpha\Big) \|y_n - \opty\|^2,
\end{equation} 
for some $\alpha \in [0, \sigma^{-1})$. 
\mg{This weighted metric turns out to be more convenient for the convergence analysis of SAPD, but it is clearly equivalent to the unweighted squared distance $E_n = \|x_{n} - \optx \|^2 + \| y_n - \opty \|^2$ up to a constant that depends on the choice of $(\tau, \sigma,\alpha)$}. 
\saa{For the sake of completeness of the paper,} we first recall the convergence of \sapdname~in expected {weighted} squared distance, established in~\citep{zhang2021robust}.
%====================================================================================
\begin{thm}[\citep{zhang2021robust}, Theorem 1]\label{rso:thm:sapd_expectation_result}
    \mg{Suppose Assumptions \ref{rso:assumption:str_cvx_smooth} and \ref{rso:assumption:unbiased} hold} and let $z_n=(x_n,y_n)_{n\geq 1}$ be the iterates generated by \sapdname, starting from an arbitrary tuple $(x_0, y_0) \in \setX \times \setY$. 
    For all $n \in \N$, $p \in (0,1)$ and $\tau, \sigma > 0, $ and $\theta\geq 0$ satisfying~\eqref{rso:eq:matrix_inequality_params} for some $\rho\in (0,1)$ and $\alpha \in [0, \sigma^{-1})$, it holds that
    \begin{equation}\label{rso:eq:conv_sapd_equation}
        \expectation\left[\cD_n\right] 
            \leq \rho^n\; \startingbias +\frac{\rho}{1-\rho} \left(\frac{\tau}{1+\tau \mu_x} \Xi_{\tau, \sigma, \theta}^x \ylfourth{\nu_x}^2+\frac{\sigma}{1+\sigma \mu_y} \Xi_{\tau, \sigma, \theta}^y \ylfourth{\nu_y}^2\right),
    \end{equation}
    {where $\startingbias \defineq \frac{1}{2\tau} \|x_{0} - \optx \|^2 + \frac{1}{2\sigma} \|y_n - \opty\|^2$ denotes the initial bias, and $\Xi_{\tau, \sigma, \theta}^x \defineq 1 + \frac{\sigma \theta(1+\theta) L_{y x}}{2\left(1+\sigma \mu_y\right)}$ $\Xi_{\tau, \sigma, \theta}^y \triangleq \frac{\tau \theta(1+\theta) L_{y x}}{2\left(1+\tau \mu_x\right)}+\left(1+2 \theta+\frac{\theta+\sigma \theta(1+\theta) L_{y y}}{1+\sigma \mu_y}+\frac{\tau \sigma \theta(1+\theta) L_{y x} L_{x y}}{\left(1+\tau \mu_x\right)\left(1+\sigma \mu_y\right)}\right)(1+2 \theta)$ are noise related constants that depend on the problem and algorithm parameters}.
\end{thm}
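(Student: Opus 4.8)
The plan is to adapt the by now standard analysis of Chambolle--Pock-type primal--dual methods to the stochastic, momentum-accelerated, Gauss--Seidel setting of Algorithm~\ref{rso:algo:sapd_algorithm}. \emph{First}, I would derive the one-step ``prox inequalities'' of the two proximal updates. For the primal step, strong convexity of $f$ with modulus $\mu_x$ gives, for every $x$, an upper bound on $f(x_{k+1}) + \langle \approxgradphix(x_k, y_{k+1}, \omega_k^x),\, x_{k+1} - x \rangle$ in terms of $\tfrac{1}{2\tau}\|x_k - x\|^2 - (\tfrac{1}{2\tau} + \tfrac{\mu_x}{2})\|x_{k+1} - x\|^2 - \tfrac{1}{2\tau}\|x_{k+1} - x_k\|^2 + f(x)$, and symmetrically for the dual step with the momentum-averaged direction $\approxs_k$ and modulus $\mu_y$. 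Evaluating both inequalities at the unique saddle point $(\optx, \opty)$, adding them, and invoking convexity of $\Phi(\cdot, y)$, concavity of $\Phi(x, \cdot)$, together with the saddle-point inequality~\eqref{rso:eq:solution}, lets me cancel the inner products carrying the \emph{true} partial gradients against a favorable-sign gap term, leaving behind only gradient \emph{differences}, terms of order $\|x_{k+1} - x_k\|^2, \|y_{k+1} - y_k\|^2$, the noise vectors $\noisegradx{k}, \noisegrady{k}$, and the momentum contribution $\theta \approxq_k$.

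\emph{Second}, I would handle the momentum term. Substituting $\approxq_k = \approxgradphiy(x_k, y_k, \omega_k^y) - \approxgradphiy(x_{k-1}, y_{k-1}, \omega_{k-1}^y)$ and using the block-Lipschitz bounds of Assumption~\ref{rso:assumption:str_cvx_smooth}(iii), all gradient-difference cross terms are bounded via Young's inequality by $\|x_k - x_{k-1}\|^2$, $\|y_k - y_{k-1}\|^2$, $\|y_{k+1} - y_k\|^2$ and the squared distances $\|x_{k+1} - \optx\|^2$, $\|y_{k+1} - \opty\|^2$, with coefficients kept free through an auxiliary parameter $\alpha \in [0, \sigma^{-1})$. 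Collecting the resulting quadratic terms, the demand that their aggregate be dominated so as to produce a contraction by the factor $\rho$ is precisely the statement that the matrix in~\eqref{rso:eq:matrix_inequality_params} is positive semidefinite --- this is the algebraic core of the proof and is exactly what forces the parametrization~\eqref{rso:eq:cb_params}. The outcome is a one-step recursion $\mathcal{V}_{k+1} \le \rho\, \mathcal{V}_k + (\text{linear-in-noise cross terms}) + (\text{quadratic noise terms})$, where $\mathcal{V}_k$ is a Lyapunov quantity equal to $\cD_k$ plus a momentum correction that vanishes at $k = 0$ because $\approxq_0 = 0$ and $x_{-1} = x_0$, $y_{-1} = y_0$.

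\emph{Third}, I would take conditional expectations, respecting the Gauss--Seidel order. Since $y_{k+1}$ is $\sigmafield_k^y$-measurable while $x_{k+1}$ is $\sigmafield_k^x$-measurable, the linear-in-noise cross terms have conditionally-unbiased multipliers after, if necessary, adding and subtracting a term so that the multiplier is measurable at the right time; by Assumption~\ref{rso:assumption:unbiased} these vanish in conditional expectation, and the quadratic noise terms are bounded by $\nu_x^2$ and $\nu_y^2$ --- tracking also the portion of the step-$k$ noise re-injected at step $k+1$ through $\theta \approxq_k$ is what produces the exact constants $\Xi^x_{\tau, \sigma, \theta}$ and $\Xi^y_{\tau, \sigma, \theta}$. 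Taking total expectations yields $\mathbb{E}[\mathcal{V}_{k+1}] \le \rho\, \mathbb{E}[\mathcal{V}_k] + \rho\big(\tfrac{\tau}{1 + \tau \mu_x}\Xi^x_{\tau,\sigma,\theta}\nu_x^2 + \tfrac{\sigma}{1 + \sigma \mu_y}\Xi^y_{\tau,\sigma,\theta}\nu_y^2\big)$; unrolling from $k = 0$ to $n-1$, summing $\sum_{j \ge 0}\rho^j = 1/(1-\rho)$, and bounding $\mathcal{V}_0$ by $\startingbias$ gives~\eqref{rso:eq:conv_sapd_equation}.

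I expect the main obstacle to be the simultaneous bookkeeping of the momentum term and the non-standard filtration: one has to express the one-step inequality in terms of quantities measurable at the correct time (so that conditional unbiasedness of the noise can be invoked) while keeping the collection of quadratic terms aligned with the matrix~\eqref{rso:eq:matrix_inequality_params}, and obtaining the noise constants $\Xi^x_{\tau,\sigma,\theta}, \Xi^y_{\tau,\sigma,\theta}$ \emph{exactly} (not merely up to an absolute constant) requires carefully accounting for how much of the noise at iteration $k$ is propagated into iteration $k+1$ by the momentum averaging.
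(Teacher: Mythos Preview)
Your plan is correct and mirrors the argument in \citep{zhang2021robust}; note that the present paper does not give its own proof of this theorem (it is quoted verbatim from that reference), but the essential ingredients appear in the appendix as Lemma~\ref{rso:lem:descent_lemma} (the one-step inequality you describe in Step~1--2), the proof of Proposition~\ref{rso:prop:almost_sure_bound} (where the matrix inequality~\eqref{rso:eq:matrix_inequality_params} is shown to be exactly the semidefiniteness condition $B-\rho^{-1}A\succeq 0$ that makes the weighted telescoping work), and Lemma~\ref{rso:lem:sapd_lemma_A3} (the ``add--subtract a predictable term'' trick you allude to in Step~3, which bounds $\|x_{k+1}-\doublenoiselessx_{k+1}\|$, $\|y_{k+1}-\noiselessy_{k+1}\|$, $\|y_{k+1}-\doublenoiselessy_{k+1}\|$ and is what generates the explicit constants $\Xi^x_{\tau,\sigma,\theta},\Xi^y_{\tau,\sigma,\theta}$). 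Your anticipated difficulty---aligning the filtration with the Gauss--Seidel order so the linear noise terms vanish in conditional expectation---is handled exactly by introducing those predictable auxiliary iterates.
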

\yassine{
As stated above, the convergence of SAPD in expected squared distance presents the classical bias-variance trade-off, \saa{which can be controlled through adjusting the \sapdname~parameter choice}. 
The bias term $\rho^n \cD_{\tau, \sigma}$, captures the rate at which the error due to initialization (bias) decays, ignoring the noise. 
It is shown in~\citep{zhang2021robust} that for certain choice of parameters, \saa{convergence of initialization bias to $0$} occurs at an accelerated rate $\rho= 1-\Theta(\frac{1}{\kappa})$ instead of the non-accelerated rate $\rho= 1-\Theta(\frac{1}{\kappa^2})$ of methods such as (Jacobi-style) SGDA. 
The variance term constitutes the (remaining part) second term at the right-hand side of~\eqref{rso:eq:conv_sapd_equation} and is due to noise accumulation \saa{that scales} with the stepsize and the noise variance.
For a particular choice of SAPD parameters, it is shown that SAPD exhibits an optimal complexity in expectation, up to logarithmic factors, and achieves an \saa{accelerated} decay rate for the bias term; however, in a number of risk-sensitive situations, convergence in expectation can prove to be insufficient. In this paper, we further investigate the properties of \sapdname~for several measures of risks, that we detail in Section~\ref{rso:sec:risk_measures}.\looseness=-1
}

%================================================================================
\subsection{\texorpdfstring{\mg{Assumptions on the gradient noise}}{Assumptions on the gradient noise}}
%================================================================================

\sa{Although} according to Theorem~\ref{rso:thm:sapd_expectation_result}, \eqref{rso:eq:matrix_inequality_params} describes a general set of parameters for which \sapdname~will admit guarantees in terms of the \textit{expected} weighted distance squared to the solution, risk-sensitive guarantees \sa{for SAPD}, including high-probability \sa{bounds} are not known. 
In the forthcoming \sa{sections,} we study \sapdname~for parameters satisfying~\eqref{rso:eq:matrix_inequality_params}, and we obtain convergence \sa{guarantees} in high probability, in $\cvar$, in $\evar$, and \sa{also} in the $\chi^2$-divergence-based risk \sa{measures}, \saa{which are} properly defined in Section \ref{rso:sec:risk_measures}. 
\saa{In other words,} our focus here is to obtain high probability guarantees as well as bounds on \sa{the risk associated with $\mg{E_n^{1/2}}=\|z_n - \optz\|$.}
To this end, we will \sa{make a ``light-tail" assumption on the magnitude of the gradient noise \saa{adopting a} subGaussian structure. 
Before giving our \saa{assumption on the gradient noise} precisely, we start with introducing the family of norm-subGaussian random variables, and recall their basic properties.}
\begin{definition}
    A random vector $X \from \Omega \to \Rd$ is \textit{norm-subGaussian} with proxy \saa{$\sigma>0$}, denoted by $X \in \normsubgaussian{\sigma}$, if we have
%\[
   $ \probability\Big[\norm{X - \expectation[X]}~\geq t\Big] \leq 2 e^{\frac{-t^2}{2\sigma^2}}, \quad \forall t \in \R$.
%\]
\end{definition}
Random vectors with Norm-subGaussian distribution were introduced in~\citep{jin2019short}, and encompass a large class of random vectors including subGaussian random vectors. 
First, note that given \sa{an} arbitrary $\alpha>0$ and a random variable $X \from \Omega \to \Rd$ such that $X \in \normsubgaussian{\sigma}$ for some $\sigma>0$, we immediately have the following implication:
\begin{equation}\label{rso:eq:nSG_pos_homogeneity}
    %\forall \sigma, \alpha>0, \forall X \from \Omega \to \Rd, 
    X \in \normsubgaussian{\sigma} \implies \alpha X \in \normsubgaussian{\alpha \sigma}.
\end{equation}
For instance, $X:\Omega\to\reals^d$ is norm-subGaussian when $X$ is subGaussian, or it is bounded, i.e., $\exists B>0$ such that $\norm{X}\leq B$ with probability $1$. 
As discussed in~\citep[Lemma 3]{jin2019short}, the squared norm of a norm-subGaussian vector admits a sub-Exponential distribution, \saa{which is defined next.}
\begin{definition}
    A random variable $\mathcal{U} \from \Omega \to \R$ is subExponential with proxy $K > 0$ if it satisfies
    \[
        \expectation\left[e^{\lambda |\mathcal{U}|}\right] \leq e^{\lambda K},  \quad \forall \lambda \in [0, K^{-1}].
    \]
\end{definition}
\yassine{In particular, if we take $U = \squarednormtwo{X}$ \sa{for} $X \in \normsubgaussian{\sigma}$ \saa{with $\expectation[X] = 0$}, \sa{then}, {the following result shows that} $U$ is subExponential with proxy $K = 8\sigma^2$.}
\begin{lem}\label{rso:lem:nSG_mgf_bounds}
    Let $X\in \normsubgaussian{\sigma}$ be such that $\expectation[X] = 0$. Then, for any $ \lambda \in \left[0, \sa{\frac{1}{4\sigma^2}}\right]$, 
    \begin{equation}\label{rso:eq:nSG_mgf_bound}
        \mgf{\lambda \squarednormtwo{X}} \leq \sa{2 e^{4\lambda\sigma^2}-1}\leq e^{8 \lambda \sigma^2}.
    \end{equation}
\end{lem}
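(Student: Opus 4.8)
The plan is to establish the moment generating function bound for $U = \|X\|^2$ by reducing it to a tail-integral computation that exploits the norm-subGaussian tail bound directly. First I would recall the standard identity for the MGF of a nonnegative random variable in terms of its tail: for $\lambda > 0$,
\[
\mathbb{E}\big[e^{\lambda \|X\|^2}\big] = 1 + \lambda \int_0^\infty e^{\lambda s}\, \mathbb{P}\big[\|X\|^2 \geq s\big]\, ds.
\]
Since $\mathbb{E}[X] = 0$, the defining property $X \in \mathrm{nSG}(\sigma)$ gives $\mathbb{P}[\|X\| \geq t] \leq 2e^{-t^2/(2\sigma^2)}$, hence substituting $s = t^2$ yields $\mathbb{P}[\|X\|^2 \geq s] \leq 2 e^{-s/(2\sigma^2)}$ for all $s \geq 0$. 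Plugging this into the integral representation gives
\[
\mathbb{E}\big[e^{\lambda \|X\|^2}\big] \leq 1 + 2\lambda \int_0^\infty e^{\lambda s} e^{-s/(2\sigma^2)}\, ds = 1 + 2\lambda \int_0^\infty e^{-(1/(2\sigma^2) - \lambda)s}\, ds.
\]
For $\lambda \in [0, \tfrac{1}{4\sigma^2}]$ we have $\tfrac{1}{2\sigma^2} - \lambda \geq \tfrac{1}{4\sigma^2} > 0$, so the integral converges and equals $\big(\tfrac{1}{2\sigma^2} - \lambda\big)^{-1}$. This produces
\[
\mathbb{E}\big[e^{\lambda \|X\|^2}\big] \leq 1 + \frac{2\lambda}{\frac{1}{2\sigma^2} - \lambda} = 1 + \frac{4\lambda\sigma^2}{1 - 2\lambda\sigma^2}.
\]

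The remaining step is purely analytic: show that $1 + \tfrac{4\lambda\sigma^2}{1-2\lambda\sigma^2} \leq 2e^{4\lambda\sigma^2} - 1$ on the relevant range, and then that $2e^{u}-1 \leq e^{2u}$ for $u \geq 0$. Writing $u = 2\lambda\sigma^2 \in [0, \tfrac{1}{2}]$, the first inequality becomes $1 + \tfrac{2u}{1-u} \leq 2e^{2u} - 1$, i.e. $\tfrac{1+u}{1-u} \leq 2e^{2u} - 1$; one verifies this holds on $[0,\tfrac12]$ by checking it at $u=0$ (both sides equal $1$) and comparing derivatives, or more cleanly by using the elementary bound $\tfrac{1}{1-u} \leq e^{2u}$ valid for $u \in [0,\tfrac12]$ (since $-\ln(1-u) \leq 2u$ there), which gives $1 + \tfrac{2u}{1-u} = \tfrac{1+u}{1-u} \leq (1+u)e^{2u} \leq 2 e^{2u} - 1$, where the last step uses $1+u \leq 2 - e^{-2u}$, again an elementary one-variable inequality on $[0,\tfrac12]$ (equality at $u=0$, and the derivative inequality $1 \leq 2e^{-2u}$ holds for $u \leq \tfrac{\ln 2}{2}$, with the endpoint $u=\tfrac12$ checked directly). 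The second inequality $2e^u - 1 \leq e^{2u}$ is immediate from $(e^u - 1)^2 \geq 0$. Rearranging $u = 2\lambda\sigma^2$ back, this chain yields exactly $\mathbb{E}[e^{\lambda\|X\|^2}] \leq 2e^{4\lambda\sigma^2} - 1 \leq e^{8\lambda\sigma^2}$.

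I expect the main obstacle to be packaging the elementary scalar inequalities cleanly so the constant $8\sigma^2$ (rather than something slightly larger) comes out with the sharp range $\lambda \in [0, \tfrac{1}{4\sigma^2}]$; the delicate point is that one must not be wasteful in bounding $\tfrac{1}{1-2\lambda\sigma^2}$, since a cruder bound would either shrink the admissible $\lambda$-range or inflate the proxy constant. An alternative route — citing \citep[Lemma 3]{jin2019short} directly, as the text foreshadows — would give subExponentiality with some explicit proxy, but to obtain the precise constants stated in \eqref{rso:eq:nSG_mgf_bound} the tail-integral computation above is the most transparent, so I would carry it out in full rather than appeal to the reference.
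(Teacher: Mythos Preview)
Your argument is correct and arrives at exactly the same intermediate bound as the paper, namely
\[
\mathbb{E}\big[e^{\lambda\|X\|^2}\big]\;\leq\;\frac{1+2\lambda\sigma^2}{1-2\lambda\sigma^2}\;=\;\frac{2}{1-2\lambda\sigma^2}-1,
\]
but you reach it by a different route: you integrate the MGF against the tail directly, whereas the paper first bounds the moments $\mathbb{E}[\|X\|^{2k}]\leq 2k!(2\sigma^2)^k$ via the same tail estimate and then sums the resulting geometric series. Both are standard and equally short; your path is arguably more direct since it avoids the Gamma-function bookkeeping.

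Where you lose a little efficiency is in the scalar step. You write the bound as $\tfrac{1+u}{1-u}$ (with $u=2\lambda\sigma^2$) and then chain $(1+u)e^{2u}\leq 2e^{2u}-1$, which forces you into the auxiliary inequality $1+u\leq 2-e^{-2u}$. The paper simply keeps the form $\tfrac{2}{1-u}-1$ and applies $\tfrac{1}{1-u}\leq e^{2u}$ once to get $\leq 2e^{2u}-1$ in one line. If you rewrite your expression as $\tfrac{2}{1-u}-1$ before bounding, the extra lemma disappears. Your proof of the final inequality $2e^{v}-1\leq e^{2v}$ via $(e^v-1)^2\geq 0$ is cleaner than the paper's phrasing.
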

\begin{lem}\label{rso:lem:norm_sg_dot}
    Let $X \in \normsubgaussian{\sigma}$ such that $\expectation[X]=0$. 
    Then, for any $u\in \Rd$ and $\lambda\geq 0$, it holds that
    %
    %\[
     $  \mgf{\lambda \dotproduct{u}{X}} \leq e^{8\lambda^2 \squarednormtwo{u} \sigma^2}$.
    %\]
    %
\end{lem}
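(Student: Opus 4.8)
The plan is to reduce the bound on the moment generating function of the one-dimensional random variable $\dotproduct{u}{X}$ to the subExponential control on $\squarednormtwo{X}$ already established in Lemma~\ref{rso:lem:nSG_mgf_bounds}, via an elementary inequality of the form $\dotproduct{u}{X} \leq \frac{t}{2}\squarednormtwo{u} + \frac{1}{2t}\squarednormtwo{X}$ valid for every $t>0$ (Young's / AM-GM inequality on the inner product, using Cauchy-Schwarz $\dotproduct{u}{X}\leq \norm{u}\,\norm{X}$ first). Since $\expectation[X]=0$, one could alternatively run the classical Chernoff-style symmetrization argument, but the cleanest route is to exploit the structure we already have: the tail bound $\probability[\norm{X}\geq s]\leq 2e^{-s^2/(2\sigma^2)}$ and Lemma~\ref{rso:lem:nSG_mgf_bounds}.

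Concretely, I would proceed as follows. First, fix $\lambda\geq 0$ and $u\in\Rd$; if $u=0$ the claim is trivial, so assume $u\neq 0$. For any $t>0$, Cauchy-Schwarz gives $\lambda\dotproduct{u}{X}\leq \lambda\norm{u}\norm{X}$, and then the scalar inequality $ab\leq \tfrac{1}{2}(a^2/t + t b^2)$ with $a=\lambda\norm{u}$, $b=\norm{X}$ yields
\[
\lambda\dotproduct{u}{X}\leq \frac{\lambda^2\squarednormtwo{u}}{2t} + \frac{t}{2}\squarednormtwo{X}.
\]
Exponentiating and taking expectations,
\[
\mgf{\lambda\dotproduct{u}{X}} \leq e^{\lambda^2\squarednormtwo{u}/(2t)}\;\mgf{\tfrac{t}{2}\squarednormtwo{X}}.
\]
Now I would invoke Lemma~\ref{rso:lem:nSG_mgf_bounds} with the parameter $\tfrac{t}{2}$ in place of $\lambda$: provided $\tfrac{t}{2}\in[0,\tfrac{1}{4\sigma^2}]$, i.e.\ $t\in(0,\tfrac{1}{2\sigma^2}]$, we get $\mgf{\tfrac{t}{2}\squarednormtwo{X}}\leq e^{4\sigma^2 t}$ — wait, more precisely $e^{8\cdot(t/2)\sigma^2}=e^{4\sigma^2 t}$. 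Hmm, this produces a linear-in-$t$ term that does not vanish as $t\to 0$, so a single application of Lemma~\ref{rso:lem:nSG_mgf_bounds} as stated is slightly lossy; the right move is instead to use the sharper first inequality in \eqref{rso:eq:nSG_mgf_bound}, namely $\mgf{\tfrac t2\squarednormtwo X}\leq 2e^{2\sigma^2 t}-1$, and then optimize, or — cleaner still — to bound $\mgf{\tfrac t2\squarednormtwo X}$ directly from the tail bound by integrating $\probability[\norm X\geq s]\leq 2e^{-s^2/(2\sigma^2)}$, which for $t<\sigma^{-2}$ gives a bound of the form $\mgf{\tfrac t2 \squarednormtwo X}\leq \frac{1}{\sqrt{1-t\sigma^2}}$ or similar. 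Then I would substitute and choose $t$ (e.g.\ $t=\tfrac{1}{2\sigma^2}$, or optimize over $t$) to collapse both factors into a single exponential of the form $e^{8\lambda^2\squarednormtwo u\sigma^2}$, adjusting the split constant in Young's inequality if needed so the constant $8$ comes out exactly.

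The main obstacle I anticipate is \emph{bookkeeping of constants}: the target bound is a clean $e^{8\lambda^2\squarednormtwo{u}\sigma^2}$ with \emph{no} restriction on $\lambda$, whereas the naive decomposition above inherits the range restriction $t\leq 1/(2\sigma^2)$ from Lemma~\ref{rso:lem:nSG_mgf_bounds} and leaves a residual term. Handling this requires either (a) re-deriving the MGF of $\squarednormtwo{X}$ directly from the norm-subGaussian tail bound and integrating, which removes the spurious constant and lets $t$ range over $(0,\sigma^{-2})$, then taking $t\to$ a value that makes the exponent work out; or (b) splitting the analysis into the regime $\lambda\norm{u}\sigma\leq c$ (small $\lambda$, where a second-order Taylor bound $e^{\lambda\dotproduct uX}\leq 1+\lambda\dotproduct uX + (\lambda\dotproduct uX)^2 e^{|\lambda\dotproduct uX|}$ combined with $\expectation[X]=0$ gives the sub-Gaussian MGF with the right constant) and the complementary regime (where one uses Cauchy-Schwarz plus the tail bound crudely). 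The regime-splitting approach is the most robust and is the one I would ultimately write up, as it guarantees the bound holds for all $\lambda\geq 0$ and makes the constant $8$ transparent. All remaining steps are routine one-dimensional estimates.
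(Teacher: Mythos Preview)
Your final regime-splitting plan is essentially the paper's strategy: both split at the threshold $8\lambda^2\squarednormtwo{u}\sigma^2 = 1$, and both handle the large-$\lambda$ side via Young's inequality $\lambda\dotproduct{u}{X} \leq 4\lambda^2\sigma^2\squarednormtwo{u} + \tfrac{1}{16\sigma^2}\squarednormtwo{X}$ followed by Lemma~\ref{rso:lem:nSG_mgf_bounds} at the fixed parameter $\tfrac{1}{16\sigma^2}$, together with $e^{(1+t)/2}\leq e^t$ for $t\geq 1$. The only substantive difference is the small-$\lambda$ step. Your Taylor-with-remainder bound $e^t \leq 1 + t + \tfrac{t^2}{2} e^{|t|}$ works in principle but leaves you with the cross term $\mathbb{E}\big[\dotproduct{u}{X}^2 e^{\lambda|\dotproduct{u}{X}|}\big]$, which forces an extra Cauchy--Schwarz and a fourth-moment computation before the constant $8$ can be extracted. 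The paper instead uses the cleaner scalar inequality $e^t \leq t + e^{t^2}$ (valid for all $t\in\mathbb{R}$): after taking expectation, $\mathbb{E}[X]=0$ erases the linear term, and one is left directly with $\mathbb{E}\big[e^{\lambda^2\dotproduct{u}{X}^2}\big] \leq \mathbb{E}\big[e^{\lambda^2\squarednormtwo{u}\,\squarednormtwo{X}}\big]$, to which Lemma~\ref{rso:lem:nSG_mgf_bounds} applies with parameter $\lambda^2\squarednormtwo{u}$ and yields exactly $e^{8\lambda^2\squarednormtwo{u}\sigma^2}$ on the range $\lambda\leq \tfrac{1}{2\sqrt{2}\sigma\norm{u}}$ with no slack and no further bookkeeping. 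That one-line trick is worth internalizing.
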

\sa{For completeness, the proofs of these two elementary results are provided in~Section~\ref{rso:app:prelimaries} of the appendix.}
\mg{Next, we will introduce an assumption which says that gradient noise terms $\noisegradx{k}$ and $\noisegrady{k} $ are light-tailed admitting a norm-subGaussian structure when conditioned on the natural filtration of the past iterates.} 
\begin{assumption}\label{rso:assumption:stochastic}
    For any $k\geq 0$ the random variables $\noisegradx{k} $ and $\noisegrady{k}$ are conditionally unbiased and norm-subGaussians with respective proxy parameters $\proxyx, \proxyy > 0$. %In other words, 
    More precisely, for all $t\geq 0$, we have $\expectation\left[\noisegrady{k} | \sigmafield_{k-1}^x \right] = 0, \expectation\left[ \noisegradx{k} |\sigmafield_{k}^y \right] = 0,$ and
    \[
        \probability[\|\noisegrady{k}\| \geq t | \saa{\cF_{k-1}^{x}}] \leq 2 e^{\frac{-t^2}{2\proxyy^2}},  \quad
        \probability[\|\noisegradx{k}\| \geq t | \saa{\cF_{k}^{y}}] \leq 2 e^{\frac{-t^2}{2\proxyx^2}}. 
    \]
\end{assumption}
We note that such subGaussian noise assumptions are common in the study of stochastic optimization algorithms~\citep{rakhlin2012making,ghadimi2012optimal,harvey2019tight}. 
In machine learning applications, where stochastic gradients are often estimated on sampled batches, noisy estimates typically behave Gaussian for moderately high sample sizes, as a consequence of the central limit theorem \citep{panigrahi2019non}. 
Furthermore, there are applications in data privacy where i.i.d. subGaussian noise is added to the gradients for privacy reasons~\citep{levy2021learning,varshney2022nearly}. 
In such settings, we expect Assumption \ref{rso:assumption:stochastic} to hold naturally.
% In the rest of the paper, we will assume that \mg{Assumption \ref{rso:assumption:stochastic} holds in addition to Assumption \ref{rso:assumption:str_cvx_smooth} and Assumption \ref{rso:assumption:unbiased}. }
\ylfourth{In the rest of the paper, together with Assumption \ref{rso:assumption:str_cvx_smooth}, we will assume that Assumption \ref{rso:assumption:stochastic} holds in lieu of Assumption \ref{rso:assumption:unbiased}.}

%================================================================================
\subsection{\texorpdfstring{\mg{$\mbox{VaR}$, $\mbox{CVaR}$, $\mbox{EVaR}$ and $\chi^2$-divergences}}{VaR, CVaR, EVaR, and X2-divergence}}\label{rso:sec:risk_measures}
%================================================================================

\saa{For any given $n\geq 0$, to quantify the risk associated with $\saa{\|z_n -\optz\|}$, i.e., the distance to the unique saddle point,} we will resort to $\phi$-divergence-based risk measures borrowed from the risk measure theory~\citep{ben2007old}, including $\cvar$, $\evar$ and $\chi^2$-divergence. 
The first risk measure of interest is \sa{the} quantile function, \saa{also known as value at risk,} defined for any random variable $\sa{\cU} \from \Omega \to \R$ as 
\[
    Q_p(\mathcal{U}) \saa{\triangleq \var_p(\cU)} \triangleq \inf_{t \in \R} \probability[\mathcal{U} \leq t] \geq p.
\]
Quantile upper bounds correspond to high-probability results, which have been already fairly studied to assess the robustness of stochastic algorithms~\citep{ghadimi2012optimal,rakhlin2012making,harvey2019tight}. 
One key contribution of this paper is the derivation of an upper bound on the quantiles of the \saa{weighted} distance metric $\cD_n$, defined in~\eqref{def-metric}, \sa{such that this upper bound exhibits} a tight bias-variance trade-off --see Section~\ref{rso:sec:tightness}.

Furthermore, we investigate the robustness of \sapdname~with respect to three convex risk measures based on $\varphi$-divergences~\citep{ben2007old}. 
Generally speaking, for a given proper convex function $\varphi: \R_+ \to \R$ satisfying $\varphi(1) = 0$ and $\lim_{t \to 0^+} \varphi(t) = \varphi(0)$, the associated $\varphi$-divergence, is defined as 
%
%\[
    $D_{\varphi}(\mathbb{Q}\Vert\mathbb{P}) \triangleq \int_{\Omega} \varphi\left(\frac{\measuredWRT \mathbb{Q}}{\measuredWRT \mathbb{P}}\right) \measuredWRT \mathbb{P}$,
%\]
%
for any input probability measures $\mathbb{Q}, \mathbb{P}$ such that $\mathbb{Q} \ll \mathbb{P}$, i.e., $\mathbb{Q}$ is absolutely continuous with respect to $\mathbb{P}$. 
Different choices of $\varphi$-divergence result in different risk measures \saa{as discussed next.}
\begin{definition}\label{rso:def:variational_representation_rm}
    For any $r \geq 0$, %we define 
    the $\varphi$-divergence based risk measure at level $r$ \saa{is defined} as %follows:
    %, denoted $\riskMeasure_{\varphi, r}$, as 
    \begin{equation}\label{rso:eq:phi_div_risk_measure}
        \riskMeasure_{\varphi, r}(\mathcal{U}) \triangleq \sup_{\substack{\mathbb{Q} \ll \mathbb{P}, ~ D_{\varphi}(\mathbb{Q}\Vert\mathbb{P}) \leq r}} \expectation_{\mathbb{Q}}\left[\mathcal{U}\right],
    \end{equation}
    where $\mathbb{P}$ denotes an arbitrary reference probability measure. 
\end{definition}
\mg{We refer the reader to \citep{ben2007old,shapiro2017distributionally} for more on $\varphi$-divergence based risk measures.} 
In this paper, we investigate the performances of SAPD under three $\varphi$-divergence based risk measures, summarized in \sa{Table~\ref{rso:table:risk_measures}}.

\begin{table}[t] 
{\scriptsize
    \centering 
    \renewcommand{\arraystretch}{2.}
    \begin{tabu} to \textwidth {X[c] X[c] X[c]}
        \hline 
        Risk measure & Formulation & Divergence \\ 
        \hline 
        $\cvar_p$, $p\in [0,1)$ & $\frac{1}{1-p} \int_{p'=p}^1 Q_{p'}(\mathcal{U}) \mathrm{d}p'$ & $\varphi(t) = \cvxindicator_{[0, \frac{1}{1-p}]}(t)$ \\ 
        $\evar_p$, $p\in [0,1)$ & $\inf_{\eta > 0 } \left\{\frac{-\log(1-p)}{\eta} +~\frac{1}{\eta}  \log(\expectation(e^{\eta \mathcal{U}}))\right\}$ & $\varphi(t) = t \log t - t + 1$ \\ 
        $\riskMeasure_{\chi^2, r}$, $r \geq 0$ & $\inf _{\eta \geq 0}\left\{\sqrt{1+2 r} \sqrt{\expectation(\mathcal{U}-\eta)_{+}^2}+\eta\right\}$ & $\varphi(t) = \frac{1}{2}(t-1)^2$ \\ 
        \hline 
    \end{tabu}}
    \renewcommand{\arraystretch}{1}
    % \captionsetup{justification=centering,position=below,skip=10pt}
    \caption{Three examples of $\varphi$-divergence based risk measures studied in this paper.} 
    \label{rso:table:risk_measures} 
\end{table}%
First, given $p\in[0,1)$, we consider the conditional value at risk \saa{at level $p$, i.e., $\cvar_p$,} defined as 
\begin{equation}\label{rso:eq:def_cvar}
    \cvar_p(\mathcal{U}) \defineq \frac{1}{1-p} \int_{p'=p}^1 Q_{p'}(\mathcal{U})\;\mathrm{d}p'\sa{.}
\end{equation}
The \cvarname~\saa{measure} admits the variational representation~\eqref{rso:eq:phi_div_risk_measure} with $\varphi : t \mapsto \cvxindicator_{[0, (1-p)^{-1}]}(t)$ for any $r> 0$. 
As an average of the higher quantiles of $\mathcal{U}$, $\cvar_p(\mathcal{U})$ holds intuitively as a statistical summary of the tail of $\mathcal{U}$, beyond its $p$-quantile. 
While high-probability bounds do not take into account the \textit{price of failure} tied to the event $\mathcal{U} \geq Q_p(\mathcal{U})$, the \cvarname~presents the advantage of averaging the whole tail of the distribution; therefore, it can quantify the risk associated \saa{with} tail events in a robust fashion. 

\medskip
The second convex risk measure we investigate is the Entropic Value at Risk~\citep{ahmadi2012entropic}, denoted $\evar$, and is defined as 
{\small
%\[
 % $   \evar_{p}(\mathcal{U}) \triangleq \inf_{\eta > 0 } \left\{-\frac{\log(1-p)}{\eta} + \frac{1}{\eta}  \log(\expectation(e^{\eta \mathcal{U}}))\right\}$.
  $   \evar_{p}(\mathcal{U}) \triangleq \inf_{\eta > 0 } \left\{- \eta^{-1} \log(1-p) + \eta^{-1}  \log(\expectation(\exp(\eta \mathcal{U})))\right\}$.
%\]
}
The $\evar$~admits the variational representation~\eqref{rso:eq:phi_div_risk_measure} with $\varphi : t \mapsto t\log(t)-t+1$ and the parameter $r$ is  set to $- \log(1-p)$ for given $p\in[0,1)$ --see e.g. \citep{shapiro2017distributionally}. $\evar$ exhibits a higher tail-sensitivity than $\cvar$, in the sense that $\cvar_{p}(\mathcal{U}) \leq \evar_p(\mathcal{U})$ for all $p \in [0,1)$ whenever $\evar_{p}(\mathcal{U}) <\infty$. 
Finally we will also derive results in terms of the $\chi^2$-divergence based risk measure, defined as~\eqref{rso:eq:phi_div_risk_measure} with $\varphi : t \mapsto \frac{1}{2} (t-1)^2$.
%==========================================================================================

%==========================================================================================
\section{Main Results}\label{rso:sec:high_prob_bound_sapd}
{In this section, we present the main results of this paper, which consists of convergence analysis of \sapdname~in high-probability and \sa{provide guarantees in terms of} the three convex risk measures presented in Table~\ref{rso:table:risk_measures}.
\saa{Later in Section~\ref{rso:sec:quadratics}, we derive analytical expressions related to convergence behavior of \sapdname~applied on quadratic SP problems, and in Section~\ref{rso:sec:tightness} we discuss some tight characteristics of our main results provided in this section. 
Finally, in Section \ref{rso:sec:proof_section}, we %will
provide the proofs of our main result stated in Theorem~\ref{rso:thm:high_probability_bound_sapd}.} 
%Our main result 
%\na{we establish} the convergence of~\sapdname~in high probability. 
%
\begin{thm}\label{rso:thm:high_probability_bound_sapd}
    \sa{{Suppose Assumption \ref{rso:assumption:str_cvx_smooth} and Assumption \ref{rso:assumption:stochastic} hold.} 
    Given $\tau, \sigma > 0, $ and \sa{$\theta\geq 0$} satisfying~\eqref{rso:eq:matrix_inequality_params} for some $\rho\in (0,1)$ and $\alpha \in [0, \sigma^{-1})$, let $(x_n,y_n)_{n\geq 1}$ denote the corresponding \sapdname~iterates,} \yassine{initialized at an arbitrary tuple $(x_0, y_0) \in \setX \times \setY$. 
    For all $n \in \N$, $p \in \sa{[0},1)$, \sa{it holds that}}
    \begin{equation}\label{rso:eq:high_probability_bound_sapd} 
        \begin{split}
            \mathbb{P}\bigg[{\cD_{n+1} + \cD_n} \leq \mg{q_{p,n+1}}
                        \bigg]\geq p, 
        \end{split}
        \quad \hbox{where}
    \end{equation}
    %
    %with 
    %
    \begin{equation} q_{p,n+1}\triangleq 
            {\left(\frac{1+\rho}{2}\right)^n 
                    \left( \cC_{\tau, \sigma, \theta}\; \startingbias + \Xi^{(1)}_{\tau,\sigma,\theta} \right) 
                    + \Xi^{(2)}_{\tau,\sigma,\theta} 
                    + \Xi^{(3)}_{\tau,\sigma,\theta}\log\left(\frac{1}{1-p}\right)}, 
    \end{equation}
    where \sa{$\cD_n  \mg{=} \frac{1}{2\tau} \squarednormtwo{x_{n} - \optx} + \frac{1-\alpha \sigma}{2 \sigma} \squarednormtwo{y_n - \opty}$, $\startingbias \defineq \frac{1}{2\tau} \squarednormtwo{x_0 - \optx} + \frac{1}{2\sigma} \squarednormtwo{y_0 - \opty}$, $\cC_{\tau, \sigma, \theta}$ and $\Xi_{\tau, \sigma, \theta}^{(i)} \defineq \Xi_{\tau, \sigma, \theta}^{(i, x)} \proxyx^2 + \Xi_{\tau, \sigma, \theta}^{(i, y)} \proxyy^2$ for $i = 1,2,3$
    %, with constants $\Xi_{\tau, \sigma, \theta}^{(i, x)}$ and $\Xi_{\tau, \sigma, \theta}^{(i, y)}$ is a constant that only 
    depend only on {the algorithm parameters ($\tau,\sigma,\theta)$ and the problem parameters $(\mu_x, \mu_y, \Lxx, \Lyy, \Lxy, \Lyx)$}. 
    %, and \ylsecond{ that \mgtwo{also} only depend} on the algorithm and problem parameters.
    {Furthermore, all these constants can be made explicit}\footnote{\sa{%The \mgtwo{details} of 
    These constants are explicitly given within the proof, provided in Section~\ref{rso:sec:proof_section}.}}} and {in particular,} under the CP parameterization in~\eqref{rso:eq:cb_params}, they satisfy $\cC_{\tau,\sigma,\theta}=\Theta(1)$, $\Xi^{(i, x)}_{\tau,\sigma,\theta} = \Theta(1)$, and $\Xi^{(i, y)}_{\tau,\sigma,\theta} = \Theta(1)$ for all $i$, as $\theta \to 1$, \sa{which implies that 
    {\small
    \[
        \limsup_{n \to \infty} Q_p(\norm{z_n-\optz}^2)
            = \cO\Big(
                    % \Big(\frac{1+\theta}{2}\Big)^n\norm{z_0-\optz}^2 +
                        (1-\theta)\delta^2\Big(1+\log\Big(\frac{1}{1-p}\Big)\Big)
                \Big).
    \]}}%
    %
    % \vspace{-1em}
\end{thm}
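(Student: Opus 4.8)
The plan is a Lyapunov argument carried out at the level of the moment generating function (MGF) and tailored to the Gauss--Seidel-plus-momentum structure of SAPD. The first step is to build a Lyapunov sequence $V_n$ with three properties: (a) $V_{n+1}$ dominates a fixed positive multiple of $\cD_{n+1}+\cD_n$ — the two consecutive terms appear because the momentum variable $\approxq_k$ forces a ``memory'' contribution into $V_n$ (e.g.\ a Lipschitz-controlled multiple of $\|\nabla_y\Phi(z_n)-\nabla_y\Phi(z_{n-1})\|^2$), which, bounded via Assumption~\ref{rso:assumption:str_cvx_smooth}, touches both $\cD_n$ and $\cD_{n-1}$; (b) $V_n$ is measurable with respect to the appropriate element of the interleaved filtration $(\cF_k^x,\cF_k^y)$, so that one SAPD step introduces exactly one fresh pair of noise vectors; and (c) $V_n$ obeys a one-step stochastic recursion
\[
    V_{n+1}\;\le\;\rho\, V_n \;+\; \langle u_n,\noisegradx{n}\rangle + \langle v_n,\noisegrady{n}\rangle \;+\; a\,\|\noisegradx{n}\|^2 + b\,\|\noisegrady{n}\|^2,
\]
where the contraction factor $\rho$ is produced precisely by the matrix inequality~\eqref{rso:eq:matrix_inequality_params} (this is the deterministic descent inequality underlying Theorem~\ref{rso:thm:sapd_expectation_result}, but with the noise kept explicit), $a,b=\cO(\tau+\sigma)$, and --- crucially --- $u_n,v_n$ are measurable with respect to the $\sigma$-field preceding the injection of $\noisegradx{n},\noisegrady{n}$ and satisfy $\|u_n\|^2+\|v_n\|^2=\cO(V_n)$.

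Given such a recursion, the second step is to propagate it through the conditional MGF. Conditioning on the $\sigma$-field $\cF_n$ that makes $u_n,v_n$ measurable and invoking Assumption~\ref{rso:assumption:stochastic} together with Lemma~\ref{rso:lem:norm_sg_dot} for the linear terms and Lemma~\ref{rso:lem:nSG_mgf_bounds} for the quadratic terms gives, for all $\lambda\in[0,\lambda_{\max}]$ with $\lambda_{\max}=\Theta\!\big((1-\rho)/\delta^2\big)$ and $\delta^2\triangleq\max(\proxyx^2,\proxyy^2)$,
\[
    \mathbb{E}\big[e^{\lambda V_{n+1}}\,\big|\,\cF_n\big]\;\le\;\exp\!\big(C_1\lambda\delta^2\big)\,\exp\!\big((\lambda\rho+C_2\lambda^2\delta^2)\,V_n\big),
\]
with $C_1,C_2$ depending only on $(\tau,\sigma,\theta)$ and the problem constants. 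Restricting $\lambda$ so that $C_2\lambda\delta^2\le(1-\rho)/2$, the exponent multiplying $V_n$ is at most $\tfrac{1+\rho}{2}\lambda$; since $V_0$ is deterministic, applying this bound recursively (with $\lambda$ successively replaced by $\tfrac{1+\rho}{2}\lambda$, $(\tfrac{1+\rho}{2})^2\lambda$, \dots) and summing the resulting geometric series yields
\[
    \mathbb{E}\big[e^{\lambda V_{n+1}}\big]\;\le\;\exp\!\Big(\tfrac{2C_1\lambda\delta^2}{1-\rho}\;+\;\big(\tfrac{1+\rho}{2}\big)^{n+1}\lambda\, V_0\Big).
\]
A Chernoff bound on $V_{n+1}$ at $\lambda=\lambda_{\max}$, with the tail probability set to $1-p$, then produces a threshold of precisely the stated form: a geometrically decaying term carrying the initialization bias $V_0$, a noise floor of size $\Theta(\delta^2/(1-\rho))$, and a term $\Theta(\delta^2/(1-\rho))\log\tfrac1{1-p}$ from the sub-exponential tail. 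Lower-bounding $V_{n+1}$ by a fixed multiple of $\cD_{n+1}+\cD_n$ then gives~\eqref{rso:eq:high_probability_bound_sapd}, reading off explicit $\cC_{\tau,\sigma,\theta}$ and $\Xi^{(i)}_{\tau,\sigma,\theta}=\Xi^{(i,x)}_{\tau,\sigma,\theta}\proxyx^2+\Xi^{(i,y)}_{\tau,\sigma,\theta}\proxyy^2$.

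For the final $\limsup$ assertion I would specialize to the Chambolle--Pock parameterization~\eqref{rso:eq:cb_params}: there $\rho=\theta$, $\tau=\tfrac{1-\theta}{\theta\mu_x}$, $\sigma=\tfrac{1-\theta}{\theta\mu_y}$, and $\alpha\sigma=c-\sqrt\theta\,\sigma L_{yy}\le c<1$. Inspecting the closed forms of $\cC_{\tau,\sigma,\theta}$ and $\Xi^{(i,x)}_{\tau,\sigma,\theta},\Xi^{(i,y)}_{\tau,\sigma,\theta}$ obtained in Step~2 shows each is $\Theta(1)$ as $\theta\to1$, so $\Xi^{(i)}_{\tau,\sigma,\theta}=\Theta(\delta^2)$. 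Since $\cD_{n+1}+\cD_n\ge\cD_{n+1}\ge\tfrac{1}{2\tau}\|x_{n+1}-\optx\|^2+\tfrac{1-\alpha\sigma}{2\sigma}\|y_{n+1}-\opty\|^2\ge\tfrac{(1-c)\,\theta\,\mu}{2(1-\theta)}\|z_{n+1}-\optz\|^2$ with $\mu=\min(\mu_x,\mu_y)$, inequality~\eqref{rso:eq:high_probability_bound_sapd} gives $Q_p(\|z_{n+1}-\optz\|^2)\le\tfrac{2(1-\theta)}{(1-c)\theta\mu}\,q_{p,n+1}$; letting $n\to\infty$ annihilates the $(\tfrac{1+\rho}{2})^n$ term and leaves $\cO\!\big((1-\theta)\delta^2(1+\log\tfrac{1}{1-p})\big)$, as claimed.

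I expect the main obstacle to be Step~1: producing a single Lyapunov function that simultaneously (i) contracts at the sharp rate $\rho$ (so the bias term decays at the accelerated, $\kappa$-linear rate rather than $\kappa^2$), (ii) has the correct measurability with respect to the interleaved Gauss--Seidel/momentum filtration so that only one pair of noise vectors enters per step, and (iii) has the separated linear-plus-quadratic noise structure with cross terms satisfying $\|u_n\|^2,\|v_n\|^2=\cO(V_n)$ rather than a crude $\cO(1)$ bound. Property (iii) is what lets the conditional-MGF bound pick up the factor $e^{C_2\lambda^2\delta^2 V_n}$ --- hence the clean geometric recursion and the accelerated bias decay --- but it is delicate precisely because $u_n,v_n$ are linear combinations of iterate displacements at the two consecutive steps $n$ and $n-1$. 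A lesser technical point is checking that the mild rate degradation from $\rho$ to $\tfrac{1+\rho}{2}$ --- the unavoidable price of folding the $\lambda^2$ term back into a linear recursion --- does not spoil the $\kappa$-dependence, which holds since $1-\tfrac{1+\rho}{2}=\tfrac12(1-\rho)=\Theta(1-\rho)$.
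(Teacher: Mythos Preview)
Your Steps~2--3 (the conditional-MGF recursion with the $(1+\rho)/2$ absorption, unrolling, then Chernoff) and the CP-parameterization asymptotics match the paper essentially line for line: Propositions~\ref{rso:prop:recursive_control}--\ref{rso:prop:high_proba_recursive_control} are the abstract version of your Step~2, and Section~\ref{sec:proof-high-probability} is the instantiation.

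The one substantive difference is in the construction of $V_n$ (your Step~1). You picture $V_n$ as a direct state-space Lyapunov function, ``$\cD_n$ plus a momentum memory term such as $\|\nabla_y\Phi(z_n)-\nabla_y\Phi(z_{n-1})\|^2$.'' The paper does \emph{not} do this. It starts from an almost-sure bound $\cE_n\le\rho^{n-1}\startingbias+\sum_{k}\rho^{n-1-k}(\text{noise inner products})$ (Proposition~\ref{rso:prop:almost_sure_bound}), whose inner products $\langle\noisegradx{k},\optx-x_{k+1}\rangle$ and $\langle\noisegrady{k-1},y_{k+1}-\opty\rangle$ have \emph{non}-predictable coefficients. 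The paper then introduces explicit \emph{predictable counterparts} $\noiselessy_{k+1},\doublenoiselessx_{k+1},\doublenoiselessy_{k+2}$ to the iterates---one SAPD step taken with exact partial gradients evaluated at the current stochastic iterates, see~\eqref{rso:eq:def_noiseless_variables}---and splits each inner product into a predictable piece (giving $T_{n+1}$, with your $u_n,v_n$ built from $\doublenoiselessx_{n+2},\noiselessy_{n+2},\doublenoiselessy_{n+3}$, all $\cF_n$-measurable) plus a correction such as $\langle\noisegradx{k},\doublenoiselessx_{k+1}-x_{k+1}\rangle$ that prox non-expansiveness (Lemma~\ref{rso:lem:sapd_lemma_A3}) bounds by $O(\|\noisegradx{k}\|^2+\|\noisegrady{k}\|^2+\|\noisegrady{k-1}\|^2)$, absorbed into $R_{n+1}$ (Lemma~\ref{rso:prop:barbers_lemma}). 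The process $V_n$ is then \emph{defined} to be the resulting accumulated right-hand side---a $\rho$-weighted sum of all past noise terms---so that $V_{n+1}=\rho V_n+T_{n+1}+R_{n+1}$ holds with \emph{equality} by construction, and $\tfrac{\rho}{2(1+\rho)}(\cE_{n+1}+\cE_n)\le V_n$ a.s. Your requirement $\|u_n\|^2+\|v_n\|^2=O(V_n)$ is then obtained by bounding the predictable counterparts by the true iterates (Lemma~\ref{rso:lem:upperbound_hat_variables}). Because the momentum couples $\noisegrady{k-1}$ to $y_{k+1}$, the predictable surrogate must look two prox steps ahead ($\doublenoiselessy_{k+2}$), which is why the $\cF_n$-measurable coefficients in $T_{n+1}$ carry indices up to $n+3$. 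This ``define $V_n$ as the upper bound'' device is the concrete mechanism realizing your abstract (i)--(iii); a direct state-space Lyapunov construction along the lines you sketch would have to overcome the fact that the one-step descent inequality (Lemma~\ref{rso:lem:descent_lemma}) already has non-predictable noise coefficients, and it is not clear it can be made to work without reinventing the same splitting.
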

\begin{proof}
    \sa{The proof is provided in Section~\ref{sec:proof-high-probability}.}
\end{proof}
%
% \vspace{-2em}
%
\begin{rema}
    Under the premise of Theorem~\ref{rso:thm:high_probability_bound_sapd},  \eqref{rso:eq:high_probability_bound_sapd} implies that for all $p\in[0,1)$ and $n\geq 0$,
    % %
    % \begin{equation}
    %     Q_p\left(\cD_{n+1}\right) \leq Q_p\left(\cD_{n+1}+(1-\rho)\cD_n\right) = \saa{\var_{p}(\cD_{n+1}+(1-\rho)\cD_n)} \leq q_{p,n+1}.
    % \end{equation}
    % %
    % Therefore, taking square root of both sides, we obtain 
    % %
    {\small
    \begin{equation}\label{eq-quantile-bound} 
        Q_p\left(\cD_{n+1}^{1/2}\right) \leq Q_p\left((\cD_{n+1}+(1-\rho)\cD_n)^{1/2}\right) = \saa{\var_{p}\left((\cD_{n+1}+(1-\rho)\cD_n)^{1/2}\right)} \leq q_{p,n+1}^{1/2}.
    \end{equation}}%
\end{rema}
\begin{rema} 
    {%It is worth noting that 
    For any given $\rho\in(0,1)$, to check if there exists \sapdname~parameters $\tau,\sigma,\theta$ such that the bias component of $\mathbb{E}[\cD_{n+1}+\cD_n]$ decreases to $0$ linearly with a rate coefficient bounded above by $\rho\in(0,1)$, one needs to solve a \textit{5-dimensional} SDP, i.e., after fixing $\rho$, checking the feasibility of \eqref{rso:eq:matrix_inequality_params} reduces to an SDP problem, see~\citep{zhang2021robust} for details.}
    Below in Corollary~\ref{rso:thm:complexity_sapd_result}, we provide a particular solution to \eqref{rso:eq:matrix_inequality_params}, in the form of \eqref{rso:eq:cb_params}, for which the choice of $\rho$ leads to an accelerated %rate with $\frac{1+\rho}{2} = 1-\Theta(\frac{1}{\kappa})$. 
    {behavior with a complexity of $ \mathcal{O}(\kappa \log(\varepsilon^{-1}) + \mu^{-1}\delta^2  (1+\log((1-p)^{-1}))  \varepsilon^{-1} \log(\varepsilon^{-1}))$ where  $\delta^2 = \max(\delta_x^2, \delta_y^2)$. 
    Thus, the bias term in Theorem \ref{rso:thm:high_probability_bound_sapd} decays at an \textit{accelerated} rate, which differs from the standard %$1-\Theta(\frac{1}{\kappa^2})$ 
    decay of non-accelerated Jacobi-style algorithms where the initialization (bias) error scales proportionally to $\kappa^2$~\citep{fallah2020optimal}}. 
\end{rema} 
\begin{rema} 
    \sa{Our bound for the $p$-th quantile, $q_{p,{n+1}}$, is tight, i.e., under the parameter choice in \eqref{rso:eq:cb_params}, the dependency of $q_{p,{n+1}}$ to $\theta$ and $p$ cannot be improved {when $n$ is large enough}. 
    For details, please see~Theorem~\ref{rso:prop:tightness}.} 
\end{rema}
Next, we provide the oracle complexity of \sapdname~in high probability, which can be derived as a corollary to our main Theorem~\ref{rso:thm:high_probability_bound_sapd}.     
\addtocounter{theorem}{+4}
\begin{corollary}\label{rso:thm:complexity_sapd_result}
    % For $p \in [0,1)$ and $\varepsilon > 0$, set the parameters $(\tau, \sigma, \theta, \rho)$ in \eqref{rso:eq:matrix_inequality_params} as 
    {For $p \in [0,1)$ and $\varepsilon > 0$, set $\tau, \sigma, \theta, \rho$ as}
    {\small
    \begin{equation}\label{rso:eq:cb_param_complexity} 
        \tau = \frac{1-\theta}{\theta \mux}, 
            \quad 
        \sigma = \frac{1-\theta}{\theta \muy}, 
            \quad 
       \rho= \theta = \max \left(1/2, \bar \theta_1, \bar \theta_2 , \bar{\bar{\theta}}_x, \bar{\bar{\theta}}_y  \right),\quad \hbox{where}
    \end{equation}
    %%
    %where
    %%
    {\scriptsize
    {
    \begin{equation*}
    %\label{rso:eq:bound_theta1}
        {\bar \theta}_1 
            \defineq 1- \frac{\beta\left(\Lxx+\mu_x\right) \mu_y}{4 \Lyx^2} 
                \left(\sqrt{1+\frac{8 \mu_x \Lyx^2}{\beta \mu_y\left(\Lxx+\mu_x\right)^2}}-1\right), \quad 
            {\bar \theta}_2 
                \defineq
                    \left\{
        \begin{array}{ll}
            1-\frac{(1-\beta)^2}{32} \frac{\mu_y^2}{\Lyy^2}
                \left(\sqrt{1+\frac{64 \Lyy^2}{(1-\beta)^2 \mu_y^2}}-1\right) & \mbox{if } \Lyy > 0 \\
            0 & \mbox{if } \Lyy = 0
        \end{array} \right. ,
    \end{equation*}
    }
    }}
    {\small
    \begin{eqnarray*} 
        % {\bar \theta}_1 
        %     &\defineq& 1- \frac{\beta\left(\Lxx+\mu_x\right) \mu_y}{4 \Lyx^2} 
        %         \left(\sqrt{1+\frac{8 \mu_x \Lyx^2}{\beta \mu_y\left(\Lxx+\mu_x\right)^2}}-1\right), 
        %     \\ 
        %     {\bar \theta}_2 
        %         &\defineq& 
        %             \left\{
        % \begin{array}{ll}
        %     1-\frac{(1-\beta)^2}{32} \frac{\mu_y^2}{\Lyy^2}
        %         \left(\sqrt{1+\frac{64 \Lyy^2}{(1-\beta)^2 \mu_y^2}}-1\right) & \mbox{if } \Lyy > 0 \\
        %     0 & \mbox{if } \Lyy = 0
        % \end{array} \right. ,\label{rso:eq:bound_theta1} \\
        \bar{\bar{\theta}}_x 
            &\defineq& 1 - \frac{\mux \varepsilon}{\left(c_x^{(1)} + c_x^{(2)} \frac{\Lxy}{\Lyx} + c_x^{(3)} \frac{\Lxy^2}{\Lyx^2}\right) \proxyx^2 \left(1 + \log(1/(1-p))\right)}, \\
            \bar{\bar{\theta}}_y 
            &\defineq& 1 - \frac{\muy \varepsilon}{\left(c_y^{(1)} + c_y^{(2)} \frac{\Lxy}{\Lyx} + c_y^{(3)} \frac{\Lxy^2}{\Lyx^2}\right) \proxyy^2 \left(1 + \log(1/(1-p))\right)},%\label{rso:eq:bound_theta2} %\\
    \end{eqnarray*}}%
    with $\beta = \ylsecond{\min\{1/2,\mux/\muy, \muy/\mux\}}$, and for universal positive constants $c_x^{(i)}, c_y^{(i)}$ for $i=1,2,3$ that are large enough\footnote{For simplicity of the presentation, we do not provide these universal constants explicitly here; that said, the constants can be made explicit in a straightforward manner following {the step-by-step computations} in Section~\ref{sec-complexity-proof} of the Appendix.}.
    % our proof provided in the online-only supplementary material.}. 
    Then, \eqref{rso:eq:matrix_inequality_params} is satisfied for $\alpha=\frac{1}{2\sigma}-\sqrt{\theta}L_{yy}$ and \sapdname~guarantees that $\probability\left[\mux \squarednormtwo{x_{n} - \optx} +  \muy \squarednormtwo{y_{n} - \opty} \leq \varepsilon \right] \geq p$ for all $n$ satisfying \eqref{intro-complexity}.
\end{corollary}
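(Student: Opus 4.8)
The plan is to obtain Corollary~\ref{rso:thm:complexity_sapd_result} from Theorem~\ref{rso:thm:high_probability_bound_sapd} combined with the Chambolle--Pock feasibility analysis of \citep{zhang2021robust}, by balancing the geometrically decaying ``bias'' part of the quantile bound $q_{p,n+1}$ against its constant ``variance'' part and against $\varepsilon$. First I would dispatch the feasibility claim. For $\tau=\tfrac{1-\theta}{\theta\mux}$, $\sigma=\tfrac{1-\theta}{\theta\muy}$, \citep[Corollary~1]{zhang2021robust} shows that for any $c\in(0,1]$ there is an explicit threshold $\bar\theta_c$ such that $\theta\ge\bar\theta_c$ makes \eqref{rso:eq:matrix_inequality_params} hold with $\rho=\theta$ and $\alpha=\tfrac{c}{\sigma}-\sqrt\theta\Lyy$; specializing to $c=\tfrac12$ and using internal weights $(\beta,1-\beta)$ to split the slack between the $\Lyx$-coupling and the $\Lyy$-coupling identifies $\bar\theta_{1/2}=\max(\bar\theta_1,\bar\theta_2)$ with $\bar\theta_1,\bar\theta_2$ exactly as stated (and $\bar\theta_2=0$ when $\Lyy=0$), while $\theta\ge\bar\theta_2$ together with $\theta\ge1/2$ guarantees $\alpha=\tfrac{1}{2\sigma}-\sqrt\theta\Lyy\in[0,\sigma^{-1})$. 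Since $\theta=\max(1/2,\bar\theta_1,\bar\theta_2,\bar{\bar{\theta}}_x,\bar{\bar{\theta}}_y)\ge\max(1/2,\bar\theta_1,\bar\theta_2)$, this settles the first assertion.

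Next I would invoke Theorem~\ref{rso:thm:high_probability_bound_sapd} at index $n-1$, which yields $\mathbb{P}[\cD_n+\cD_{n-1}\le q_{p,n}]\ge p$, hence $\mathbb{P}[\cD_n\le q_{p,n}]\ge p$. Plugging in the explicit stepsizes, $\tfrac{1}{2\tau}=\tfrac{\theta\mux}{2(1-\theta)}$ and $\tfrac{1-\alpha\sigma}{2\sigma}=\tfrac{1}{4\sigma}+\tfrac{\sqrt\theta\Lyy}{2}\ge\tfrac{\theta\muy}{4(1-\theta)}$, so $\cD_n\ge\tfrac{\theta}{4(1-\theta)}\big(\mux\squarednormtwo{x_n-\optx}+\muy\squarednormtwo{y_n-\opty}\big)$. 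Therefore it suffices to find the range of $n$ for which $\tfrac{4(1-\theta)}{\theta}\,q_{p,n}\le\varepsilon$. Writing $q_{p,n}=\big(\tfrac{1+\theta}{2}\big)^{n-1}\big(\cC_{\tau,\sigma,\theta}\,\startingbias+\Xi^{(1)}_{\tau,\sigma,\theta}\big)+\big(\Xi^{(2)}_{\tau,\sigma,\theta}+\Xi^{(3)}_{\tau,\sigma,\theta}\log\tfrac{1}{1-p}\big)$, I would require each of the two summands to be at most $\tfrac{\theta\varepsilon}{8(1-\theta)}$.

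For the variance summand I would use the fact, part of Theorem~\ref{rso:thm:high_probability_bound_sapd}, that under the CP parameterization the constants $\Xi^{(i,x)}_{\tau,\sigma,\theta},\Xi^{(i,y)}_{\tau,\sigma,\theta}$ remain $\Theta(1)$ as $\theta\to1$; reading off their explicit expressions from the proof of that theorem (the ratios $\Lxy/\Lyx$ enter only through the noise-propagation constants of Theorem~\ref{rso:thm:sapd_expectation_result}), one bounds $\Xi^{(i,x)}_{\tau,\sigma,\theta}\le\big(c_x^{(1)}+c_x^{(2)}\tfrac{\Lxy}{\Lyx}+c_x^{(3)}\tfrac{\Lxy^2}{\Lyx^2}\big)/\mux$ uniformly over $\theta\in[1/2,1)$, and similarly for $y$. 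Then $\tfrac{4(1-\theta)}{\theta}\big(\Xi^{(2)}_{\tau,\sigma,\theta}+\Xi^{(3)}_{\tau,\sigma,\theta}\log\tfrac{1}{1-p}\big)\le 8(1-\theta)\big(1+\log\tfrac{1}{1-p}\big)\big[\tfrac{(c_x^{(1)}+\cdots)\proxyx^2}{\mux}+\tfrac{(c_y^{(1)}+\cdots)\proxyy^2}{\muy}\big]$, and each bracketed term is at most $\varepsilon/4$ precisely because $\theta\ge\bar{\bar{\theta}}_x$ and $\theta\ge\bar{\bar{\theta}}_y$ force $1-\theta\le1-\bar{\bar{\theta}}_x$ and $1-\theta\le1-\bar{\bar{\theta}}_y$ — this is the role of $\bar{\bar{\theta}}_x,\bar{\bar{\theta}}_y$, and the universal constants $c_x^{(i)},c_y^{(i)}$ can be enlarged to absorb the factor $8$ and $\tfrac1\theta\le2$. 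For the bias summand I would use $\big(\tfrac{1+\theta}{2}\big)^{n-1}\le e^{-(n-1)(1-\theta)/2}$, $\startingbias=\tfrac{\theta}{2(1-\theta)}\mathcal{W}_0$, and (again via $1-\theta\le1-\bar{\bar{\theta}}_x$) $\tfrac{4(1-\theta)}{\theta}\big(\cC_{\tau,\sigma,\theta}\,\startingbias+\Xi^{(1)}_{\tau,\sigma,\theta}\big)=\bigOh\big((1+\tfrac{\Lxy^2}{\Lyx^2})(\mathcal{W}_0+\varepsilon)\big)$, so the bias summand falls below $\tfrac{\theta\varepsilon}{8(1-\theta)}$ once $n-1\ge\tfrac{C}{1-\theta}\log\big(\tfrac{(1+\Lxy^2/\Lyx^2)\mathcal{W}_0}{\varepsilon}\big)$. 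Finally, since $1-\theta=\min\{1/2,1-\bar\theta_1,1-\bar\theta_2,1-\bar{\bar{\theta}}_x,1-\bar{\bar{\theta}}_y\}$, one has $\tfrac{1}{1-\theta}=\max\{2,(1-\bar\theta_1)^{-1},(1-\bar\theta_2)^{-1},(1-\bar{\bar{\theta}}_x)^{-1},(1-\bar{\bar{\theta}}_y)^{-1}\}$; the elementary estimate $\sqrt{1+z}-1=\Theta(\min(z,\sqrt z))$ applied to the first three reciprocals reproduces the initialization-bias terms $\tfrac{\Lxx}{\mux}+\tfrac{\Lyx}{\sqrt{\mux\muy}}+\tfrac{\Lyy}{\muy}$ of \eqref{intro-complexity}, while $(1-\bar{\bar{\theta}}_x)^{-1}=\tfrac{(c_x^{(1)}+\cdots)\proxyx^2(1+\log(1/(1-p)))}{\mux\varepsilon}$ and its $y$-analogue reproduce the variance term; substituting into $n=\bigOh\big(\tfrac{1}{1-\theta}\log\tfrac{(1+\Lxy^2/\Lyx^2)\mathcal{W}_0}{\varepsilon}\big)$ gives \eqref{intro-complexity}.

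The hard part will be the variance-bound argument of the third paragraph together with the part of the final count that rests on it: one has to carry the constants $\cC_{\tau,\sigma,\theta}$ and $\Xi^{(i,x)}_{\tau,\sigma,\theta},\Xi^{(i,y)}_{\tau,\sigma,\theta}$ explicitly through the (already lengthy) proof of Theorem~\ref{rso:thm:high_probability_bound_sapd} and verify two things: that they stay bounded and bounded away from $0$ as $\theta\to1$ under the CP parameterization in \eqref{rso:eq:cb_param_complexity}, and that their dependence on the coupling constants is captured by the polynomial $1+\tfrac{\Lxy}{\Lyx}+\tfrac{\Lxy^2}{\Lyx^2}$, so that it matches $c^{(1)}+c^{(2)}\tfrac{\Lxy}{\Lyx}+c^{(3)}\tfrac{\Lxy^2}{\Lyx^2}$. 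Once this bookkeeping is in place, the remaining ingredients — the geometric-sum bound, the $\sqrt{1+z}-1$ asymptotics, and the $\max$/$\min$ juggling over the five thresholds — are routine.
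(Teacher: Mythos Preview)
Your approach is correct and mirrors the paper's proof: both invoke \citep[Corollary~1]{zhang2021robust} for feasibility, use $\cD_n\ge\tfrac{\theta}{4(1-\theta)}\big(\mux\|x_n-\optx\|^2+\muy\|y_n-\opty\|^2\big)$ to transfer the quantile bound, split the target $\varepsilon$ between the bias and variance pieces of $q_{p,n}$ (the paper uses four sub-conditions \eqref{rso:eq:complexity_bias_term}--\eqref{rso:eq:complexity_xi3}, you use two), control the variance via $\theta\ge\bar{\bar\theta}_x,\bar{\bar\theta}_y$ and the polynomial-in-$\Lxy/\Lyx$ bounds on $\cQ_x,\cQ_y,\|A_i\|^2$ (one caveat: these constants come from the proof of Theorem~\ref{rso:thm:high_probability_bound_sapd} and its auxiliary lemmas, not from Theorem~\ref{rso:thm:sapd_expectation_result} as your parenthetical suggests), and finish by reading off $(1-\theta)^{-1}$ from the five thresholds. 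The bookkeeping you flag as the hard part is exactly what the paper devotes Lemmas~\ref{rso:lem:beta_bounds} and~\ref{rso:lem:partial_complexity_bounds} to.
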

\begin{proof}
    {The result follows directly from Theorem~\ref{rso:thm:high_probability_bound_sapd} after plugging in our choice of parameters based on tedious but straightforward computations. 
    For the sake of completeness, we provide the details of these computations in Section~\ref{sec-complexity-proof} of the Appendix}\looseness=-1%~--see Section~\ref{rso:sec:complexity_proof} there.
\end{proof}
%
% \vspace{-2em}
%
{
\addtocounter{thm}{+1}
\begin{rema}\label{rema-Lxy-Lyx-equal}
    By Assumption \ref{rso:assumption:str_cvx_smooth}, the partial gradients $\nabla_x \Phi$ and $\nabla_y \Phi$ are Lipschitz continuous; therefore, they are almost everywhere differentiable by Rademacher's Theorem. If we assume slightly more, i.e., if %assuming the derivatives of 
    $\nabla_x \Phi$ and $\nabla_y \Phi$ are continuously {differentiable}, then the partial derivatives commute and we have $\nabla_y\nabla_x \Phi (x,y) = \nabla_x\nabla_y \Phi (x,y)$, {as a consequence of Schwarz's Theorem}. % (see e.g. \citep[Thm. 9]{marsden1993elementary})
    In this case, we can take $\Lxy=\Lyx$ in Assumption \ref{rso:assumption:str_cvx_smooth} and in Theorem \ref{rso:thm:complexity_sapd_result}.
\end{rema}
}
\sa{Using Theorem~\ref{rso:thm:high_probability_bound_sapd} and} \mg{building on the representation of the CVaR in terms of the quantiles}, we can deduce \sa{a bound on $\cvar_p(\cD_n^{\frac{1}{2}})$ as shown in Theorem~\ref{thm:risk-nounds}, where we also provide bounds on the entropic value at risk and on the $\chi^2$-based risk measure, as defined in Table~\ref{rso:table:risk_measures}.}
\addtocounter{theorem}{-4}
\begin{thm}[Bounds on Risk Measures]
    \label{thm:risk-nounds}
    \sa{Under the premise of Theorem~\ref{rso:thm:high_probability_bound_sapd},} %the following bounds,}
    {\scriptsize
    {\begin{equation}\label{rso:eq:cvar_bound}
        \cvar_p \left(\cD_{n+1}^{\frac{1}{2}}\right) 
            \leq 
                    \sqrt{
                        \left(\frac{1+\rho}{2}\right)^{n/2}
                        \left(
                            \cC_{\tau, \sigma, \theta} \startingbias 
                            + \Xi^{(1)}_{\tau,\sigma,\theta} 
                        \right)
                        +\Xi^{(2)}_{\tau,\sigma,\theta}
                        }
                    + \sqrt{
                        \Xi^{(3)}_{\tau,\sigma,\theta} \Big(1 + \log\Big(\frac{1}{1-p}\Big)\Big)
                    },
    \end{equation}}}%
    {\scriptsize
        \begin{equation}\label{rso:eq:evar_result}
            \evar_p(\cD_{n+1}^{\frac{1}{2}}) 
                \leq 
                    \sqrt{
                        \left(\frac{1+\rho}{2}\right)^{n/2}
                        \left(
                            \cC_{\tau, \sigma, \theta} \startingbias 
                            + \Xi^{(1)}_{\tau,\sigma,\theta} 
                        \right)
                        +\Xi^{(2)}_{\tau,\sigma,\theta}
                        }
                    + \sqrt{
                        \Xi^{(3)}_{\tau,\sigma,\theta}
                        } \left(\log\Big(\frac{1}{1-p}\Big)^{1/2} + \sqrt{\pi}\right),
        \end{equation}}%}
    \saa{hold for all $n \in \N$ and $p \in [0,1)$,} where \sa{$\cD_n$, $\Xi^{(1)}_{\tau,\sigma,\theta}$, $\Xi^{(2)}_{\tau,\sigma,\theta}$ and $\bar\delta$} are as defined in Theorem~\ref{rso:thm:high_probability_bound_sapd}. 
    \sa{Furthermore, for all $n \in \N$ and $r>0$, the right-hand side of \eqref{rso:eq:evar_result} with $p=1-\frac{1}{1+r}$ is an upper bound on $\riskMeasure_{\chi^2, r}(\cD_{n+1}^{1/2})$.}
\end{thm}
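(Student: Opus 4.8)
The plan is to deduce the bounds on $\cvar_p$, $\evar_p$, and the $\chi^2$-risk measure from the high-probability bound of Theorem~\ref{rso:thm:high_probability_bound_sapd} by purely ``scalar'' arguments, treating $W \triangleq \cD_{n+1} + (1-\rho)\cD_n$ as a nonnegative random variable for which we know a tail bound of the shape $\probability[W > a + b\log(1/(1-p))] \le 1-p$ for all $p\in[0,1)$, or equivalently $\probability[W > a + bt] \le e^{-t}$ for all $t\ge 0$, with $a = (\frac{1+\rho}{2})^n(\cC_{\tau,\sigma,\theta}\cD_{\tau,\sigma} + \Xi^{(1)}) + \Xi^{(2)}$ and $b = \Xi^{(3)}$. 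Since $\cD_{n+1}^{1/2} \le W^{1/2}$ pointwise and all three risk measures are monotone, it suffices to bound the risk of $W^{1/2}$, i.e. of $\sqrt{a+bE}$ where $E$ is a random variable satisfying $\probability[E>t]\le e^{-t}$, that is, $E$ is stochastically dominated by a standard Exponential (equivalently, a subExponential-type variable). Using subadditivity of the square root, $\sqrt{a+bE}\le \sqrt a + \sqrt b\,\sqrt E$, and by translation-equivariance and positive homogeneity of each risk measure, $\riskMeasure(\sqrt{a+bE}) \le \sqrt a + \sqrt b\,\riskMeasure(\sqrt E)$. So the whole proof reduces to estimating $\cvar_p(\sqrt E)$ and $\evar_p(\sqrt E)$ for such an $E$.

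For $\cvar_p$ I would use the integral representation $\cvar_p(\sqrt E) = \frac{1}{1-p}\int_p^1 Q_{p'}(\sqrt E)\,\mathrm dp'$, together with the quantile bound $Q_{p'}(E) \le \log(1/(1-p'))$, which gives $Q_{p'}(\sqrt E)\le \sqrt{\log(1/(1-p'))}$. Then $\cvar_p(\sqrt E) \le \frac{1}{1-p}\int_p^1 \sqrt{\log(1/(1-p'))}\,\mathrm dp'$; substituting $u = \log(1/(1-p'))$ turns this into $\int_{\log(1/(1-p))}^\infty \sqrt u\, e^{-u}\,\mathrm du$ divided by $1-p$, and bounding $\int_c^\infty \sqrt u\,e^{-u}\mathrm du \le (1+\sqrt c)\,e^{-c}$ (integrate by parts or use $\sqrt u \le 1 + \tfrac{u}{2}$-type crude bounds, or $\sqrt{u} \le \sqrt{c} + \ldots$) yields $\cvar_p(\sqrt E) \le 1 + \sqrt{\log(1/(1-p))} \le \sqrt{1+\log(1/(1-p))} + \text{const}$, matching the claimed $\sqrt{\Xi^{(3)}(1+\log(1/(1-p)))}$ form after carrying the $\sqrt a + \sqrt b(\cdot)$ through. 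For $\evar_p$ I would bound the moment generating function: since $E\preceq\mathrm{Exp}(1)$, $\expectation[e^{\eta\sqrt E}]$ can be controlled, but it is cleaner to bound $\evar$ via the known inequality $\evar_p(X)\le$ (a Gaussian-type integral of the quantile), or directly use $\evar_p(\sqrt E) = \inf_{\eta>0}\{\eta^{-1}\log(1/(1-p)) + \eta^{-1}\log\expectation[e^{\eta\sqrt E}]\}$ and the subGaussian-like tail $\probability[\sqrt E > t] = \probability[E>t^2]\le e^{-t^2}$, which makes $\sqrt E$ norm-subGaussian with an absolute proxy, so $\log\expectation[e^{\eta\sqrt E}] \le \eta\expectation[\sqrt E] + c\eta^2$; optimizing over $\eta$ gives $\evar_p(\sqrt E) \le \expectation[\sqrt E] + 2\sqrt{c\log(1/(1-p))} \le \sqrt\pi/2 + \sqrt{\pi}\,\sqrt{\log(1/(1-p))}$ or a similar explicit bound, which after rescaling produces the $\sqrt{\Xi^{(3)}}\big(\log(1/(1-p))^{1/2} + \sqrt\pi\big)$ form in \eqref{rso:eq:evar_result}.

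The last claim, that the right-hand side of \eqref{rso:eq:evar_result} with $p = 1-\frac{1}{1+r}$ upper bounds $\riskMeasure_{\chi^2,r}(\cD_{n+1}^{1/2})$, follows from the general comparison between $\chi^2$-divergence risk and EVaR: one checks that $\riskMeasure_{\chi^2,r}(\cU) \le \evar_{p(r)}(\cU)$ for the stated correspondence $p(r) = r/(1+r)$, i.e. $1-p = 1/(1+r)$. This is a known inequality among $\varphi$-divergence risk measures (both dominate $\cvar$, and one can verify $\chi^2\le \mathrm{EVaR}$ at matched radii either via the Donsker--Varadhan variational formula for relative entropy versus the $\chi^2$ variational formula, or by a direct Cauchy--Schwarz estimate on $\expectation_{\mathbb Q}[\cU]$ when $D_{\chi^2}(\mathbb Q\|\mathbb P)\le r$); citing \citep{ahmadi2012entropic,shapiro2017distributionally} should suffice. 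The main obstacle is not conceptual but bookkeeping: getting the constants $\sqrt\pi$, the ``$1+$'' inside the log, and the exact grouping of $\Xi^{(1)},\Xi^{(2)},\Xi^{(3)}$ to line up with the statement requires care in (i) the subadditivity split $\sqrt{a+bE}\le\sqrt a+\sqrt b\sqrt E$ and the fact that $a$ itself already contains a $\Xi^{(2)}$ not multiplied by any log, and (ii) choosing the crudest bound on $\int_c^\infty \sqrt u\,e^{-u}\,\mathrm du$ and on $\expectation[\sqrt E]$ that still yields exactly the displayed clean expressions rather than something slightly weaker; I expect one has to be slightly clever (or slightly lossy in a harmless direction) to land on precisely $\sqrt{1+\log(1/(1-p))}$ and $\sqrt\pi$.
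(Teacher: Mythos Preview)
Your approach is correct and essentially matches the paper's: the paper applies its Corollaries~\ref{rso:cor:cvar_bound_vn}, \ref{rso:cor:evar_bound}, \ref{cor:chi-square} to the Lyapunov process $V_n$ (which dominates $\cD_{n+1}+\cD_n$ up to a constant), but those corollaries are themselves proved exactly as you sketch---from the quantile bound, via subadditivity of $\sqrt{\cdot}$, and for EVaR via an MGF bound derived from the sub-Gaussian tail $\probability[U_n>t]\le e^{-\lambda t^2}$ of the centered residual. The one bookkeeping shortcut that resolves your CVaR worry is the Jensen-type inequality $\cvar_p(U^{1/2}) \le \big(\cvar_p(U)\big)^{1/2}$ (paper's Lemma~\ref{lem:cvar_pythagoras}): with it you first integrate the \emph{linear} quantile bound to get $\cvar_p(W)\le a + b\big(1+\log\frac{1}{1-p}\big)$ exactly (since $\frac{1}{1-p}\int_p^1 \log\frac{1}{1-p'}\,dp' = 1+\log\frac{1}{1-p}$), and only then take the square root---this lands directly on $\sqrt{\Xi^{(3)}(1+\log(1/(1-p)))}$ without ever integrating $\sqrt{u}\,e^{-u}$. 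For the $\chi^2$ comparison, the specific inequality used is $D_{\mathrm{KL}}(\mathbb Q\Vert\mathbb P)\le \log\big(1+D_{\chi^2}(\mathbb Q\Vert\mathbb P)\big)$ from \citep{gibbs2002choosing}, which gives $\riskMeasure_{\chi^2,r}\le \evar_{1-1/(1+r)}$ immediately.
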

\begin{proof}
    \saa{The proof is provided in Section~\ref{sec:risk-proof}.}
\end{proof}
In the next section, we discuss a \sa{{family of} quadratic SP problems for which we can compute the limiting covariance matrix of the iterates in expectation explicitly, assuming additive i.i.d. Gaussian noise on the partial gradients. 
This will allow us to gain insights about the effect of parameter choices and argue about the tightness of our analysis.} 

%==========================================================================================

%==========================================================================================
% \newpage
\section{Analytical solution for quadratics}\label{rso:sec:quadratics}
In this section, we study the behaviour of \sapdname~on quadratic problems subject to isotropic Gaussian noise. 
More specifically, we consider
\begin{equation}\label{rso:eq:quadratic_problem}
    \min_{x \in \Rd} \max_{y \in \Rd} \frac{\mux}{2} \squarednormtwo{x} + \dotproduct{Kx}{y} - \frac{\muy}{2} \squarednormtwo{y},
\end{equation}
where $K\in \R^{d\times d}$ is a symmetric matrix, and $\mux, \muy >0$ are two regularization parameters. 
\saa{The unique saddle point of \eqref{rso:eq:quadratic_problem} is the origin $\optz=(\optx,\opty)=(\mathbf{0},\mathbf{0})$.}
\sa{At each iteration $k\geq 0$, suppose we have access to noisy estimates $\approxgradphiy(x_k,y_k)= Kx_k + \gnoisey{k}$ and $\approxgradphix(x_k,y_{k+1}) = K^\top y_{k+1} + \gnoisex{k}$} of the partial gradients of $\sadobj: (x,y) \mapsto \dotproduct{Kx}{y}$, where the $(\gnoisex{k})_{k\geq 0}$ and $(\gnoisey{k})_{k\geq 0}$ denote i.i.d. centered Gaussian vectors satisfying $\expectation[\gnoisex{k} {\gnoisex{k}}^\top] = \expectation[\gnoisey{k} {\gnoisey{k}}^\top] = d^{-1}\delta^2 I$ \sa{for some $\delta\geq 0$}. 
{In this special case, we have the gradient noise vectors $\noisegradx{n} = \gnoisex{k}$, $\noisegrady{n} = \gnoisey{k}$.} 
\saa{Our main motivation to study this toy problem is to gain some insights into the sample paths \sapdname~generates, and use these insights while studying the tightness of high-probability bounds provided in Section~\ref{rso:sec:high_prob_bound_sapd}.}\looseness=-1

This problem was first studied in~\citep{zhang2021robust} where it was shown that, under certain conditions on $\tau, \sigma, \theta$, the sequence of iterates \sa{$(\tilde z_n)_{n\geq 0}$} generated by \sapdname, \sa{where $\tilde z_n=(x_{n-1}, y_n)$,} converges in distribution to a \sa{zero-mean multi-variate Gaussian random vector} whose covariance matrix \sa{$\widetilde{\Sigma}^\infty$} satisfies a certain Lyapunov equation of dimension $2d\times 2d$. 
The authors of~\citep{zhang2021robust} manage to split this equation into $d$ \sa{many $2\times 2$} Lyapunov equations: 
\begin{equation}\label{rso:eq:reduced_lyap_main_paper}
    \sa{\widetilde{\Sigma}^{\infty, \sa{\lambda}}} = A^{\lambda}~\sa{\widetilde{\Sigma}^{\infty, \sa{\lambda}}}~ (A^{\lambda})^\top + R^{\lambda}\sa{\in\reals^{\ylfourth{2\times 2}},\quad \saa{\forall}\lambda\in\spectrum(K),}
\end{equation}
\sa{i.e., for each eigenvalue $\lambda$ of $K$, there is a $2\times 2$ Lyapunov equation to be solved, where $A^{\lambda}, R^{\lambda}\in\reals^{2\times 2}$ \ylfourth{depend} only} on $\tau, \sigma, \theta$, $\delta^2$ and  \sa{$\lambda\in\spectrum(K)$ --for completeness, we %recall
provide these steps in detail} in Section~\ref{rso:sec:recalls_quadratics} \mg{of the appendix}.

\sa{Given an arbitrary symmetric matrix $K$, in~\citep{zhang2021robust}, the small-dimensional Lyapunov equation in~\eqref{rso:eq:reduced_lyap_main_paper} is solved numerically. 
On the other hand,} \sa{to establish the tightness of our high-probability bounds for the class of SCSC problems with a non-bilinear $\Phi$} subject to \sa{noisy gradients with subGaussian tails} (see Section~\ref{rso:sec:high_prob_bound_sapd}), we need to analytically solve~\eqref{rso:eq:reduced_lyap_main_paper}. 
However, analytically solving~\eqref{rso:eq:reduced_lyap_main_paper} for general parameters satisfying the matrix inequality \eqref{rso:eq:matrix_inequality_params} is a challenging problem that standard symbolic computation tools were not in a position \sa{to properly address our needs. 
That said, \mg{as we shall discuss next}, we can provide analytical solutions for \eqref{rso:eq:reduced_lyap_main_paper}} under the Chambolle-Pock \ylfourth{(CP)} parameterization~\sa{in}~\eqref{rso:eq:cb_params}, where primal and dual stepsizes are parameterized in $\theta$, i.e., the momentum parameter, and for this choice the rate $\rho=\theta$. {%\color{magenta} 
\saa{We should note that} CP parameterization represents \saa{a rich enough class of admissible SAPD parameters} in the sense that under this parameterization SAPD can achieve accelerated bias decay in the expected squared distance metric \citep{zhang2021robust} and the accelerated high probability results we derived in Theorem \ref{rso:thm:complexity_sapd_result}.}

%================================================================================
\subsection{Covariance matrix of the iterates under the CP parameterization}
%================================================================================

The main result of this section is \sa{the analytical solution of a Lyapunov equation corresponding to the limiting covariance $\Sigma^\infty$ for $(x_n,y_n)_{n\geq 0}$ as $n\to \infty$, which is similar to the Lyapunov equation in~\eqref{rso:eq:reduced_lyap_main_paper} corresponding to the limiting covariance $\widetilde{\Sigma}^\infty$ for $(x_{n-1},y_n)_{n\geq 0}$,} \sa{for the parameterization in}~\eqref{rso:eq:cb_params}. 
{The proof yields closed-form solutions, useful for understanding the effect of parameters on the solution. Due to lengthy calculations involved, the proof is provided in~Section~\ref{rso:sec:proof_thm_quadratics} of the Appendix.}
\mg{Our proof technique is based on identifying the conditions on the parameters so that the Lyapunov matrix {in \eqref{rso:eq:reduced_lyap_main_paper}} admits \ylsecond{a} unique solution and we solve it as a function of $\lambda$ by diagonalizing $\Alam$ given in \eqref{rso:eq:reduced_lyap_main_paper} with a proper change of basis.}   
\begin{thm}\label{rso:thm:quadratics}
    Let $\kappa_{\text{max}} \defineq \rho(\yassine{K}) / \sqrt{\mux \muy}$. 
    For \yassine{any} \mg{given} $\theta > (\sqrt{1+\kappa_{\text{max}}^2}- 1)/\kappa_{\text{max}}$ \mg{fixed}, set $\tau = (1-\theta)/(\theta \mux)$ and $\sigma = (1-\theta)/(\theta \muy)$. Suppose gradient noise sequences are i.i.d. centered Gaussian satisfying $\expectation[\gnoisex{k}{\gnoisex{k}}^\top] = \expectation[\gnoisey{k}{\gnoisey{k}}^\top] = \frac{\delta^2}{d} I_d$.
    Then, the iterates $z_n = (x_n, y_n)$ generated by \sapdname~\sa{applied to the SCSC problem in}~\eqref{rso:eq:quadratic_problem} with parameters $\tau, \sigma, \theta$, converges in distribution to a centered Gaussian distribution with covariance matrix \sa{$\Sigma^\infty$} satisfying $\sa{\Sigma^\infty} = V^\top \sa{\Sigma^{\infty, \Lambda}} V$ where $V$ is orthogonal, and $\sa{\Sigma^{\infty, \Lambda}}$ is block diagonal with $d$ blocs $\Sigma^{\infty, \lambda_i}\sa{\in\reals^{2\times 2}}$ \sa{for $i=1,\ldots,d$, where $\yassine{(\lambda_i)_{1\leq i\leq d}}$ denote the eigenvalues of $K$.} 
    Specifically, for each $\lambda \in \spectrum(K)$, block $\Sigma^{\infty, \lambda}$ \sa{has the following form: For $\lambda = 0$,}
    \begin{equation}\label{eq:limit_cov_informal_0}
        \sa{\Sigma^{\infty, 0}} = 
            \frac{\delta^2}{d} \frac{(1-\theta)}{\mu_x^2 \mu_y^2(1+\theta)}
                \left[\begin{array}{cc}
                    \theta^2 \mu_y^2 & 0 \\
                    0 & \mu_x^2\left(1+2\left(1-\theta^2\right) \theta\right)
                \end{array}\right],
    \end{equation}    
    \sa{otherwise, for $\lambda\neq 0$,}
    \begin{equation}\label{eq:limit_cov_informal_non_zero}
        \begin{scriptsize}
           \sa{\Sigma^{\infty, \lambda}} = \frac{\delta^2}{d} \frac{1-\theta}{P_c(\theta, \kappa)}
                \left[\begin{array}{cc}
                    \frac{1}{\mux^2}\left(\sa{P_{1,1}^{(\infty, 1)}(\theta, \kappa)} + \frac{\lambda^2}{\mu_y^2} \sa{P_{1,1}^{(\infty, 2)}(\theta, \kappa)} \right)
                    & \frac{1}{\lambda\mu_x} \left(\sa{P_{1,2}^{(\infty, 1)}(\theta, \kappa)} + \frac{\lambda^2}{\mu_y^2} \sa{P_{1,2}^{(\infty, 2)}(\theta, \kappa)}\right) \\
                    \frac{1}{\lambda\mu_x} \left(\sa{P_{1,2}^{(\infty, 1)}(\theta, \kappa)} + \frac{\lambda^2}{\mu_y^2} \sa{P_{1,2}^{(\infty, 2)}(\theta, \kappa)}\right) 
                    & \frac{1}{\lambda^2}\left(\sa{P_{2,2}^{(\infty, 1)}(\theta, \kappa)} + \frac{\lambda^2}{\mu_y^2} \sa{P_{2,2}^{(\infty, 2)}(\theta, \kappa)}\right)
            \end{array}\right],
        \end{scriptsize}    
    \end{equation}
    where \sa{$P_{1,1}^{(\infty, k)}$, $P_{1,2}^{(\infty, k)}$ and $P_{2,2}^{(\infty, k)}$ for $k=1,2$, and $P_c$ are polynomials of $(\theta,\kappa)$, can be made explicit and are 
   provided in Table~\ref{rso:table:polynomials} of Appendix \ref{rso:sec:constants}.} 
    {%with the property that 
    Moreover, for any $\lambda \in \spectrum(K)$, all %the coefficients 
    elements of the matrix $\Sigma^{\infty,\lambda}\in\reals^{4\times 4}$ scale with $(1-\theta)$ as $\theta \to 1$. 
    %All these polynomials 
    }\looseness=-1
\end{thm}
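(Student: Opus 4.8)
The plan is to realize \sapdname~on the quadratic~\eqref{rso:eq:quadratic_problem} as a stable linear recursion driven by i.i.d.\ Gaussian noise, and then to solve the associated discrete Lyapunov equation in closed form, one eigenvalue of $K$ at a time. First I would write out the recursion. Since $f=\tfrac{\mux}{2}\|\cdot\|^2$ and $g=\tfrac{\muy}{2}\|\cdot\|^2$, the proximal maps are the linear contractions $\prox{\tau f}(v)=v/(1+\tau\mux)$ and $\prox{\sigma g}(v)=v/(1+\sigma\muy)$, and under the CP parametrization~\eqref{rso:eq:cb_params} both factors equal $\theta$. Setting $\tilde s_k\triangleq Kx_k+\gnoisey{k}$, the momentum step collapses to $s_k=(1+\theta)\tilde s_k-\theta\tilde s_{k-1}$ for $k\ge1$ (with $s_0=\tilde s_0$ a harmless transient due to $\approxq_0=\mathbf 0$), so the updates read $y_{k+1}=\theta y_k+\tfrac{1-\theta^2}{\muy}\tilde s_k-\tfrac{\theta(1-\theta)}{\muy}\tilde s_{k-1}$ and $x_{k+1}=\theta x_k-\tfrac{1-\theta}{\mux}\big(Ky_{k+1}+\gnoisex{k}\big)$. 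Hence the augmented state $\zeta_k\triangleq(x_k,y_k,\tilde s_{k-1})$ satisfies a linear recursion $\zeta_{k+1}=A\zeta_k+B\omega_k$ with $\omega_k\triangleq(\gnoisex{k},\gnoisey{k})\sim\mathcal N(0,\tfrac{\delta^2}{d}I_{2d})$, where $A,B$ are explicit in $\theta,\mux,\muy$ and $K$; the point of carrying $\tilde s_{k-1}$ is that then $\zeta_k$ depends only on $\{\omega_j\}_{j\le k-1}$, so $\omega_k$ is independent of $\zeta_k$ and we genuinely have a Gauss--Markov chain.

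Diagonalizing the symmetric matrix $K=V_0^\top\Lambda V_0$ with $V_0$ orthogonal and applying the rotation $\hat x_k=V_0 x_k$, $\hat y_k=V_0 y_k$, $\hat{\tilde s}_k=V_0\tilde s_k$ leaves the (isotropic) noise law invariant and decouples the chain into $d$ independent small recursions $\zeta_k^\lambda=A^\lambda\zeta_{k-1}^\lambda+B^\lambda\omega_{k-1}^\lambda$ indexed by $\lambda\in\spectrum(K)$, with $A^\lambda,B^\lambda$ depending only on $\theta,\mux,\muy$ and the scalar $\lambda$ --- precisely the structure behind~\eqref{rso:eq:reduced_lyap_main_paper}. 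A direct computation shows $\det(zI-A^\lambda)$ factors as $z$ times the quadratic $z^2-(a+\theta)z+\big(\theta^2-\theta(1-\theta)^2\kappa^2\big)$, where $a=\theta-(1-\theta)^2(1+\theta)\kappa^2$ and $\kappa=\lambda/\sqrt{\mux\muy}$, and the Schur--Cohn (Jury) criterion applied to this quadratic shows that the standing hypothesis $\theta>(\sqrt{1+\kappa_{\text{max}}^2}-1)/\kappa_{\text{max}}$, equivalently $(1-\theta^2)\kappa_{\text{max}}<2\theta$, forces $\rho(A^\lambda)<1$ for every $\lambda\in\spectrum(K)$ and moreover that no two eigenvalues of $A^\lambda$ have product $1$. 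By the standard result recalled in Section~\ref{rso:sec:recalls_quadratics}, each $\zeta_k^\lambda$ then converges in distribution to the centered Gaussian whose covariance is the \emph{unique} solution of $\Sigma=A^\lambda\Sigma(A^\lambda)^\top+B^\lambda(B^\lambda)^\top$; undoing the rotation and regrouping coordinates gives $z_n=(x_n,y_n)\convergesindistributionto\mathcal N(0,\Sigma^\infty)$ with $\Sigma^\infty=V^\top\Sigma^{\infty,\Lambda}V$, $\Sigma^{\infty,\Lambda}$ block-diagonal with blocks $\Sigma^{\infty,\lambda_i}$ equal to the $(x,y)$-block of the per-$\lambda$ stationary covariance.

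The analytical formulas then come from solving these Lyapunov equations in closed form. For $\lambda\ne0$ I would diagonalize $A^\lambda=P^\lambda D^\lambda(P^\lambda)^{-1}$ explicitly --- the eigenvalues are the two roots of the quadratic above, and the eigenvectors can be written down in closed form --- and pass to the eigenbasis: with $M\triangleq(P^\lambda)^{-1}\Sigma(P^\lambda)^{-\top}$ and $\widetilde R\triangleq(P^\lambda)^{-1}B^\lambda(B^\lambda)^\top(P^\lambda)^{-\top}$, the equation $\Sigma=A^\lambda\Sigma(A^\lambda)^\top+B^\lambda(B^\lambda)^\top$ becomes the entrywise scalar system $M_{ij}=d_i^\lambda d_j^\lambda M_{ij}+\widetilde R_{ij}$, hence $M_{ij}=\widetilde R_{ij}/(1-d_i^\lambda d_j^\lambda)$, which is well defined since $1-d_i^\lambda d_j^\lambda\ne0$ by the spectral bound. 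Transforming back, $\Sigma=P^\lambda M(P^\lambda)^\top$; collecting terms over a common denominator, factoring out the overall $(1-\theta)$, and reading off the $(x,y)$-block yields~\eqref{eq:limit_cov_informal_non_zero} with the polynomials $P^{(\infty,k)}_{i,j}$ and $P_c$ of Table~\ref{rso:table:polynomials}. For $\lambda=0$ the $x$- and $y$-updates decouple: $\hat x$ is a scalar AR$(1)$ with innovation $-\tfrac{1-\theta}{\mux}\gnoisex{k}$, and $\hat y$ an ARMA$(1,1)$-type recursion with innovations $\tfrac{1-\theta^2}{\muy}\gnoisey{k}-\tfrac{\theta(1-\theta)}{\muy}\gnoisey{k-1}$, and summing the geometric series (keeping the one-step noise correlation in the $\hat y$ recursion) gives the diagonal matrix~\eqref{eq:limit_cov_informal_0}. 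Finally, the last claim --- that every entry of $\Sigma^{\infty,\lambda}$ scales like $(1-\theta)$ as $\theta\to1$ --- follows by inspection of these closed forms: each entry is $(1-\theta)$ times a fixed rational function of $(\theta,\kappa)$ with denominator $P_c(\theta,\kappa)$, and at $\theta=1$ the non-degeneracy constraint $(1-\theta^2)\kappa<2\theta$ holds for \emph{every} $\kappa$, so $P_c(\theta,\kappa)$ stays bounded away from $0$ and the numerators stay bounded; thus $\Sigma^{\infty,\lambda}_{ij}=\Theta(1-\theta)$ for each fixed $\lambda$, and this is already manifest in~\eqref{eq:limit_cov_informal_0} for $\lambda=0$.

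The main obstacle is the closed-form Lyapunov solve. As the text already remarks for the general parametrization, off-the-shelf symbolic tools do not handle it; what makes the CP case tractable is (i) the parametrization itself, which collapses both prox factors to $\theta$ and eliminates free parameters, and (ii) obtaining an explicit eigendecomposition of $A^\lambda$ so that the reduction $M_{ij}=\widetilde R_{ij}/(1-d_i^\lambda d_j^\lambda)$ applies --- writing those eigenvectors cleanly and then simplifying the back-transformed sum into the compact polynomial form of Table~\ref{rso:table:polynomials} is the delicate part, and is also where one must verify that the $\lambda$-dependence enters exactly through the factors $1/\lambda$ and $1/\lambda^2$ appearing in~\eqref{eq:limit_cov_informal_non_zero}. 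The Schur--Cohn computation pinning down the stability threshold, and the bookkeeping relating the covariance of $(x_n,y_n)$ to that of the shifted iterate $(x_{n-1},y_n)$ used in~\eqref{rso:eq:reduced_lyap_main_paper}, are routine by comparison.
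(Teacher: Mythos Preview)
Your plan is correct and follows the same overall strategy as the paper: write \sapdname\ on the quadratic as a linear recursion, diagonalize $K$ to decouple into per-eigenvalue scalar systems, verify $\rho(A^\lambda)<1$ under the threshold on $\theta$, diagonalize $A^\lambda$ and solve the Lyapunov equation entrywise in the eigenbasis, and then collect terms into the polynomial form of Table~\ref{rso:table:polynomials}. The one tactical difference worth noting is your choice of state. You augment to $\zeta_k=(x_k,y_k,\tilde s_{k-1})$ so that the innovations $(\gnoisex{k},\gnoisey{k})$ are genuinely i.i.d.\ and independent of $\zeta_k$, at the price of a $3\times3$ per-$\lambda$ transition matrix with a spurious zero eigenvalue; the paper instead works with the $2\times2$ state $\tilde z_k=(x_{k-1},y_k)$, which forces it to carry a nonzero cross term $\mathbb{E}[\tilde z_k\omega_k^\top]$ in the noise covariance $R$ (since $\omega_k$ and $\tilde z_k$ share $\gnoisey{k-1}$), and then to pass from the limiting covariance $\tilde\Sigma^\infty$ of $(x_{n-1},y_n)$ to that of $(x_n,y_n)$ via the linear map $T$ determined by $x_n=\theta x_{n-1}-\tfrac{1-\theta}{\mux}K y_n$. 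Your route trades a slightly larger Lyapunov system for cleaner independence and avoids both the cross-correlation bookkeeping and the final $T$-transform; the paper's route keeps the Lyapunov system $2\times2$ throughout, which makes the eigenvector computation in Lemma~\ref{rso:lem:airjordan} and the subsequent simplifications (Appendix~\ref{rso:sec:supp:quadratic}) marginally lighter. Either way the nonzero spectrum of the transition matrix and the resulting polynomials coincide, and your identification of the explicit diagonalization of $A^\lambda$ as the crux of the computation is exactly right.
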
 
\begin{proof}
\sa{The proof is given in Section~\ref{sec:proof-lim-cov}.}
\end{proof}
%
%Given the definition of the polynomials in Table~\ref{rso:table:polynomials}, one may observe that for any $\lambda \in \spectrum(K)$,
According to Theorem \ref{rso:thm:quadratics}, the matrix $\Sigma^{\infty,\lambda}$ has the property that it scales with $(1-\theta)$ as $\theta \to 1$; we leverage this fact to 
%\mgtwo{argue about}
establish the tightness of our analysis in the next section (Section \ref{rso:sec:tightness}).

\mg{In Figure~\ref{rso:fig:cov_convergence}, we illustrate Theorem \ref{rso:thm:quadratics} on a simple quadratic problem where primal and dual iterates are scalar, i.e., $d=1$ and $K=c$ is a scalar. 
In the three panels of Figure~\ref{rso:fig:cov_convergence}, we consider three problems P1, P2, P3 from left to right where the problem constants, $(c, \mux, \muy, \delta)$, are chosen as \sa{P1}: $(1, 4.4, 1.5, 35)$ - \sa{P2}: $(1, 2, 20, 50)$ - \sa{P3}: $(10^{-3}, 0.205, 0.307, 5)$. SAPD was run $2000$ times for $500$ iterations \saa{using CP parameterization in~\eqref{rso:eq:cb_params}} with $\theta=0.99$. 
For each problem,} we estimate the empirical covariance matrix \sa{$\Sigma_n$ for $n\in\{2^k:~k=0,\ldots, \log_2(500)\}$.}
\sa{The level set $\{z:\ z^\top\Sigma^\infty z = 1\}$} for the theoretical covariance matrix \sa{$\Sigma^\infty$} derived in Theorem \ref{rso:thm:quadratics} is represented by a brown edged ellipse on each plot.
Figure~\ref{rso:fig:cov_convergence} \mg{suggests} the linear convergence of the matrices to the equilibrium matrix~\sa{$\Sigma^\infty$}. 
Subsequently, we observe on these three examples how noise accumulates along iterations, producing covariance matrices that are non-decreasing in the sense of the Loewner ordering. 
\mg{This monotonicity behavior} is intuitively expected as the noise accumulates over the iterations, but can also be proven using the fact that covariance matrix $\Sigma_n$ of $z_n = [x_n^\top,y_n^\top]^\top$ follows a Lyapunov recursion \citep{laub1990sensitivity,hassibi1999indefinite}. 
We \sa{elaborate further on} this property by showing below that convergence of $\Sigma_n$ to $\Sigma^\infty$ happens at a linear rate characterized by the spectral radius of \sa{a particular matrix related to the SAPD iterations}. {The proof builds on the spectral characterizations of the covariance matrix $\Sigma^\infty$ obtained in the proof of Theorem \ref{rso:thm:quadratics}.}
\begin{corollary}\label{coro:cov-rate}
    {In the %setting 
    premise of Theorem~\ref{rso:thm:quadratics}}, for any $\theta \in \Big( \kappa_{\text{max}}^{-1} (\sqrt{1 + \kappa_{\text{max}}^2} - 1), 1\Big)$, the sequence of covariance matrices $\Sigma_n \defineq \expectation[z_n z_n^\top]$ satisfies  
    \begin{equation}\label{rso:eq:conv_speed_Sigma_infty}
        \spectralradius{\Sigma_n - \Sigma^{\infty}} = \mathcal{O}\left(\rho(A)^{2n}\right),
    \end{equation}
    where $z_n \defineq \mg{(x_n^\top, y_n^\top)^\top}$, 
    \sa{$\scalemath{0.9}{A 
        \defineq 
            \left[\begin{array}{cc}
                \frac{1}{1+\tau \mu_x} I_d & \frac{-\tau}{\left(1+\tau \mu_x\right)} K \\
                \frac{1}{1+\sigma \mu_y}\left(\frac{\sigma(1+\theta)}{1+\tau \mu_x}-\sigma \theta\right) K & \frac{1}{1+\sigma \mu_y}\left(I_d-\frac{\tau \sigma(1+\theta)}{1+\tau \mu_x} K^2 \right)
            \end{array}\right]}$ {%\color{magenta}
            with $\rho(A)<1$.}}
\end{corollary}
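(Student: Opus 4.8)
The plan is to reduce the SAPD iteration on the quadratic~\eqref{rso:eq:quadratic_problem} to a first-order affine stochastic recursion whose transition matrix is exactly $A$, derive the induced recursion for the second-moment matrix, and then read off the geometric rate from $\rho(A)$. I would work with the shifted pair $\tilde z_n \triangleq (x_{n-1}^\top, y_n^\top)^\top$ (already used in Section~\ref{rso:sec:recalls_quadratics}) rather than with $z_n$, because the momentum term feeds $x_{n-1}$ into the $y$-update, so $z_{n+1}$ is not a function of $z_n$ alone whereas $\tilde z_n$ is Markov. Specializing the updates to $f=\tfrac{\mu_x}{2}\|\cdot\|^2$, $g=\tfrac{\mu_y}{2}\|\cdot\|^2$, $\Phi(x,y)=\langle Kx,y\rangle$ (so that $\prox{\tau f}$ and $\prox{\sigma g}$ are scalings by $(1+\tau\mu_x)^{-1}$ and $(1+\sigma\mu_y)^{-1}$, $\tilde\nabla_y\Phi(x_k,y_k,\omega_k^y)=Kx_k+w^y_k$, $\tilde\nabla_x\Phi(x_k,y_{k+1},\omega^x_k)=Ky_{k+1}+w^x_k$, using $K=K^\top$), and eliminating $x_k$ from the $y_{k+1}$-update by substitution, one obtains, exactly as recalled in Section~\ref{rso:sec:recalls_quadratics},
\[
\tilde z_{n+1}=A\,\tilde z_n+\xi_n,
\]
with $A$ the matrix in the statement and $\xi_n$ a fixed linear combination of $w^x_{n-1},w^y_{n-1},w^y_n$. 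Also $z_n=B\,\tilde z_n+D\,w^x_{n-1}$ for explicit constant matrices $B,D$ read off from $x_n=\prox{\tau f}(x_{n-1}-\tau(Ky_n+w^x_{n-1}))$ (the top row of $B$ coincides with that of $A$, the bottom row is $[0\ \ I]$). Using the Gauss--Seidel filtration of the excerpt, $w^x_{n-1}$ is independent of $\tilde z_n$, $\xi_n$ has mean zero, $\mathbb E[\xi_n\xi_n^\top]=R$ is independent of $n$, and the cross-covariance $S\triangleq\mathbb E[\tilde z_n\xi_n^\top]$ is independent of $n$ for $n\geq 1$ (the only correlation between $\tilde z_n$ and $\xi_n$ is through the lagged noise $w^y_{n-1}$, which enters $y_n$ linearly with a fixed coefficient).

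Next, let $\tilde\Sigma_n\triangleq\mathbb E[\tilde z_n\tilde z_n^\top]$. Squaring the recursion gives, for $n\geq1$, the affine recursion $\tilde\Sigma_{n+1}=A\tilde\Sigma_nA^\top+\mathcal R$ with constant forcing $\mathcal R\triangleq AS+S^\top A^\top+R$. Since each $\tilde z_n$ is a Gaussian vector (a linear image of i.i.d.\ Gaussian noises and the deterministic initialization) and $\tilde z_n\convergesindistributionto\mathcal N(0,\tilde\Sigma^\infty)$, the covariances converge, $\tilde\Sigma_n\to\tilde\Sigma^\infty$, and passing to the limit in the affine recursion identifies $\tilde\Sigma^\infty$ as its fixed point: $\tilde\Sigma^\infty=A\tilde\Sigma^\infty A^\top+\mathcal R$. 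Subtracting the two equations,
\[
\tilde\Sigma_{n+1}-\tilde\Sigma^\infty=A(\tilde\Sigma_n-\tilde\Sigma^\infty)A^\top,\qquad\text{hence}\qquad \tilde\Sigma_n-\tilde\Sigma^\infty=A^{\,n-1}(\tilde\Sigma_1-\tilde\Sigma^\infty)(A^{\,n-1})^\top.
\]

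It then remains to convert powers of $A$ into powers of $\rho(A)$, and here I would invoke the spectral structure already produced in the proof of Theorem~\ref{rso:thm:quadratics}. Writing $K=V^\top\Lambda V$ with $V$ orthogonal, $A$ is conjugate to the block-diagonal matrix $\bigoplus_{\lambda\in\spectrum(K)}A^\lambda$ with $A^\lambda\in\mathbb R^{2\times2}$ as in~\eqref{rso:eq:reduced_lyap_main_paper}; that proof diagonalizes each $A^\lambda$, so $A$ is diagonalizable, $\rho(A)=\max_\lambda\rho(A^\lambda)$, and $\rho(A)<1$ under the standing hypothesis $\theta>\kappa_{\text{max}}^{-1}(\sqrt{1+\kappa_{\text{max}}^2}-1)$ (which is precisely the condition that renders~\eqref{rso:eq:reduced_lyap_main_paper} solvable with a finite solution). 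Consequently $\|A^m\|\leq C_A\,\rho(A)^m$ for a constant $C_A$ depending only on the eigenvector matrix, so $\|\tilde\Sigma_n-\tilde\Sigma^\infty\|\leq C_A^2\|\tilde\Sigma_1-\tilde\Sigma^\infty\|\,\rho(A)^{2(n-1)}=\mathcal O(\rho(A)^{2n})$. Finally, from $z_n=B\tilde z_n+Dw^x_{n-1}$ with $w^x_{n-1}$ independent of $\tilde z_n$ and of fixed covariance $\tfrac{\delta^2}{d}I$ we get $\Sigma_n=B\tilde\Sigma_nB^\top+\tfrac{\delta^2}{d}DD^\top$ for every $n$ and likewise $\Sigma^\infty=B\tilde\Sigma^\infty B^\top+\tfrac{\delta^2}{d}DD^\top$, so the $n$-independent term cancels: $\Sigma_n-\Sigma^\infty=B(\tilde\Sigma_n-\tilde\Sigma^\infty)B^\top$. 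Since $\Sigma_n-\Sigma^\infty$ is symmetric, $\spectralradius{\Sigma_n-\Sigma^\infty}=\|\Sigma_n-\Sigma^\infty\|\leq\|B\|^2\,\|\tilde\Sigma_n-\tilde\Sigma^\infty\|=\mathcal O(\rho(A)^{2n})$, which is~\eqref{rso:eq:conv_speed_Sigma_infty}. One may equivalently run this last argument block-by-block over $\lambda\in\spectrum(K)$ using the $2\times2$ Lyapunov recursions~\eqref{rso:eq:reduced_lyap_main_paper} and the closed forms from Theorem~\ref{rso:thm:quadratics}, then take the maximum over $\lambda$.

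The main obstacle is the first step: making the reduction $\tilde z_{n+1}=A\tilde z_n+\xi_n$ rigorous against the non-standard Gauss--Seidel filtration, and in particular accounting for the fact that the momentum term injects the lagged noise $w^y_{n-1}$ into $\xi_n$, so that $\xi_n$ is \emph{correlated} with $\tilde z_n$; this is why $\tilde\Sigma_n$ does not obey a pure Lyapunov recursion but an affine one with the cross term $AS+S^\top A^\top$ folded into $\mathcal R$, and one must verify that $S$ is $n$-independent. Once that bookkeeping is in place, the remainder is standard linear-systems theory~\citep{laub1990sensitivity,hassibi1999indefinite} together with the diagonalizability of $A$ inherited from the proof of Theorem~\ref{rso:thm:quadratics}.
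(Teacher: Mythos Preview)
Your proposal is correct and follows essentially the same route as the paper: derive the affine Lyapunov recursion for the second-moment matrix of the shifted state $\tilde z_n=(x_{n-1},y_n)$, subtract the fixed point, and convert powers of $A$ into $\rho(A)^{2n}$ via the diagonalizability of $A$ established in the proof of Theorem~\ref{rso:thm:quadratics}. The paper's version vectorizes through a Jordan/Kronecker representation rather than writing the closed form $\tilde\Sigma_n-\tilde\Sigma^\infty=A^{n-1}(\tilde\Sigma_1-\tilde\Sigma^\infty)(A^{n-1})^\top$ directly, and is less explicit than you are about the passage from $\tilde z_n$ to $z_n$ (which you handle cleanly via $z_n=B\tilde z_n+Dw^x_{n-1}$ with $w^x_{n-1}$ independent of $\tilde z_n$), but the substance is the same.
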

\begin{proof}
\sa{The proof is provided in Appendix~\ref{sec:cov-rate-proof}.
}%\vspace*{-5mm}
\end{proof}
\begin{figure}[h]
  \centering
  \includegraphics[width=0.9\textwidth]{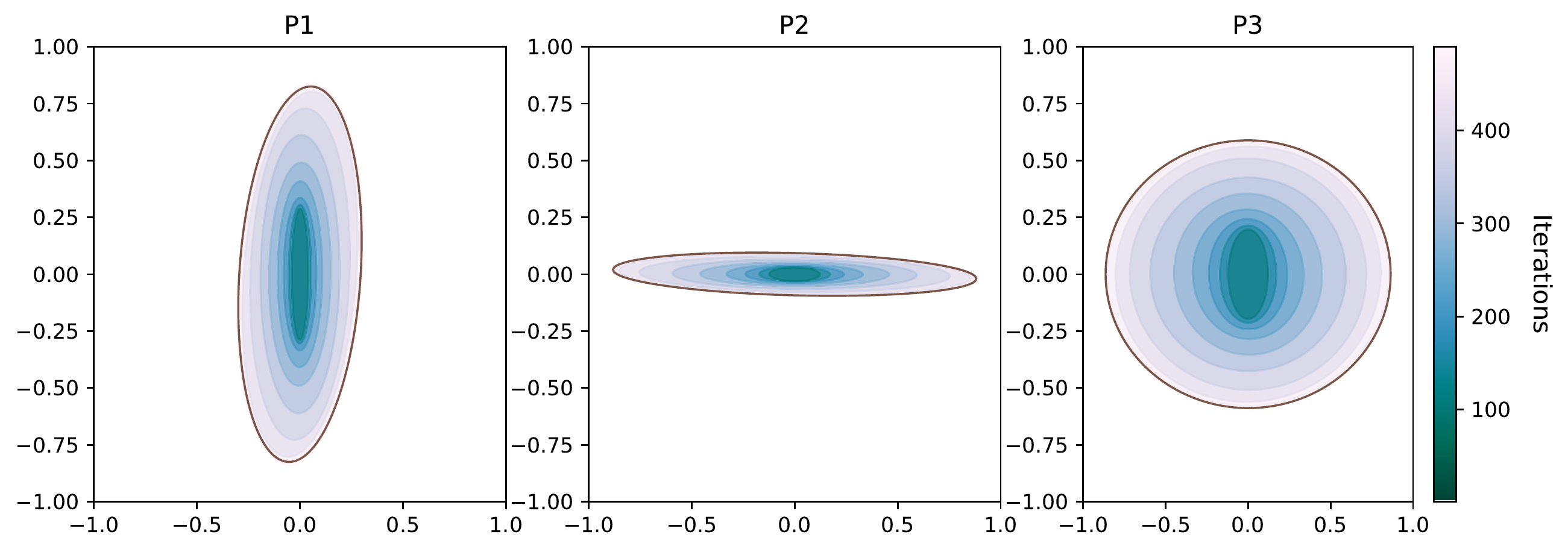}
  \caption{Noise accumulation over SAPD iterates: %covariance matrices 
  \sa{$\{\Sigma_n\}$} converges to an equilibrium covariance \sa{$\Sigma^\infty$} due to convergence in distribution $z_n \convergesindistributionto z^\infty$ derived in Thereom~\ref{rso:thm:quadratics}. By \saa{Corollary~\ref{coro:cov-rate},} 
  convergence occurs at a linear rate \sa{which is plotted for the level sets $\{z\in\reals^{2}: z^\top \Sigma_n z = 1\}$ for $d=1$.} The constants $(\sa{c}, \mux, \muy, \delta)$ for each problem \mg{from left to right}: \sa{P1}: $(1, 4.4, 1.5, 35)$ - \sa{P2}: $(1, 2, 20, 50)$ - \sa{P3}: $(\sa{10^{-3}}, 0.205, 0.307, 5)$, \sa{where $K=c\in\mathbb{R}$}.}
  \label{rso:fig:cov_convergence}
  % \vspace{-1em}
\end{figure}
%
% \vspace{-1em}
%============================================================
\subsection{Tightness analysis}\label{rso:sec:tightness}
%============================================================
We discuss in this section that the constants given in~\sa{Theorem}~\ref{rso:thm:high_probability_bound_sapd} are tight in the sense that under the \saa{CP} parameterization given in~\eqref{rso:eq:cb_params}, which corresponds to a particular solution of the matrix inequality in \eqref{rso:eq:matrix_inequality_params}, the dependency of these constants to $\theta$ and $p$ cannot be improved {when the number of iterations $n$ is sufficiently large}. To this end, we consider quadratic problems subject to \sa{additive} isotropic Gaussian noise \sa{for which} we can do exact computations, \sa{i.e., both $\{\noisegradx{k}\}$ and $\{\noisegrady{k}\}$ are i.i.d \saa{zero-mean} Gaussian random vector sequences with isotropic covariances, and these sequences are independent from each other as well.}

In Section~\ref{rso:eq:quadratic_problem}, \sa{under the isotropic Gaussian noise assumption,} we show that the distribution \mg{$\pi_n$} of the iterates $z_n \defineq (x_n, y_n)$ converges to a Gaussian distribution \mg{$\pi_\infty$} with mean $\optz = (\optx, \opty)$ and a covariance matrix $\Sigma^\star$ for which we provide a formula in~\eqref{eq:limit_cov_informal_non_zero}. 
If we let $z_\infty$ denote a random variable with the stationary distribution $\pi_\infty$, {Theorem \ref{rso:thm:high_probability_bound_sapd} implies that}
\begin{equation}\label{eq-quantile-upper-bound}
    Q_p(\norm{z_\infty-\optz}^2) =\limsup_{n \to \infty} Q_p(\norm{z_n-\optz}^2)
        = \cO\Big(
                % \Big(\frac{1+\theta}{2}\Big)^n\norm{z_0-\optz}^2 +
                    (1-\theta)\delta^2\Big(1+\log\Big(\frac{1}{1-p}\Big)\Big)
            \Big),
\end{equation}
as $\theta\to 1$. %We observe that 
This upper bound (grows) scales linearly with respect to $1-\theta$ and $\log(\frac{1}{1-p})$, and a natural question is whether this scaling can be improved.
In the next proposition we provide lower bounds on the quantiles of $\squarednormtwo{z_\infty}$ {that also grows linearly with respect to $1-\theta$ and $\log(\frac{1}{1-p})$, matching the upper bound in \eqref{eq-quantile-upper-bound}. 
Therefore, we conclude that our analysis is tight in the sense that we cannot expect to improve our bound in \eqref{eq-quantile-upper-bound} 
in terms of its dependency to $p$ and $1-\theta$}. 
\begin{thm}\label{rso:prop:tightness}
    Let $(z_n)_{n\geq 0}$ be the sequence initialized at \mg{an arbitrary tuple} $z_0=(x_0,y_0)$ generated by~\sapdname~under the parameterization~\eqref{rso:eq:cb_params} on the quadratic problem~\eqref{rso:eq:quadratic_problem} where $\optz = (0, 0)$. 
    Then, the sequence $(z_n)_{n\geq 0}$ converges in distribution to a Gaussian vector $z_\infty$. 
    Furthermore, for any
    $p \in (0,1)$, \saa{$p$-th} quantile $Q_p(\squarednormtwo{z_\infty-\optz})$ \mg{admits the bound}  
    \[
        \psi_1(p,\theta) \leq Q_p(\squarednormtwo{z_\infty-\optz}) \leq \psi_2(p, \theta),
    \]
    where $\psi_1(p, \theta) = (1-\theta) \log(1/(1-p)) \Theta(1)$ and $\psi_2(p, \theta) = (1-\theta) \bigOh(1+\log(1/(1-p)))$, as $\theta \to 1$.
\end{thm}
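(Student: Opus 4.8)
The convergence in distribution $z_n \convergesindistributionto z_\infty$ is exactly the conclusion of Theorem~\ref{rso:thm:quadratics}: under the parameterization~\eqref{rso:eq:cb_params} (so $\rho=\theta$, with $\theta$ in the admissible range $(\kappa_{\text{max}}^{-1}(\sqrt{1+\kappa_{\text{max}}^2}-1),1)$), \sapdname~applied to~\eqref{rso:eq:quadratic_problem} produces iterates converging to a centered Gaussian $z_\infty\sim\mathcal{N}(\mathbf{0},\Sigma^\infty)$ with $\Sigma^\infty$ described there; recall $\optz=(\mathbf 0,\mathbf 0)$. Since the noise term $R^\lambda$ in the Lyapunov recursion~\eqref{rso:eq:reduced_lyap_main_paper} is positive definite whenever $\delta>0$, we get $\Sigma^\infty\succ 0$, so $\squarednormtwo{z_\infty-\optz}$ is a positive-definite quadratic form in a nondegenerate Gaussian and hence has a continuous, strictly increasing distribution function on $(0,\infty)$; every quantile of the limit is a continuity point, and the standard quantile-convergence fact gives $Q_p(\squarednormtwo{z_n-\optz})\to Q_p(\squarednormtwo{z_\infty-\optz})$ for each $p\in(0,1)$.

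\emph{Upper bound $\psi_2$.} This is essentially~\eqref{eq-quantile-upper-bound}. Applying Theorem~\ref{rso:thm:high_probability_bound_sapd} with the CP parameters~\eqref{rso:eq:cb_params}, the bound $q_{p,n+1}$ is uniform in $n$ up to $\Xi^{(2)}_{\tau,\sigma,\theta}+\Xi^{(3)}_{\tau,\sigma,\theta}\log(\tfrac1{1-p})+\littleOh(1)$, where all the $\Xi$-constants are $\Theta(1-\theta)$ as $\theta\to1$. Combining this with the equivalence of $\cD_{n+1}+\cD_n$ and $\squarednormtwo{z_{n+1}-\optz}+\squarednormtwo{z_n-\optz}$ up to $(\tau,\sigma,\alpha)$-dependent constants, and with the quantile convergence just established, yields $Q_p(\squarednormtwo{z_\infty-\optz})=\bigOh\big((1-\theta)(1+\log(\tfrac1{1-p}))\big)$, which is $\psi_2(p,\theta)$.

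\emph{Lower bound $\psi_1$.} This is the substantive part, carried out in two steps. First, reduce to a one-dimensional Gaussian marginal: if $v$ is a unit eigenvector of $\Sigma^\infty$ for its largest eigenvalue $\lambda_{\max}(\Sigma^\infty)$, then $\dotproduct{v}{z_\infty}\sim\mathcal{N}(0,\lambda_{\max}(\Sigma^\infty))$ while $\dotproduct{v}{z_\infty}^2\le\squarednormtwo{z_\infty}$ almost surely, so by monotonicity and positive scale-equivariance of quantiles,
\[
Q_p(\squarednormtwo{z_\infty-\optz}) \ge \lambda_{\max}(\Sigma^\infty)\,Q_p(\xi^2),\qquad \xi\sim\mathcal{N}(0,1).
\]
Routing through a single Gaussian coordinate is precisely what preserves the correct right tail: the Gaussian tail lower bound $\probability[\xi\ge u]\ge \tfrac{u}{\sqrt{2\pi}(u^2+1)}e^{-u^2/2}$ gives $Q_p(\xi^2)=\Theta(\log(\tfrac1{1-p}))$ as $p\to1$, and $Q_p(\xi^2)$ is positive and increasing on $(0,1)$. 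Second, lower-bound $\lambda_{\max}(\Sigma^\infty)$: by the last assertion of Theorem~\ref{rso:thm:quadratics} each block $\Sigma^{\infty,\lambda}$ scales with $(1-\theta)$ as $\theta\to1$, and reading the explicit expressions~\eqref{eq:limit_cov_informal_0}--\eqref{eq:limit_cov_informal_non_zero} one checks that the leading coefficient in $(1-\theta)$ of (at least) one diagonal entry is strictly positive; writing $\Sigma^\infty=(1-\theta)M(\theta)$ with $M(\theta)\to M_\star\succeq0$, $M_\star\ne0$, this gives $\lambda_{\max}(\Sigma^\infty)=(1-\theta)\lambda_{\max}(M(\theta))\ge\tfrac12(1-\theta)\lambda_{\max}(M_\star)>0$ for $\theta$ close enough to $1$. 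Multiplying the two estimates yields $Q_p(\squarednormtwo{z_\infty-\optz})\ge(1-\theta)\log(\tfrac1{1-p})\,\Theta(1)=\psi_1(p,\theta)$.

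\emph{Main obstacle.} The qualitative ``$\Sigma^{\infty,\lambda}$ scales with $(1-\theta)$'' only immediately yields an $\bigOh(1-\theta)$ bound; the real work is the \emph{matching} lower bound $\lambda_{\max}(\Sigma^\infty)=\Omega(1-\theta)$, i.e.\ checking that the leading-order-in-$(1-\theta)$ term of the top eigenvalue does not degenerate for the eigenvalues $\lambda$ actually occurring in $\spectrum(K)$. This reduces to evaluating the polynomials $P_c,P^{(\infty,k)}_{i,j}$ of Theorem~\ref{rso:thm:quadratics} at $\theta=1$ and verifying a non-vanishing condition (for instance, the $(1,1)$ entry of $\Sigma^{\infty,0}$ has leading term $\tfrac{\delta^2(1-\theta)}{2d\mux^2}$, hence is $\Theta(1-\theta)$, and similarly for $\lambda\ne0$). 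A secondary technical point is making the estimate $Q_p(\xi^2)=\Theta(\log(\tfrac1{1-p}))$ uniform enough that the $\Theta(1)$ prefactor in $\psi_1$ is genuinely $\theta$-independent, which is where the tightness with respect to the probability level $p$ ultimately resides.
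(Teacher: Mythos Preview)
Your upper bound is essentially the paper's argument: pass Theorem~\ref{rso:thm:high_probability_bound_sapd} through the CP parameterization and the quantile limit; this matches Section~\ref{rso:sec:proof_tightness} closely.

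Your lower bound takes a genuinely different route. The paper writes $\squarednormtwo{z_\infty}=X^\top\Sigma^\infty X$ with $X\sim\mathcal N(0,I_{2d})$, bounds $\squarednormtwo{z_\infty}\ge\lambda_{\min}(\Sigma^\infty)\,\squarednormtwo{X}$, invokes Inglot's inequality $Q_p(\chi^2_{2d})\ge 2(2d)+2\log\frac{1}{1-p}-\tfrac52$, and then spends most of the effort proving $\lambda_{\min}(\Sigma^\infty)=\Theta(1-\theta)$ block by block: for each $\lambda\in\spectrum(K)$ it computes the smallest eigenvalue of the $2\times2$ block via $\tfrac12\big(\Sigma_{11}+\Sigma_{22}-\sqrt{(\Sigma_{11}-\Sigma_{22})^2+4\Sigma_{12}^2}\big)$, expands each entry to leading order in $(1-\theta)$ using Table~\ref{rso:table:polynomials}, and uses the conjugate trick $a-b=(a^2-b^2)/(a+b)$ to check positivity of the leading coefficient. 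Your projection onto the top eigenvector replaces all of this with the trivial fact that $\lambda_{\max}(\Sigma^\infty)$ dominates any diagonal entry, so a single entry computation (e.g.\ $\Sigma_{11}^{\infty,\lambda}$, which the paper's own asymptotics give as $\tfrac{\delta^2(1-\theta)}{4d\mu_x^2}(2+\lambda^2/\mu_y^2)+o(1-\theta)$ for $\lambda\neq0$) already yields $\lambda_{\max}(\Sigma^\infty)=\Omega(1-\theta)$. This is more economical for the theorem as stated; the paper's $\lambda_{\min}$ route additionally establishes that \emph{every} eigenvalue of $\Sigma^\infty$ is $\Theta(1-\theta)$, a strictly stronger structural statement that your argument does not recover. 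Two small points to watch: your illustrative diagonal entry uses $\Sigma^{\infty,0}$, which is only available if $0\in\spectrum(K)$, so the $\lambda\neq0$ case must actually be carried out (it is equally routine from the polynomials at $\theta=1$); and your one-dimensional quantile bound $Q_p(\xi^2)=\Theta(\log\tfrac{1}{1-p})$ holds only as $p\to1$ (for $p\downarrow0$ the ratio $Q_p(\xi^2)/\log\tfrac{1}{1-p}\to0$), whereas the paper's $2d$-dimensional Inglot bound has a nonvanishing additive constant $4d-\tfrac52>0$ that keeps the lower bound positive for all $p\in(0,1)$. Since the theorem's $\Theta(1)$ is stated as $\theta\to1$ at fixed $p$, your argument still delivers the claim, but the paper's formulation is slightly more uniform in $p$.
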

\begin{proof}
\saa{The proof is provided in Section~\ref{rso:sec:proof_tightness} of the appendix.}
% \vspace*{-5mm}
\end{proof}
%==========================================================================================

%==========================================================================================
\clearpage
\section{Proof of Main Results}\label{rso:sec:proof_section}
%==========================================================================================

%==========================================================================================
\subsection{Concentration inequalities through recursive control}\label{rso:app:recursive_control}
%==========================================================================================
\yassine{This section presents general concentration inequalities that will be specialized later for the analysis of~\sapdname. 
The first result is a recursive concentration inequality \saa{extending the result provided in~\citep[Proposition 6.7]{cutler2021stochastic}, which is used in the analysis of the stochastic gradient descent (SGD) method for minimization of a smooth strongly convex function in~\citep{cutler2021stochastic}}. 
Our variant of this inequality enables us to analyze saddle point problems with acceleration, providing new insights on their robustness properties.}
\begin{prop}\label{rso:prop:recursive_control}
    Let $(\sigmafield_n)_{n\geq 0}$ be a filtration on $(\Omega, \sigmafield, \mathbb{P})$. Let $(V_n)_{n\geq 0}$, $(T_n)_{n\geq 0}$, and $(R_n)_{n\geq 0}$, be three scalar \sa{stochastic processes adapted to $(\cF_n)_{n\geq 0}$ with following properties: there exist $\sigma_R, \sigma_T > 0$ such that for all $n \geq 0$,}
    \begin{itemize}
        \itemsep0em
        \item $V_n$ is non-negative;
        \item $\mgfc{\lambda \sa{T_{n+1}}}{ \sigmafield_{n}} \leq e^{\lambda^2 \sigma_T^2 V_n}$ for all $\lambda > 0$, i.e., $T_{n+1}$ \saa{conditioned} on $\sigmafield_n$ is subGaussian;
        \item $\mgfc{\lambda \sa{R_{n+1}}}{\sigmafield_{n}}\leq e^{\lambda \sigma^2_R}$ for all $\lambda \in [0,1/\sigma_R^2]$, i.e., $R_{n+1}$ \saa{conditioned} on $\sigmafield_n$ is subExponential.
    \end{itemize}
    \sa{If there exists $\rho\in(0,1)$ such that}
    \begin{equation}\label{rso:eq:general_recursive_bound}
          V_{n+1} \sa{- T_{n+1} - R_{n+1}} \leq \rho\; V_n, \quad \forall n\geq 0,
    \end{equation}
    then for all~$\lambda \in \Big(0,\min\{\frac{1}{2\sigma_R^2},\frac{1-\rho}{4\sigma_T^2} \}\Big)$, it holds that
        $\mgf{\lambda V_{n+1}} \leq e^{\lambda \sigma_R^2} \mgf{\frac{\lambda(1+\rho)}{2} V_n}$, 
        for $n\geq 0$.
\end{prop}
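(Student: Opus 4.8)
The plan is to bound the conditional moment generating function (MGF) of $V_{n+1}$ given $\cF_n$ by successively peeling off the two noise terms $T_{n+1}$ and $R_{n+1}$ from the recursion \eqref{rso:eq:general_recursive_bound}. First I would fix $\lambda$ in the prescribed range and use the pointwise inequality $V_{n+1} \leq \rho V_n + T_{n+1} + R_{n+1}$ together with monotonicity of $t\mapsto e^{\lambda t}$ to get
\[
    \mgfc{\lambda V_{n+1}}{\cF_n} \leq e^{\lambda \rho V_n}\,\mgfc{\lambda(T_{n+1}+R_{n+1})}{\cF_n}.
\]
The main technical step is to control the joint MGF of $T_{n+1}+R_{n+1}$, for which I would apply a conditional Cauchy–Schwarz (or Hölder) inequality to decouple the two terms:
\[
    \mgfc{\lambda(T_{n+1}+R_{n+1})}{\cF_n} \leq \mgfc{2\lambda T_{n+1}}{\cF_n}^{1/2}\,\mgfc{2\lambda R_{n+1}}{\cF_n}^{1/2}.
\]

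Next I would plug in the two structural hypotheses. Since $2\lambda < \frac{1-\rho}{2\sigma_T^2}$, the subGaussian bound gives $\mgfc{2\lambda T_{n+1}}{\cF_n} \leq e^{4\lambda^2\sigma_T^2 V_n}$, hence $\mgfc{2\lambda T_{n+1}}{\cF_n}^{1/2} \leq e^{2\lambda^2\sigma_T^2 V_n}$. Since $2\lambda \leq 1/\sigma_R^2$, the subExponential bound gives $\mgfc{2\lambda R_{n+1}}{\cF_n} \leq e^{2\lambda\sigma_R^2}$, hence $\mgfc{2\lambda R_{n+1}}{\cF_n}^{1/2} \leq e^{\lambda\sigma_R^2}$. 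Combining,
\[
    \mgfc{\lambda V_{n+1}}{\cF_n} \leq e^{\lambda\sigma_R^2}\, e^{(\lambda\rho + 2\lambda^2\sigma_T^2) V_n}.
\]
The key inequality is now that the exponent coefficient on $V_n$ is at most $\frac{\lambda(1+\rho)}{2}$: because $\lambda < \frac{1-\rho}{4\sigma_T^2}$ we have $2\lambda^2\sigma_T^2 < \frac{\lambda(1-\rho)}{2}$, so $\lambda\rho + 2\lambda^2\sigma_T^2 < \lambda\rho + \frac{\lambda(1-\rho)}{2} = \frac{\lambda(1+\rho)}{2}$. Therefore $\mgfc{\lambda V_{n+1}}{\cF_n} \leq e^{\lambda\sigma_R^2}\, e^{\frac{\lambda(1+\rho)}{2}V_n}$, and taking total expectations (using the tower property and that $V_n$ is $\cF_n$-measurable) yields $\mgf{\lambda V_{n+1}} \leq e^{\lambda\sigma_R^2}\,\mgf{\frac{\lambda(1+\rho)}{2}V_n}$, which is the claim.

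I expect the only real subtlety to be the measurability/adaptedness bookkeeping needed to justify conditioning on $\cF_n$ and applying the hypotheses: one must be careful that $V_n$ (and not $V_{n+1}$) is what appears inside the conditional expectation, and that the subGaussian/subExponential bounds are stated precisely with respect to $\cF_n$ as given. The Cauchy–Schwarz step requires both conditional MGFs to be finite, which is guaranteed by the chosen range of $\lambda$. The rest is the elementary arithmetic of checking that $\lambda < \min\{\frac{1}{2\sigma_R^2}, \frac{1-\rho}{4\sigma_T^2}\}$ is exactly what makes the two exponents fit, which is where the specific constants $\frac{1}{2\sigma_R^2}$ and $\frac{1-\rho}{4\sigma_T^2}$ come from — the factor $2$ loss in each term is precisely the price of the Cauchy–Schwarz decoupling.
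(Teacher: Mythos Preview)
Your proposal is correct and follows essentially the same approach as the paper: use the recursion to bound the conditional MGF, pull out the $\cF_n$-measurable factor $e^{\lambda\rho V_n}$, apply conditional Cauchy--Schwarz to decouple $T_{n+1}$ and $R_{n+1}$, invoke the subGaussian and subExponential hypotheses at parameter $2\lambda$, and then use $\lambda < \frac{1-\rho}{4\sigma_T^2}$ to absorb $2\lambda^2\sigma_T^2$ into $\frac{\lambda(1-\rho)}{2}$ before taking unconditional expectations. The paper's proof is line-for-line the same argument.
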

\begin{proof}
    \yassine{Our proof follows closely the arguments of~\citep[Proposition 6.7]{cutler2021stochastic}.
    The main difference \sa{is in} the term $T_{n+1}$ which takes the specific form $T_{n+1} = G_{n+1} \sqrt{V_n}$ in~\citep{cutler2021stochastic}, where $G_{n+1}$ \saa{conditioned on $\cF_{n}$} is assumed to be subGaussian.}
    For any $\lambda \geq 0$, \eqref{rso:eq:general_recursive_bound} together with Cauchy-Schwarz inequality implies that
    \[
    \begin{split}
        \mgfc{\lambda V_{n+1}}{\sigmafield_{n}}
            &\leq e^{\lambda \rho V_n} \mgfc{\lambda \sa{(T_{n+1} + R_{n+1})} }{ \sigmafield_{n}} \leq e^{\lambda \rho V_n} \mgfc{2\lambda \sa{T_{n+1}} }{ \sigmafield_{n}}^{1/2} \mgfc{2\lambda \sa{R_{n+1}} }{ \sigmafield_{n}}^{1/2}. \\
    \end{split}
    \]
    Thus for 
    $\lambda \in\Big(0, \frac{1}{2\sigma_R^2}\Big]$, we have 
    $\mgfc{\lambda V_{n+1} }{ \sigmafield_{n}}
            \leq e^{\lambda \sigma_R^2} 
                 e^{\lambda (\rho + 2\lambda \sigma_T^2) V_n}$.
    Setting $0 \leq \lambda\leq \min\left\{\frac{1}{2\sigma_R^2}, \frac{1-\rho}{4 \sigma_T^2}\right\}$ and taking the non-conditional expectation, we ensure that  $\mgf{\lambda V_{n+1}} \leq e^{\lambda \sigma_R^2} \mgf{\lambda \frac{1+\rho}{2} V_n}$.
    This completes the proof. 
\end{proof}
Unrolling the above recursive property on the moment generating function of $V_n$ provides us with high probability results \sa{on} $(V_n)_{n\geq 0}$, given in the next result.
\begin{prop}\label{rso:prop:high_proba_recursive_control}
    Let $V_n, T_n, R_n$ be defined as in Proposition~\ref{rso:prop:recursive_control}. 
    Then, for all $n \geq 0$ and $\lambda \in \left[0, \min\left\{\frac{1-\rho}{4 \sigma_T^2}, \frac{1}{2\sigma_R^2}\right\} \right]$,
    \begin{equation}\label{rso:eq:rate_mgf_scalar_process}
        \mgf{\lambda V_n} \leq  e^{\frac{2\lambda \sigma_R^2}{1-\rho}} \mgf{\lambda \left(\frac{1+\rho}{2}\right)^n V_0}.
    \end{equation}
    \yassine{Furthermore, if $V_0 = C_0$ is constant, then
    \begin{equation}
        \probability\left[V_n \leq \left(\frac{1+\rho}{2}\right)^n C_0 + \frac{2\sigma_R^2}{1-\rho}\left(1+\max\left\{1, 2~\frac{\sigma_T^2}{\sigma_R^2}\right\} 
        \log\left(\frac{1}{1-p}\right)\right)
        \right] \geq p.
    \end{equation}
    Alternatively, if $V_0$} can be expressed as $V_0 = C_0 + \mathcal{U}$ \saa{such that} $C_0 \geq 0$ is constant and $\mathcal{U}$ satisfies  $\mgf{\lambda \mathcal{U}} \leq e^{\alpha \lambda + \beta \lambda^2}$, for all $\lambda \in \sa{\left[0, \frac{1}{\bar \alpha}\right]}$, for some constants $\alpha, \bar \alpha > 0$ and {$\beta\geq 0$}, then for any $p\in\sa{[0},1)$ and \sa{$\lambda\in[0,\gamma]$} where \sa{$\gamma \defineq \frac{1-\rho}{\max\{\bar \alpha, 2 \sigma_R^2, 4 \sigma_T^2\}}$, we have}
    \begin{equation}\label{rso:eq:high_prob_recursive_control}
        \mathbb{P}\left(V_{n} 
            \leq \left(\frac{1+\rho}{2}\right)^n (C_0 + \alpha) + \sa{\left(\frac{1+\rho}{2}\right)^{2n} \lambda \beta} + \frac{2\sigma_R^2}{1-\rho} + \sa{\frac{1}{\lambda}} \log\left(\frac{1}{1-p}\right)
            \right) \geq p.
    \end{equation}
\end{prop}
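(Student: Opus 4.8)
The plan is to iterate the one-step moment generating function (MGF) bound from Proposition~\ref{rso:prop:recursive_control} and then convert the resulting MGF estimate into a tail bound via a Chernoff-type argument. First I would fix $\lambda \in \left[0, \min\left\{\frac{1-\rho}{4\sigma_T^2}, \frac{1}{2\sigma_R^2}\right\}\right]$ and apply the conclusion of Proposition~\ref{rso:prop:recursive_control}, namely $\mgf{\lambda V_{n+1}} \leq e^{\lambda \sigma_R^2}\mgf{\frac{\lambda(1+\rho)}{2}V_n}$. The key observation is that shrinking $\lambda$ to $\frac{(1+\rho)}{2}\lambda$ keeps it in the admissible range (since $\frac{1+\rho}{2} < 1$), so the recursion can be applied again with the reduced multiplier. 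Unrolling $n$ times gives $\mgf{\lambda V_n} \leq \exp\!\big(\lambda\sigma_R^2 \sum_{k=0}^{n-1}\big(\tfrac{1+\rho}{2}\big)^k\big)\,\mgf{\lambda\big(\tfrac{1+\rho}{2}\big)^n V_0}$, and bounding the geometric sum by $\sum_{k\geq 0}\big(\tfrac{1+\rho}{2}\big)^k = \frac{2}{1-\rho}$ yields exactly \eqref{rso:eq:rate_mgf_scalar_process}.

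Next I would handle the two special cases for $V_0$. When $V_0 = C_0$ is a deterministic constant, $\mgf{\lambda(\tfrac{1+\rho}{2})^n V_0} = e^{\lambda(\tfrac{1+\rho}{2})^n C_0}$, so $\mgf{\lambda V_n} \leq \exp\!\big(\lambda\big[(\tfrac{1+\rho}{2})^n C_0 + \tfrac{2\sigma_R^2}{1-\rho}\big]\big)$, i.e. $V_n$ minus its ``bias'' term is sub-exponential. A standard Chernoff bound $\probability[V_n \geq t] \leq e^{-\lambda t}\mgf{\lambda V_n}$, optimized over $\lambda$ in the admissible window $\big[0,\min\{\tfrac{1-\rho}{4\sigma_T^2},\tfrac{1}{2\sigma_R^2}\}\big]$, then gives the stated high-probability bound; the factor $\max\{1, 2\sigma_T^2/\sigma_R^2\}$ and the additive $1$ appear precisely from clipping the optimal $\lambda$ at the endpoint of this window (when $\log\frac{1}{1-p}$ is small the unconstrained optimizer would exceed the allowed range, forcing the endpoint choice). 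For the general case $V_0 = C_0 + \mathcal{U}$ with $\mgf{\lambda\mathcal{U}} \leq e^{\alpha\lambda + \beta\lambda^2}$ on $[0,1/\bar\alpha]$, I would substitute this into \eqref{rso:eq:rate_mgf_scalar_process}: since the effective multiplier on $\mathcal{U}$ is $\lambda' \triangleq \lambda(\tfrac{1+\rho}{2})^n \leq \lambda$, one needs $\lambda' \leq 1/\bar\alpha$, which is ensured by taking $\lambda \leq \gamma$; this gives $\mgf{\lambda V_n} \leq \exp\!\big(\lambda(\tfrac{1+\rho}{2})^n(C_0+\alpha) + \lambda^2(\tfrac{1+\rho}{2})^{2n}\beta + \lambda\tfrac{2\sigma_R^2}{1-\rho}\big)$. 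A Chernoff bound with the same $\lambda$ (not re-optimized, which is why $\lambda$ and the $\frac{1}{\lambda}\log\frac{1}{1-p}$ term survive in \eqref{rso:eq:high_prob_recursive_control}) and the elementary inequality $\probability[V_n \geq t]\leq e^{-\lambda t + \log\mgf{\lambda V_n}} = 1-p$ when $t$ equals the bracketed expression, finishes the argument.

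The main obstacle — really the only subtle point — is the careful bookkeeping of the admissible range for $\lambda$ through the iteration: one must verify that every intermediate multiplier $\lambda(\tfrac{1+\rho}{2})^k$ stays within the interval required by Proposition~\ref{rso:prop:recursive_control} (this is automatic since the sequence is decreasing and $\lambda$ already lies in that interval), and, in the general case, that the reduced multiplier hitting $\mathcal{U}$ respects the sub-exponential threshold $1/\bar\alpha$, which is exactly what the definition of $\gamma$ encodes. The rest is the standard Chernoff/Markov conversion, with the only care being whether to optimize over $\lambda$ (done in the constant case, producing the clean bound with the $\log\frac{1}{1-p}$ coefficient) or to leave $\lambda$ free (done in the general case, where the quadratic-in-$\lambda$ term from $\beta$ makes a closed-form optimum messy, so one states the bound for arbitrary $\lambda \in [0,\gamma]$ instead).
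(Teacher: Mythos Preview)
Your proposal is correct and follows essentially the same route as the paper: iterate the one-step MGF bound of Proposition~\ref{rso:prop:recursive_control} (noting that the shrunk multiplier $\lambda(\tfrac{1+\rho}{2})^k$ remains admissible), bound the resulting geometric sum by $\tfrac{2}{1-\rho}$, and then apply a Chernoff argument---choosing the endpoint $\lambda = \tfrac{1-\rho}{2\max\{\sigma_R^2,2\sigma_T^2\}}$ in the constant-$V_0$ case and leaving $\lambda\in[0,\gamma]$ free in the general case. One very minor remark: the additive ``$1$'' inside the bracket in the constant case comes directly from the $\tfrac{2\sigma_R^2}{1-\rho}$ bias term in the MGF bound, not from the clipping of $\lambda$; the clipping is only responsible for the $\max\{1,2\sigma_T^2/\sigma_R^2\}$ factor in front of $\log\frac{1}{1-p}$.
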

\begin{proof}
    Let us first prove by induction on $n$ that for all $\lambda \in \left(0, \min\left\{\frac{1-\rho}{4 \sigma_T^2}, \frac{1}{2\sigma_R^2}\right\} \right)$,
    \begin{equation}
    \label{eq:induction_bound}
        \mgf{\lambda V_{n}}
            \leq    
                \mgf{\lambda \left(\frac{1+\rho}{2}\right)^n V_0 
                    + \lambda \sigma_R^2
                        \sum_{k=0}^{n-1} \left(\frac{1+\rho}{2}\right)^{k}}.
    \end{equation}
    For $n=0$, this property holds trivially with the convention $\Sigma_{k=0}^{n-1} = 0$ when $n=0$. 
    Assuming the inequality holds for some $n\geq 0$, next we show it also holds for $n+1$. According to Proposition~\ref{rso:prop:recursive_control}, %we have
    \[
    \begin{split}
        \mgf{\lambda V_{n+1}}
            &\leq e^{\lambda \sigma_R^2} \mgf{\lambda \frac{1+\rho}{2} V_n} \\
            &\leq e^{\lambda \sigma_R^2} 
                    \mgf{
                        \lambda \frac{1+\rho}{2} \left(\frac{1+\rho}{2}\right)^n V_0 
                        + \lambda \frac{1+\rho}{2} \sigma_R^2
                            \sum_{k=0}^{n-1} \left(\frac{1+\rho}{2}\right)^{k} 
                    }
            = \mgf{
                        \lambda \left(\frac{1+\rho}{2}\right)^{n+1} V_0 
                        + \lambda \sigma_R^2
                            \sum_{k=0}^{n} \left(\frac{1+\rho}{2}\right)^{k} %n-k 
                    },
    \end{split}
    \]
    where the second inequality follows from the induction hypothesis since 
    \[
        0<\lambda (1+\rho)/2 \leq \lambda\leq \min\left\{\frac{1-\rho}{4 \sigma_T^2}, \frac{1}{2\sigma_R^2}\right\},
    \]
    and this completes the induction. 
    Thus, \eqref{rso:eq:rate_mgf_scalar_process} follows from using  $\sum_{k=0}^{n-1}\Big(\frac{1+\rho}{2}\Big)^k\leq \frac{2}{1-\rho}$ within \eqref{eq:induction_bound}. 
    The \yassine{remaining statements follow from a Chernoff bound; indeed, if $V_0 = C_0$ is constant, we obtain
    \[
        \probability\left[V_n \geq \left(\frac{1+\rho}{2}\right)^n \sa{C_0}+ \frac{2 \sigma_R^2}{1-\rho} + t\right] \leq e^{ \sa{- \lambda t}},
    \]
    for \sa{$\lambda = \frac{1-\rho}{2\max\{\sigma_R^2, 2\sigma_T^2\}}$} and $t = \frac{1}{\lambda} \sa{\log(\frac{1}{1-p})}
    $, \sa{which implies the desired result.}}
    \sa{Next, suppose $V_0=C_0+\cU$ for some constant $C_0$ and $\cU$ as in the hypothesis.} 
    First, observe that for $\lambda \in \left(0, \min\left\{\frac{1-\rho}{4 \sigma_T^2}, \frac{1}{2\sigma_R^2}, \frac{1}{\bar \alpha}\right\}\right)$, %\sa{we have}
    \begin{equation}
        \begin{split}
            \mgf{\lambda V_{n}}
                &\leq   
                    e^{\frac{2\lambda \sigma_R^2}{1-\rho}} 
                    \mgf{\lambda \left(\frac{1+\rho}{2}\right)^n (C_0 + \mathcal{U})} %\\&
                    \leq  e^{\lambda \left(
                                \left(\frac{1+\rho}{2}\right)^n (C_0 + \alpha)
                                + \frac{2 \sigma_R^2}{1-\rho}                     
                            \right)
                        + \lambda^2 \left(\frac{1+\rho}{2}\right)^{2n} \beta
                    }. \label{eq:subexponential-bound}
        \end{split}
    \end{equation}
    Thus, for all $t \geq 0$, 
    \[
        \probability\left( V_{n} 
            \geq \left(\frac{1+\rho}{2}\right)^n (C_0 + \alpha) + \frac{2 \sigma_R^2}{1-\rho} + t \right) 
                \leq 
                    e^{\lambda^2 \left(\frac{1+\rho}{2}\right)^{2n} \beta - \lambda t}. 
    \]
    \sa{Fixing an arbitrary non-negative $\lambda$ such that $\lambda \leq \frac{1-\rho}{\max\{\bar \alpha, 2\sigma_R^2, 4 \sigma_T^2\}}$}, we have \sa{$\exp(\lambda^2 \left(\frac{1+\rho}{2}\right)^{\sa{2}n} \beta - \lambda t) = 1-p \iff t = \lambda (\frac{1+\rho}{2})^{2n} \beta + \frac{1}{\lambda} \log (1/(1-p))$,} which \sa{proves \eqref{rso:eq:high_prob_recursive_control}.} 
\end{proof}
Based on Proposition \ref{rso:prop:high_proba_recursive_control}, as a corollary, one can derive convergence rates for the~\sa{CVaR and EVaR risk measures of the scalar process $(V_n)_{n\geq 0}$.}
\begin{cor}\label{rso:cor:cvar_bound_vn}
    Let $V_n, T_n, R_n, \gamma$ be defined as in Proposition~\ref{rso:prop:high_proba_recursive_control}. 
    Then, for any $p \in \sa{[0},1)$ and \sa{$\lambda\in[0,\gamma]$}, 
    \begin{equation}\label{rso:eq:cor:cvar}
        \cvar_p(V_n^{\frac{1}{2}}) \leq \left(\frac{1+\rho}{2}\right)^{\sa{\frac{n}{2}}} \sqrt{C_0 + \alpha + \sa{\lambda} \beta} + \sa{\sqrt{\frac{2}{1-\rho}\sigma_R^2 + \frac{1}{\sa{\lambda}}\left(1 + \log\left(\frac{1}{1-p}\right)\right)}}.
    \end{equation}
\end{cor}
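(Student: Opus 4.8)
The plan is to derive the CVaR bound from the high-probability (quantile) bound already established in Proposition~\ref{rso:prop:high_proba_recursive_control}, using the integral representation $\cvar_p(\cU) = \frac{1}{1-p}\int_{p}^{1} Q_{p'}(\cU)\,\mathrm{d}p'$ together with the monotonicity and positive-homogeneity properties of quantiles. The key preliminary observation is that $Q_{p'}(V_n^{1/2}) = \big(Q_{p'}(V_n)\big)^{1/2}$ since $t\mapsto t^{1/2}$ is nondecreasing on $\R_+$ and $V_n\geq 0$; hence $\cvar_p(V_n^{1/2}) = \frac{1}{1-p}\int_p^1 \big(Q_{p'}(V_n)\big)^{1/2}\,\mathrm{d}p'$.

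First I would invoke Proposition~\ref{rso:prop:high_proba_recursive_control}, specifically the bound \eqref{rso:eq:high_prob_recursive_control}, which gives, for any fixed $\lambda\in[0,\gamma]$ and any $p'\in[0,1)$,
\[
Q_{p'}(V_n) \leq \left(\frac{1+\rho}{2}\right)^n(C_0+\alpha) + \left(\frac{1+\rho}{2}\right)^{2n}\lambda\beta + \frac{2\sigma_R^2}{1-\rho} + \frac{1}{\lambda}\log\!\left(\frac{1}{1-p'}\right).
\]
Taking square roots and using the subadditivity of $\sqrt{\cdot}$ (i.e. $\sqrt{a+b}\leq\sqrt{a}+\sqrt{b}$ for $a,b\geq 0$), I would split $Q_{p'}(V_n)^{1/2}$ into a ``deterministic'' part, namely $\sqrt{(\tfrac{1+\rho}{2})^n(C_0+\alpha) + (\tfrac{1+\rho}{2})^{2n}\lambda\beta}$ which I would further bound by $(\tfrac{1+\rho}{2})^{n/2}\sqrt{C_0+\alpha+\lambda\beta}$ (using $(\tfrac{1+\rho}{2})^{2n}\leq(\tfrac{1+\rho}{2})^{n}$ since $\tfrac{1+\rho}{2}\in(0,1)$), plus a ``risk'' part $\sqrt{\tfrac{2\sigma_R^2}{1-\rho} + \tfrac{1}{\lambda}\log(1/(1-p'))}$.

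Next I would plug this into the integral representation and integrate term by term over $p'\in[p,1]$, pulling the constant $\frac{1}{1-p}$ through. The deterministic part integrates trivially to $(\tfrac{1+\rho}{2})^{n/2}\sqrt{C_0+\alpha+\lambda\beta}$. For the risk part, I would again use $\sqrt{a+b}\leq\sqrt a+\sqrt b$ to separate $\sqrt{\tfrac{2\sigma_R^2}{1-\rho}}$ (whose average over $[p,1]$ is itself) from $\sqrt{\tfrac{1}{\lambda}\log(1/(1-p'))}$, and then bound the average $\frac{1}{1-p}\int_p^1 \sqrt{\tfrac{1}{\lambda}\log(1/(1-p'))}\,\mathrm{d}p'$. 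The cleaner route here is to apply Jensen's inequality (concavity of $\sqrt{\cdot}$): $\frac{1}{1-p}\int_p^1 \sqrt{\tfrac{1}{\lambda}\log\tfrac{1}{1-p'}}\,\mathrm{d}p' \leq \sqrt{\tfrac{1}{\lambda}\cdot\frac{1}{1-p}\int_p^1\log\tfrac{1}{1-p'}\,\mathrm{d}p'}$, and the inner integral evaluates via the substitution $s=1-p'$ to $\frac{1}{1-p}\int_0^{1-p}(-\log s)\,\mathrm{d}s = 1 + \log\tfrac{1}{1-p}$. Combining, the risk part is bounded by $\sqrt{\tfrac{2\sigma_R^2}{1-\rho}} + \sqrt{\tfrac{1}{\lambda}\big(1+\log\tfrac{1}{1-p}\big)}$; one final $\sqrt{a}+\sqrt{b}\leq\sqrt{a+b}$ (in reverse) or simply regrouping gives the stated form $\sqrt{\tfrac{2}{1-\rho}\sigma_R^2 + \tfrac{1}{\lambda}(1+\log\tfrac{1}{1-p})}$, completing the proof.

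The main obstacle — though it is more bookkeeping than genuine difficulty — is handling the three nested applications of $\sqrt{a+b}\leq\sqrt a+\sqrt b$ in the right order so that the constant $\tfrac{2}{1-\rho}\sigma_R^2$ and the $\log$ term end up grouped under a single square root as in \eqref{rso:eq:cor:cvar}, rather than appearing as separate summands; one must be careful that the $+1$ inside $1+\log(1/(1-p))$ (coming from $\int_0^{1-p}(-\log s)\,ds = (1-p)(1-\log(1-p))$) is correctly tracked through the Jensen step. A cleaner alternative that avoids Jensen entirely is to bound $Q_{p'}(V_n)^{1/2}$ in ``additive'' form first and integrate, using $\int_p^1\sqrt{\log\tfrac{1}{1-p'}}\,dp' \leq (1-p)\sqrt{1+\log\tfrac{1}{1-p}}$, which follows from concavity as well; either way the EVaR-style $\sqrt{\pi}$ constant does not appear here because CVaR only averages the tail rather than exponentially reweighting it.
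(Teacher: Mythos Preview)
Your overall strategy---bound the quantiles of $V_n$ via Proposition~\ref{rso:prop:high_proba_recursive_control} and then pass to CVaR by integrating---is the same as the paper's, but the order in which you take square roots and integrate differs, and your version has a genuine gap at the final step.

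After splitting the risk part via $\sqrt{a+b}\le\sqrt a+\sqrt b$ and applying Jensen to the $\log$ term, you arrive at $\sqrt{\tfrac{2\sigma_R^2}{1-\rho}}+\sqrt{\tfrac1\lambda\big(1+\log\tfrac1{1-p}\big)}$. You then claim that ``one final $\sqrt a+\sqrt b\le\sqrt{a+b}$ (in reverse) or simply regrouping'' yields the stated single square root. This step fails: $\sqrt a+\sqrt b\ge\sqrt{a+b}$, not $\le$, so you cannot recombine the two pieces under one radical. The bound you actually obtain is \emph{larger} than the one in~\eqref{rso:eq:cor:cvar}, and no regrouping recovers the stated form. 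The fix is easy and you nearly had it: do not split the risk part before Jensen. Applying Jensen (concavity of $\sqrt{\cdot}$) directly to $p'\mapsto\sqrt{\tfrac{2\sigma_R^2}{1-\rho}+\tfrac1\lambda\log\tfrac1{1-p'}}$ yields exactly $\sqrt{\tfrac{2\sigma_R^2}{1-\rho}+\tfrac1\lambda\big(1+\log\tfrac1{1-p}\big)}$ in one step.

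The paper's route is cleaner still: it integrates the quantile bound for $V_n$ (not for $V_n^{1/2}$) first to obtain
\[
\cvar_p(V_n)\le\Big(\tfrac{1+\rho}{2}\Big)^n(C_0+\alpha+\lambda\beta)+\tfrac{2\sigma_R^2}{1-\rho}+\tfrac1\lambda\Big(1+\log\tfrac1{1-p}\Big),
\]
then invokes Lemma~\ref{lem:cvar_pythagoras} (which gives $\cvar_p(V_n^{1/2})\le\cvar_p(V_n)^{1/2}$) and applies subadditivity of $\sqrt{\cdot}$ exactly once. This avoids Jensen altogether and keeps the two groups under their square roots from the outset.
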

\begin{proof}
    \sa{Note that the first and second terms on the right-hand side of \eqref{rso:eq:high_prob_recursive_control} satisfy
    \[
        \left(\frac{1+\rho}{2}\right)^n (C_0 + \alpha) + \sa{\left(\frac{1+\rho}{2}\right)^{2n} \lambda \beta} 
        \leq \left(\frac{1+\rho}{2}\right)^n (C_0 + \alpha + \lambda \beta).
    \]
    Hence, by integrating the resulting looser bound with respect to $p$, and using CVaR's integral formulation in~\eqref{rso:eq:def_cvar},} we obtain
    \[
        \cvar_p(V_n) \leq \left(\frac{1+\rho}{2}\right)^n \left(C_0 + \alpha + {\lambda \beta}\right) + \frac{2\sigma_R^2}{1-\rho} + {\frac{1}{\lambda}} \left(1 +  \log\left(\frac{1}{1-p}\right) \right),
    \]
    \sa{which directly implies \eqref{rso:eq:cor:cvar}, due to} Lemma~\eqref{lem:cvar_pythagoras} and the sub-additivity of $t \mapsto \sqrt{t}$.
\end{proof}
%
% \vspace{-1em}
\begin{cor}\label{rso:cor:evar_bound}
    Let $V_n, T_n, R_n, \gamma$ be defined as in Proposition~\ref{rso:prop:high_proba_recursive_control}. 
    Then, for any $p \in \sa{[0},1)$, \sa{and $\lambda\in[0,\gamma]$,}
    \begin{equation}\label{eq:rso:cor:evar_vn}
        \evar_p\left(V_n^{\frac{1}{2}}\right) 
            \leq 
                \left(\frac{1+\rho}{2}\right)^{n / 2}\sa{\sqrt{C_0
                +\alpha
                +\sa{\lambda} \beta}} 
                + \sqrt{\frac{2}{1-\rho}} \sigma_R 
                + \mg{\left( \sqrt{\frac{1}{\sa{\lambda}}\log(\frac{1}{1-p})} + \frac{\sqrt{\pi}}{\sqrt{\sa{\lambda}}}\right)}.                
    \end{equation}
\end{cor}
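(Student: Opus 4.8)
The plan is to derive the EVaR bound from the moment generating function estimate already established in the proof of Proposition~\ref{rso:prop:high_proba_recursive_control}, namely the subexponential-type bound $\mgf{\lambda V_n} \leq e^{\lambda \left(\left(\frac{1+\rho}{2}\right)^n (C_0 + \alpha) + \frac{2\sigma_R^2}{1-\rho}\right) + \lambda^2 \left(\frac{1+\rho}{2}\right)^{2n}\beta}$, valid for $\lambda \in [0, \gamma]$. First I would recall the definition $\evar_p(\cU) = \inf_{\eta>0}\{-\eta^{-1}\log(1-p) + \eta^{-1}\log \expectation[e^{\eta \cU}]\}$ and note that we want to control $\evar_p(V_n^{1/2})$, not $\evar_p(V_n)$. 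The natural route is to apply $\evar$ to $V_n$ first and then pass to the square root: by concavity of $t\mapsto \sqrt t$ and Jensen (or the elementary fact that $\var_p$, hence $\cvar_p$ and $\evar_p$, commute with monotone transformations only as inequalities), one gets $\evar_p(V_n^{1/2}) \leq \sqrt{\evar_p(V_n)}$ — this is the same ``Lemma~\ref{lem:cvar_pythagoras}''-type device used in the CVaR corollary, and I would invoke the analogous statement for $\evar$.

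Next I would bound $\evar_p(V_n)$ using the mgf estimate directly. Plugging the subexponential bound into the definition of $\evar$ gives, for any feasible $\eta = \lambda \in (0,\gamma]$,
\[
\evar_p(V_n) \leq \left(\frac{1+\rho}{2}\right)^n (C_0+\alpha) + \frac{2\sigma_R^2}{1-\rho} + \lambda \left(\frac{1+\rho}{2}\right)^{2n}\beta + \frac{1}{\lambda}\log\left(\frac{1}{1-p}\right).
\]
Here the $\lambda \beta (\frac{1+\rho}{2})^{2n}$ term comes from the quadratic-in-$\lambda$ exponent divided by $\eta=\lambda$, and the $\frac1\lambda \log\frac{1}{1-p}$ term is the standard entropic penalty. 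One can loosen $(\frac{1+\rho}{2})^{2n} \le (\frac{1+\rho}{2})^n \le 1$ in the $\beta$-term as was done for CVaR, so that $\evar_p(V_n) \leq (\frac{1+\rho}{2})^n(C_0+\alpha+\lambda\beta) + \frac{2\sigma_R^2}{1-\rho} + \frac1\lambda\log\frac{1}{1-p}$. Taking the square root and using subadditivity of $\sqrt{\cdot}$ over the three summands yields the first two terms of \eqref{eq:rso:cor:evar_vn} plus a term $\sqrt{\frac1\lambda \log\frac{1}{1-p}}$.

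The remaining discrepancy with the claimed bound is the extra $\sqrt{\pi/\lambda}$ summand, which does not come from the crude mgf-to-$\evar$ argument above; it must instead arise from a sharper, distribution-level estimate of $\evar_p(V_n^{1/2})$ that exploits the \emph{norm}-subGaussian / subexponential structure of $V_n^{1/2}$ itself (recall $V_n^{1/2}$ behaves like a norm-subGaussian quantity, so $\evar$ of it picks up a Gaussian-type $\sqrt{\pi}$ correction rather than only the exponential tail). Concretely, I expect one writes $V_n^{1/2} = (C_n + W)^{1/2}$ with $C_n$ the deterministic part and $W$ a centered subexponential remainder with proxy $\asymp \sigma_R^2/(1-\rho)$, then uses $\sqrt{a+b} \le \sqrt a + \sqrt{b_+}$ together with a tail bound $\probability[W \ge t] \le e^{-t/(2\sigma')}$ to compute $\expectation[e^{\eta \sqrt{W_+}}]$ via the Gaussian-integral identity $\int_0^\infty e^{\eta\sqrt t} e^{-ct}\,dt$, whose logarithm contributes the $\sqrt\pi$ factor. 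The main obstacle will be bookkeeping the constants so that the final proxy matches $\sigma_R$ and the free parameter $\lambda$ plays the role it does in \eqref{eq:rso:cor:evar_vn}; once the mgf of $\sqrt{W_+}$ is controlled in the form $e^{\eta\sqrt{\pi/\lambda} + \ldots}$, the infimum over $\eta$ in the definition of $\evar$ produces exactly the $\sqrt{\frac1\lambda \log\frac{1}{1-p}} + \sqrt{\pi/\lambda}$ pair, and combining with the deterministic $(\frac{1+\rho}{2})^{n/2}\sqrt{C_0+\alpha+\lambda\beta}$ and $\sqrt{2/(1-\rho)}\,\sigma_R$ contributions via subadditivity of the square root finishes the proof. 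The $\chi^2$-divergence consequence then follows by the known identification $\riskMeasure_{\chi^2,r} \le \evar_{1-1/(1+r)}$ that the paper has already set up in Table~\ref{rso:table:risk_measures}, applied to $V_n^{1/2}$.
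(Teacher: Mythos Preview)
Your first route already proves the corollary. The inequality $\evar_p(V_n^{1/2})\le \sqrt{\evar_p(V_n)}$ is valid for the reason you sketch (apply Jensen with the concave increasing map $t\mapsto\sqrt t$ inside the dual representation $\evar_p(X)=\sup_Q\mathbb{E}_Q[X]$), and together with the mgf bound on $V_n$ it yields
\[
\evar_p(V_n^{1/2})\le \Big(\tfrac{1+\rho}{2}\Big)^{n/2}\sqrt{C_0+\alpha+\lambda\beta}+\sqrt{\tfrac{2}{1-\rho}}\,\sigma_R+\sqrt{\tfrac{1}{\lambda}\log\tfrac{1}{1-p}}.
\]
This is \emph{stronger} than \eqref{eq:rso:cor:evar_vn}: the stated bound carries an additional nonnegative $\sqrt{\pi/\lambda}$, so your inequality trivially implies it. There is no ``remaining discrepancy'' to patch, and the second half of your plan (decomposing $V_n=C_n+W$ and computing $\mathbb{E}[e^{\eta\sqrt{W_+}}]$) is unnecessary.

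The paper argues differently and this is where the $\sqrt{\pi/\lambda}$ actually enters. It starts from the quantile bound on $V_n$ (Proposition~\ref{rso:prop:high_proba_recursive_control}), passes to $Q_p(V_n^{1/2})$ via Lemma~\ref{lem:cvar_pythagoras} and subadditivity of $\sqrt{\cdot}$, and then \emph{shifts} rather than decomposes: setting $U_n\defineq V_n^{1/2}-(\tfrac{1+\rho}{2})^{n/2}\sqrt{C_0+\alpha+\lambda\beta}-\sqrt{\tfrac{2}{1-\rho}}\,\sigma_R$, the quantile bound gives the Gaussian-type tail $\mathbb{P}(U_n>t)\le e^{-\lambda t^2}$ for $t\ge 0$. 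The mgf $\mathbb{E}[e^{\eta U_n}]$ is then computed by integrating this tail; completing the square produces the Gaussian integral $\int e^{-(\lambda/\eta^2)s^2}ds$ and hence the factor $\eta\sqrt{\pi/\lambda}$. Optimizing $\eta$ in the EVaR definition and using translation invariance gives the stated bound. So your speculation about the origin of $\sqrt\pi$ was in the right spirit (a Gaussian integral) but at the wrong level: it comes from the tail of the shifted $V_n^{1/2}$, not from a square-root-of-subexponential decomposition of $V_n$. Your route avoids this detour at the price of needing the EVaR analogue of Lemma~\ref{lem:cvar_pythagoras}, which is a one-line consequence of coherence; the paper's route uses only the quantile version of that lemma but pays the $\sqrt{\pi/\lambda}$ constant.
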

\begin{proof}
    \sa{The bound in \eqref{rso:eq:high_prob_recursive_control} of Proposition~\ref{rso:prop:high_proba_recursive_control}} ensures that for all $p \in [0, 1)$ \sa{and $\lambda\in[0,\gamma]$}, \mg{the $p$-th quantile of $V_n$ satisfies}
    \[
        Q_p\left(V_n\right) 
            \leq \left(\frac{1+\rho}{2}\right)^n\left(C_0+\alpha+ \sa{\lambda} \beta  \right)
            + \frac{2 \sigma_R^2}{1-\rho}
            + \frac{1}{\sa{\lambda}} \log \left(\frac{1}{1-p}\right);
    \]
    \sa{hence, non-negativity of $V_n$, Lemma~\ref{lem:cvar_pythagoras} and sub-additivity of $t \mapsto \sqrt{t}$ together imply that}
    \begin{equation}\label{rso:eq:qp_sqrt_vn}
        Q_p\left(V_n^{1 / 2}\right) 
            \leq \left(\frac{1+\rho}{2}\right)^{n / 2}\sa{\sqrt{C_0
                +\alpha+\lambda \beta}}
            + \sqrt{\frac{2}{1-\rho}} \sigma_R
            +\sa{\frac{1}{\sqrt{\lambda}}} \log \left(\frac{1}{1-p}\right)^{1 / 2}.
    \end{equation}
    For $n\geq 0$, let
    $U_n 
            \defineq 
                V_n^{1 / 2}-\left(\frac{1+\rho}{2}\right)^{n / 2}\sa{\sqrt{C_0
                +\alpha+\lambda \beta}}
                - \sqrt{\frac{2}{1-\rho}} \sigma_R,$
    and note that~\eqref{rso:eq:qp_sqrt_vn} implies
    \begin{equation}
        \mathbb{P}\left(U_n>t\right) \leqslant e^{-\sa{\lambda} t^2}\quad\sa{\forall~t\geq 0.}
        \label{ineq-Un-tail-bound}
    \end{equation}
    \sa{Therefore,} following standard arguments from~\citep{vershynin2018high}, we have \sa{for any $\eta>0$ that}
  {\small \begin{eqnarray*} \mathbb{E}(e^{\sa{\eta} U_n}) &=& \int_{0}^{\infty} \probability\left[e^{\sa{\eta} U_n}>t\right]dt 
   = \int_{-\infty}^{\infty} \probability\left[e^{\sa{\eta} U_n}>e^u\right]e^u du  \\
    &=& \int_{-\infty}^{0} \probability\left[e^{\sa{\eta} U_n}>e^u\right]e^u du + \int_{0}^{\infty} \probability\left[e^{\sa{\eta} U_n}>e^u\right]e^u du\\
   &\leq&\int_{-\infty}^{0} e^u du  + \int_{0}^{\infty} \sa{e^{-\frac{\lambda }{\eta^2}u^2}} e^u du = 1 + \sa{e^{\frac{\eta^2}{4\lambda}}}\int_{0}^{\infty} \sa{e^{-\frac{\lambda}{\eta^2} (u-\frac{\eta^2}{2\lambda})^2}}  du 
    1 + \sa{e^{\frac{\eta^2}{4\lambda}}\int_{-\frac{\eta^2}{2\lambda}}^{\infty} e^{-\frac{\lambda}{\eta^2} s^2}  ds}\\
    &\leq&  1 + \sa{\eta e^{\frac{\eta^2}{4\lambda}}\sqrt{\frac{\pi}{\lambda}}} \leq \sa{\Big(1+\eta\sqrt{\frac{\pi}{\lambda}}\Big) e^{\frac{\eta^2}{4\lambda}}},
    \end{eqnarray*}}
    where we used \eqref{ineq-Un-tail-bound}. 
    On the other hand, 
    {\small 
    \begin{eqnarray*}
        \evar_p\left[ U_n \right] 
            &=& \inf_{\eta > 0} \Big\{- \eta^{-1} \log(1-p) 
                + \eta^{-1} \mg{\log} \expectation\left[e^{\eta U_n}\right] \Big\} \\
            &\leq& \inf_{\eta > 0} - \eta^{-1} \log(1-p)
                +  \eta^{-1} \left(\eta^2 / (4\lambda) + 
                                           \eta \sqrt{\pi} / \sqrt{\lambda}  \right) 
            = \sqrt{\log(\frac{1}{1-p})} / \sqrt{\lambda}  + \sqrt{\pi} / \sqrt{\lambda},
            %\sqrt{\frac{2}{\gamma}}\sqrt{\log\left(\frac{c}{1-p}\right)} + 1.
    \end{eqnarray*}}
    where we used $\log(1+x)\leq x$ for $x\geq 0$.
    Finally, by translation invariance of the $\evar$, we obtain $\evar_p\left[ V_n^{\frac{1}{2}} \right] \leq  
                \left(\frac{1+\rho}{2}\right)^{n / 2}
                \sa{\sqrt{C_0
                    +\alpha
                    +\lambda \beta}} 
                + \sqrt{\frac{2}{1-\rho}} \sigma_R  + \sa{\left( \sqrt{\frac{1}{\lambda}\log\Big(\frac{1}{1-p}\Big)} + \frac{\sqrt{\pi}}{\sqrt{\lambda}}\right)}$.                
\end{proof}
\yassine{
We finish with a bound on the $\chi^2$-based risk measure, as defined in Table~\ref{rso:table:risk_measures}.
\begin{cor}
\label{cor:chi-square}
    Let $V_n, T_n, R_n, \gamma$ be defined as in Proposition~\ref{rso:prop:high_proba_recursive_control}. 
    Then, for any $r > 0$, \sa{and $\lambda\in[0,\gamma]$,}  
    \begin{equation}\label{eq:rso:cor:chi2_vn}
        \riskMeasure_{\chi^2, r}\left(V_n^{\frac{1}{2}}\right) 
            \leq 
                \left(\frac{1+\rho}{2}\right)^{n/2}
                \sqrt{C_0
                    +\alpha
                    +\sa{\lambda} \beta} 
                + \sqrt{\frac{2}{1-\rho}} \sigma_R
                + \left(\sqrt{\frac{1}{\sa{\lambda}} \log\left(1+r\right)} 
                + \frac{\sqrt{\pi}}{\sa{\sqrt{\lambda}}}\right).
    \end{equation}
\end{cor}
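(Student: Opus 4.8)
The plan is to obtain \eqref{eq:rso:cor:chi2_vn} by reducing the $\chi^2$-divergence based risk measure to the entropic value at risk, for which the matching bound has already been proved in Corollary~\ref{rso:cor:evar_bound}. Recall from Definition~\ref{rso:def:variational_representation_rm} and Table~\ref{rso:table:risk_measures} that $\riskMeasure_{\chi^2,r}(\mathcal{U})$ is the worst-case expectation of $\mathcal{U}$ over probability measures $\mathbb{Q}\ll\mathbb{P}$ lying in a $\chi^2$-ball of radius $r$ about the reference measure $\mathbb{P}$ (i.e.\ $\varphi(t)=\frac12(t-1)^2$ in \eqref{rso:eq:phi_div_risk_measure}), whereas $\evar_p(\mathcal{U})$ is the analogous worst case over a Kullback--Leibler ball of radius $-\log(1-p)$. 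So the reduction amounts to a set inclusion between these ambiguity balls.

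First I would establish the comparison inequality $\riskMeasure_{\chi^2,r}(\mathcal{U})\le \evar_{p}(\mathcal{U})$ with $p\defineq 1-\frac{1}{1+r}$, valid for any random variable $\mathcal{U}$ with finite EVaR. Applying Jensen's inequality to the concave map $\log(\cdot)$ gives, for every $\mathbb{Q}\ll\mathbb{P}$,
$D_{\mathrm{KL}}(\mathbb{Q}\Vert\mathbb{P})=\expectation_{\mathbb{Q}}\big[\log\frac{d\mathbb{Q}}{d\mathbb{P}}\big]\le \log\,\expectation_{\mathbb{Q}}\big[\frac{d\mathbb{Q}}{d\mathbb{P}}\big]=\log\big(1+\expectation_{\mathbb{P}}\big[(\tfrac{d\mathbb{Q}}{d\mathbb{P}}-1)^2\big]\big)$,
so a bound on the $\chi^2$-divergence of $\mathbb{Q}$ from $\mathbb{P}$ forces $D_{\mathrm{KL}}(\mathbb{Q}\Vert\mathbb{P})\le\log(1+r)$ with the normalization of Table~\ref{rso:table:risk_measures}; taking suprema over the two feasible sets yields the claimed inequality with $-\log(1-p)=\log(1+r)$. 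This is exactly the argument underlying the last sentence of Theorem~\ref{thm:risk-nounds}, only now stated for a generic $\mathcal{U}$ rather than for $\cD_{n+1}^{1/2}$; note $p=1-\frac{1}{1+r}\in[0,1)$ whenever $r>0$.

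The conclusion is then immediate: apply the comparison to $\mathcal{U}=V_n^{1/2}$ and invoke Corollary~\ref{rso:cor:evar_bound} at level $p=1-\frac{1}{1+r}$ and the same $\lambda\in[0,\gamma]$. Since $\log\big(\frac{1}{1-p}\big)=\log(1+r)$, the right-hand side of \eqref{eq:rso:cor:evar_vn} specializes to $\big(\frac{1+\rho}{2}\big)^{n/2}\sqrt{C_0+\alpha+\lambda\beta}+\sqrt{\frac{2}{1-\rho}}\,\sigma_R+\sqrt{\frac{1}{\lambda}\log(1+r)}+\frac{\sqrt{\pi}}{\sqrt{\lambda}}$, which is precisely \eqref{eq:rso:cor:chi2_vn}.

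As a self-contained alternative that bypasses the risk-measure comparison, one can instead argue directly from the Orlicz-type dual formula $\riskMeasure_{\chi^2,r}(\mathcal{U})=\inf_{\eta\ge 0}\{\eta+\sqrt{1+2r}\,\sqrt{\expectation[(\mathcal{U}-\eta)_{+}^2]}\}$ of Table~\ref{rso:table:risk_measures}: by translation invariance reduce to the recentred variable $U_n=V_n^{1/2}-\big(\frac{1+\rho}{2}\big)^{n/2}\sqrt{C_0+\alpha+\lambda\beta}-\sqrt{\frac{2}{1-\rho}}\,\sigma_R$, for which the proof of Corollary~\ref{rso:cor:evar_bound} already supplies the sub-Gaussian tail bound $\probability(U_n>t)\le e^{-\lambda t^2}$ (a consequence of \eqref{rso:eq:high_prob_recursive_control} in Proposition~\ref{rso:prop:high_proba_recursive_control}); a layer-cake computation gives $\expectation[(U_n-\eta)_{+}^2]=\int_0^\infty\probability(U_n>\eta+\sqrt{s})\,ds\le e^{-\lambda\eta^2}/\lambda$ for $\eta\ge 0$, and optimizing $\eta$ (essentially $\eta\asymp\sqrt{\log(1+r)/\lambda}$) recovers the same estimate. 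There is no real analytical obstacle here; the only care needed is bookkeeping of the normalization constant of the $\chi^2$-divergence, i.e.\ making sure the factor multiplying $r$ inside the logarithm is kept consistent with the convention $\varphi(t)=\frac12(t-1)^2$ fixed in Table~\ref{rso:table:risk_measures}.
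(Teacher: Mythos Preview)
Your proposal is correct and follows essentially the same route as the paper: the paper's proof also reduces $\riskMeasure_{\chi^2,r}$ to $\evar_{1-1/(1+r)}$ via the inequality $D_{\varphi_{\mathrm{KL}}}(\mathbb{Q}\Vert\mathbb{P})\le\log(1+D_{\varphi_{\chi^2}}(\mathbb{Q}\Vert\mathbb{P}))$ (citing Gibbs--Su rather than deriving it by Jensen as you do), and then invokes Corollary~\ref{rso:cor:evar_bound}. Your self-contained alternative via the dual formula and the sub-Gaussian tail of $U_n$ is a nice extra, but it is not the paper's argument; and your closing remark about tracking the $\tfrac12$ in $\varphi(t)=\tfrac12(t-1)^2$ is well placed, since the Jensen step yields $\log(1+\expectation_{\mathbb{P}}[(d\mathbb{Q}/d\mathbb{P}-1)^2])$ rather than $\log(1+D_{\varphi_{\chi^2}})$ directly.
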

\begin{proof}
    By\citep[Theorem 5]{gibbs2002choosing}, for all $\sa{\mathbb{Q}} \ll \probability$, we have
        $D_{\varphi_{\text{KL}}}(\mathbb{Q} \saa{\Vert} \probability) \leq 
            \log \left(1 + D_{\varphi_{\chi^2}}(\mathbb{Q} \saa{\Vert} \probability) \right)$,
    \sa{where $\varphi_{\text{KL}}(t)=t\log(t)-t+1$. 
    Therefore, for any integrable random variable $U : \Omega \to \R$,} 
    \[
        \sup_{\sa{\mathbb{Q}:} D_{\varphi_{\chi^2}}(\mathbb{Q} \saa{\Vert} \probability) \leq r} \expectation_{\mathbb{Q}}[U] 
            \sa{\leq} \sup_{\sa{\mathbb{Q}:} D_{\varphi_{\text{KL}}}(\mathbb{Q} \saa{\Vert} \probability) \leq \log(1 + r)} \expectation_{\mathbb{Q}}[U] = \evar_{1 - 1/(1+r)}(U),
    \]
   whenever $\evar_{1 - 1/(1+r)}(U)<\infty$, where we used the EVaR representation given in Table \ref{rso:table:risk_measures}. 
   The statement follows directly from Corollary~\ref{rso:cor:evar_bound}.
\end{proof}
}
In the next section, we design scalar processes $V_n, T_n, R_n$ which satisfy the above assumptions while dominating the error $\cD_n$ on \sapdname~iterates, so that Proposition~\ref{rso:prop:recursive_control}, Corollaries~\ref{rso:cor:evar_bound} and~\ref{cor:chi-square} will allow us to prove our main results, \saa{stated in} Theorem~\ref{rso:thm:high_probability_bound_sapd} and Theorem~\ref{thm:risk-nounds}.

%==========================================================================================
\subsection{Proofs of Theorem~\ref{rso:thm:high_probability_bound_sapd} and Theorem~\ref{thm:risk-nounds}}\label{rso:sec:proof_main_thms}
%==========================================================================================
\begin{figure}[ht]
    \centering
    \includegraphics[width=\linewidth]{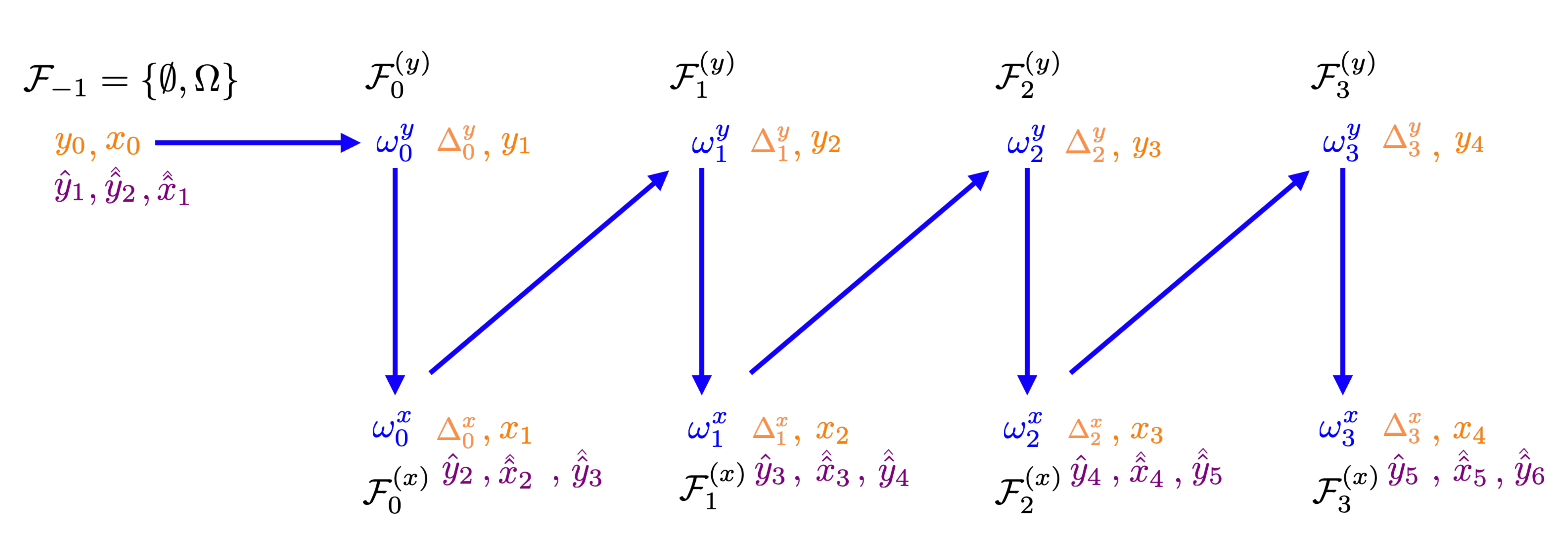}
    % \vspace*{-1cm}
    \caption{Measurability of %sequences of interest in 
    \sapdname~sequences. Our analysis is made possible by the introduction of {predictable} counterparts $\noiselessx_{k}, \doublenoiselessx_{k}, \noiselessy_{k}, \doublenoiselessy_{k}$ to the iterates $x_k,y_k$ as defined in~\eqref{rso:eq:def_noiseless_variables}.}
    \label{rso:fig:measurability_fig}
\end{figure}
%
% \vspace{-1em}
For proving the main results of this paper, namely Theorem~\ref{rso:thm:high_probability_bound_sapd} and Theorem~\ref{thm:risk-nounds}, the application of the recursive control inequality from Section~\ref{rso:app:recursive_control} is not straightforward. In particular, Gauss-Seidel type updates within \sapdname~significantly complicate 
%\nsa{There are papers discussing the advantage of using Gauss-Seidel for minimax rather than Jacobi, we should cite them; this way, we can argue that the complicated analysis we use to deal with G-S update pays-off. MG: I added something} 
the measurability properties of \sapdname~iterate sequence, as illustrated in Figure~\ref{rso:fig:measurability_fig}: the iterates $x_k$ and $y_k$ are measurable with respect to different filtrations $\mathcal{F}_{k-1}^{(x)}$ and $\mathcal{F}_{k-1}^{(y)}$. We circumvent this issue by introducing a stochastic process {$\{V_k\}_{k\geq 0}$} that almost surely upper bounds the distance to the saddle point while exhibiting simpler measurability characteristics {as discussed next}. We note that even though algorithms with Gauss-Seidel type updates, such as \sapdname, are significantly more complicated to analyze than their Jacobi counterparts, such an analysis is rewarding in the sense that algorithms with Gauss-Seidel type updates can often be faster than those using Jacobi type updates, see \citep{zhang2021robust,zhang2022near}. 
Indeed, our analysis for \sapdname~allowed us to obtain high-probability bounds that demonstrate an accelerated behavior for a stochastic primal-dual algorithm for SP problems. 
% \looseness=-1

%============================================================================
\subsubsection{Proof of Theorem~\ref{rso:thm:high_probability_bound_sapd}}
\label{sec:proof-high-probability}
%============================================================================

\yassine{Our proof combines several ingredients.
Let $(\tau,\sigma,\theta,\rho,\sa{\alpha})$ be a solution to the matrix inequality in \eqref{rso:eq:matrix_inequality_params}. 
Recall the weighted distance square metric $\cD_n  {=} \frac{1}{2\tau} \|x_{n} - \optx\| + \frac{1-\alpha \sigma}{2 \sigma} \|y_n - \opty\|^2$ we {introduced} in~\eqref{def-metric}}. 
In the proof, we use a scaled version $\cE_n\triangleq \cD_n/\rho$ for $n\geq 0$ {that simplifies the analysis}. 
We first introduce the following auxiliary iterates $\noiselessy_k, \doublenoiselessx_k, \doublenoiselessy_k$ which can be interpreted as the ``noise-free counterparts" to the actual iterates $x_k, y_k$ in the sense {they represent roughly how the algorithm would behave if the gradients were deterministic in lieu of being stochastic at step $k$:}
\begin{align}\label{rso:eq:def_noiseless_variables}
        & \doublenoiselessx_{0} \defineq x_0, & &\doublenoiselessx_{k+1} \defineq \prox{\tau f} \left(x_k - \tau \gradphix(x_k, \noiselessy_{k+1}) \right), \\
        & \noiselessy_{0} \defineq \doublenoiselessy_{0} \defineq y_0,
        & & \noiselessy_{k+1}  \defineq \prox{\sigma g}\Big(y_k + \sigma (1+\theta) \gradphiy(x_k, y_k) - \sa{\sigma}\theta \gradphiy(x_{k-1}, y_{k-1})\Big), \\
        &  & & \doublenoiselessy_{k+1} \defineq \prox{\sigma g} \left(\noiselessy_k  + \sigma (1+\theta) \gradphiy( \doublenoiselessx_k, \noiselessy_{k}) - \sigma \theta \gradphiy( x_{k-1}, y_{k-1}) \right). 
\end{align}%
{where we recall that $x_{-1}=x_0$ and $y_{-1}=y_0$ (see Algorithm \ref{rso:algo:sapd_algorithm}).}
These auxiliary iterates whose measurability properties are illustrated in Figure~\ref{rso:fig:measurability_fig}, will be key for being able to apply Proposition~\ref{rso:prop:high_proba_recursive_control} to obtain high-probability results for \sapdname. 
\saa{Our proof is based on} establishing an almost sure upper bound of the quantity $\distancegap{n+1} + \distancegap{n}$ by a scalar process $V_n$, {
%that we will explicitly construct for \sapdname, 
and then showing that our choice of $V_n$} satisfies the assumptions of Proposition~\ref{rso:prop:recursive_control}. 
This will then directly yield the desired high-probability estimates for \sapdname.
{We start with a proposition that provides an almost sure bound to the scaled squared distance metric $\cE_n$.} 
Although this bound is already present in substance in~\citep{zhang2021robust}, it does not appear explicitly. 
For completeness, in Appendix~\ref{sec:as-dom}, we provide 
its proof based on various arguments developed in~\citep{zhang2021robust}.
\begin{prop}\label{rso:prop:almost_sure_bound}
Let $(x_n, y_n)$ be the sequence generated by \sapdname, intialized at an arbitrary tuple ${(x_{-1}, y_{-1})} = (x_0, y_0) \in \setX \times \setY$. Provided that there exists $\tau, \sigma > 0$, and $\theta \geq 0$ that satisfy~\eqref{rso:eq:matrix_inequality_params} for some $\rho \in (0, 1)$ and $\alpha \in {[0, \sigma^{-1})}$, {the following almost sure bound on $\cE_n$,} 
\begin{equation}
\label{rso:eq:almost_sure_upper_bound}
\distancegap{n} 
        \leq \rho^{n-1} \startingbias 
                + \sum_{k=0}^{n-1} \rho^{n-1-k} 
                    \Big( 
                        \dotproduct{\noisegradx{k}}{\optx - x_{k+1}} 
                        + \dotproduct{(1+\theta) \noisegrady{k} - \theta \noisegrady{k-1}}{y_{k+1} - \opty} 
                    \Big),
\end{equation}
{holds for all $n\geq 1$,} where $\distancegap{n} \ylsecond{=} \frac{1}{2\rho\tau} \squarednormtwo{x_{n} - \optx} + \frac{1-\alpha \sigma}{2\rho \sigma} \squarednormtwo{y_n - \opty}$, and $\startingbias \ylsecond{=} \frac{1}{2\tau} \squarednormtwo{x_0 - \optx} + \frac{1}{2\sigma} \squarednormtwo{y_0 - \opty}$.
\end{prop}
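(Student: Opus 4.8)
The plan is to prove \eqref{rso:eq:almost_sure_upper_bound} by a pathwise version of the one-step descent analysis of \sapdname~developed in~\citep{zhang2021robust}: instead of taking conditional expectations to cancel the gradient-noise contributions, these are carried along on the right-hand side, which is exactly what the concentration arguments of Section~\ref{rso:app:recursive_control} will later require. Decompose $\approxgradphix(x_k,y_{k+1},\omega_k^x)=\gradphix(x_k,y_{k+1})+\noisegradx{k}$ and $\approxgradphiy(x_k,y_k,\omega_k^y)=\gradphiy(x_k,y_k)+\noisegrady{k}$; since $\approxq_0=\mathbf{0}$ and $x_{-1}=x_0$, $y_{-1}=y_0$, the momentum-averaged estimate in the dual step of Algorithm~\ref{rso:algo:sapd_algorithm} equals $\approxs_k=(1+\theta)\approxgradphiy(x_k,y_k,\omega_k^y)-\theta\,\approxgradphiy(x_{k-1},y_{k-1},\omega_{k-1}^y)$, so its noise component is precisely $(1+\theta)\noisegrady{k}-\theta\noisegrady{k-1}$ under the convention $\noisegrady{-1}=\noisegrady{0}$ (equivalently, there is no momentum at $k=0$).

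First I would write the first-order optimality conditions of the two proximal steps, $\frac{1}{\tau}(x_k-x_{k+1})-\approxgradphix(x_k,y_{k+1},\omega_k^x)\in\partial f(x_{k+1})$ and $\frac{1}{\sigma}(y_k-y_{k+1})+\approxs_k\in\partial g(y_{k+1})$, pair each with the $\mu_x$- (resp.\ $\mu_y$-) strong convexity inequality of $f$ (resp.\ $g$) evaluated at $\optx$ (resp.\ $\opty$), expand the inner products $\frac{1}{\tau}\dotproduct{x_k-x_{k+1}}{\optx-x_{k+1}}$ and $\frac{1}{\sigma}\dotproduct{y_k-y_{k+1}}{\opty-y_{k+1}}$ via the three-point identity, sum the two inequalities, and use the saddle-point characterization of $(\optx,\opty)$, i.e.\ $-\gradphix(\optx,\opty)\in\partial f(\optx)$ and $\gradphiy(\optx,\opty)\in\partial g(\opty)$, to replace the function-value gap $f(x_{k+1})-f(\optx)+g(y_{k+1})-g(\opty)$ by the linear lower bound $\dotproduct{\gradphix(\optx,\opty)}{\optx-x_{k+1}}+\dotproduct{\gradphiy(\optx,\opty)}{y_{k+1}-\opty}$. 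The outcome is a per-iteration inequality whose left-hand side carries the step-$(k+1)$ squared distances (reinforced by the $\frac{\mu_x}{2},\frac{\mu_y}{2}$ strong-convexity terms) plus the nonnegative proximal displacements $\frac{1}{2\tau}\squarednormtwo{x_{k+1}-x_k}$ and $\frac{1}{2\sigma}\squarednormtwo{y_{k+1}-y_k}$, and whose right-hand side carries the step-$k$ squared distances, the untouched noise inner products $\dotproduct{\noisegradx{k}}{\optx-x_{k+1}}+\dotproduct{(1+\theta)\noisegrady{k}-\theta\noisegrady{k-1}}{y_{k+1}-\opty}$, and the ``coupling/momentum'' residuals $\dotproduct{\gradphix(x_k,y_{k+1})-\gradphix(\optx,\opty)}{\optx-x_{k+1}}$ and $\dotproduct{[\gradphiy(x_k,y_k)-\gradphiy(\optx,\opty)]+\theta[\gradphiy(x_k,y_k)-\gradphiy(x_{k-1},y_{k-1})]}{y_{k+1}-\opty}$.

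The main obstacle is controlling these last residuals. The plan is to split off the monotone parts (using convexity of $\Phi(\cdot,y)$ and concavity of $\Phi(x,\cdot)$, which yield nonpositive contributions once one routes through the intermediate point $(\optx,y_{k+1})$), bound the remaining gradient increments by the block-Lipschitz constants of Assumption~\ref{rso:assumption:str_cvx_smooth}(iii), and apply Young's inequality with weights tuned so that every term is dominated by the quadratic form of the positive semidefinite matrix in \eqref{rso:eq:matrix_inequality_params}, in the variables $\norm{x_k-\optx}$, $\norm{y_k-\opty}$, $\norm{x_{k+1}-\optx}$, $\norm{y_{k+1}-\opty}$ and $\norm{\gradphiy(x_k,y_k)-\gradphiy(x_{k-1},y_{k-1})}$; the free parameter $\alpha\in[0,\sigma^{-1})$ is what makes the $y$-displacement and the momentum-gradient norm reabsorbable. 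The genuinely delicate points are that the Gauss--Seidel structure evaluates $\gradphix$ at $(x_k,y_{k+1})$ but $\gradphiy$ at $(x_k,y_k)$, so monotonicity of the saddle operator does not apply verbatim and the mixed increments must be carefully decomposed, and that the momentum difference $\theta[\gradphiy(x_k,y_k)-\gradphiy(x_{k-1},y_{k-1})]$ must be book-kept so that, after combining the per-iteration inequalities with geometric weights, it telescopes and leaves only a nonnegative remainder that vanishes at $k=0$ because $x_{-1}=x_0$, $y_{-1}=y_0$ and $\approxq_0=\mathbf{0}$. This is precisely the estimate carried out in expectation in~\citep{zhang2021robust}; performing it pathwise is routine once the noise terms have been isolated as above.

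Collecting these pieces yields the one-step contraction $\distancegap{k+1}\le\rho\,\distancegap{k}+\dotproduct{\noisegradx{k}}{\optx-x_{k+1}}+\dotproduct{(1+\theta)\noisegrady{k}-\theta\noisegrady{k-1}}{y_{k+1}-\opty}$ for all $k\ge0$, after discarding a nonnegative quantity on the left; here one uses the diagonal entries $\frac{1}{\tau}+\mu_x-\frac{1}{\rho\tau}\ge0$ and $\frac{1}{\sigma}+\mu_y-\frac{1}{\rho\sigma}\ge0$ of \eqref{rso:eq:matrix_inequality_params} to ensure the reinforced step-$(k+1)$ terms on the left dominate $\distancegap{k+1}=\cD_{k+1}/\rho$. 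Unrolling this recursion from $k=0$ to $k=n-1$ and bounding $\rho^n\distancegap{0}=\rho^{n-1}\cD_0\le\rho^{n-1}\startingbias$ (which holds since $\alpha\ge0$ gives $\cD_0=\frac{1}{2\tau}\squarednormtwo{x_0-\optx}+\frac{1-\alpha\sigma}{2\sigma}\squarednormtwo{y_0-\opty}\le\startingbias$) produces exactly \eqref{rso:eq:almost_sure_upper_bound}.
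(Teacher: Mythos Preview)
Your high-level strategy is right: this is a pathwise version of the descent analysis in \citep{zhang2021robust}, and the only change from the expectation-based proof is to carry the noise inner products rather than annihilate them. The paper's proof does exactly this. Where your proposal goes wrong is the final paragraph: you assert a clean one-step contraction
\[
\cE_{k+1}\le \rho\,\cE_k + \dotproduct{\noisegradx{k}}{\optx-x_{k+1}} + \dotproduct{(1+\theta)\noisegrady{k}-\theta\noisegrady{k-1}}{y_{k+1}-\opty}
\]
``after discarding a nonnegative quantity on the left.'' This step is not available. The descent lemma (Lemma~\ref{rso:lem:descent_lemma}) produces, besides $\Lambda_k-\Sigma_{k+1}$, a term $\Gamma_{k+1}$ that contains \emph{positive} cross-products such as $\theta\Lyx\|x_k-x_{k-1}\|\|y_{k+1}-y_k\|$ and $\theta\Lyy\|y_k-y_{k-1}\|\|y_{k+1}-y_k\|$, together with the non-sign-definite momentum increment $-\langle q_{k+1},y_{k+1}-\opty\rangle+\theta\langle q_k,y_k-\opty\rangle$. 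After Young's inequality, absorbing $\|x_k-x_{k-1}\|^2$ and $\|y_k-y_{k-1}\|^2$ requires the displacement terms $\tfrac{1}{2\tau}\|x_k-x_{k-1}\|^2$ and $\tfrac{1}{2\sigma}\|y_k-y_{k-1}\|^2$ generated at iteration $k-1$, so they cannot be ``discarded'' at that step. In other words, $\cE_k$ alone is not a one-step Lyapunov function for \sapdname; the matrix inequality~\eqref{rso:eq:matrix_inequality_params} is $5\times5$ precisely because the contracting quantity must include the consecutive-iterate displacements.

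What the paper actually does (Appendix~\ref{sec:as-dom}) is to sum the per-iteration inequalities of Lemma~\ref{rso:lem:descent_lemma} with weights $\rho^{-k}$ \emph{first}, perform Abel summation on the momentum terms to expose $|1-\theta/\rho|S_{k+1}$ and a boundary term $-\rho^{-n+1}\langle q_n,y_n-\opty\rangle$, and then recognise the entire non-noise residual as $U_n=\tfrac{1}{2}\xi_0^\top A\xi_0 - \tfrac{1}{2}\sum_{k=1}^{n-1}\rho^{-k+1}\xi_k^\top(B-\rho^{-1}A)\xi_k - \rho^{-n+1}(\tfrac{1}{2}\xi_n^\top B\xi_n-\cE_n-\tfrac{\theta}{\rho}S_n)$, with $\xi_k=(\|x_k-\optx\|,\|y_k-\opty\|,\|x_k-x_{k-1}\|,\|y_k-y_{k-1}\|,\|y_{k+1}-y_k\|)^\top$. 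The matrix inequality~\eqref{rso:eq:matrix_inequality_params} is equivalent to $B-\rho^{-1}A\succeq 0$, which kills the middle sum, and a separate $3\times3$ Schur-complement check gives $\tfrac{1}{2}\xi_n^\top B\xi_n-\tfrac{\theta}{\rho}S_n\ge\cE_n$; only then is $\cE_n$ extracted. Your variable list $(\|x_k-\optx\|,\|y_k-\opty\|,\|x_{k+1}-\optx\|,\|y_{k+1}-\opty\|,\|q_k\|)$ does not match the matrix in~\eqref{rso:eq:matrix_inequality_params}, so ``Young with weights tuned to that matrix'' is not the right mechanism either.

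A minor side point: you set $\noisegrady{-1}=\noisegrady{0}$, but the paper (and the statement of the proposition, read together with the convention used just after~\eqref{eq:P1P2Q-sequence}) takes $\noisegrady{-1}=\mathbf{0}$; these yield different $k=0$ terms in~\eqref{rso:eq:almost_sure_upper_bound}.
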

\begin{proof}
    The proof is provided in Appendix~\ref{sec:as-dom}.
\end{proof}
{Now, equipped with Proposition~\ref{rso:prop:almost_sure_bound}}, {we can write}
{\small
\begin{align}
\label{eq:SAPD-bound}
\begin{split}
    \distancegap{n} 
        &\leq \rho^{n-1} \startingbias 
                + \sum_{k=0}^{n-1} \rho^{n-1-k} 
                    \Big( 
                        \dotproduct{\noisegradx{k}}{\optx - x_{k+1}} 
                        + \dotproduct{(1+\theta) \noisegrady{k} - \theta \noisegrady{k-1}}{y_{k+1} - \opty} 
                    \Big) \\
        &= \rho^{n-1} \startingbias 
            \begin{aligned}[t]
                &+ \sum_{k=0}^{n-1} \rho^{n-1-k} \left( 
                        \dotproduct{\noisegradx{k}}{\optx - \doublenoiselessx_{k+1}} 
                        + (1+\theta) \dotproduct{\noisegrady{k}}{\noiselessy_{k+1} - \opty}
                        - \theta \dotproduct{\noisegrady{k-1}}{\doublenoiselessy_{k+1} - \opty}
                    \right) \\
                &+ \sum_{k=0}^{n-1} \rho^{n-1-k} \left( 
                        \dotproduct{\noisegradx{k}}{\doublenoiselessx_{k+1} - x_{k+1}} 
                        + (1+\theta) \dotproduct{\noisegrady{k}}{y_{k+1} - \noiselessy_{k+1}}
                        - \theta \dotproduct{\noisegrady{k-1}}{y_{k+1} - \doublenoiselessy_{k+1}}
                    \right).\\
            \end{aligned}
\end{split}
\end{align}
}
\sa{For $k\geq 0$,} {introducing the scalar quantities}
\begin{subequations}\label{eq:P1P2Q-sequence}
    \begin{align}
        & \varidx{P}{k}{1} \defineq \dotproduct{\noisegradx{k}}{\optx - \doublenoiselessx_{k+1}} 
                            + (1+\theta) \dotproduct{\noisegrady{k}}{\noiselessy_{k+1} - \opty},\qquad \varidx{P}{k}{2} \defineq \frac{-\theta}{\rho} \dotproduct{\noisegrady{k}}{\doublenoiselessy_{k+2} - \opty}, \\
        & Q_k \defineq \dotproduct{\noisegradx{k}}{\doublenoiselessx_{k+1} - x_{k+1}}
                            + (1+\theta) \dotproduct{\noisegrady{k}}{y_{k+1} - \noiselessy_{k+1}}
                            - \theta \dotproduct{\noisegrady{k-1}}{y_{k+1} - \doublenoiselessy_{k+1}},
    \end{align}
\end{subequations}
rearranging the sums \sa{in~\eqref{eq:SAPD-bound} and \sa{using $\noisegrady{-1}=\mathbf{0}$}, we may write \eqref{eq:SAPD-bound} equivalently as follows:}
\[
    \distancegap{n} 
        \leq  \rho^{n-1} \startingbias  
                + \sum_{k=0}^{n-1} \rho^{n-1-k} \varidx{P}{k}{1}  
                + \sum_{k=\sa{0}}^{n-2} \rho^{n-1-k} \varidx{P}{k}{2}
                + \sum_{k=0}^{n-1} \rho^{n-1-k} Q_k. 
\]
Now notice that for $n\geq 0$, 
{{\small
\begin{equation}
\label{eq:consecutive_E}
\begin{split}
    \distancegap{n+1} &+ \distancegap{n} 
        \leq
            \begin{aligned}[t]
                 \left(1 + \frac{1}{\rho}\right) 
                 & \left(
                    \rho^{n} \startingbias 
                    + \sum_{k=0}^{n-1} \rho^{n-k} \varidx{P}{k}{1} 
                    + \sum_{k=0}^{n-2} \rho^{n-k} \varidx{P}{k}{2} 
                    + \sum_{k=0}^{n-1} \rho^{n-k} Q_{k} 
                    \right) + \varidx{P}{n}{1} + \rho \varidx{P}{n-1}{2} + Q_n \\
            \end{aligned} \\
      %  &= \begin{aligned}[t]
       %          \left(1 + \frac{1}{\rho}\right) 
       %          & \left(
       %             \rho^{n} \startingbias 
       %             + \sum_{k=0}^{n} \rho^{n-k} \varidx{P}{k}{1} 
       %             + \sum_{k=0}^{n} \rho^{n-k} \varidx{P}{k}{2} 
       %             + \sum_{k=0}^{n} \rho^{n-k} Q_{k} 
       %             \right) \\
        %         & - \frac{1}{\rho} \varidx{P}{n}{1} - \left(1 + \frac{1}{\rho}\right) \varidx{P}{n}{2}  - \varidx{P}{n-1}{2} - \frac{1}{\rho} Q_n \\        
        %    \end{aligned} \\
        &\qquad~= {\begin{aligned}[t]       
                 \left(1 + \frac{1}{\rho}\right) 
                 & \left(
                    \rho^{n} \startingbias 
                    + \sum_{k=0}^{n} \rho^{n-k} \varidx{P}{k}{1} 
                    + \sum_{k=0}^{n} \rho^{n-k} \varidx{P}{k}{2} 
                    \right) \\
                 & + \left(1 + \frac{1}{\rho}\right)
                  \left(\frac{-1}{1+\rho}
          \varidx{P}{n}{1} 
            - \varidx{P}{n}{2}
            - \frac{\rho}{1+\rho} \varidx{P}{n-1}{2} 
             { - \frac{1}{1+\rho} Q_n 
            + \sum_{k=0}^{n} \rho^{n-k} Q_k}\right).\\        
            \end{aligned}} \\    
\end{split}    
\end{equation}}
}
{We next present a lemma which bounds the terms on the right-hand side of the above equality.}
\begin{lem}\label{rso:prop:barbers_lemma}
\saa{Let $\varidx{P}{n}{1},\varidx{P}{n}{2}$ and $Q_n$ be defined as in~\eqref{eq:P1P2Q-sequence}. Then,} for any $n\geq \sa{0}$, 
\[
\begin{split}
    {\frac{-1}{1+\rho}
          \varidx{P}{n}{1} 
            - \varidx{P}{n}{2}
            - \frac{\rho}{1+\rho} \varidx{P}{n-1}{2} }
            & { - \frac{1}{1+\rho} Q_n 
            + \sum_{k=0}^{n} \rho^{n-k} Q_k}\\
        &\leq {\frac{\rho}{2(1+\rho)} (\distancegap{n+1} + \distancegap{n}) 
            + \sum_{k=0}^{n} \rho^{n-k} \left(\cQ_x \squarednormtwo{\noisegradx{k}} + \cQ_y \squarednormtwo{\noisegrady{k}}\right)},
\end{split}
\]
%
%\yl{where $\cQ_x \defineq B^x + C^x + C_{-1}^x$, and $\cQ_y \defineq B^y + B_{-1}^y + C^y + C_{-1}^y + C_{-2}^y$} with $B^x, B^y, B_{-1}^y, C^x, C_{-1}^x,  C^y, C_{-1}^y, C_{-2}^y$, 
{for some positive constants $\cQ_x$ and $\cQ_y$, 
%These constants 
which depend only on the algorithm and problem parameters, and are provided explicitly in Table~\ref{rso:table:constants_C} of Appendix \ref{rso:sec:constants}}.
%\na{some constants, given explicitly in Table~\ref{rso:table:constants_B} and Table~\ref{rso:table:constants_C}, that only depend on algorithm and problem parameters
%.}
\end{lem}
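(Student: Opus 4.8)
The statement is a purely pathwise (deterministic) estimate, so no measurability or expectation arguments enter yet; the ingredients are the non-expansiveness/contraction of proximal operators, the Lipschitz bounds of Assumption~\ref{rso:assumption:str_cvx_smooth}(iii), Young's inequality, and careful constant tracking. The first step is to control, pathwise, how far each actual SAPD iterate is from its noise-free counterpart defined in \eqref{rso:eq:def_noiseless_variables}. Writing $\tilde\nabla_y\Phi(x_k,y_k,\omega_k^y)=\nabla_y\Phi(x_k,y_k)+\noisegrady{k}$ and $\tilde\nabla_x\Phi(x_k,y_{k+1},\omega_k^x)=\nabla_x\Phi(x_k,y_{k+1})+\noisegradx{k}$, comparing Algorithm~\ref{rso:algo:sapd_algorithm} with \eqref{rso:eq:def_noiseless_variables} and using that $\prox{\sigma g}$ and $\prox{\tau f}$ are $1$-Lipschitz gives, for every $k\ge 0$, $\|y_{k+1}-\noiselessy_{k+1}\|\le \sigma(1+\theta)\|\noisegrady{k}\|+\sigma\theta\|\noisegrady{k-1}\|$; propagating this through the $x$-update and invoking Assumption~\ref{rso:assumption:str_cvx_smooth}(iii) yields $\|x_{k+1}-\doublenoiselessx_{k+1}\|\le \tau\|\noisegradx{k}\|+\tau L_{xy}\|y_{k+1}-\noiselessy_{k+1}\|$, and in the same way one bounds $\|y_{k+1}-\doublenoiselessy_{k+1}\|$, $\|\noiselessy_{k+1}-\doublenoiselessy_{k+1}\|$ and $\|\doublenoiselessx_k-x_k\|$ by explicit linear combinations of $\|\noisegradx{j}\|$ and $\|\noisegrady{j}\|$ with $j\in\{k,k-1,k-2\}$, with the convention $\noisegrady{-1}=\noisegrady{-2}=\mathbf{0}$. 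I would record each constant, since they ultimately feed into $\cQ_x,\cQ_y$.

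The second step bounds the \emph{distance-to-saddle} of the auxiliary iterates that appear inside $\varidx{P}{n}{1}$, $\varidx{P}{n}{2}$, $\varidx{P}{n-1}{2}$. Using the fixed-point identities $\optx=\prox{\tau f}(\optx-\tau\nabla_x\Phi(\optx,\opty))$ and $\opty=\prox{\sigma g}(\opty+\sigma\nabla_y\Phi(\optx,\opty))$ valid at the saddle point, together with the $\tfrac{1}{1+\sigma\mu_y}$-contraction of $\prox{\sigma g}$ (resp. $\tfrac{1}{1+\tau\mu_x}$ of $\prox{\tau f}$) coming from strong convexity, Assumption~\ref{rso:assumption:str_cvx_smooth}(iii), and the elementary inequalities $\|x_m-\optx\|^2\le 2\tau\rho\,\distancegap{m}$ and $\|y_m-\opty\|^2\le\tfrac{2\sigma\rho}{1-\alpha\sigma}\distancegap{m}$ (immediate from the definition of $\distancegap{m}$ in Proposition~\ref{rso:prop:almost_sure_bound}), one shows that each of $\|\doublenoiselessx_{n+1}-\optx\|^2$, $\|\noiselessy_{n+1}-\opty\|^2$, $\|\doublenoiselessy_{n+1}-\opty\|^2$ and $\|\doublenoiselessy_{n+2}-\opty\|^2$ is at most a parameter-dependent multiple of $\distancegap{n+1}+\distancegap{n}$ plus a parameter-dependent multiple of $\|\noisegradx{j}\|^2+\|\noisegrady{j}\|^2$ over $j\in\{n,n-1,n-2\}$. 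For $\doublenoiselessx_{n+1}$, $\noiselessy_{n+1}$ and $\doublenoiselessy_{n+1}$ one simply adds the Step~1 perturbation bound to $\|x_{n+1}-\optx\|$ (resp. $\|y_{n+1}-\opty\|$), using that $\noiselessy_{n+1}-y_{n+1}$ and $\doublenoiselessy_{n+1}-\noiselessy_{n+1}$ are both $O(\text{noise})$; the delicate one is the two-step-ahead iterate $\doublenoiselessy_{n+2}$, which by its definition in \eqref{rso:eq:def_noiseless_variables} reaches back only to time $n$, and which I would bound by a single $\prox{\sigma g}$-contraction step from $\noiselessy_{n+1}$ together with the gradient increments at $(\doublenoiselessx_{n+1},\noiselessy_{n+1})$ and $(x_n,y_n)$ — the latter producing exactly the $\distancegap{n}$ contribution, and $\noiselessy_{n+1},\doublenoiselessx_{n+1}$ producing the $\distancegap{n+1}$ contribution via Step~1.

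The third step assembles the bound. I would apply Young's inequality $\langle a,b\rangle\le\tfrac{\varepsilon}{2}\|b\|^2+\tfrac{1}{2\varepsilon}\|a\|^2$ to each scalar product in $\tfrac{-1}{1+\rho}\varidx{P}{n}{1}-\varidx{P}{n}{2}-\tfrac{\rho}{1+\rho}\varidx{P}{n-1}{2}-\tfrac{1}{1+\rho}Q_n+\sum_{k=0}^n\rho^{n-k}Q_k$: for the $P$-type products $\langle\text{noise},\ \text{auxiliary iterate}-\optz\rangle$, I take the $(\text{auxiliary}-\optz)$ factor as $b$ and choose $\varepsilon$ small (depending only on the parameters and on the finitely many $O(1)$ constants produced above) so that the accumulated $\tfrac{\varepsilon}{2}\|b\|^2$ terms total at most $\tfrac{\rho}{2(1+\rho)}(\distancegap{n+1}+\distancegap{n})$ after invoking Step~2; for the $Q$-type products $\langle\text{noise},\ \text{auxiliary}-\text{actual}\rangle$, both factors are $O(\text{noise})$ by Step~1, so Young's inequality with, say, $\varepsilon=1$ turns each into a pure combination of squared noise norms. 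Finally I collect all squared-noise contributions: a term $\|\noisegrady{k-1}\|^2$ or $\|\noisegrady{k-2}\|^2$ carrying weight $\rho^{n-k}$ is re-indexed to weight $\le\rho^{-2}\rho^{n-j}$ on $\|\noisegrady{j}\|^2$, and (using $\noisegrady{-1}=\noisegrady{-2}=\mathbf{0}$) everything is absorbed into $\sum_{k=0}^n\rho^{n-k}\big(\cQ_x\|\noisegradx{k}\|^2+\cQ_y\|\noisegrady{k}\|^2\big)$ with $\cQ_x,\cQ_y$ explicit functions of $(\tau,\sigma,\theta,\rho,\alpha,\mu_x,\mu_y,L_{xx},L_{yy},L_{xy},L_{yx})$, which are the quantities tabulated in Table~\ref{rso:table:constants_C}.

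\textbf{Expected main obstacle.} Conceptually nothing here is deep, but there are many cross terms — roughly eight inner products spread across $\varidx{P}{n}{1},\varidx{P}{n}{2},\varidx{P}{n-1}{2},Q_n$ and the tail sum, plus the time reindexing — and the single Young parameter $\varepsilon$ must be calibrated so the aggregated ``distance'' coefficient is \emph{exactly} at most $\tfrac{\rho}{2(1+\rho)}$, which forces one to propagate all the $O(1)$ constants from Steps~1--2 to the end rather than hide them in $\mathcal{O}(\cdot)$ notation. The second subtlety, already flagged above, is that the two-step-ahead iterate $\doublenoiselessy_{n+2}$ must be controlled by $\distancegap{n+1}+\distancegap{n}$ and never by $\distancegap{n+2}$ or $\distancegap{n-1}$; this is precisely what dictates going through exactly one $\prox{\sigma g}$-contraction step from $\noiselessy_{n+1}$ rather than from $y_{n+2}$ or from $(x_{n-1},y_{n-1})$.
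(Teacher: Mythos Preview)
Your plan is sound and would deliver an inequality of the claimed shape, but it takes a different route from the paper and would not land on the specific constants in Table~\ref{rso:table:constants_C}. The paper's key device is a cancellation you miss: the three inner products in $-\tfrac{1}{1+\rho}Q_n$ are paired with $-\tfrac{1}{1+\rho}\varidx{P}{n}{1}$ and $-\tfrac{\rho}{1+\rho}\varidx{P}{n-1}{2}$ so that the auxiliary iterates $\doublenoiselessx_{n+1},\noiselessy_{n+1},\doublenoiselessy_{n+1}$ drop out exactly, leaving inner products of noise against the \emph{actual} increments $x_{n+1}-\optx$ and $y_{n+1}-\opty$. Young's inequality with the explicit weights $\gamma_x=8\tau$, $\gamma_y=\tfrac{8\sigma(1+\theta)}{1-\alpha\sigma}$, $\gamma'_y=\tfrac{8\theta\sigma}{1-\alpha\sigma}$ then gives the $\tfrac{\rho}{8(1+\rho)}\distancegap{n+1}$ pieces directly from the definition of $\distancegap{n+1}$, with no need for your Step~2 on these three terms. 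Only the remaining term $-\varidx{P}{n}{2}$ genuinely requires bounding an auxiliary-to-saddle distance, namely $\|\doublenoiselessy_{n+2}-\opty\|$, and for that the paper proceeds exactly as you describe (one prox-contraction step from $\noiselessy_{n+1}$, isolated as Lemma~\ref{rso:lem:upperbound_hathaty}); the residual $\|\noiselessy_{n+1}-y_{n+1}\|^2$ and $\|\doublenoiselessx_{n+1}-x_{n+1}\|^2$ terms this produces are then absorbed via your Step~1, i.e., Lemma~\ref{rso:lem:sapd_lemma_A3}. In short: your brute-force strategy (triangle-inequality each auxiliary back to the actual iterate, then Young with a uniformly small $\varepsilon$) is valid and yields \emph{some} admissible $\cQ_x,\cQ_y$ with the right $\Theta(1-\theta)$ scaling under~\eqref{rso:eq:cb_params}, but the paper's cancellation is cleaner, needs only one auxiliary-to-saddle estimate instead of four, and is what produces the specific constants the lemma advertises.
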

\begin{proof}
The proof is provided in Appendix \ref{sec:proof_barbers_lemma}.
\end{proof}
{Applying Lemma~\ref{rso:prop:barbers_lemma} to the inequality \eqref{eq:consecutive_E}}, 
we obtain
\begin{equation} \label{rso:eq:upperbound_lyap}
    {\frac{\rho}{2(1+\rho)} \left(\distancegap{n+1} + \distancegap{n}\right)}
        \leq \rho^{n} \startingbias 
                \begin{aligned}[t]
                    &+ \sum_{k=0}^{n} \rho^{n-k} \varidx{P}{k}{1} 
                    + \sum_{k=0}^{n} \rho^{n-k} \varidx{P}{k}{2} 
                    + \sum_{k=0}^{n} \rho^{n-k} \Big(\cQ_x\squarednormtwo{\noisegradx{k}} 
                    + \cQ_y %\sum_{k=0}^{n} \rho^{n-k}
                    \squarednormtwo{\noisegrady{k}}\Big), \\
                \end{aligned} \\
\end{equation}
For $n \in \N$, we define $V_n$, \sa{$T_{n+1}$ and $R_{n+1}$ as follows:}
\begin{equation}\label{rso:eq:def_process_sapd_proof}
\begin{split}
    V_n &\defineq \rho^{n} \startingbias 
        + \sum_{k=0}^{n} \rho^{n-k} \Big(\varidx{P}{k}{1} 
        + 
        %\sum_{k=\sa{0}}^{n} \rho^{n-k} 
        \varidx{P}{k}{2} \Big)
        + \sum_{k=0}^{n} \rho^{n-k} \Big(\cQ_x\squarednormtwo{\noisegradx{k}} 
                    + \cQ_y %\sum_{k=0}^{n} \rho^{n-k}
                    \squarednormtwo{\noisegrady{k}}\Big), \\
    \sa{T_{n+1}} &\defineq \varidx{P}{n+1}{1} + \varidx{P}{n+1}{2}, \qquad
    \sa{R_{n+1}} \defineq {\;\cQ_x \squarednormtwo{\noisegradx{n+1}} + \cQ_y \squarednormtwo{\noisegrady{n+1}}} ;\\
\end{split}
\end{equation}
\sa{therefore, \eqref{rso:eq:upperbound_lyap} implies that
\begin{align}
    \label{eq:E-V-bound}
    {\frac{\rho}{2(1+\rho)} \left(\distancegap{n+1} + \distancegap{n}\right)} \leq V_n,\quad\forall~n\geq 0,\quad\text{a.s.}
\end{align}}
Next, we argue that $V_n$ satisfies the assumptions of the recursive control inequality in~\eqref{rso:prop:recursive_control}. To achieve this goal, we will use the following {lemma}.
\begin{lem}\label{rso:lem:upperbound_hat_variables}
{For any \sa{$k \in \N$} and \mg{$\rho\in (0,1)$}, the following inequalities,}
\begin{equation}\label{rso:eq:upperbound_hat_variables}
\begin{split}
    8 \squarednormtwo{\doublenoiselessx_{k+1} - \optx} & \leq  \squarednormtwo{A_1} \; \frac{\rho}{2(1 + \rho)} \left(\distancegap{k} + \distancegap{k-1}\right), \\
    16 {(1+\theta)^2} \squarednormtwo{\noiselessy_{k+1} - \opty} & \leq  \squarednormtwo{A_2} \; \frac{\rho}{2(1 + \rho)} \left(\distancegap{k} + \distancegap{k-1}\right), \\
    16 {\frac{\theta^2}{\rho^2}} \squarednormtwo{\doublenoiselessy_{k+2} - \opty} & \leq  \squarednormtwo{A_3} \; \frac{\rho}{2(1 + \rho)} \left(\distancegap{k} + \distancegap{k-1}\right), \\
\end{split}
\end{equation}
{hold almost surely with the convention that $\distancegap{-1}\triangleq\distancegap{0}$, for some vectors $A_1, A_2, A_3 \in \mathbb{R}^4$ which are explicitly provided in Table~\ref{rso:table:constants_A} of Appendix \ref{rso:sec:constants}.}

\begin{proof}
    The proof is provided in Appendix \ref{proof:rso:lem:upperbound_hat_variables}.
\end{proof}
\end{lem}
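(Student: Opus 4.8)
The plan is to control the three noise-free auxiliary iterates by combining nonexpansiveness of the proximal maps with the fixed-point characterization of the saddle point. Since $\optz=(\optx,\opty)$ is the unique saddle point of~\eqref{rso:eq:main_problem}, its first-order optimality conditions read $\optx=\prox{\tau f}(\optx-\tau\gradphix(\optx,\opty))$ and $\opty=\prox{\sigma g}(\opty+\sigma\gradphiy(\optx,\opty))$; since the momentum combination $(1+\theta)\gradphiy(\optx,\opty)-\theta\gradphiy(\optx,\opty)$ collapses to $\gradphiy(\optx,\opty)$, the point $\opty$ is likewise a fixed point of the momentum prox-step appearing in~\eqref{rso:eq:def_noiseless_variables}. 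First I would subtract each such identity from the matching line of~\eqref{rso:eq:def_noiseless_variables}, use that $\prox{\tau f}$ and $\prox{\sigma g}$ are nonexpansive ($1$-Lipschitz) --- one may instead invoke the sharper factors $1/(1+\tau\mu_x)$ and $1/(1+\sigma\mu_y)$ coming from strong convexity of $f,g$, but plain nonexpansiveness already suffices here --- and then apply the block-Lipschitz bounds of Assumption~\ref{rso:assumption:str_cvx_smooth}(iii). This gives the affine estimates $\|\doublenoiselessx_{k+1}-\optx\|\le(1+\tau\Lxx)\|x_k-\optx\|+\tau\Lxy\|\noiselessy_{k+1}-\opty\|$, with $\|\noiselessy_{k+1}-\opty\|\le\|y_k-\opty\|+\sigma(1+\theta)(\Lyx\|x_k-\optx\|+\Lyy\|y_k-\opty\|)+\sigma\theta(\Lyx\|x_{k-1}-\optx\|+\Lyy\|y_{k-1}-\opty\|)$, and $\|\doublenoiselessy_{k+2}-\opty\|\le\|\noiselessy_{k+1}-\opty\|+\sigma(1+\theta)(\Lyx\|\doublenoiselessx_{k+1}-\optx\|+\Lyy\|\noiselessy_{k+1}-\opty\|)+\sigma\theta(\Lyx\|x_k-\optx\|+\Lyy\|y_k-\opty\|)$.

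The second step is to chain these estimates: substituting the bound for $\|\noiselessy_{k+1}-\opty\|$ into the one for $\|\doublenoiselessx_{k+1}-\optx\|$ and then both into the one for $\|\doublenoiselessy_{k+2}-\opty\|$, each of the three left-hand sides becomes bounded by $a^\top v$ for an explicit nonnegative vector $a\in\R^4$ depending only on $(\tau,\sigma,\theta)$ and $(\Lxx,\Lyy,\Lxy,\Lyx)$, where $v\triangleq(\|x_k-\optx\|,\|y_k-\opty\|,\|x_{k-1}-\optx\|,\|y_{k-1}-\opty\|)^\top$ (for $k=0$ the entries $\|x_{-1}-\optx\|,\|y_{-1}-\opty\|$ equal $\|x_0-\optx\|,\|y_0-\opty\|$ by the initialization of Algorithm~\ref{rso:algo:sapd_algorithm}, which is exactly the convention $\distancegap{-1}\triangleq\distancegap{0}$). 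On the other hand, by the definition of $\distancegap{n}$ the right-hand side of~\eqref{rso:eq:upperbound_hat_variables} is a diagonal positive-definite quadratic form, $\frac{\rho}{2(1+\rho)}(\distancegap{k}+\distancegap{k-1})=\squarednormtwo{Dv}$ with $D=\diag(d_1,d_2,d_1,d_2)$, $d_1^2=\frac{1}{4\tau(1+\rho)}$ and $d_2^2=\frac{1-\alpha\sigma}{4\sigma(1+\rho)}>0$ (recall $\alpha\sigma<1$). Applying the weighted Cauchy--Schwarz inequality $(a^\top v)^2=\big(\sum_i(a_i/d_i)(d_iv_i)\big)^2\le\squarednormtwo{D^{-1}a}\,\squarednormtwo{Dv}$ turns each affine bound into the desired squared-norm bound; multiplying by the prescribed scalars $8$, $16(1+\theta)^2$, $16\theta^2/\rho^2$ and absorbing them into the vectors, i.e. setting $A_1=\sqrt{8}\,D^{-1}a_1$, $A_2=4(1+\theta)D^{-1}a_2$, $A_3=(4\theta/\rho)D^{-1}a_3$, produces the three inequalities of~\eqref{rso:eq:upperbound_hat_variables}, with the resulting explicit entries collected in Table~\ref{rso:table:constants_A}.

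The only substantive work is bookkeeping: propagating the coefficients through the two-level substitution $\noiselessy_{k+1}\!\to\!\doublenoiselessx_{k+1}\!\to\!\doublenoiselessy_{k+2}$ and then through the weighted Cauchy--Schwarz step, so that the final vectors $A_1,A_2,A_3$ are expressed purely in terms of the problem constants $(\mu_x,\mu_y,\Lxx,\Lyy,\Lxy,\Lyx)$ and the algorithm parameters $(\tau,\sigma,\theta,\rho,\alpha)$ and match Table~\ref{rso:table:constants_A}. There is no conceptual obstacle --- nonexpansiveness of the prox together with the block-Lipschitz bounds does all the work --- but the constants are cumbersome, which is why their closed forms are deferred to the appendix; the one point needing a word of care is the base case $k=0$, which is handled by the stated convention $\distancegap{-1}=\distancegap{0}$ rather than by a separate argument.
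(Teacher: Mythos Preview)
Your proposal is correct and follows essentially the same route as the paper: fixed-point characterization of $(\optx,\opty)$, prox contraction, block-Lipschitz bounds from Assumption~\ref{rso:assumption:str_cvx_smooth}(iii), chaining $\noiselessy_{k+1}\to\doublenoiselessx_{k+1}\to\doublenoiselessy_{k+2}$, and a weighted Cauchy--Schwarz against the diagonal form $\frac{\rho}{2(1+\rho)}(\distancegap{k}+\distancegap{k-1})$. The one point to flag is that the paper uses the \emph{sharper} strong-convexity contraction factors $1/(1+\tau\mu_x)$ and $1/(1+\sigma\mu_y)$ throughout, not mere nonexpansiveness; these factors are what produce the precise entries recorded in Table~\ref{rso:table:constants_A} (and, downstream, the $\Theta(1)$ scalings under the CP parametrization used in Lemma~\ref{rso:lem:asymptotic_xi}), so if you want to match the stated $A_1,A_2,A_3$ exactly you must carry them along rather than drop them.
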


%-----------------------------------------------------------------------------
% STEP 2 : Proving the assumptions of the recursive control inequality
%-----------------------------------------------------------------------------

Let us now show that $V_n$ satisfies the assumptions of the recursive control inequality in~\eqref{rso:prop:recursive_control}. Indeed, for any $n \geq 0$, $V_{n+1} - V_n = 
    \sa{(\rho-1) V_n} 
    + \varidx{P}{n+1}{1} + \varidx{P}{n+1}{2} 
        + {\cQ_x \squarednormtwo{\noisegradx{n+1}} 
        + \cQ_y \squarednormtwo{\noisegrady{n+1}}}
$,
which \sa{is equivalent to} $V_{n+1} {=} \rho V_n \sa{+ T_{n+1} + R_{n+1}}$. \sa{Let} $\left(\sigmafield_n\right)_{n\geq \sa{-1}}$ be the filtration defined as \sa{$\sigmafield_{-1} \defineq \{\emptyset, \Omega\}$,} 
and $\sigmafield_{n} \triangleq {\sigmafield_{n}^x} =  \sigma\left(\sigmafield_{n-1} \cup\;\sigma(\noisegrady{n}) \cup \sigma(\noisegradx{n}) \right)$, for all $n \geq 0$. 
We first observe that for all $n \in \N$, \sa{$V_n$, $T_n$ and $R_n$ are $\sigmafield_n$-measurable; moreover, $V_n$ is non-negative due to~\eqref{eq:E-V-bound}.} 
Second, for any \sa{$n\geq 0$}, %we also note that 
\sa{since $\noisegradx{n}$ and $\noisegrady{n}$ are norm-\sa{subGaussian} conditioned {respectively on $\sigmafield_{n}^y$ and $\sigmafield_{n-1}^x$}, for any $\lambda \geq 0$, we get that}
%
% \vspace{-1em} 
%
\[
\begin{split}
    % \mgfc{\lambda \sa{T_{n+1}}}{\sigmafield_{n}}
    %     &= \mathbb{E}\Bigg[e^{\lambda 
    %             \dotproduct{\noisegradx{n+1}}{\optx - \doublenoiselessx_{n+2}} 
    %             + \lambda\Big\langle\noisegrady{n+1},~(1+\theta)\left(\noiselessy_{n+2}-\opty\right) - \theta\rho^{-1} (\doublenoiselessy_{n+3} - \opty)\Big\rangle}\Big | \sigmafield_{n}\Bigg] \\
    \mgfc{\lambda \sa{T_{n+1}}}{\sigmafield_{n}}    &= {\mathbb{E}\Bigg[e^{\lambda\Big\langle\noisegrady{n+1},~(1+\theta)\left(\noiselessy_{n+2}-\opty\right) - \theta\rho^{-1} (\doublenoiselessy_{n+3} - \opty)\Big\rangle} \expectation \Bigg[e^{\lambda 
                \dotproduct{\noisegradx{n+1}}{\optx - \doublenoiselessx_{n+2}}} \Big | \sigmafield_{n+1}^y  \Bigg] \Big | \sigmafield_{n}\Bigg]} \\
        &\leq {
            e^{8\lambda^2 \left(\squarednormtwo{\doublenoiselessx_{n+2} - \optx} \proxyx^2 + \squarednormtwo{(1+\theta)\left(\noiselessy_{n+2}-\opty \right) - \theta\rho^{-1} \left(\doublenoiselessy_{n+3} - \opty\right)} \proxyy^2  \right)}
        } \\
        & \leq {
            e^{8\lambda^2 \left(\squarednormtwo{\doublenoiselessx_{n+2} - \optx} \proxyx^2 
            + 2 (1+\theta)^2 \squarednormtwo{\noiselessy_{n+2}-\opty} 
            + 2 \theta^{{2}}\rho^{-{2}}  \squarednormtwo{\doublenoiselessy_{n+3} - \opty} \proxyy^2  \right)}
        }, \\
\end{split} 
\]
{where we used {Lemma~\ref{rso:lem:norm_sg_dot} and the inequality $(a+b)^2\leq 2a^2 + 2b^2$ for scalars $a,b$ in the last step}, noting that $\doublenoiselessx_{n+2}$, $\noiselessy_{n+2}$, $\doublenoiselessy_{n+3}$ are all $\cF_n$-measurable.}
Hence, in view of Lemma~\ref{rso:lem:upperbound_hat_variables} {provided above} %\saa{(provided in Appendix~\ref{sec:cvx-ineq})}, 
{and the bound in \eqref{eq:E-V-bound},} we have 
%
% \vspace{-1em}
%
\begin{equation}
{\small
    \mgfc{\lambda \sa{T_{n+1}} }{ \sigmafield_{n}} 
        \leq {e^{\lambda^2 \left(\squarednormtwo{A_1} \proxyx^2  + \left(\squarednormtwo{A_2} + \squarednormtwo{A_3} \right) \proxyy^2  \right) \frac{\rho}{2(1+\rho)}(\distancegap{n+1} + \distancegap{n})}} 
        \leq {e^{\lambda^2 \Big(\squarednormtwo{A_1} \proxyx^2  + \left(\squarednormtwo{A_2} + \squarednormtwo{A_3} \right) \proxyy^2  \Big) V_n},} 
}
\end{equation}
{where we used \eqref{eq:E-V-bound} to obtain the second inequality}. 
Third, for all $n \geq 0$ and $\lambda \in \left(0, {\frac{1}{4 \max\{\cQ_x \proxyx^2, \cQ_y \proxyy^2\}}}\right)$, we have in view of Lemma~\ref{rso:lem:nSG_mgf_bounds}
\begin{equation}
    \begin{split}
        \mgfc{\lambda \sa{R_{n+1}}}{\sigmafield_{n}}
       & %\leq
            ={ \expectation \left[ e^{\lambda \cQ_y \squarednormtwo{\noisegrady{n+1}}} \expectation \left[  e^{\lambda \cQ_x \squarednormtwo{\ylfourth{\noisegradx{n+1}}}} | \sigmafield_{n+1}^y \right]  | \sigmafield_n \right] }%\\
       %&
       \leq { \exp
                \left(8 \lambda 
                        \left( \cQ_x \proxyx^2 
                            + \cQ_y  \proxyy^2
                        \right)
                \right). } \\
\end{split}
\end{equation}
Finally, we next argue that $V_0$ can be expressed as $V_0 = \startingbias + \mathcal{U}$ for some $\mathcal{U}$ satisfying $\mgf{\lambda \mathcal{U}} \leq e^{\alpha \lambda + \beta \lambda^2}, \quad \forall \lambda \in \sa{\left[0, \frac{1}{\bar \alpha}\right]}$, for some constants $\alpha, \bar \alpha > 0$ and 
{$\beta\geq 0$}.
First, note that $\noiselessy_{1}, \doublenoiselessx{_1}$ and $\doublenoiselessy_{2}$ are all deterministic quantities {as they depend only on the initialization}; hence, using the inequality $u^\top v \leq \frac{\ylprime{\gamma}}{2(1-\rho)} \squarednormtwo{u} + \frac{(1-\rho)}{2\ylprime{\gamma}} \squarednormtwo{v} $ {for any $\gamma>0$}, we observe that for all $\lambda \in \left[0, {\Big(4 \max\{\left(\frac{1-\rho}{2{\mux}} + \cQ_x \right) \proxyx^2, \left({\frac{(1-\rho)}{{\muy}}} + \cQ_y \right) \proxyy^2\}\Big)^{-1}}\right]$, we have
\[
\begin{split}
    &\mgf{\lambda (V_0 - \startingbias)} = 
        \mgf{\lambda \sa{\left(\varidx{P}{0}{1} + \varidx{P}{0}{2} %+ \frac{1}{\rho} \varidx{P}{-1}{2} 
        +   
              {  
               \cQ_x \squarednormtwo{\noisegradx{0}} 
               + \cQ_y \squarednormtwo{\noisegrady{0}} 
                %+ \frac{1}{\rho} \squarednormtwo{\noisegrady{-1}}
                }
            \right)}} \\
    &= \begin{alignedat}[t]{1}
        &  \expectation \left[ 
            e^{ 
            \lambda \dotproduct{\noisegradx{0}}{\optx - \doublenoiselessx_{1}} 
            + \lambda \dotproduct{\noisegrady{0}}{(1+\theta)(\noiselessy_{1} - \opty) 
            - \frac{\theta}{\rho} (\doublenoiselessy_{2} - \opty)} 
                { 
                    + \lambda \cQ_x \squarednormtwo{\noisegradx{0}} 
                    + \lambda \cQ_y \squarednormtwo{\noisegrady{0}} 
                    }} \right] \\
    \end{alignedat} \\
    & \leq 
        {
        \expectation 
            \begin{alignedat}[t]{1}
                & \left[e^{\frac{\lambda}{2(1-\rho)} \left(\ylprime{\mux} \squarednormtwo{\doublenoiselessx_{1} - \optx} + (1+\theta)^{\ylthird{2}} \ylprime{\muy} \squarednormtwo{\noiselessy_1 - \opty} + \theta^{\ylthird{2}} \rho^{-\ylthird{2}} \ylprime{\muy} \squarednormtwo{\doublenoiselessy_2 - \opty}\right)} \right. \\
                & \left.  e^{\frac{\lambda(1-\rho)}{2} \left(\ylsecond{\frac{1}{\mux}} \squarednormtwo{\noisegradx{0}} + \ylsecond{\frac{1}{\muy}}  \ylthird{\squarednormtwo{\noisegrady{0}}} + \ylsecond{\frac{1}{\muy}} \ylthird{\squarednormtwo{\noisegrady{0}}}\right)  + \lambda \cQ_x \squarednormtwo{\noisegradx{0}} 
                + \lambda \cQ_y \squarednormtwo{\noisegrady{0}}} \right] \\
            \end{alignedat}    
            }
            \\
    & = 
        { 
            \begin{alignedat}[t]{1}
                & e^{\frac{\lambda}{2(1-\rho)} \left(\ylprime{\mux} \squarednormtwo{\doublenoiselessx_{1} - \optx} + (1+\theta)^{\ylthird{2}} \ylthird{\muy} \squarednormtwo{\noiselessy_1 - \opty} + \theta^{\ylthird{2}} \rho^{-\ylthird{2}} \ylthird{\muy} \squarednormtwo{\doublenoiselessy_2 - \opty}\right)} \expectation 
                    \left[ 
                        e^{\lambda 
                            \left( 
                                \left(\frac{1-\rho}{2 \ylprime{\mux}} + \cQ_x\right) \squarednormtwo{\noisegradx{0}} +
                                \left(\ylthird{\frac{1-\rho}{\muy}}+ \cQ_y\right) \squarednormtwo{\noisegrady{0}}
                            \right)}
                    \right] \\
            \end{alignedat}
        } \\
    \end{split}
\]
Thus,
\[
    \begin{split}    
    &\mgf{\lambda (V_0 - \startingbias)} \\
    &\leq 
        {
            e^{\frac{\lambda}{2(1-\rho)} \left(\ylprime{\mux} \squarednormtwo{\doublenoiselessx_{1} - \optx} + (1+\theta)^{\ylthird{2}} \ylprime{\muy} \squarednormtwo{\noiselessy_1 - \opty} + \theta^{\ylthird{2}} \rho^{-\ylthird{2}} \ylprime{\muy} \squarednormtwo{\doublenoiselessy_2 - \opty}\right)} 
            e^{8 \lambda 
                            \left( 
                                \left(\frac{1-\rho}{2 \ylprime{\mux}} + \cQ_x\right) \proxyx^2 +
                                \left(\ylthird{\frac{1-\rho}{\ylprime{\muy}}} + \cQ_y\right) \proxyy^2
                            \right)}            
        } \\
    &\leq 
        {
            e^{\frac{\lambda}{2(1-\rho)} \left(\ylsecond{\frac{\mux}{8}} \squarednormtwo{A_1} + \ylsecond{\frac{\muy}{16}} (\squarednormtwo{A_2} + \squarednormtwo{A_3})\right)\; \ylsecond{\frac{\rho}{2(1+\rho)}\; \frac{2}{\rho}} \startingbias} 
            e^{8\lambda 
                            \left( 
                                \left(\frac{1-\rho}{2 \ylprime{\mux}} + \cQ_x\right) \proxyx^2 +
                                \left(\ylthird{\frac{1-\rho}{\ylprime{\muy}}} + \cQ_y\right) \proxyy^2
                            \right)}            
        } \\
    &\leq 
        {
            e^{\frac{\lambda}{2(1-\rho)} \left(\ylsecond{\frac{\mux}{8}} \squarednormtwo{A_1} + \ylsecond{\frac{\muy}{16}} (\squarednormtwo{A_2} + \squarednormtwo{A_3})\right) \startingbias} 
            e^{8\lambda 
                            \left( 
                                \left(\frac{1-\rho}{2 \ylprime{\mux}} + \cQ_x\right) \proxyx^2 +
                                \left(\ylthird{\frac{1-\rho}{\ylprime{\muy}}} + \cQ_y\right) \proxyy^2
                            \right)}            
        }, \\
\end{split}
\]
where the {first} inequality follows from Lemma~\ref{rso:lem:nSG_mgf_bounds}, in the {second} inequality we used Lemma~\ref{rso:lem:upperbound_hat_variables} {given above,} and the relation $\cE_0+\cE_{-1}=2\cE_0 {=} 2\cD_0/\rho\leq 2\startingbias/\rho$, which follows from $1-\alpha\sigma<1$ and the relations $x_{0}=x_{-1}$, $y_{0}=y_{-1}$. 
Hence, we can {apply} Proposition~\ref{rso:prop:high_proba_recursive_control} to {the $V_n, R_n, T_n$ sequence defined in \eqref{rso:eq:def_process_sapd_proof}} with the following choice of parameter values, {\small 
\begin{equation}\label{eq:rso:constants_vn_sapd}
    \begin{split}
          C_0 &= \startingbias, \quad \cU = \varidx{P}{0}{1} + \varidx{P}{0}{2} + \cQ_x \squarednormtwo{\noisegradx{0}} + \cQ_y \squarednormtwo{\noisegrady{0}},\\ 
        \sigma_T^2 &=  {\squarednormtwo{A_1} \proxyx^2  + \left(\squarednormtwo{A_2} + \squarednormtwo{A_3} \right) \proxyy^2}, \quad
        \sigma_R^2 = {8} \; (\cQ_x \proxyx^2 + \cQ_y \proxyy^2), \\
        \alpha &= 
            \begin{aligned}[t]
                \frac{1}{16(1-\rho)} 
                    \bigg(\mux\squarednormtwo{A_1} 
                    &+ \frac{\muy}{2} (\squarednormtwo{A_2} + \squarednormtwo{A_3})\bigg)\; \startingbias  \\
                    &   + \left(4\frac{(1-\rho)}{\mux} + 8 \cQ_x \right) \proxyx^2 
                        + \left(\ylthird{8\frac{(1-\rho)}{\muy}} + 8 \cQ_y \right) \proxyy^2,    
            \end{aligned} \\
        \bar \alpha &= {4 \max\left( \left( \frac{1-\rho}{2\ylprime{\mux}} + \cQ_x \right) \proxyx^2, 
             \left(\ylthird{\frac{(1-\rho)}{\ylprime{\muy}} + \cQ_y} \right)\proxyy^2 \right)}, \quad
        \beta = 0, \end{split} \end{equation}}%
where $\cD_{\tau,\sigma}$ is defined in the statement of Theorem~\ref{rso:thm:high_probability_bound_sapd}.
When we invoke Proposition~\ref{rso:prop:high_proba_recursive_control}, we set $\lambda=\tilde\gamma$ within \eqref{rso:eq:high_prob_recursive_control} for some particular $\tilde\gamma>0$ such that $\tilde\gamma\leq \gamma$ as required by the proposition. Thus, for any $p\in (0,1)$ \sa{and $n\geq 0$, the following inequality
{\small
\[
    \begin{split}
        V_n \leq
            &\left(\frac{1+\rho}{2}\right)^n \bigg(
                            \left(1 + \frac{1}{\ylsecond{16}(1-\rho)} \left(\ylsecond{\mux}\squarednormtwo{A_1} + \ylsecond{\frac{\muy}{2}} (\squarednormtwo{A_2} + \squarednormtwo{A_3})\right) \right) \startingbias \\
            &\quad+ \left(\frac{\ylsecond{4}(1-\rho)}{\ylprime{\mux}} + \ylsecond{8} \cQ_x \right) \proxyx^2 
                            + \left(\ylthird{8}\frac{(1-\rho)}{\ylprime{\muy}}  \ylsecond{+ 8 \cQ_y} \right)\proxyy^2  \bigg)\\
            &+ \frac{\ylsecond{16}}{1-\rho} \left( \cQ_x \proxyx^2 + \cQ_y \proxyy^2\right)
                +\frac{1}{\tilde\gamma}\log\left(\frac{1}{1-p}\right),
    \end{split}
\]}%
holds} with probability at least $p$, with the choice of
{
\begin{align}\label{eq:gamma}
    {\tilde\gamma\triangleq\frac{1-\rho}{\gamma_x \proxyx^2 + \gamma_y \proxyx^2}\leq\gamma\triangleq \frac{1-\rho}{\max\{\bar \alpha, 2\sigma_R^2, 4 \sigma_T^2\}}},
\end{align}}%
where \begin{equation} \gamma_x = 2 \frac{(1-\rho)}{\ylprime{\mux}}  + 16 \cQ_x  + 4 \squarednormtwo{A_1}, \quad \gamma_y = \ylthird{4} \frac{(1-\rho)}{\ylprime{\muy}} + 16 \cQ_y  + 4 (\squarednormtwo{A_2
} + \squarednormtwo{A_3}).\label{eqn-gammas}%do not remove the label as it is used
\end{equation} 
%and $\gamma_y = \ylthird{4} \frac{(1-\rho)}{\ylprime{\muy}} + 16 \cQ_y  + 4 (\squarednormtwo{A_2
%} + \squarednormtwo{A_3})$. 
{In view of~\eqref{eq:E-V-bound}, and noting that $\cD_n = \rho \distancegap{n}$, we obtain $\ylthird{\cD_{n+1} + \cD_n} \leq 2 (1+\rho) V_n \leq 4 V_n$. 
Therefore, 
\[
    \cC_{\tau, \sigma, \theta} = \left(4 + \frac{1}{4(1-\rho)} \left(\ylprime{\mux}\squarednormtwo{A_1} + \ylprime{\frac{\muy}{2}} (\squarednormtwo{A_2} + \squarednormtwo{A_3})\right) \right), \\
\]
{\small
\begin{equation}
\begin{aligned}
    \Xi_{\tau, \sigma, \theta}^{(x,1)} &=  16 \frac{(1-\rho)}{\mux} + 32 \cQ_x, &\quad
    \Xi_{\tau, \sigma, \theta}^{(y,1)} &= \ylthird{32} \frac{(1-\rho)}{\muy} + 32 \cQ_y, \\
    \Xi_{\tau, \sigma, \theta}^{(x,2)} &= \frac{64\cQ_x}{(1-\rho)}, \Xi_{\tau, \sigma, \theta}^{(x,3)} = \frac{4 \gamma_x}{1-\rho}, &\quad
    \Xi_{\tau, \sigma, \theta}^{(y,2)} &= \frac{64\cQ_y}{(1-\rho)}, \quad \Xi_{\tau, \sigma, \theta}^{(y,3)} = \frac{4 \gamma_y}{1-\rho}, \\
\end{aligned}
\label{def-constants-main-thm}
\end{equation}}%
completes the proof of \eqref{rso:eq:high_probability_bound_sapd}.} 
The remaining items to prove regarding the asymptotic properties of $\Xi_{\tau, \sigma, \theta}^{(1)}$ and $\Xi_{\tau, \sigma, \theta}^{(2)}$ \ylsecond{as $\theta \to 1$} {follows from straightforward but tedious computations; for completeness, we provide the details in the separate Lemma~\ref{rso:lem:asymptotic_xi}, provided in Section~\ref{sec:asymp} of the Appendix)}.\looseness=-1%\QEDB

\subsubsection{Proof of Theorem~\ref{thm:risk-nounds}}
\label{sec:risk-proof}
We can deduce Theorem~\ref{thm:risk-nounds} from the above analysis.
\sa{Indeed, the CVaR bound in \eqref{rso:eq:cvar_bound} directly follows from Corollary~\ref{rso:cor:cvar_bound_vn} applied to the process $V_n$ introduced in~\eqref{rso:eq:def_process_sapd_proof}, with the associated constants defined in~\eqref{eq:rso:constants_vn_sapd}.  
Furthermore, the EVaR bound in \eqref{rso:eq:evar_result} follows from Corollary~\ref{rso:cor:evar_bound} applied to the same $(V_n)_{n\geq 0}$. 
Finally, the bound on $\riskMeasure_{\chi^2, r}(\cD_n^{1/2})$ follows from Corollary~\ref{cor:chi-square}.}

\label{rso:app:sapd_conv}

%==========================================================================================
\section{Numerical Results}
%==========================================================================================
In this section, we illustrate the robustness properties of \sapdname~when solving bilinear games and distributionally robust learning problems involving both synthetic and real data. 
First we consider the regularized bilinear game presented in~\eqref{rso:eq:quadratic_problem}, 
\[
    \min_{x \in \Rd} \max_{y \in \Rd} \frac{\mux}{2} \norm{x}^2 + x^\top K y - \frac{\muy}{2} \norm{y}^2, \quad \text{for } K \triangleq 10 \tilde K / \lVert \tilde{K} \rVert, \quad \tilde{K} \defineq (M + M^\top)/2,
\]
where $M = (M_{i,j})$ is a $30 \times 30$ matrix with entries sampled from i.i.d standard normal variables. 
We set the regularization variables as $\mux = \muy = 1$. 
We explore two values of the momentum parameter $\theta$ as $\bar{\theta}$ and $1 - (1-\bar{\theta})^2$, with $\bar{\theta} \displaystyle \defineq (1 + \kappa_{\text{max}}^2)^{1/2} -1$ computed based on the threshold value from Theorem~\ref{rso:thm:quadratics}.
We then determine the stepsizes $\tau, \sigma$ according to the CP parameterization~\eqref{rso:eq:cb_params} \mgtwo{where $\rho=\theta$}. 
Finally, \sapdname~is initialized at a random tuple $(x_0, y_0) = 50 (\tilde x_0, \tilde y_0)$, where $\tilde x_0, \tilde y_0 \in \R^{30}$ have entries sampled from i.i.d. standard normal distributions. 
{In Figure \ref{rso:fig:num_exps}, we report the histogram of the distance squared $E_k =\|x_k-\optx\|^2 + \|y_k - \opty\|^2$ to the saddle point $\optz=\mathbf{0}$ after $k=2000$ (top, middle panel) and $k=5000$ iterations (top, right panel) based on 500 sample paths and for both choice of (momentum) parameter values. 
The expected distance $\mathbb{E}[E_k]$ over iterations is also reported on the top, left panel along with the error bars around it. The continuous vertical line in the convergence plots represents the sample average (estimating the expectation $\mathbb{E}[E_k]$), while the dashed vertical line represents $\var_{p}(E_k)$ with $p=0.90$, i.e., the $90^{th}$ percentile of the error $E_k$. 
We observe that the performance is sensitive to the choice of parameters and there are bias/risk trade-offs in the choice of parameters; indeed, when the number of steps is smaller (for $k=2000$), the noise accumulation is not dominant and a smaller rate parameter $\rho=\theta$ allows faster decay of the initialization bias, resulting in better guarantees for the value at risk with $p=0.90$ or equivalently for the 90-th quantile. 
On the other hand when the number of steps is larger (for $k=5000$), there is more risk associated to accumulation of noise and a larger choice of $\rho=\theta$ close to $1$ is preferable, as  this results in smaller primal and dual stepsizes which allows to control the tail risk at the expense of a slower decay of the initialization bias.} 

Next, we aim to solve the following distributionally robust logistic regression problem introduced in~\citep{zhang2021robust}: $\min_{x \in \Rd} \max_{y \in \cP_r} \frac{\mux}{2} \norm{x}^2 + \sum_{i=1}^{n} y_i \phi_i(x) - \frac{\muy}{2} \norm{y}^2$, where $\phi_i(x) \defineq \log(1 + \exp(-b_i a_i^\top x))$, and $\cP_r \defineq \{y \in \R_n^+, \boldsymbol{1}^\top y = 1, \norm{y - \boldsymbol{1}/n}^2 \leq \frac{r}{n^2}\}$, with $r=2\sqrt{n}$. 
We consider two datasets from the UCI Repository\footnote{\url{https://archive.ics.uci.edu/ml/index.php}}, \texttt{DryBean}, and \texttt{Arcene}, and follow the preprocessing protocol outlined in~\citep{zhang2021robust}. 
For each dataset, we run ~\sapdname~with two values $\theta_1, \theta_2$ that are greater than the threshold value $\bar \theta$ given in~\citep[Corollary 1]{zhang2021robust}. \sapdname~is initialized for both datasets at $x_0 = [2, \ldots, 2]$ and $y_0 = \boldsymbol{1}/n$. {In the middle and bottom panels of Figure~\ref{rso:fig:num_exps}, we display the average of the error $E_k$ over the course of the iterations as well as the error histogram for \sapdname~over $500$ runs as we did in the previous experiment.
Our numerical findings are similar to the bilinear case, i.e., to obtain the best risk guarantees, one needs to choose the algorithm parameters in a careful fashion --which %This 
is inline with our theoretical results, where obtaining the accelerated iteration complexity in Theorem \ref{rso:thm:complexity_sapd_result} requires choosing the parameters in an optimized fashion over the class of admissible CP parameters.} 
\begin{figure}[t]
    \centering
    \begin{minipage}[t]{\textwidth}
        \centering
        \includegraphics[width=\textwidth]{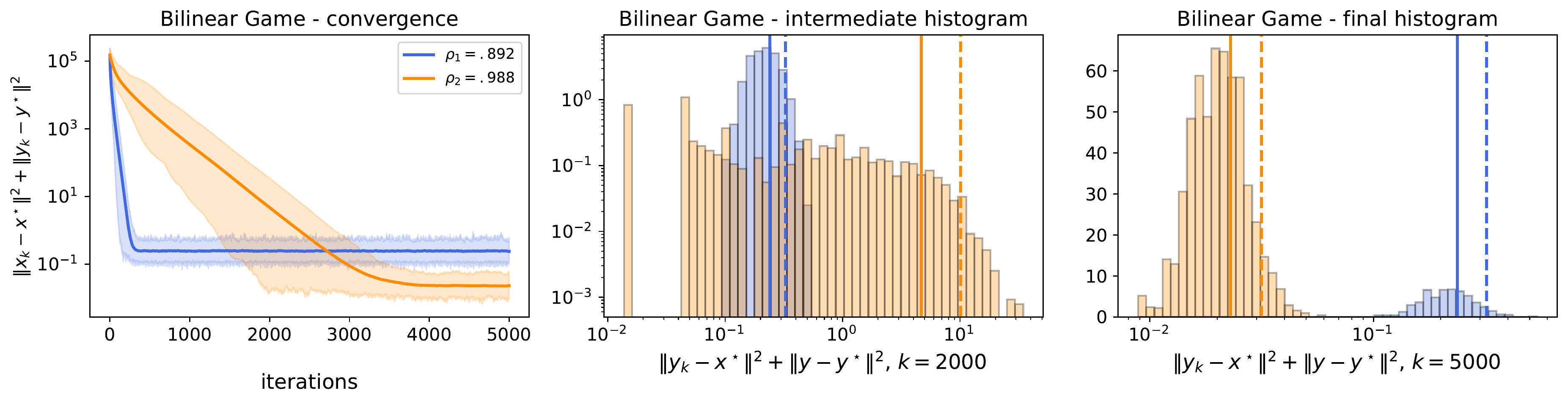}
    \end{minipage}
    \vspace{-0.2cm}
    \begin{minipage}[t]{\textwidth}
        \centering
        \includegraphics[width=\textwidth]{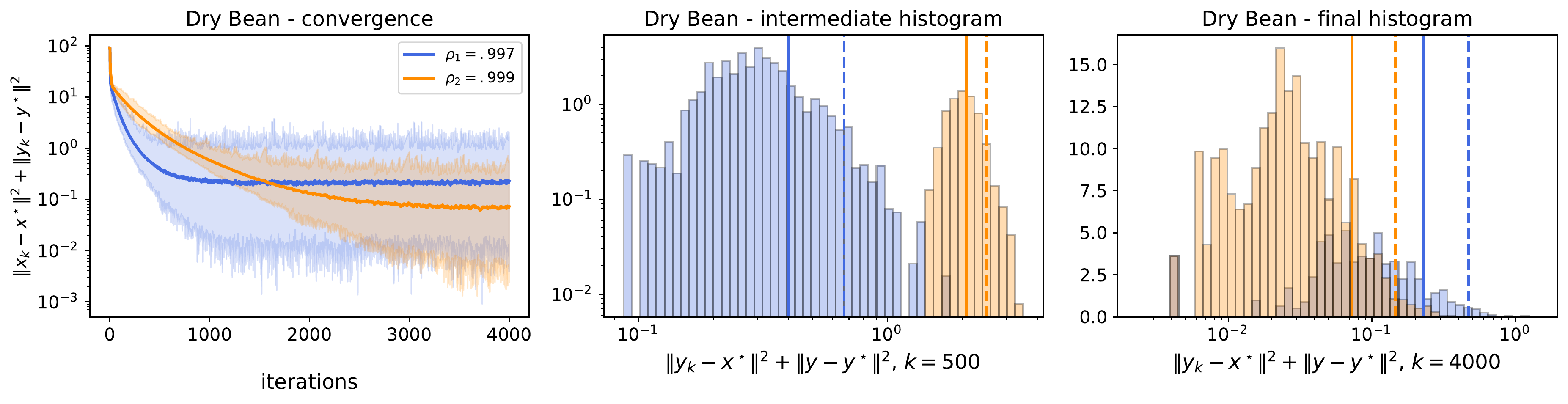}
    \end{minipage}
    \vspace{-0.5cm}
    \begin{minipage}[t]{\textwidth}
        \centering
        \includegraphics[width=\textwidth]{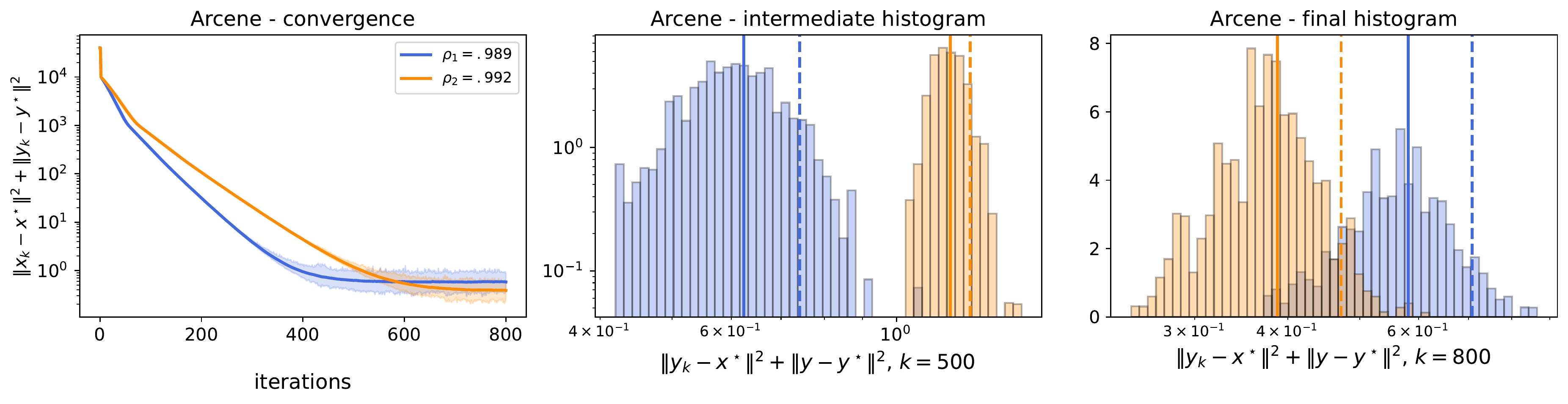}
    \end{minipage}
    \caption{The figure shows the convergence behavior and distribution of performance scores for \sapdname~across three datasets. The left column displays the expected distance squared $E_k$ of \sapdname~iterates to the solution over iterations, while the middle and right columns show histograms of $E_k$ at fixed iterations. The continuous line in the convergence plots represents the average score $\mathcal{E}(E_k)$, while the dashed line represents the $90^{th}$ percentile. The datasets include a synthetically generated bilinear game, and Dry Bean and Arcene from the UCI repository.}
    \label{rso:fig:num_exps}
    % \vspace*{-10mm}
\end{figure}
\label{rso:sec:numerical_exps}

\section{Conclusion}
We consider \saa{a first-order primal-dual method that relies} on stochastic estimates of the gradients for solving SCSC saddle point problems. We focused on the stochastic accelerated primal dual (\sapdname) method~\cite{zhang2021robust}.%, which can achieve optimal bias/variance trade-off in a number of settings.
We obtained high-probability bounds for the iterates to lie in a given neighborhood of the saddle point that reflects accelerated behavior. For a class of quadratic SCSC problems subject to i.i.d. isotropic Gaussian noise and under a particular parameterization of the \saa{\sapdname~parameters, we were able to %do exact computations for
compute the distribution of the \sapdname~iterates exactly in closed form. We used this result to} show that our %analysis
\saa{high-probability bound} is tight in terms of its dependency to target probability $p$, primal and dual stepsizes and the momentum parameter $\theta$.  We also provide a risk-averse convergence analysis characterizing the ``Conditional Value at Risk", \mgtwo{$\chi^2$-divergences} and the ``Entropic Value at Risk" of the distance to the saddle point, highlighting the trade-offs between the bias and the risk associated with an approximate solution.
% \vspace*{-3mm}

% \acks{All acknowledgements go at the end of the paper before appendices and references.
% Moreover, you are required to declare funding (financial activities supporting the
% submitted work) and competing interests (related financial activities outside the submitted work).
% More information about this disclosure can be found on the JMLR website.}

% Manual newpage inserted to improve layout of sample file - not
% needed in general before appendices/bibliography.

\section*{Acknowledgements}
Yassine Laguel and Mert G\"urb\"uzbalaban acknowledge support from the grants Office of Naval Research %Award Number
N00014-21-1-2244, National Science Foundation (NSF) CCF-1814888, NSF DMS1723085, NSF DMS-2053485. Necdet Serhat Aybat's work was supported in part by the grant Office of Naval Research Award N00014-21-1-2271.

\vskip 0.2in
\bibliographystyle{plainnat}
{
\bibliography{optim}
}

%\newpage

\appendix

%==========================================================================================
%
% \vspace{-1em}
%
\section{Elementary proofs for subGaussians and convex risk measures}
%==========================================================================================
We provide in this section proofs of elementary properties of subGaussian vectors and convex risk measures. 

\subsection{Elementary Properties of Norm-subGaussian Vectors}\label{rso:app:prelimaries}

In this section, we provide elementary proof of Lemma~\ref{rso:lem:nSG_mgf_bounds} and Lemma~\ref{rso:lem:norm_sg_dot}. The proofs follow from standard arguments that can be found in textbooks such as [8, 6].

\subsubsection{Proof of Lemma~\ref{rso:lem:nSG_mgf_bounds}}
    We follow standard arguments from~\citep{vershynin2018high}. First note that, for any $k>0$, we have
    \[
    \begin{split}
        \expectation[\norm{X}^{k}] &= \int_{t=0}^{+\infty} \probability[\norm{X}^{k} \geq t] \measuredWRT t = \int_{t=0}^{+\infty} \probability[\norm{X} \geq t^{1/k}] \measuredWRT t \\
        &\leq 2 \int_{t=0}^\infty e^{-t^{\frac{2}{k}}/(2\sigma^2)} \measuredWRT t %\\
        %&
        =  k (2\sigma^2)^{\frac{k}{2}} \int_{u=0}^\infty e^{-u} u^{\frac{k}{2} - 1} \measuredWRT u = k (2\sigma^2)^{\frac{k}{2}} \Gamma\left(\frac{k}{2}\right),
    \end{split}
    \]
    where $\Gamma$ denotes the gamma function. Hence, noting that $\Gamma(k) = (k-1)!$, by the monotone convergence theorem,
     \[
    \begin{split}
        \expectation[e^{\lambda \squarednormtwo{X}}] 
        &= 1 + \sum_{k=1}^{\infty} \frac{\lambda^k}{k!} \expectation[\norm{X}^{2k}] 
        \leq 1 + \sum_{k=1}^\infty \frac{\lambda^k}{k!}(2k) (2\sigma^2)^{k} \Gamma(k) \\
        &\leq 1 + 2\sum_{k=1}^\infty \left(2 \lambda \sigma^2 \right)^k
        = \frac{2}{1-2\lambda \sigma^2}-1,
    \end{split}
    \]
    the last equality being valid for any $\lambda\in[0, \frac{1}{2 \sigma^2})$. Since for any $u \in [0, \frac{1}{2}]$, $\frac{1}{1-u} \leq e^{2u}$, we obtain $\mgf{\lambda \squarednormtwo{X}} \leq 2e^{4 \lambda \sigma^2}-1$ for any $\lambda\in[0, \frac{1}{4 \sigma^2}]$. Finally, last inequality follows from $2e^{2u}-1 \leq e^{4u}$, where we chose $u=2\lambda \sigma^2$. \QEDB\looseness=-1

%\begin{proof}[
\subsubsection{Proof of Lemma~\ref{rso:lem:norm_sg_dot}}
    For $u=0$, the inequality to prove is trivial. Assume $u \neq 0$.
    From Lemma~\ref{rso:lem:nSG_mgf_bounds} and Cauchy-Schwarz inequality, we have
    \begin{equation}
    \label{eq:inner-sq-bound}
        \expectation\left[e^{\lambda^2 \dotproduct{u}{X}^2}\right] \leq \expectation[e^{\lambda^2 \squarednormtwo{u} \squarednormtwo{X}}] \leq e^{8 \lambda^2 \squarednormtwo{u} \sigma^2},
    \end{equation}
    for all $\lambda \in [0, \frac{1}{2\sqrt{2} \sigma \norm{u}}]$. Thus, for any such $\lambda$, noticing that $e^t \leq t + e^{t^2}$ for $t \in \R$, we obtain $\expectation\left[e^{\lambda \dotproduct{u}{X}}\right] \leq \expectation\left[\lambda \dotproduct{u}{X} + e^{\lambda^2 \dotproduct{u}{X}^2}\right] \leq e^{8\lambda^2 \squarednormtwo{u} \sigma^2}$, where the second inequality follows from \eqref{eq:inner-sq-bound} and the assumption that $\expectation[X]=0$. Moreover, for $\lambda \geq \frac{1}{2 \sqrt{2} \norm{u} \sigma}$, we have by Cauchy Schwarz's inequality and Lemma~\ref{rso:lem:nSG_mgf_bounds} that 
        $\expectation[e^{\lambda \dotproduct{u}{X}}] \leq \expectation\Big[e^{\frac{8\lambda^2 \sigma^2 \squarednormtwo{u}}{2} + \frac{\squarednormtwo{X}}{16 \sigma^2}}\Big] \leq e^{\frac{1}{2}\left(1 + 8 \lambda^2 \sigma^2 \squarednormtwo{u}\right)} \leq e^{8\lambda^2 \squarednormtwo{u} \sigma^2}$,
    where the last inequality is due to $e^{\frac{1+t}{2}}\leq e^t$ for $t\geq 1$. \QEDB
%\end{proof}

%
% \vspace{-1em}
%
%================================================================================================
\subsection{Elementary properties of Convex Risk Measures}
%================================================================================================

The following lemma is used in the derivation of $\cvar$ and $\evar$ bounds.
\begin{lem}\label{lem:cvar_pythagoras}
    For any \yassine{non-negative} random variable $U \from \Omega \to \R_+$, we have for all $p\in [0, 1)$:
    \[
    \begin{split}
            \yassine{Q_p(U^2)^{\frac{1}{2}}} \yassine{= Q_p(U)},\qquad
            \cvar_p(U^2)^{\frac{1}{2}} \geq \cvar_p(U).
    \end{split}
    \]
\end{lem}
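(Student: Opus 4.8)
The plan is to prove the two claims separately, each following directly from the definition of the quantile function $Q_p$ and of $\cvar_p$ as an average of upper quantiles. For the first identity, fix $p \in [0,1)$ and recall $Q_p(U) = \inf\{t \in \R : \probability[U \leq t] \geq p\}$. The key observation is that, since $U \geq 0$, the map $t \mapsto t^2$ is a strictly increasing bijection from $\R_+$ to $\R_+$, so for any $s \geq 0$ we have $\{U \leq s\} = \{U^2 \leq s^2\}$ as events, hence $\probability[U \leq s] = \probability[U^2 \leq s^2]$. I would then argue that $t \geq Q_p(U^2)$ iff $\probability[U^2 \leq t] \geq p$ (monotonicity of $t \mapsto \probability[U^2\leq t]$ together with right-continuity, which guarantees the infimum in the definition is attained); substituting $t = s^2$ and using the event identity gives $s^2 \geq Q_p(U^2)$ iff $\probability[U \leq s] \geq p$ iff $s \geq Q_p(U)$. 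Taking infima over admissible $s$ yields $Q_p(U^2)^{1/2} = Q_p(U)$. One should be slightly careful with the edge cases where $Q_p(U)=0$ or where the relevant CDF has a jump exactly at the quantile, but right-continuity of CDFs handles both.

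For the second (inequality) claim, I would use the integral representation $\cvar_p(\cU) = \frac{1}{1-p}\int_p^1 Q_{p'}(\cU)\,\mathrm d p'$ from \eqref{rso:eq:def_cvar}. Applying the first part of the lemma pointwise in $p'$, we have $Q_{p'}(U^2) = Q_{p'}(U)^2$ for every $p' \in [p,1)$, so
\[
    \cvar_p(U^2) = \frac{1}{1-p}\int_p^1 Q_{p'}(U)^2 \,\mathrm d p'.
\]
By the Cauchy--Schwarz inequality (or Jensen's inequality for the convex function $x \mapsto x^2$ with respect to the normalized measure $\frac{1}{1-p}\mathrm d p'$ on $[p,1]$),
\[
    \left(\frac{1}{1-p}\int_p^1 Q_{p'}(U)\,\mathrm d p'\right)^{2} \leq \frac{1}{1-p}\int_p^1 Q_{p'}(U)^2\,\mathrm d p',
\]
and the left-hand side is exactly $\cvar_p(U)^2$. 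Combining the two displays gives $\cvar_p(U)^2 \leq \cvar_p(U^2)$, i.e. $\cvar_p(U^2)^{1/2} \geq \cvar_p(U)$, which is the desired bound.

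The only mildly delicate point — and the one I would expect to require the most care — is justifying the first identity rigorously when the distribution of $U$ is not continuous, since then $\probability[U \leq t]$ can have jumps and the infimum defining $Q_p$ must be handled via right-continuity rather than by inverting a continuous CDF. Everything else (the event identity $\{U\le s\}=\{U^2\le s^2\}$ for $U\ge 0$, and the Jensen/Cauchy--Schwarz step) is routine. No measurability or integrability issues arise beyond what is implicit in the statement, since $Q_{p'}(U)$ is a nonnegative measurable function of $p'$ and the integrals are over a bounded interval.
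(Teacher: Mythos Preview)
Your proposal is correct and follows essentially the same approach as the paper: both establish $Q_p(U^2)=Q_p(U)^2$ via the event identity $\{U\le s\}=\{U^2\le s^2\}$ for nonnegative $U$, then plug this into the integral representation of $\cvar_p$ and apply Jensen's inequality (the paper phrases this last step as the variance identity $\expectation[X^2]=\expectation[X]^2+\expectation[(X-\expectation[X])^2]$, which is the same thing). Your treatment of the quantile identity is in fact slightly more careful about right-continuity than the paper's, which handles the two inequalities more tersely.
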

\begin{proof}
    We first show that $Q_p(X^2) = Q_p(X)^2$ for any $p \in (0,1)$. Indeed, for any $0 \leq t < Q_p(U)^2$, we have $\probability[U^2 \leq t] = \probability[U \leq \sqrt{t}] < p$ \saa{which follows from non-negativity of $U$ and} definition of $Q_p(U)$. 
    This implies $t < Q_p(U^2)$\sa{; thus, $Q_p(U)^2\leq Q_p(U^2)$.}
    Conversely, we note that $ p \leq \probability[ U \leq Q_p(U)] = \probability[ U^2 \leq Q_p(U)^2]$, \sa{which implies $Q_p(U)^2\geq Q_p(U^2)$;} hence, $Q_p(X^2) = Q_p(X)^2$. 
    \sa{Using this result,} 
    %we get} 
    %
    \[
    \begin{split}
        \cvar_p(U^2) = \left(\frac{1}{1-p} \int_{p'=p}^1 Q_{p'}(U^2) \mathrm{d}p' \right) 
        &= \expectation_{p'\sim \mathcal{U}[p,1]} [Q_{\sa{p'}}(U)^2] \\ 
        &\geq \expectation_{p'\sim \mathcal{U}[p,1]} [Q_{p'}(U)]^2 = \cvar_p(U)^2,
    \end{split}
    \]
    where $\mathcal{U}[p,1]$ denotes the uniform distribution on $[p,1]$, and \sa{the} last inequality follows from the identity $\expectation[X^2] = \expectation[X]^2 + \expectation[(X - \expectation[X])^2]$.
\end{proof} 
\label{rso:app:subgaussians}

%==========================================================================================
%
% \vspace{-2em}
%
\section{Intermediate results and proofs for the non-quadratic case}\label{sec:sapd_elementary_convex}
%==========================================================================================
\saa{To start with, for the sake of completeness,} we cite two results from~\citep{zhang2021robust}. 
The first lemma is used to derive the almost sure bound \saa{result of Proposition~\ref{rso:prop:almost_sure_bound}, which is provided below in Appendix~\ref{sec:as-dom}, while the second lemma is used for deriving the convex inequalities provided in Appendix~\ref{sec:cvx-ineq}.}
\begin{lem}[See~{\citep[Lemma~1]{zhang2021robust}}]\label{rso:lem:descent_lemma}
    The iterates $(x_k, y_k)$ of \sapdname~satisfy
    \[
        \begin{aligned}
            \mathcal{L}\left(x_{k+1}, \opty\right)-\mathcal{L}\left(\optx, y_{k+1}\right) \leq & -\left\langle q_{k+1}, y_{k+1}-\sa{\opty}\right\rangle+\theta\left\langle q_k, y_k-\sa{\opty}\right\rangle + \Lambda_k - \Sigma_{k+1}+\Gamma_{k+1} \\
                & + \dotproduct{\noisegradx{k}}{\sa{\optx-x_{k+1}}} + \dotproduct{(1+\theta)\noisegrady{k} - \theta \noisegrady{\sa{k-1}}}{y_{k+1} - \opty},
        \end{aligned}
    \]
    \saa{for all $k\geq 0$,} where 
    \[ {\small
        \begin{aligned}
            q_k \defineq & \gradphiy\left(x_k, y_k\right)- \gradphiy\left(x_{k-1}, y_{k-1}\right), \quad
            \Lambda_k \defineq \frac{1}{2 \tau}\left\|\optx-x_k\right\|^2+\frac{1}{2 \sigma}\left\|\opty-y_k\right\|^2, \\
            \Sigma_{k+1} \defineq & \left(\frac{1}{2 \tau}+\frac{\mu_x}{2}\right)\left\|\optx-x_{k+1}\right\|^2+\left(\frac{1}{2 \sigma}+\frac{\mu_y}{2}\right)\left\|\opty-y_{k+1}\right\|^2, \\
            \Gamma_{k+1} \defineq & \left(\frac{\Lxx}{2}-\frac{1}{2 \tau}\right)\left\|x_{k+1}-x_k\right\|^2-\frac{1}{2 \sigma}\left\|y_{k+1}-y_k\right\|^2 +\theta \Lyx\left\|x_k-x_{k-1}\right\|\left\|y_{k+1}-y_k\right\| \\ 
            &+\theta \Lyy\left\|y_k-y_{k-1}\right\|\left\|y_{k+1}-y_k\right\|.
        \end{aligned}
     }
   \]
\end{lem}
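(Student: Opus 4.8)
\textbf{Proof proposal for Lemma~\ref{rso:lem:descent_lemma}.}

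The plan is to reconstruct the one-step descent inequality for SAPD by combining the optimality conditions of the two proximal steps with the strong convexity of $f$ and $g$ and the convexity-concavity-plus-smoothness of $\Phi$. First I would write down the first-order optimality condition for the dual update $y_{k+1}=\prox{\sigma g}(y_k+\sigma \tilde s_k)$, which gives $\frac{1}{\sigma}(y_k - y_{k+1}) + \tilde s_k \in \partial g(y_{k+1})$, and similarly for the primal update $x_{k+1}=\prox{\tau f}(x_k - \tau \tilde\nabla_x\Phi(x_k,y_{k+1},\omega_k^x))$, which gives $\frac{1}{\tau}(x_k - x_{k+1}) - \tilde\nabla_x\Phi(x_k,y_{k+1},\omega_k^x) \in \partial f(x_{k+1})$. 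Using $\mu_x$-strong convexity of $f$ and $\mu_y$-strong convexity of $g$, I would then test these subgradient inclusions against $\optx - x_{k+1}$ and $\opty - y_{k+1}$ respectively, picking up the $\frac{1}{2\tau}$-, $\frac{\mu_x}{2}$-, $\frac{1}{2\sigma}$-, $\frac{\mu_y}{2}$-weighted squared-distance terms that will assemble into $\Lambda_k$ and $\Sigma_{k+1}$ via the three-point (cosine) identity $\langle a-b, a-c\rangle = \tfrac12\|a-b\|^2 + \tfrac12\|a-c\|^2 - \tfrac12\|b-c\|^2$.

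Next I would handle the coupling terms. The key algebraic manipulation is to replace the stochastic gradients by their exact counterparts plus the noise terms $\noisegradx{k}$, $\noisegrady{k}$, which is exactly where the last line of the claimed inequality (the two inner products with $\noisegradx{k}$ and $(1+\theta)\noisegrady{k}-\theta\noisegrady{k-1}$) comes from. For the exact part, I would use convexity of $\Phi(\cdot,y_{k+1})$ to bound $\langle \nabla_x\Phi(x_k,y_{k+1}), \optx - x_{k+1}\rangle$ in terms of $\Phi(\optx,y_{k+1}) - \Phi(x_{k+1},y_{k+1})$ up to a smoothness error controlled by $L_{xx}\|x_{k+1}-x_k\|^2$ (this produces part of $\Gamma_{k+1}$), and concavity of $\Phi(x_k,\cdot)$ together with the momentum term $\tilde s_k = \tilde\nabla_y\Phi(x_k,y_k,\omega_k^y) + \theta q_k$ to generate the telescoping terms $-\langle q_{k+1}, y_{k+1}-\opty\rangle + \theta\langle q_k, y_k - \opty\rangle$. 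Here I would insert and subtract $\nabla_y\Phi(x_{k+1},y_{k+1})$ and $\nabla_y\Phi(x_k,y_k)$ strategically, using the block Lipschitz bounds from Assumption~\ref{rso:assumption:str_cvx_smooth}(iii) to control $\|q_{k+1} - (\nabla_y\Phi(x_{k+1},y_{k+1})-\nabla_y\Phi(x_k,y_k))\|$-type quantities, and applying Young's inequality to split the cross terms into the $\theta L_{yx}\|x_k - x_{k-1}\|\|y_{k+1}-y_k\|$ and $\theta L_{yy}\|y_k-y_{k-1}\|\|y_{k+1}-y_k\|$ contributions that appear in $\Gamma_{k+1}$, together with the $-\frac{1}{2\sigma}\|y_{k+1}-y_k\|^2$ slack. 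Finally I would collect all the pieces: the definition of $\mathcal{L}$ means $\Phi(\optx,y_{k+1}) - \Phi(x_{k+1},y_{k+1})$ combined with the $f$ and $g$ terms gives $\mathcal{L}(x_{k+1},\opty) - \mathcal{L}(\optx,y_{k+1})$ on the left, and the right-hand side organizes into $-\langle q_{k+1},y_{k+1}-\opty\rangle + \theta\langle q_k, y_k-\opty\rangle + \Lambda_k - \Sigma_{k+1} + \Gamma_{k+1}$ plus the noise inner products.

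Since this is quoted verbatim from \citep[Lemma~1]{zhang2021robust}, the cleanest route is simply to cite that reference; the sketch above is the proof strategy one would follow to reprove it from scratch. The main obstacle I anticipate is the careful bookkeeping of the momentum term: because $\tilde s_k$ involves $q_k = \tilde\nabla_y\Phi(x_k,y_k,\omega_k^y) - \tilde\nabla_y\Phi(x_{k-1},y_{k-1},\omega_{k-1}^y)$ rather than exact gradients, one must track three generations of iterates ($k-1$, $k$, $k+1$) simultaneously and be meticulous about which Lipschitz constant ($L_{yx}$ versus $L_{yy}$) multiplies which displacement, so that the cross terms land exactly in the stated form of $\Gamma_{k+1}$ rather than with extra slack that would later break the matrix inequality~\eqref{rso:eq:matrix_inequality_params}. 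Keeping the noise terms isolated and exactly unbiased-testable against $\sigmafield_k^y$ and $\sigmafield_{k-1}^x$ (so that later conditioning arguments go through) is the other delicate point.
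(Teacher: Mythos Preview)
Your proposal is correct and matches what the paper does: the paper does not prove this lemma at all but simply cites it from \citep[Lemma~1]{zhang2021robust}, exactly as you note. Your sketch of how to reprove it from scratch (proximal optimality conditions, strong convexity, three-point identity, separating noise from exact gradients, and Lipschitz bounds to generate the $\Gamma_{k+1}$ cross terms) is the standard and correct route. One small notational slip: in your last paragraph you write $q_k = \tilde\nabla_y\Phi(x_k,y_k,\omega_k^y) - \tilde\nabla_y\Phi(x_{k-1},y_{k-1},\omega_{k-1}^y)$, but in the lemma $q_k$ denotes the \emph{exact} gradient difference; the stochastic version is $\tilde q_k$ in Algorithm~\ref{rso:algo:sapd_algorithm}, and the passage from $\tilde q_k$ to $q_k$ is precisely what produces the $(1+\theta)\noisegrady{k}-\theta\noisegrady{k-1}$ noise term.
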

\begin{lem}[See~{\citep[Lemma~3]{zhang2021robust}}]\label{rso:lem:sapd_lemma_A3}
    \saa{Let $(x_n, y_n)_{n\geq 0}$ denote the \sapdname~iterate sequence. 
    Then, the following inequalities hold for all $n \in \N$,} 
    \[
    {\small
    \begin{aligned}[t]        
        \norm{\noiselessy_{n+1} - \sa{y_{n+1}}}
            &\leq \frac{\sigma}{1+\sigma \mu_y} \left((1+\theta) \norm{\noisegrady{n}} + \theta \norm{\noisegrady{n-1}} \right),  \\
        \norm{\doublenoiselessx_{n+1} - x_{n+1}} 
            &\leq \frac{\tau}{1+\tau \mu_x} \left(\norm{\noisegradx{n}} + \Lxy \frac{\sigma}{1+\sigma \mu_y} \left((1+\theta) \norm{\noisegrady{n}} + \theta \norm{\noisegrady{n-1}} \right)\right), \\
        \norm{\doublenoiselessy_{n+1} - y_{n+1}} 
            &\leq \frac{\sigma}{1+\sigma \mu_y} 
                \begin{aligned}[t]
                     &\left( \frac{\tau(1+\theta) \Lyx}{1+\tau \mu_x} \norm{\noisegradx{n-1}}\right. 
                        + (1+\theta)\norm{\noisegrady{n}}  \\
                        &\quad + \left(\theta + (1+\theta) \left(\frac{1+\sigma(1+\theta)\Lyy}{1+\sigma \mu_y} + \frac{\tau \sigma (1+\theta) \Lyx \Lxy}{(1+\tau \mu_x)(1+\sigma \mu_y)}\right)\right) \norm{\noisegrady{n-1}} \\
                        & \quad+  \left. \theta \left(\frac{1+\sigma(1+\theta)\Lyy}{1+\sigma \mu_y} + \frac{\tau \sigma (1+\theta) \Lyx \Lxy}{(1+\tau \mu_x)(1+\sigma \mu_y)}\right) \norm{\noisegrady{n-2}} \right).\\
                \end{aligned}
    \end{aligned}
    }
    \]
\end{lem}

%========================================================================================================================
\subsection{Proof of Proposition~\ref{rso:prop:almost_sure_bound} (Almost sure domination of \sapdname~iterates)}\label{sec:as-dom}
%========================================================================================================================

Letting \sa{$\bar x_n \defineq K_n(\rho)^{-1} \sum_{k=0}^{n-1} \rho^{-k} x_{k+1}$, and $\bar y_n \defineq K_n(\rho)^{-1} \sum_{k=0}^{n-1} \rho^{-k} y_{k+1}$}, with $K_n(\rho) \defineq \sa{\sum_{k=0}^{n-1}} \rho^{-k} = \frac{1}{\rho^{n-1}}\times\frac{1-\rho^n}{1-\rho}$, by Jensen's inequality , we have for all $\rho \in(0,1]$,
\[
        K_n(\rho)\left(\mathcal{L}\left(\bar{x}_n, \sa{\opty}\right)-\mathcal{L}\left(\sa{\optx}, \bar{y}_n\right)\right) \leq \sum_{k=0}^{n-1} \rho^{-k}\left(\mathcal{L}\left(x_{k+1}, \sa{\opty}\right)-\mathcal{L}\left(\sa{\optx}, y_{k+1}\right)\right).
\]
Hence, in view of Lemma~\ref{rso:lem:descent_lemma}, 
\begin{equation}\label{rso:eq:as_jensen_application}
        \begin{aligned}
            & K_n(\rho)\left(\mathcal{L}\left(\bar{x}_n, \sa{\opty}\right)-\mathcal{L}\left(\sa{\optx}, \bar{y}_n\right)\right) \\
            & \leq \sum_{k=0}^{n-1} \rho^{-k}\sa{\Big(}-\left\langle q_{k+1}, y_{k+1}-\sa{\opty}\right\rangle+\theta\left\langle q_k, y_k-\sa{\opty}\right\rangle+\Lambda_k-\Sigma_{k+1}+\Gamma_{k+1} \\
            & \quad + \dotproduct{\noisegradx{k}}{\optx - x_{k+1}} + \dotproduct{(1+\theta)\noisegrady{k} - \theta \noisegrady{k-1}}{y_{k+1} - \opty}\sa{\Big)},
    \end{aligned}
\end{equation}
where $q_k \defineq \gradphiy(x_k, y_k) - \gradphiy(x_{k-1}, y_{k-1})$.
By Cauchy-Schwarz inequality, observe that 
\[
        \left|\left\langle q_{k+1}, y_{k+1}-\sa{\opty}\right\rangle\right| \leq S_{k+1} \defineq \Lyx\left\|x_{k+1}-x_k\right\|\left\|y_{k+1}-y\right\|+\Lyy\left\|y_{k+1}-y_k\right\|\left\|y_{k+1}-\sa{\opty}\right\|, \quad \forall k \geq 0.
\]
Hence, \sa{using $q_0=\mathbf{0}$ due to our initialization of $(x_{-1},y_{-1})=(x_0,y_0)$, we have}
\[
{\small
        \begin{aligned}
            & \sum_{k=0}^{n-1} \rho^{-k}\left(-\left\langle q_{k+1}, y_{k+1}-\sa{\opty}\right\rangle+\theta\left\langle q_k, y_k-\sa{\opty}\right\rangle\right) =\sum_{k=0}^{n-2} \rho^{-k}\left(\frac{\theta}{\rho}-1\right)\left\langle q_{k+1}, y_{k+1}-\sa{\opty}\right\rangle-\rho^{-n+1}\left\langle q_n, y_n-\sa{\opty}\right\rangle \\
            & \leq \sum_{k=0}^{n-2} \rho^{-k}\left|1-\frac{\theta}{\rho}\right| S_{k+1}+\rho^{-n+1} S_n \sa{\leq} \sum_{k=0}^{n-1} \rho^{-k}\left|1-\frac{\theta}{\rho}\right| S_{k+1}+\rho^{-n+1} \frac{\theta}{\rho} S_n.
        \end{aligned}
}        
\]
\yassine{From~\eqref{rso:eq:as_jensen_application}, it follows that}
\[
{
    \begin{aligned}
    & K_n(\rho)\left(\mathcal{L}\left(\bar{x}_n, \sa{\opty}\right)-\mathcal{L}\left(\sa{\optx}, \bar{y}_n\right)\right)+\rho^{-n+1} \distancegap{n}\\
    &    \leq U_n 
            + \sum_{k=0}^{n-1} \rho^{-k}\sa{\Big(}\dotproduct{\noisegradx{k}}{\optx - x_{k+1}} + \dotproduct{(1+\theta)\noisegrady{k} - \theta \noisegrady{k-1}}{y_{k+1} - \opty}\sa{\Big)},
    \end{aligned}
}
\]
where
%
%\[
    $U_n \triangleq \sum_{k=0}^{n-1} \rho^{-k}\left(\Gamma_{k+1}+\Lambda_k-\Sigma_{k+1}+\left|1-\frac{\theta}{\rho}\right| S_{k+1}\right)-\rho^{-n+1}\left(-\distancegap{n} -\frac{\theta}{\rho} S_n\right)$.
%\]
%
Now, observe that for all $n\geq 1$,
\[
    \begin{aligned}
        U_n & =\frac{1}{2} \sum_{k=0}^{n-1} \rho^{-k}\left(\xi_k^{\top} A \xi_k-\xi_{k+1}^{\top} B \xi_{k+1}\right)-\rho^{-n+1}\left(-\distancegap{n}-\frac{\theta}{\rho} S_n\right) \\
        & =\frac{1}{2} \xi_0^{\top} A \xi_0-\frac{1}{2} \sum_{k=1}^{n-1} \rho^{-k+1}\left[\xi_k^{\top}\left(B-\frac{1}{\rho} A\right) \xi_k\right]-\rho^{-n+1}\left(\frac{1}{2} \xi_n^{\top} B \xi_n-\distancegap{n}-\frac{\theta}{\rho} S_n\right),
    \end{aligned}
\]
where $A, B \in \mathbb{R}^{5 \times 5}$ and $\xi_k \in \mathbb{R}^5$ are defined for $k\geq 0$ as
{\footnotesize
\[
    \begin{aligned}
        & A \triangleq\left(\begin{array}{ccccc}
        \frac{1}{\tau} & 0 & 0 & 0 & 0 \\
        0 & \frac{1}{\sigma} & 0 & 0 & 0 \\
        0 & 0 & 0 & 0 & \theta \Lyx \\
        0 & 0 & 0 & 0 & \theta \Lyy \\
        0 & 0 & \theta \Lyx & \theta \Lyy & -\alpha
        \end{array}\right),~ 
        B \triangleq\left(\begin{array}{ccccc}
        \frac{1}{\tau}+\mu_x & 0 & 0 & 0 & 0 \\
        0 & \frac{1}{\sigma}+\mu_y & -\left|1-\frac{\theta}{\rho}\right| \Lyx & -\left|1-\frac{\theta}{\rho}\right| \Lyy & 0 \\
        0 & -\left|1-\frac{\theta}{\rho}\right| \Lyx & \frac{1}{\tau}-\Lxx & 0 & 0 \\
        0 & -\left|1-\frac{\theta}{\rho}\right| \Lyy & 0 & \frac{1}{\sigma}-\alpha & 0 \\
        0 & 0 & 0 & 0 & 0
        \end{array}\right),
    \end{aligned}
\]
and {$\xi_k \triangleq \left(\begin{array}{ccccc}
        \left\|x_k-\sa{\optx}\right\|, &
        \left\|y_k-\sa{\opty}\right\|. &
        \left\|x_k-x_{k-1}\right\|, &
        \left\|y_k-y_{k-1}\right\|, &
        \left\|y_{k+1}-y_k\right\|
        \end{array}\right)^\top \in\reals^5
        $}.
}
By~\citep[\sa{Lemma~5}]{zhang2021robust}, the matrix inequality condition~\eqref{rso:eq:matrix_inequality_params} is equivalent to having $B - \rho^{-1}A \succeq 0$. 
In this case, we almost surely have
\begin{align}
     U_n \leq \frac{1}{2} \xi_0^{\top} A \xi_0-\rho^{-n+1}\left(\frac{1}{2} \xi_n^{\top} B \xi_n- \distancegap{n} -\frac{\theta}{\rho} S_n\right).\label{eq:Un-bound}
\end{align}
Finally, denoting 
{\small
\[
        G^{\prime \prime} 
            \defineq \left(\begin{array}{ccc}
                    \frac{1}{\sigma}\left(1-\frac{1}{\rho}\right)+\mu_y+\frac{\alpha}{\rho} & \left(-\left|1-\frac{\theta}{\rho}\right|-\frac{\theta}{\rho}\right) \Lyx & \left(-\left|1-\frac{\theta}{\rho}\right|-\frac{\theta}{\rho}\right) \Lyy \\
                    \left(-\left|1-\frac{\theta}{\rho}\right|-\frac{\theta}{\rho}\right) \Lyx & \frac{1}{\tau}-\Lxx & 0 \\
                    \left(-\left|1-\frac{\theta}{\rho}\right|-\frac{\theta}{\rho}\right) \Lyy & 0 & \frac{1}{\sigma}-\alpha
                \end{array}\right),
\]}%
we have \sa{$G^{\prime \prime}\succeq 0$ in view of~\citep[\sa{Lemma~6}]{zhang2021robust}; thus,}
{\footnotesize
\begin{eqnarray*}
    \frac{1}{2} \xi_n^{\top} B \xi_n-\frac{\theta}{\rho} S_n %\\
    &= &
    \frac{1}{2 \rho \tau}\left\|x_n-x\right\|^2+\frac{1}{2}\left(\frac{1}{\rho \sigma}-\frac{\alpha}{\rho}\right)\left\|y_n-y\right\|^2+\frac{1}{2} \xi_n^{\top}\left(\begin{array}{ccc}
    \frac{1}{\tau}\left(1-\frac{1}{\rho}\right)+\mu_x & \mathbf{0}_{1 \times 3} & 0 \\
    \mathbf{0}_{3 \times 1} & G^{\prime \prime} & \mathbf{0}_{3 \times 1} \\
    0 & \mathbf{0}_{1 \times 3} & 0
    \end{array}\right) \xi_n \\
    &\geq & \frac{1}{2 \rho \tau}\left\|x_n-x\right\|^2+\frac{1}{2 \rho \sigma}(1-\alpha \sigma)\left\|y_n-y\right\|^2= \distancegap{n}.
\end{eqnarray*}
}
\sa{Therefore, using~\eqref{eq:Un-bound}, we can conclude that} $U_n \leq \frac{1}{2} \xi_0^{\top} A \xi_0 \sa{\leq} \frac{1}{2\tau} \squarednormtwo{\sa{x_0} - \optx} + \frac{1}{2\sigma} \squarednormtwo{\sa{y_0} - \opty} = \startingbias$. 
\sa{Finally,} by non-negativity of $\mathcal{L}\left(\bar{x}_n, \sa{\opty}\right)-\mathcal{L}\left(\sa{\optx}, \bar{y}_n\right)$, we obtain~\eqref{rso:eq:almost_sure_upper_bound}.\QEDB
%\end{proof}

\subsection{Convex inequalities}\label{sec:cvx-ineq}
\subsubsection{Proof of Lemma~\ref{rso:prop:barbers_lemma}}\label{sec:proof_barbers_lemma}
We first start with a technical result we will use in the proof of  Lemma~\ref{rso:prop:barbers_lemma}.
\begin{lem}\label{rso:lem:upperbound_hathaty}
    For any \sa{$n \geq 1$}, 
    \[
        \norm{\doublenoiselessy_{n+1} - \opty} 
            \leq {\norm{A_0} (\distancegap{n} + \distancegap{n-1})^{1/2} }
                + \frac{1}{1+\sigma \mu_y} \left(\left(1 + \sigma(1+\theta)\Lyy \right) \norm{y_n - \noiselessy_n}
                + \sigma(1+\theta)\Lyx \norm{x_n - \doublenoiselessx_n}\right),
    \]
    where $\doublenoiselessy_{n+1},\noiselessy_n,\doublenoiselessx_n$ are defined in \eqref{rso:eq:def_noiseless_variables}, and $A_0 \in \mathbb{R}^4$ 
    %depends only on $(\tau,\sigma,\theta,\rho,\alpha)$, and 
    is defined as
    % $A_0 \in \mathbb{R}^4$ is defined as 
    {\small
    \[
        A_0 \defineq
        \sa{\tfrac{1}{1+\sigma \muy}}
            \begin{bmatrix}
                \sqrt{2\rho \tau}\sigma(1+\theta) \Lyx \\
                \frac{\sqrt{2\rho \sigma}}{\sqrt{1-\alpha \sigma}}(1 + \sigma(1+\theta)\Lyy) \\
                {\sqrt{2\rho \tau}\cdot \sigma \theta \Lyx} \\
                {\frac{\sqrt{2\rho \sigma}}{\sqrt{(1-\alpha \sigma)}}\cdot\sigma \theta \Lyy} 
            \end{bmatrix} \in \R^4.
    \]
    }
    % \saa{is} \na{a vector (given explicitly in Table~\ref{rso:table:constants_A} of Appendix \ref{rso:sec:constants}) that 
    %The vector $A_0$ is also explicitly given in Table~\ref{rso:table:constants_A} of Appendix \ref{rso:sec:constants}. 
\end{lem}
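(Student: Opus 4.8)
Fix $n\ge 1$. The plan is to combine the contractivity of the proximal operator $\prox{\sigma g}$ (which is $\tfrac{1}{1+\sigma\muy}$-Lipschitz, since $g$ is $\muy$-strongly convex) with the first-order optimality of the saddle point, $\opty=\prox{\sigma g}\big(\opty+\sigma\gradphiy(\optx,\opty)\big)$, and then with the block-Lipschitz estimates of Assumption~\ref{rso:assumption:str_cvx_smooth}(iii). First I would start from the definition of $\doublenoiselessy_{n+1}$ in \eqref{rso:eq:def_noiseless_variables} and the above fixed-point identity to obtain
\[
\norm{\doublenoiselessy_{n+1}-\opty}\le \tfrac{1}{1+\sigma\muy}\Big\|\,\noiselessy_n-\opty+\sigma(1+\theta)\big(\gradphiy(\doublenoiselessx_n,\noiselessy_n)-\gradphiy(\optx,\opty)\big)-\sigma\theta\big(\gradphiy(x_{n-1},y_{n-1})-\gradphiy(\optx,\opty)\big)\Big\|,
\]
where I used $(1+\theta)-\theta-1=0$ to add and subtract $\gradphiy(\optx,\opty)$ and rewrite the two gradient terms as increments that vanish at the equilibrium.

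Next I would insert $\pm\gradphiy(x_n,y_n)$ into the first increment and split $\noiselessy_n-\opty=(\noiselessy_n-y_n)+(y_n-\opty)$. Applying the triangle inequality together with the bounds $\norm{\gradphiy(\doublenoiselessx_n,\noiselessy_n)-\gradphiy(x_n,y_n)}\le\Lyx\norm{\doublenoiselessx_n-x_n}+\Lyy\norm{\noiselessy_n-y_n}$, $\norm{\gradphiy(x_n,y_n)-\gradphiy(\optx,\opty)}\le\Lyx\norm{x_n-\optx}+\Lyy\norm{y_n-\opty}$, and the analogous bound at index $n-1$, I would then collect the terms carrying $\norm{\noiselessy_n-y_n}$ and $\norm{\doublenoiselessx_n-x_n}$; their coefficients come out to be exactly $\tfrac{1+\sigma(1+\theta)\Lyy}{1+\sigma\muy}$ and $\tfrac{\sigma(1+\theta)\Lyx}{1+\sigma\muy}$, matching the two explicit terms in the statement.

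It then remains to bound the residual sum $\tfrac{1}{1+\sigma\muy}\big(\sigma(1+\theta)\Lyx\norm{x_n-\optx}+(1+\sigma(1+\theta)\Lyy)\norm{y_n-\opty}+\sigma\theta\Lyx\norm{x_{n-1}-\optx}+\sigma\theta\Lyy\norm{y_{n-1}-\opty}\big)$ by $\norm{A_0}\,(\distancegap{n}+\distancegap{n-1})^{1/2}$. For this I would use the identity $\distancegap{m}=\tfrac{1}{2\rho\tau}\squarednormtwo{x_m-\optx}+\tfrac{1-\alpha\sigma}{2\rho\sigma}\squarednormtwo{y_m-\opty}$ and apply a weighted Cauchy--Schwarz inequality, first within each time index $m\in\{n,n-1\}$ (combining the $x$- and $y$-contributions with weights $2\rho\tau$ and $\tfrac{2\rho\sigma}{1-\alpha\sigma}$ so that the square roots of the weighted squared norms reassemble into $\distancegap{m}^{1/2}$), obtaining bounds of the form $\sqrt{(A_0^{(1)})^2+(A_0^{(2)})^2}\,\distancegap{n}^{1/2}$ and $\sqrt{(A_0^{(3)})^2+(A_0^{(4)})^2}\,\distancegap{n-1}^{1/2}$, where $A_0^{(i)}$ denotes the $i$-th component of the vector $A_0$ in the statement, and then a second Cauchy--Schwarz across the two indices to collapse these into $\norm{A_0}\,(\distancegap{n}+\distancegap{n-1})^{1/2}$.

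The argument is otherwise routine; the only points requiring care are the constant bookkeeping so that the residual coefficients reassemble precisely into the components of $A_0$, and the two-stage Cauchy--Schwarz (first within each time index, then across the two indices), which is exactly what prevents a spurious $\sqrt{2}$ factor from appearing in the final bound.
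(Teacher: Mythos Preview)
Your proposal is correct and follows essentially the same route as the paper: prox contraction combined with the fixed-point identity for $\opty$, then block-Lipschitz estimates on $\gradphiy$, a triangle-inequality split to isolate the $\norm{\noiselessy_n-y_n}$ and $\norm{\doublenoiselessx_n-x_n}$ terms, and finally Cauchy--Schwarz to package the four residual distances into $\norm{A_0}(\distancegap{n}+\distancegap{n-1})^{1/2}$. The only cosmetic difference is that the paper applies Lipschitz directly between $(\doublenoiselessx_n,\noiselessy_n)$ and $(\optx,\opty)$ and then splits $\norm{\noiselessy_n-\opty}$ and $\norm{\doublenoiselessx_n-\optx}$ via the triangle inequality, whereas you insert $\pm\gradphiy(x_n,y_n)$ first; both routes produce identical coefficients.
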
%\todo{MG: Yassine, if we use $A_0$ only here, it is better to give it here rather than to refer to the table because this avoids people doing back and forth with the table when reading the proof. Then, reference to $A_0$ in other partso f the paper should also be given to this lemma rather than the Table. YASSINE:DONE}
\begin{proof}
    % In view of~\eqref{rso:eq:fixed_point_eq} and \yassine{the contraction property of the prox operator of strongly convex function}, 
    {%First note that 
    Since $(\optx, \opty)$ is a solution of~\eqref{rso:eq:main_problem}, $\optx$ and $\opty$ are fixed points of two deterministic proximal gradient maps, i.e.,
\begin{equation}\label{rso:eq:fixed_point_eq}
        \optx = \prox{\tau f}\left(\optx - \tau \gradphix(\optx, \opty)\right), \quad \opty = \prox{\sigma g}\left(\opty + \sigma \gradphiy(\optx, \opty)\right).
    \end{equation}
    Thus, by the contraction properties of the prox for strongly convex functions, and convexity of the squared norm, we have}
    \[
        \begin{split}
            \norm{\doublenoiselessy_{n+1} - \opty} 
                \leq \frac{1}{1+\sigma \mu_y} \left\| \noiselessy_n + \sigma (1+\theta) \gradphiy(\doublenoiselessx_{n}, \noiselessy_{n}) - \sigma \theta \gradphiy(x_{n-1}, \sa{y_{n-1}}) - \opty - \sigma \gradphiy(\optx, \opty) \right\|.
        \end{split}
    \]
    By the triangular inequality and smoothness assumptions on $\gradphiy$, we deduce
    \[
        \scalemath{0.8}{
        \begin{alignedat}[t]{1}
            \norm{\doublenoiselessy_{n+1} - \opty} 
                &\leq \frac{1}{1+\sigma \mu_y}
                    \left(
                        (1+\sigma(1+\theta)\Lyy) \norm{\noiselessy_n - \opty} 
                        + \sigma (1+\theta) \Lyx \norm{\doublenoiselessx_n - \optx} 
                        + \sigma \theta \Lyx \norm{x_{n-1} - \optx} 
                        + \sigma \theta \Lyy \norm{y_{n-1} - \opty} 
                    \right) \\
                &\leq \begin{alignedat}[t]{1}
                     & \frac{1}{1+\sigma \mu_y}
                    \left(
                        (1+\sigma(1+\theta)\Lyy) \norm{y_n - \opty} 
                        + \sigma (1+\theta) \Lyx \norm{x_n - \optx} 
                        + \sigma \theta \Lyx \norm{x_{n-1} - \optx} 
                        + \sigma \theta \Lyy \norm{y_{n-1} - \opty} 
                    \right) \\
                    & + \frac{1}{1+\sigma \mu_y} 
                        \left((1+\sigma(1+\theta)\Lyy) \norm{\noiselessy_n - y_n} 
                        + \sigma (1+\theta) \Lyx \norm{\doublenoiselessx_n - x_n} \right). \\
                \end{alignedat}
        \end{alignedat}
        }
    \]
    The statement finally follows from Cauchy-Schwarz inequality.
\end{proof}

Now we are ready to prove Lemma~\ref{rso:prop:barbers_lemma}. 
By Young's inequality, for any $\gamma_x, \gamma_y > 0$,
\[
    \begin{split}
        {\frac{-1}{1+\rho}} 
            & {\left(
                \varidx{P}{n}{1} 
                + \dotproduct{\noisegradx{n}}{\doublenoiselessx_{n+1} - x_{n+1}} 
                + (1+\theta)\; \dotproduct{\noisegrady{n}}{y_{n+1} - \noiselessy_{n+1}}
                \right)}\\
            &= \frac{1}{1+\rho} 
                    \begin{aligned}[t]
                        & \left(\dotproduct{\noisegradx{n}}{\doublenoiselessx_{n+1} - \optx} 
                            + (1+\theta)\; \dotproduct{\noisegrady{n}}{\opty - \noiselessy_{n+1}} \right. \\
                        & \left. - \dotproduct{\noisegradx{n}}{\doublenoiselessx_{n+1} - x_{n+1}} 
                            - (1+\theta)\; \dotproduct{\noisegrady{n}}{y_{n+1} - \noiselessy_{n+1}} \right) \\
                    \end{aligned} \\
            &= \frac{1}{1+\rho} \left(\dotproduct{\noisegradx{n}}{x_{n+1} - \optx} 
                + (1+\theta) \dotproduct{\noisegrady{n}}{\opty - y_{n+1}} \right) \\        
            &\leq \frac{\gamma_x}{2(1+\rho)} \squarednormtwo{\noisegradx{n}} 
                    + \frac{1}{2 \gamma_x (1+\rho)} \squarednormtwo{\optx - x_{n+1}} 
                    + \frac{(1+\theta)\gamma_y}{2(1+\rho)}  \squarednormtwo{\noisegrady{n}} 
                    + \frac{(1+\theta)}{2 \gamma_y (1+\rho)} \squarednormtwo{y_{n+1} - \opty}.
    \end{split}
\]
Setting $\gamma_x \defineq 8 \tau$ and $\gamma_y \defineq \frac{8\sigma (1+\theta)}{1-\alpha \sigma}$, we ensure that 
%
% \vspace{-0.35em}
%
\begin{equation}\label{rso:eq:bound_Pn_1_barber_lemma}
    \begin{split}
        \frac{-1}{1+\rho} \left(
                \varidx{P}{n}{1} 
                + \dotproduct{\noisegradx{n}}{\doublenoiselessx_{n+1} - x_{n+1}} \right.
                &+ \left. (1+\theta)\; \dotproduct{\noisegrady{n}}{y_{n+1} - \noiselessy_{n+1}}
                \right) \\
            &\leq \frac{\rho}{8(1+\rho)} \distancegap{n+1} + \frac{4\tau}{1+\rho} \squarednormtwo{\noisegradx{n}} 
                            + \frac{4\sigma(1+\theta)^2}{(1+\rho)(1-\alpha \sigma)} \squarednormtwo{\noisegrady{n}}, \\            
    \end{split}
\end{equation}
where $\cE_n=\cD_n/\rho$ and $\cD_n$ is defined in \eqref{def-metric}.
\sa{Moreover, we also have} $\frac{-\rho}{1+\rho} \varidx{P}{n-1}{2} + \frac{\theta}{1+\rho} \dotproduct{\noisegrady{n-1}}{y_{n+1} - \doublenoiselessy_{n+1}} = \frac{\theta}{1+\rho} \dotproduct{\noisegrady{n-1}}{y_{n+1} - \opty} \leq \frac{\theta}{1+\rho} \left(\frac{\gamma'_y}{2}\norm{\noisegrady{n-1}}^2+\frac{1}{2\gamma'_y}\norm{\opty - y_{n+1}}^2\right)$
for any $\gamma'_y > 0$. 
Hence, \sa{setting $\gamma'_y=\frac{8\theta\sigma}{1-\alpha\sigma}$ leads to}
\begin{equation}\label{rso:eq:bound_Pn_2_minus_barber_lemma}
    \begin{split}
        {\frac{-\rho}{1+\rho}} \varidx{P}{n-1}{2} + \frac{\theta}{1+\rho} \dotproduct{\noisegrady{n-1}}{y_{n+1} - \doublenoiselessy_{n+1}} 
            \leq \frac{\rho}{8(1+\rho)} \distancegap{n+1} 
                + \frac{4 \sigma\theta^2}{(1+\rho)(1-\alpha \sigma)} \squarednormtwo{\noisegrady{n-1}}.
    \end{split}
\end{equation}
Finally, observe that for \sa{any} $\gamma > 0$,
\[
\begin{split}
    - \varidx{P}{n}{2} 
        &\leq \frac{\theta \gamma}{2\rho} {\squarednormtwo{\noisegrady{n}}} + \frac{\theta}{2\gamma \rho} \squarednormtwo{\doublenoiselessy_{n+2} - \opty} \\ 
        &\leq \frac{\theta \gamma}{2\rho} {\squarednormtwo{\noisegrady{n}}} 
            + \frac{\theta}{2\gamma \rho} \bigg(3 \squarednormtwo{A_0} (\distancegap{n+1} + \distancegap{n}) 
        \begin{aligned}[t]
             &+ 3 \Big(\frac{1 + \sigma(1+\theta)\Lyy}{1+\sigma \mu_y}\Big)^2 
             \squarednormtwo{\noiselessy_{n+1} - y_{n+1}} \\
             &+ 3 \Big(\frac{\sigma (1+\theta) \Lyx}{1+\sigma \mu_y}\Big)^2 \squarednormtwo{\doublenoiselessx_{n+1} - x_{n+1}} \bigg),
        \end{aligned}\\
\end{split}
\]
where \sa{the} last inequality \sa{follows} from Lemma~\ref{rso:lem:upperbound_hathaty} and the %\yassine{convexity of the norm}
simple inequality $(a+b+c)^2\leq 3a^2+3b^2+3c^2$ for any $a,b,c\in\reals$.
Setting $\gamma \defineq \frac{6 \theta \squarednormtwo{A_0}(1+\rho)}{\sa{\rho^2}}$ ensures that 
\begin{equation}\label{rso:eq:bound_Pn_2_barber_lemma}
    - \varidx{P}{n}{2} 
            \leq \frac{\rho}{4(1+\rho)}(\distancegap{n+1} + \distancegap{n}) 
                \begin{aligned}[t]
                    &+ \frac{3 \sa{\theta^2}\squarednormtwo{A_0}(1+\rho)}{\sa{\rho^3}} \squarednormtwo{\noisegrady{n}} \\
                    &+ \frac{\rho}{4 \squarednormtwo{A_0} (1+\rho)}  \left(\frac{1 + \sigma(1+\theta)\Lyy}{1+\sigma \mu_y}\right)^2 
                 \squarednormtwo{\noiselessy_{n+1} - y_{n+1}} \\     
                    &+ \frac{\rho}{4 \squarednormtwo{A_0} (1+\rho)} \left(\frac{\sigma (1+\theta) \Lyx}{1+\sigma \mu_y}\right)^2 \squarednormtwo{\doublenoiselessx_{n+1} - x_{n+1}}.
                \end{aligned}
\end{equation}
Hence, {using the trivial {upper bound} $\cE_{n+1}\leq (\cE_{n+1} + \cE_n)$} and {combining the bounds~\cref{rso:eq:bound_Pn_1_barber_lemma,rso:eq:bound_Pn_2_minus_barber_lemma,rso:eq:bound_Pn_2_barber_lemma} we obtained above, we get}
{\small
\[
\begin{split}
    \frac{-1}{1+\rho}
        &  \varidx{P}{n}{1} 
            - \varidx{P}{n}{2}
            - \frac{\rho}{1+\rho} \varidx{P}{n-1}{2} 
            - \frac{1}{1+\rho} Q_n \\
        &\leq 
            \begin{aligned}[t]
                 & \frac{\rho}{2(1+\rho)} (\distancegap{n+1} + \distancegap{n})  \\
                 &  + \frac{4\tau}{1+\rho} \squarednormtwo{\noisegradx{n}} 
                    + \frac{4\sigma(1+\theta)^2}{(1+\rho)(1-\alpha \sigma)} \squarednormtwo{\noisegrady{n}} 
                    + \frac{4\sigma\theta^2}{(1+\rho)(1-\alpha \sigma)} \squarednormtwo{\noisegrady{n-1}} 
                    + \frac{3 \theta^2 (1+\rho) \squarednormtwo{A_0}}{\rho^3} {\squarednormtwo{\noisegrady{n}}}  \\
                 &  + \frac{\rho}{4 \squarednormtwo{A_0}(1+\rho)} \left(\frac{1 + \sigma(1+\theta)\Lyy}{1+\sigma \mu_y}\right)^2 
             \squarednormtwo{\noiselessy_{n+1} - y_{n+1}} \\
             &
             + \frac{\rho}{4 \squarednormtwo{A_0} (1+\rho)} \left(\frac{\sigma (1+\theta) \Lyx}{1+\sigma \mu_y}\right)^2 \squarednormtwo{\doublenoiselessx_{n+1} - x_{n+1}}.
            \end{aligned}
\end{split}
\]}
Let us now introduce $\zeta_k \defineq \Big[\norm{\noisegradx{k}},\; \norm{\noisegrady{k}},\; \rho^{1/2} \norm{\noisegrady{k-1}}\Big]^\top\sa{\in\reals^3}$ \sa{for $k\geq 0$; %and notice 
then, by similar computations the following bounds follow from} Lemma~\ref{rso:lem:sapd_lemma_A3}:
\[
\begin{split}
    % \vspace{-2em}
    \squarednormtwo{\noiselessy_{n+1} - y_{n+1}}
        &\leq 
            \zeta_n^\top \diag \left[0, \frac{2\sigma^2 (1+\theta)^2}{(1+\sigma \muy)^2}, \frac{2\sigma^2 {\theta^2 \rho^{-1}}}{(1+\sigma \muy)^2}\right] \zeta_n \\
    \squarednormtwo{\doublenoiselessx_{n+1} - x_{n+1}} 
        & \leq 
            \zeta_n^\top \diag \left[\frac{3 \tau^2}{(1+\tau \mux)^2}, \frac{3 \tau^2 \sigma^2 (1+\theta)^2 \Lxy^2}{(1+\tau \mux)^2 (1+\sigma \muy)^2}, \frac{3 \tau^2 \sigma^2 \theta^2 \rho^{-1} \Lxy^2}{(1+\tau \mux)^2 (1+\sigma \muy)^2}\right]
         \zeta_n,
\end{split}
\]
and we deduce that 
\[
    \frac{-1}{1+\rho} 
        \varidx{P}{n}{1} 
            - \varidx{P}{n}{2}
            - \frac{\rho}{1+\rho} \varidx{P}{n-1}{2} 
            - \frac{1}{1+\rho} Q_n \leq \frac{\rho}{2(1+\rho)} (\distancegap{n+1} + \distancegap{n}) +  \zeta_n^\top \diag \left[B^x, B^y, B_{-1}^{y} \right] \zeta_n, \\
\]
\sa{where $B^x, B^y, B_{-1}^{y}$ are {constants specified} in Table~\ref{rso:table:constants_C} of Appendix \ref{rso:sec:constants}.}

We \ylsecond{now} treat the sum $\sum_{k=0}^n \rho^{n-k} \ylsecond{Q_k}$. 
Observe first that for all $n \in \N$, Lemma~\ref{rso:lem:sapd_lemma_A3}, 
{\small
\[
\begin{split}
    Q_n 
        &\leq 
            \norm{\noisegradx{n}} \; \norm{\doublenoiselessx_{n+1}-x_{n+1}} 
            + (1+\theta) \norm{\noisegrady{n}} \; \norm{y_{n+1}-\noiselessy_{n+1}} 
            + \theta \; \norm{\noisegrady{n-1}} \; \norm{y_{n+1} \ylsecond{-}\doublenoiselessy_{n+1}} \\
        &\leq   \begin{aligned}[t]
                    & \norm{\noisegradx{n}} \; 
                        \left[ 
                            \frac{\tau}{1+\tau \mux}
                                \left(
                                    \left\|\noisegradx{n}\right\|
                                    + \Lxy \frac{\sigma}{1+\sigma \mu_y}
                                        \left(
                                            (1+\theta) \left\|\noisegrady{n}\right\|
                                            + \theta\left\|\noisegrady{n-1}\right\|
                                        \right)
                                \right) 
                            \right] \\
                    & + (1+\theta) \norm{\noisegrady{n}} \; 
                        {\left(
                            \frac{\sigma}{1+\sigma \muy} 
                                \left(
                                    (1+\theta) \norm{\noisegrady{n}} + \theta \norm{\noisegrady{n-1}}
                                \right)
                            \right)} \\
                    & + \theta \norm{\noisegrady{n-1}} 
                        \frac{\sigma}{1+\sigma \mu_y} 
                            \begin{aligned}[t]
                                 &\left( \frac{\tau(1+\theta) \Lyx}{1+\tau \mu_x} \norm{\noisegradx{n-1}}\right. \\
                                    &+ (1+\theta)\norm{\noisegrady{n}}  \\
                                    &+ \left(\theta + (1+\theta) \left(\frac{1+\sigma(1+\theta)\Lyy}{1+\sigma \mu_y} + \frac{\tau \sigma (1+\theta) \Lyx \Lxy}{(1+\tau \mu_x)(1+\sigma \mu_y)}\right)\right) \norm{\noisegrady{n-1}} \\
                                    & +  \left. \theta \left(\frac{1+\sigma(1+\theta)\Lyy}{1+\sigma \mu_y} + \frac{\tau \sigma (1+\theta) \Lyx \Lxy}{(1+\tau \mu_x)(1+\sigma \mu_y)}\right) \norm{\noisegrady{n-2}} \right),\\
                            \end{aligned}                        
                \end{aligned} \\
\end{split}
\]}%
{which, after organizing the terms and using $ab \leq a^2/2 + b^2/2$ for any scalars $a, b$, becomes
{\small
\[
\begin{split}
    Q_n &\leq   \begin{aligned}[t]
                    & \frac{\tau}{1+\tau \mux}\squarednormtwo{\noisegradx{n}} 
                        + \frac{\sigma(1+\theta)^2}{1+\sigma \mu_y} \squarednormtwo{\noisegrady{n}} \\
                    & + \frac{\sigma \theta}{1+\sigma \mu_{\mathbf{y}}}\left(\theta+(1+\theta)\left(\frac{1+\sigma(1+\theta) \mathrm{L}_{\mathrm{yy}}}{1+\sigma \mu_y}+\frac{\tau \sigma(1+\theta) \mathrm{L}_{\mathrm{yx}} \mathrm{L}_{\mathrm{xy}}}{\left(1+\tau \mu_x\right)\left(1+\sigma \mu_y\right)}\right)\right) \squarednormtwo{\noisegrady{n-1}} \\
                    & + \frac{\sigma \theta}{\ylthird{2}(1+\sigma \mu_y)} \quad \frac{\tau(1+\theta) \Lyx}{1+\tau \mu x} \squarednormtwo{\noisegrady{n-1}} 
                        + \frac{\sigma \theta}{\ylthird{2}(1+\sigma \mu_y)} \frac{\tau(1+\theta) \Lyx}{1+\tau \mu_x} \squarednormtwo{\noisegradx{n-1}} \\
                    & + \frac{\sigma \theta}{1+\sigma \mu y}(1+\theta) \squarednormtwo{\noisegrady{n}} + \frac{\sigma \theta}{1+\sigma \muy} (1+\theta) \squarednormtwo{\noisegrady{n-1}} \\
                    & + \frac{\sigma \theta}{1+\sigma \mu_y} \ylthird{\frac{\theta}{2}}\left(\frac{1+\sigma(1+\theta) \mathrm{L}_{\mathrm{yy}}}{1+\sigma \mu_y}+\frac{\tau \sigma(1+\theta) \mathrm{L}_{\mathrm{yx}} \mathrm{L}_{\mathrm{xy}}}{\left(1+\tau \mu_x\right)\left(1+\sigma \mu_y\right)}\right) \squarednormtwo{\noisegrady{n-1}}  \\
                    & \quad + \frac{\sigma \theta}{1+\sigma \mu_y} \ylthird{\frac{\theta}{2}} \left(\frac{1+\sigma(1+\theta) \mathrm{L}_{\mathrm{yy}}}{1+\sigma \mu_y}+\frac{\tau \sigma(1+\theta) \mathrm{L}_{\mathrm{yx}} \mathrm{L}_{\mathrm{xy}}}{\left(1+\tau \mu_x\right)\left(1+\sigma \mu_y\right)}\right) \squarednormtwo{\noisegrady{n-2}} \\
                    & + \frac{\tau \sigma(1+\theta) \Lxy}{\ylthird{2}\left(1+\tau \mu_x\right)\left(1+\tau \mu_y\right)} \squarednormtwo{\noisegradx{n}}
                        + \frac{\tau \sigma(1+\theta) \Lxy}{\ylthird{2} \left(1+\tau \mu_x\right)\left(1+\tau \mu_y\right)} \squarednormtwo{\noisegrady{n}} \\
                    & + \frac{\tau \sigma \theta \Lxy}{\ylthird{2} \left(1+\tau \mu_x\right)\left(1+\tau \mu_y\right)} \squarednormtwo{\noisegradx{n}}
                        + \frac{\tau \sigma \theta \Lxy}{\ylthird{2} \left(1+\tau \mu_x\right) \left(1+\tau \mu_y\right)} \squarednormtwo{\noisegrady{n-1}} \\
                \end{aligned} \\
        % &\leq C^x\; \squarednormtwo{\noisegradx{n}} \;+\;  C_{-1}^x\; \rho \squarednormtwo{\noisegradx{n-1}} 
        %         \;+\; C^y\; \squarednormtwo{\noisegrady{n}} \;+\;  C_{-1}^y\; \rho \squarednormtwo{\noisegrady{n-1}} \;+\; C_{-2}^y\; \rho^2 \squarednormtwo{\noisegrady{n-2}},
\end{split} 
\]}
}
Thus, we obtain $Q_n \leq C^x\; \squarednormtwo{\noisegradx{n}} \;+\;  C_{-1}^x\; \rho \squarednormtwo{\noisegradx{n-1}} \;+\; C^y\; \squarednormtwo{\noisegrady{n}} \;+\;  C_{-1}^y\; \rho \squarednormtwo{\noisegrady{n-1}} \;+\; C_{-2}^y\; \rho^2 \squarednormtwo{\noisegrady{n-2}}$ for some constants $C^x, C_{-1}^x, C^y, C_{-1}^y, C_{-2}^{y}$ (that are explicitly given in Table~\ref{rso:table:constants_C} of Appendix \ref{rso:sec:constants}). 
Hence, setting $\noisegradx{-1} = \noisegrady{-1} = \noisegradx{-2} = 0$, we obtain,
\[
\begin{split}
    \sum_{k=0}^{n} \rho^{n-k} Q_k
        &- \frac{1}{1+\rho} \varidx{P}{n}{1} 
        - \varidx{P}{n}{2}
        - \frac{\rho}{1+\rho} \varidx{P}{n-1}{2} 
        - \frac{1}{1+\rho} Q_n \\
        &\leq
            \begin{aligned}[t]
                \frac{\rho}{2(\ylsecond{1}+\rho)} (\distancegap{n+1} + \distancegap{n}) 
                    &+ \sum_{k=0}^{n} \rho^{n-k} 
                        \begin{aligned}[t]
                        ( & C^x \squarednormtwo{\noisegradx{k}} 
                          + C_{-1}^x \rho \squarednormtwo{\noisegradx{k-1}} \\
                        & + C^y \squarednormtwo{\noisegrady{k}}
                        + C_{-1}^y \rho \squarednormtwo{\noisegrady{k-1}}
                        + C_{-2}^y \rho^2 \squarednormtwo{\noisegrady{k-2}})\\  
                        \end{aligned} \\        
                    &+ B^x \squarednormtwo{\noisegradx{n}} + B^y \squarednormtwo{\noisegrady{n}} + B_{-1}^y \rho \squarednormtwo{\noisegrady{n-1}};
            \end{aligned} \\
\end{split}
\]
therefore, rearranging the terms together we get
{\small
\[
\begin{split}
    \sum_{k=0}^{n} \rho^{n-k} Q_k
        &- \frac{1}{1+\rho} \varidx{P}{n}{1} 
        - \varidx{P}{n}{2}
        - \frac{\rho}{1+\rho} \varidx{P}{n-1}{2} 
        - \frac{1}{1+\rho} Q_n \\
        &
        \leq 
            \begin{aligned}[t]
                &\frac{\rho}{2(\ylsecond{1}+\rho)} (\distancegap{n+1} + \distancegap{n}) \\
                &+ C^x  \sum_{k=0}^{n} \rho^{n-k} \squarednormtwo{\noisegradx{k}}
                    + C_{-1}^x \sum_{k=0}^{n-1} \rho^{n-k}  \squarednormtwo{\noisegradx{k}} 
                    + C^y  \sum_{k=0}^{n} \rho^{n-k} \squarednormtwo{\noisegrady{k}} \\
                &+ C_{-1}^y \sum_{k=0}^{n-1} \rho^{n-k} \squarednormtwo{\noisegrady{k}}
                    + C_{-2}^y \sum_{k=0}^{n-2} \rho^{n-k} \squarednormtwo{\noisegrady{k}} %\\             
                %&
                + B^x \squarednormtwo{\noisegradx{n}} + B^y \squarednormtwo{\noisegrady{n}} + B_{-1}^y \rho \squarednormtwo{\noisegrady{n-1}}
            \end{aligned} \\
        &\leq  \frac{\rho}{2(\ylsecond{1}+\rho)} (\distancegap{n+1} + \distancegap{n})
            + \cQ_x \sum_{k=0}^{n} \rho^{n-k} \squarednormtwo{\noisegradx{k}} 
            + \cQ_y \sum_{k=0}^{n} \rho^{n-k} \squarednormtwo{\noisegrady{k}},  \\
\end{split}
\]
}
where $\cQ_x \defineq B^x + C^x + C_{-1}^x$ and $\cQ_y \defineq B^y + B_{-1}^y + C^y + C_{-1}^y + C_{-2}^y$. 
{This completes the proof.}
\QEDB

%========================================================================================================================
\subsubsection{Proof of Lemma~\ref{rso:lem:upperbound_hat_variables}}\label{proof:rso:lem:upperbound_hat_variables}
%========================================================================================================================

Let \sa{$k \in \N$} be fixed. 
In view of~\eqref{rso:eq:fixed_point_eq}, we have
{\small\begin{align*}
    \sa{\norm{\noiselessy_{k+1} - \opty}} 
    &\leq \frac{1}{1+\sigma \mu_y} \norm{y_k + \sigma(1+\theta) \gradphiy(x_{k}, y_k) - \sigma\theta \gradphiy(x_{k-1}, y_{k-1}) - \opty - \sigma \gradphiy(\optx, \opty)} \\
    &\leq \frac{1}{1+\sigma \mu_y} 
        \Big(
            \begin{aligned}[t]
                & \norm{y_k - \opty} + \sigma (1+\theta) \norm{\gradphiy(x_k, y_k) - \gradphiy(\optx, \opty)} \\
                & + \theta \sigma \norm{\gradphiy(x_{k-1}, y_{k-1}) - \gradphiy(\optx, \opty)} \Big)
            \end{aligned} \\
    &\leq \frac{1}{1+\sigma \mu_y}  
        \begin{aligned}[t]  
            \Big( \sigma (1+\theta) \Lyx \norm{x_k - \optx} 
            &+ (1 + \sigma (1+\theta) \Lyy) \norm{y_k - \opty} \\
            + \sigma\theta \Lyx \norm{x_{k-1} - \optx} 
            &+ \sigma\theta \Lyy \norm{y_{k-1} - \opty} \Big),\\
        \end{aligned} \\
\end{align*}}%
where the third inequality \sa{follows} from the smoothness assumptions on $\gradphix$ and $\gradphiy$, {and for the $k=0$ case, we have $x_{-1}=x_0$ and $y_{-1}=y_0$.} 
%Through analogous algebraic computations, 
Using similar arguments, we also obtain 
\[
{\small
\begin{aligned}[t]
    \norm{\doublenoiselessx_{k+1} - \optx} 
        & \leq \frac{1}{1+\tau \mu_x} \Big((1+\tau \Lxx) \norm{x_k - \optx} + \tau \Lxy \norm{\noiselessy_{k+1} - \opty} \Big)\\
        &\leq \frac{1}{1+\tau \mu_x} 
            \begin{alignedat}[t]{2}
                \left(
                    \left(1+\tau \Lxx + \frac{\tau \sigma (1+\theta) \Lyx \Lxy}{1 + \sigma \mu_y}\right) \norm{x_k - \optx}\right. 
                    &+ \frac{\tau \sigma \theta \Lxy \Lyx}{1 + \sigma \mu_y} \norm{x_{k-1} - \optx}& \\
                     + \frac{\tau \Lxy (1 + \sigma (1+\theta)\Lyy)}{1 + \sigma \mu_y} \norm{y_k - \opty}& 
                     + \left. \frac{\tau \sigma \theta \Lxy \Lyy }{1 + \sigma \mu_y} \norm{y_{k-1} - \opty} \right),\\
            \end{alignedat}
\end{aligned}
}
\]
from which \sa{we deduce the following bound:}
\begin{equation*}
    \scalemath{0.9}{
    \begin{alignedat}[t]{1}
        \norm{\doublenoiselessy_{k+2} - \opty}  
            & \leq \frac{1}{1+\sigma \mu_y} 
                    \begin{alignedat}[t]{1}
                        \Big( 
                            \norm{\noiselessy_{k+1} - \opty} &
                            + \sigma (1+\theta) \Lyx \norm{\doublenoiselessx_{k+1} - \optx}
                            +  \sigma (1+\theta) \Lyy \norm{\noiselessy_{k+1} - \opty} \\
                        & + \sigma \theta \Lyx \norm{x_{k} - \optx} 
                              + \sigma \theta \Lyy \norm{y_{k} - \opty} \Big)\\
                    \end{alignedat}\\
            & \leq \frac{1}{1+\sigma \mu_y} 
                \begin{alignedat}[t]{1}
                     & \Bigg( 
                        \Bigg(
                            \begin{aligned}[t]
                                & (1 + \sigma (1+\theta) \Lyy) \frac{\sigma (1+\theta) \Lyx}{1 + \sigma \mu_y} \\
                                & + \sigma (1+\theta) \Lyx \frac{\left(1+\tau \Lxx + \frac{\tau \sigma (1+\theta) \Lyx \Lxy}{1 + \sigma \mu_y}\right)}{1+\tau \mu_x} + \sigma \theta \Lyx \Bigg) \norm{x_k - \optx}
                            \end{aligned} \\
                     & \quad + \left(\frac{(1 + \sigma (1+\theta) \Lyy)^2}{1+\sigma \mu_y} + \sigma (1+\theta) \Lyx \frac{\tau \Lxy (1 + \sigma (1+\theta)\Lyy)}{(1+\sigma \mu_y)(1+\tau \mu_x)} + \sigma \theta \Lyy \right) \norm{y_k - \opty}  \\  
                    & \quad +  \left((1 + \sigma (1+\theta) \Lyy) \frac{\theta \sigma \Lyx}{(1+\sigma \mu_y)} + \sigma (1+\theta) \frac{\tau \sigma \theta \Lxy \Lyx^2}{(1+\sigma \mu_y)(1+\tau \mu_x)}  \right) \norm{x_{k-1} - \optx} \\
                    & \quad + \left(\frac{(1 + \sigma (1+\theta) \Lyy) \sigma \theta \Lyy}{1+\sigma \mu_y} + \sigma (1+\theta) \frac{\tau \sigma \theta \Lxy \Lyx \Lyy}{(1+\sigma \mu_y)(1+\tau \mu_x)}  \right) \norm{y_{k-1} - \opty} \Bigg).\\
                \end{alignedat}
    \end{alignedat}
    }
\end{equation*}
{Combining the above bounds with Cauchy-Schwarz inequality implies \eqref{rso:eq:upperbound_hat_variables} and we conclude.}
\QEDB

%==========================================================================================
\section{Details and proofs for the quadratic setting}
%==========================================================================================
%--------------------------------------------------------------------------------------------------------------------------------------------------------------------------------
\subsection{Properties of SAPD~on the quadratic SP problem given in\texorpdfstring{~\eqref{rso:eq:quadratic_problem}}{4.1}%~\texorpdfstring{\citep{zhang2021robust}}{40}
}\label{rso:sec:recalls_quadratics}
%--------------------------------------------------------------------------------------------------------------------------------------------------------------------------------

In this section, we briefly recall the \sa{discussion in}~\citep{zhang2021robust} regarding the \sa{convergence} behaviour of \sapdname~on the SP problem in~\eqref{rso:eq:quadratic_problem}.
Precisely, denoting $\sa{\tilde z_k} = [x_{k-1}, y_{k}]^\top$ and $\omega_k = \left[\omega_{k-1}^x \omega_{k-1}^y ; \omega_k^y\right]^{\top}$, the authors observe that \sa{$(\tilde z_k)_{k\geq 0}$} satisfies the recurrence relation $\tilde z_{k+1} = A \tilde z_k + B \omega_k$ where $A$ and $B$ are defined as 
\begin{equation}\label{rso:eq:a_lam_r_lam}
    \scalemath{0.8}{A \mgtwo{=}
        \left[\begin{array}{cc}
            \frac{1}{1+\tau \mu_x} I_d & \frac{-\tau}{\left(1+\tau \mu_x\right)} K \\
            \frac{1}{1+\sigma \mu_y}\left(\frac{\sigma(1+\theta)}{1+\tau \mu_x}-\sigma \theta\right) K & \frac{1}{1+\sigma \mu_y}\left(I_d-\frac{\tau \sigma(1+\theta)}{1+\tau \mu_x} K^2 \right)
            \end{array}\right], 
        \quad 
            B \mgtwo{=}
            \left[\begin{array}{ccc}
                    \frac{-\tau}{1+\tau \mu_x} I_d & 0_d & 0_d \\
                    \frac{-\tau \sigma(1+\theta)}{\left(1+\tau \mu_x\right)\left(1+\sigma \mu_y\right)} K & \frac{-\sigma \theta}{1+\sigma \mu_y} I_d & \frac{\sigma(1+\theta)}{1+\sigma \mu_y} I_d
            \end{array}\right]}.
\end{equation}
As a result, the covariance matrix \sa{$\tilde\Sigma_k$ of $\tilde z_k$} satisfies for all $k\geq 0$,
\begin{equation}\label{rso:eq:cov_recursion}
    \sa{\tilde\Sigma_{k+1}} = A \sa{\tilde\Sigma_{k}} A^\top + R,
\end{equation}
where $R = \frac{\delta^2}{d} B B^{\top}+A \mathbb{E}\left[\tilde z_k \omega_k^{\top}\right] B^{\top}+B \mathbb{E}\left[\omega_k \tilde z_k^{\top}\right] A^{\top}$. 
Using the independence assumptions on the $\gnoisex{k}$'s and $\gnoisey{k}$'s, elementary derivations lead to expressing $R$ as
\[
    \scalemath{0.8}{R = \frac{\delta^2}{d} \left[\begin{array}{cc}
    \frac{\tau^2}{\left(1+\tau \mu_x\right)^2} 
&   \left(\frac{\tau^2 \sigma(1+\theta)}{\left(1+\tau \mu_x\right)^2\left(1+\sigma \mu_y\right)}
        + \frac{\tau \sigma^2 \theta (1+\theta)}{\left(1+\sigma \mu_y\right)^2(1+\tau \mu_x)} \right) K \\
\left(\frac{\tau^2 \sigma(1+\theta)}{\left(1+\tau \mu_x\right)^2\left(1+\sigma \mu_y\right)}
        + \frac{\tau \sigma^2 \theta (1+\theta)}{\left(1+\sigma \mu_y\right)^2(1+\tau \mu_x)} \right) K &  
    \frac{\sigma^2(1+\theta)^2}{\left(1+\sigma \muy \right)^2}
        \left(
            \frac{\tau^2}{\left(1+\tau \mu_x\right)^2}
                + \frac{2 \tau \sigma \theta}{\left(1+\tau \mu_x\right)\left(1+\sigma \mu_y\right)}\right) K^2 
    + \frac{\sigma^2}{\left(1+\sigma \mu_y\right)^2} \left(1 + \frac{2 \theta(1+\theta)\sigma \mu_y}{1+ \sigma \muy}\right) I_d\\
\end{array}\right]}.
\]
Provided that the spectral radius $\rho(A)$ of $A$ is less than $1$, the sequence \sa{$(\tilde \Sigma_k)_{k\geq 0}$} converges to a matrix \sa{$\tilde \Sigma^{\infty}$} satisfying
\begin{equation}\label{rso:eq:Lyap_eq}
    \sa{\tilde \Sigma^{\infty}= A 
    \tilde \Sigma^{\infty} A^\top + R.}
\end{equation}
Leveraging the spectral theorem, it is shown in~\citep{zhang2021robust} that an orthogonal change of basis enables to reduce the $2d \times 2d$ Lyapunov equation to $d$ systems of the following form for each $\lambda\in\spectrum(K)$:  
\begin{equation}\label{rso:eq:Lyap_eq_reduced}
    \saa{\tilde \Sigma^{\infty, \lambda} = A^{\lambda} \tilde \Sigma^{\infty, \lambda} {A^{\lambda}}^\top + R^{\lambda},}
\end{equation}
\saa{such that $A^{\lambda}$ and $R^{\lambda}$ are $2\times 2$ matrices defined for each $\lambda\in\spectrum(K)$ as}
\[
    \begin{split}
        A^{\lambda} &\defineq \left[\begin{array}{cc}
    \frac{1}{1+\tau \mu_x}  & \frac{-\tau}{\left(1+\tau \mu_x\right)} \lambda \\
    \frac{1}{1+\sigma \mu_y}\left(\frac{\sigma(1+\theta)}{1+\tau \mu_x}-\sigma \theta\right) \lambda & \frac{1}{1+\sigma \mu_y}\left(I_d-\frac{\tau \sigma(1+\theta)}{1+\tau \mu_x} \lambda^2\right)
    \end{array}\right] \\
    R^{\lambda} &\defineq \delta^2 \scalemath{0.8}{\left[\begin{array}{cc}
        \frac{\tau^2}{\left(1+\tau \mu_x\right)^2} 
    &   \left(\frac{\tau^2 \sigma(1+\theta)}{\left(1+\tau \mu_x\right)^2\left(1+\sigma \mu_y\right)}
            + \frac{\tau \sigma^2 \theta (1+\theta)}{\left(1+\sigma \mu_y\right)^2(1+\tau \mu_x)} \right) \lambda \\
    \left(\frac{\tau^2 \sigma(1+\theta)}{\left(1+\tau \mu_x\right)^2\left(1+\sigma \mu_y\right)}
            + \frac{\tau \sigma^2 \theta (1+\theta)}{\left(1+\sigma \mu_y\right)^2(1+\tau \mu_x)} \right) \lambda &  
        \frac{\sigma^2(1+\theta)^2}{\left(1+\sigma \muy \right)^2}
            \left[
                \frac{\tau^2}{\left(1+\tau \mu_x\right)^2}
                    + \frac{2 \tau \sigma \theta}{\left(1+\tau \mu_x\right)\left(1+\sigma \mu_y\right)}\right] \lambda^2
        + \frac{\sigma^2}{\left(1+\sigma \mu_y\right)^2} \left(1 + \frac{2 \theta(1+\theta)\sigma \mu_y}{1+ \sigma \muy}\right)\\ 
    \end{array}\right]}   
    \end{split}
\]
{and $A$ is similar to the matrix $\diag(A^{\lambda_1},\cdots, A^{\lambda_d})$. Therefore, we have $\rho(A) = \max_{i=1,2,\dots,d}\rho(A^{\lambda_i})$}. 

% %----------------------------------------------------------------------------------------
% %----------------------------------------------------------------------------------------
\subsection{Proof of Theorem~\ref{rso:thm:quadratics}}\label{rso:sec:proof_thm_quadratics}
\label{sec:proof-lim-cov}
% %----------------------------------------------------------------------------------------
% %----------------------------------------------------------------------------------------

In this section, we solve the Lyapunov equations \eqref{rso:eq:Lyap_eq_reduced} analytically under the parameterization in~\eqref{rso:eq:cb_params}. 
Throughout, \mg{given $\lambda \in \spectrum(K)$, we introduce the quantity $\klam \defineq \frac{\lambda}{\sqrt{\mux \muy}}$ which is closely related to the condition number $\kappa = \max\{L_{xx}, L_{yx}, L_{yy}\}/\min\{\mu_y, \mu_x\}$. 
Indeed, for $\mu_x=\mu_y$, we have \saa{$\kappa = \max \{|\kappa_{\lambda}|:~ \lambda\in\spectrum(K)\}$}. \saa{For each $\lambda\in\spectrum(K)$, let} $\tilde\Sigma^{\infty, \lambda}$ be \saa{a} solution to \eqref{rso:eq:Lyap_eq_reduced}, i.e., $\tilde\Sigma^{\infty, \lambda}$ \saa{solves the following $2\times 2$ Lyapunov equation:}
\begin{equation}\label{rso:eq:Lyap_eq_reduced_lambda}
    \tilde \Sigma^{\infty, \lambda} = A^{\lambda} \tilde \Sigma^{\infty, \lambda} {A^{\lambda}}^\top + \Rlam.
\end{equation}
\saa{Furthermore,} such a solution is unique if $\rho(A^{\lambda})<1$ \citep{laub1990sensitivity,hassibi1999indefinite}.
The following result provides an explicit formula to $\tilde \Sigma^{\infty, \lambda}$ whose proof is deferred to Section \ref{proof-of-prop:limit_covariance_quadratic}.} 
\begin{prop}\label{rso:prop:limit_covariance_quadratic}
Under the Chambolle-Pock parameterization \saa{in~\eqref{rso:eq:cb_params},} for \mg{$\lambda \neq 0$} and \mg{$\theta \in \left(\frac{1}{\klam} \bigg(\sqrt{1 + {(\klam)}^2} - 1\bigg), 1\right)$}, \mg{we have $\rho(A^{\lambda})<1$,} and the unique solution $\Sigma^{\infty, \lambda}$ \saa{to the equation} in~\eqref{rso:eq:Lyap_eq_reduced_lambda} is given by 
        \[
            \tilde\Sigma^{\infty, \lambda} = \frac{\delta^2(1-\theta)}{d\lambda^2 P_c(\theta, \klam)} \left[\begin{array}{cc}
            \frac{\lambda^2}{\mux^2}\left(\tilde P_{1,1}^{(1)}(\theta, \klam) + \frac{\lambda^2}{\mu_y^2} \tilde P_{1,1}^{(2)}(\theta, \klam) \right)
                & \frac{\lambda}{\mu_x} \left(\tilde P_{1,2}^{(1)}(\theta, \kappa) + \frac{\lambda^2}{\mu_y^2} \tilde P_{1,2}^{(2)}(\theta, \klam)\right) \\
            \frac{\lambda}{\mu_x} \left(\tilde P_{1,2}^{(1)}(\theta, \klam) + \frac{\lambda^2}{\mu_y^2} \tilde P_{1,2}^{(2)}(\theta, \klam)\right) 
        \end{array}\right],
        \]
where  $P_c$ and $P_{q,\ell}^{(k)}$ for $q=1,2$ and $\ell=1,2$ are polynomials in $\theta$ and $\klam$, defined in \ylfourth{the top part of Table~\ref{rso:table:polynomials}}. 
\mg{Otherwise, for $\lambda=0$ and $\theta\in [0,1)$, we also have $\rho(\Alam)<1$ and the unique solution $\tilde \Sigma^{\infty, \lambda}$ of~\eqref{rso:eq:Lyap_eq_reduced_lambda} is given by} 
\[
        \tilde \Sigma^{\infty, 0} 
            = \frac{\delta^2}{d} \frac{(1-\theta)}{\mu_x^2 \mu_y^2(1+\theta)}\scalemath{0.8}{
                    \left[\begin{array}{cc}
                        \mu_y^2 & 0 \\
                        0 & \mu_x^2\left(1+2\left(1-\theta^2\right) \theta\right)
                    \end{array}\right]}.
\]   
\end{prop}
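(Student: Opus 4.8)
\textbf{Proof proposal for Proposition~\ref{rso:prop:limit_covariance_quadratic}.}

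The plan is to solve the $2\times 2$ Lyapunov equation \eqref{rso:eq:Lyap_eq_reduced_lambda} directly, treating the case $\lambda \neq 0$ and $\lambda = 0$ separately. For $\lambda = 0$, the matrix $A^{0}$ becomes lower-triangular (in fact $A^{0} = \diag\bigl(\tfrac{1}{1+\tau\mu_x},\,\tfrac{1}{1+\sigma\mu_y}\bigr)$ because the off-diagonal blocks are multiples of $\lambda$), so its eigenvalues are $\tfrac{1}{1+\tau\mu_x}$ and $\tfrac{1}{1+\sigma\mu_y}$, both strictly in $(0,1)$ for $\theta\in[0,1)$; here \eqref{rso:eq:Lyap_eq_reduced_lambda} decouples entrywise and one reads off the entries of $R^{0}$ and divides by $1-(\text{eigenvalue})^2$. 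Substituting the CP stepsizes $\tau=\tfrac{1-\theta}{\theta\mu_x}$, $\sigma=\tfrac{1-\theta}{\theta\mu_y}$ gives $1+\tau\mu_x = 1+\sigma\mu_y = 1/\theta$, which collapses everything to the stated closed form after elementary simplification.

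For $\lambda \neq 0$, the main idea is a similarity transformation that diagonalizes $A^{\lambda}$. I would first verify $\rho(A^{\lambda}) < 1$ under the stated threshold $\theta > \tfrac{1}{\kappa_\lambda}\bigl(\sqrt{1+\kappa_\lambda^2}-1\bigr)$: compute the characteristic polynomial of $A^{\lambda}$, which under the CP parameterization has coefficients that are rational functions of $\theta$ and $\kappa_\lambda$ only (the explicit $\lambda$ and $\mu$ dependence cancels because $\tau\sigma\lambda^2 = \tfrac{(1-\theta)^2}{\theta^2}\kappa_\lambda^2$), and then apply the Schur–Cohn (Jury) stability criterion for a degree-two polynomial $t^2 - (\mathrm{tr}\,A^\lambda)\,t + \det A^\lambda$: the conditions are $|\det A^\lambda| < 1$ and $|\mathrm{tr}\,A^\lambda| < 1 + \det A^\lambda$; the threshold on $\theta$ should fall out of the second inequality after simplification. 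Once stability is established, the unique solution can be obtained by writing $A^{\lambda} = V \Lambda V^{-1}$ (or by vectorizing: $\mathrm{vec}(\tilde\Sigma^{\infty,\lambda}) = (I - A^\lambda\otimes A^\lambda)^{-1}\mathrm{vec}(R^\lambda)$), and then expressing the three independent entries of the symmetric matrix $\tilde\Sigma^{\infty,\lambda}$ as explicit rational functions of $\theta$ and $\kappa_\lambda$. The factor $\tfrac{\delta^2(1-\theta)}{d}$ pulls out naturally since $R^\lambda$ is proportional to $\tfrac{\delta^2}{d}$ and, after substituting the CP stepsizes, carries an overall factor of $(1-\theta)$-type scaling; the remaining dependence on $\lambda$ and $\mu_x,\mu_y$ appears only through the scalings $\tfrac{\lambda}{\mu_x}$ and $\tfrac{\lambda^2}{\mu_y^2}$ displayed in the statement, which one confirms by tracking how each entry of $R^\lambda$ and $A^\lambda$ scales.

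The main obstacle is the symbolic bookkeeping: solving $(I - A^\lambda\otimes A^\lambda)^{-1}R^\lambda$ by hand produces cumbersome rational expressions, and the work is in collecting numerators and the common denominator $P_c(\theta,\kappa_\lambda)$ cleanly so that the factored form in Table~\ref{rso:table:polynomials} emerges. I would organize this by first noting that $P_c$ must be (up to a constant) $(1-\det A^\lambda)\bigl((1+\det A^\lambda)^2 - (\mathrm{tr}\,A^\lambda)^2\bigr)$ — the standard denominator appearing in the closed form of a $2\times 2$ discrete Lyapunov solution — and then verifying that each numerator polynomial $P^{(k)}_{q,\ell}$ matches the corresponding combination of entries of $R^\lambda$ against this denominator; the degree count in $\theta$ and $\kappa_\lambda$ provides a useful consistency check. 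The final claim that every entry of $\Sigma^{\infty,\lambda}$ scales like $(1-\theta)$ as $\theta\to 1$ then follows from the overall $(1-\theta)$ prefactor together with the observation that $P_c(\theta,\kappa_\lambda)$ and the numerator polynomials all tend to finite nonzero limits as $\theta\to 1$ (which one checks by evaluating at $\theta=1$, where $\tau=\sigma=0$ formally but the $\theta$-polynomials remain well-defined).
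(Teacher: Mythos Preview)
Your proposal is correct and follows essentially the same route as the paper: handle $\lambda=0$ via the diagonal structure $A^{0}=\diag(\theta,\theta)$ under the CP stepsizes, and for $\lambda\neq 0$ solve the vectorized $2\times 2$ Lyapunov equation explicitly (the paper does this after first diagonalizing $A^{\lambda}$ over $\mathbb{C}$ and then transforming back, whereas you allow the direct $(I-A^{\lambda}\otimes A^{\lambda})^{-1}$ route). Your identification of $P_c$ with the standard $2\times 2$ discrete-Lyapunov denominator $(1-\det A^{\lambda})\bigl((1+\det A^{\lambda})^2-(\mathrm{tr}\,A^{\lambda})^2\bigr)=(1-\nu_1\nu_2)(1-\nu_1^2)(1-\nu_2^2)$ is exactly what the paper arrives at after simplification.

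One point to correct: the threshold $\theta>\tfrac{1}{|\kappa_\lambda|}\bigl(\sqrt{1+\kappa_\lambda^2}-1\bigr)$ is \emph{not} the Schur--Cohn stability boundary, so it will not ``fall out of the second inequality.'' It is the boundary at which the discriminant of the characteristic polynomial of $A^{\lambda}$ becomes negative, i.e., the eigenvalues become complex conjugates. The paper uses this: once $\nu_1=\overline{\nu_2}$, one has $\rho(A^{\lambda})^2=|\nu_1|^2=\det A^{\lambda}=\theta^2-\theta(1-\theta)^2\kappa_\lambda^2$, and a direct check shows this is $<1$ for all $\theta\in(0,1)$. Your Schur--Cohn argument would yield a (larger) stability region and therefore still covers the stated range, so the stability claim survives; but the specific threshold is there to make the paper's complex diagonalization and the ensuing symmetric-function identities in $\nu_1,\nu_2$ go through, not because it is where stability begins.
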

%\todo{MG: YAssine, currently this prop refers to the polynomials defined in online supp material, is there a way to make the statement self-contained referring to only appendix of the paper. YASSINE: DONE}

Theorem~\ref{rso:thm:quadratics} \mg{will then follow} directly from Proposition~\ref{rso:prop:limit_covariance_quadratic}. 
\begin{proof}[Proof of Theorem~\ref{rso:thm:quadratics}] 
Proposition~\ref{rso:prop:limit_covariance_quadratic} characterizes the asymptotic covariance matrix of $(x_{n-1}, y_n)$ in the limit as $n\to\infty$, which we will use to deduce the covariance matrix $\Sigma^{\infty}$ of $(x_n, y_n)$ in the limit as $n\to\infty$. 
First, recall from~\citep{zhang2021robust} that the orthogonal matrix leading to the reduced Lyapunov~\eqref{rso:eq:Lyap_eq_reduced} \mg{is given by} $Z = P V$ where $P$ is the permutation matrix associated to the permutation $\mathcal{P}$ of $\{1,\ldots, 2d \}$ defined as $\mathcal{P}(qd + r) = q + 2r-1\; \text{mod}\;[d]$, for all $q \in \{0,1\}, r \in \{1,\dots, d\}$,
and $V \defineq \diag(U, U) \in \mathbb{R}^{2d \times 2d}$ where $U$ describes an orthogonal basis for $K$ with $K = U \diag(\lambda_1, \dots, \lambda_d) U^\top$. 
Now, since $x_n = \frac{1}{1+\tau \mux}(x_{n-1} - \tau K y_n)$, we have $[x_n^\top , y_n^\top ]^\top = T [x_{n-1}^\top , y_n^\top ]^\top$ where
{\small
\[
    T \defineq
        \left[\begin{array}{cc}
                \frac{1}{1+\tau \mu_x} I & \frac{-\tau}{1+\tau \mu_x} K \\
                0 & I
            \end{array}\right].
\]}
Thus, $\Sigma^\infty = T \Sigma^\infty T^\top$, and noting that $T$ admits the block diagonalization $T = Z T^{(\Lambda)} Z^\top$ where $T^{(\Lambda)} = \diag(T^{(\lambda_1)}, \dots, T^{(\lambda_d)})$ and 
\[
    T^{(\lambda_i)} = 
        \left[\begin{array}{cc}
            \frac{1}{1+\tau \mu_x} & \frac{-\tau \lambda_i}{1+\tau \mu_x} \\
            0 & 1
            \end{array}\right] \quad \forall i \in\{1, \dots, d\},
\]
we obtain $\Sigma^\infty = Z T^{(\Lambda)} \tilde \Sigma^{\infty, \Lambda} (T^{(\Lambda)})^\top Z^\top$, where $\tilde \Sigma^{\infty, \Lambda} = \diag(\tilde \Sigma^{\infty, \lambda_1}, \dots, \tilde \Sigma^{\infty, \lambda_d})$.
Finally, we observe that $T^{(\Lambda)} \tilde \Sigma^{\infty, \Lambda} (T^{(\Lambda)})^\top = \diag(\Sigma^{\infty,, \lambda_1}, \dots, \Sigma^{\infty,, \lambda_d})$ where 
\[
        \Sigma^{\infty, \lambda_i} 
            \defineq \left[\begin{array}{cc}
            \theta^2 \tilde \Sigma^{\infty, \lambda_i}_{11}-2 \theta(1-\theta) \frac{\lambda_i}{\mu_x} \tilde \Sigma^{\infty, \lambda_i}_{12}+\frac{(1-\theta)^2 \lambda_i^2}{\mu_x^2} \tilde \Sigma^{\infty, \lambda_i}_{22} & \theta \tilde \Sigma^{\infty, \lambda_i}_{12}-\theta(1-\theta) \frac{\lambda_i}{\mu_x} \tilde \Sigma^{\infty, \lambda_i}_{22} \\
            \theta \tilde \Sigma^{\infty, \lambda_i}_{12}-\theta(1-\theta) \frac{\lambda_i}{\mu_x} \tilde \Sigma^{\infty, \lambda_i}_{22} & \tilde \Sigma^{\infty, \lambda_i}_{22}
            \end{array}\right].
\]
Plugging $\lambda=\lambda_i$ {into} the expression of $\tilde \Sigma^{\infty, \lambda}$ computed in Proposition~\ref{rso:prop:limit_covariance_quadratic}, we obtain $\Sigma^{\infty, 0} = \frac{\delta^2}{d} \frac{(1-\theta)}{\mu_x^2 \mu_y^2(1+\theta)} \diag \left[\theta^2 \mu_y^2,\;  \mu_x^2\left(1+2\left(1-\theta^2\right) \theta\right)\right]$, if $\lambda_i=0$; otherwise,
{\small
\[
        \Sigma^{\infty, \lambda_i} = \frac{(1-\theta) \delta^2}{d\lambda_i^2 P_c(\theta, \kappa)} 
            \left[\begin{array}{cc}
                \frac{\lambda_i^2}{\mux^2}\left(P_{1,1}^{(\infty, 1)}(\theta, \kappa) + \frac{\lambda_i^2}{\mu_y^2} P_{1,1}^{(\infty, 2)}(\theta, \kappa) \right)
                & \frac{\lambda}{\mu_x} \left(P_{1,2}^{(\infty, 1)}(\theta, \kappa) + \frac{\lambda_i^2}{\mu_y^2} P_{1,2}^{(\infty, 2)}(\theta, \kappa)\right) \\
                \frac{\lambda}{\mu_x} \left(P_{1,2}^{(\infty, 1)}(\theta, \kappa) + \frac{\lambda_i^2}{\mu_y^2} P_{1,2}^{(\infty, 2)}(\theta, \kappa)\right) 
                & P_{2,2}^{(\infty, 1)}(\theta, \kappa) + \frac{\lambda_i^2}{\mu_y^2} P_{2,2}^{(\infty, 2)}(\theta, \kappa),
\end{array}\right],
\]}%
where the polynomials $P_{i,j}^{(\infty, k)}$ and $P_c$ are given {explicitly} in {the bottom part of} Table~\ref{rso:table:polynomials} of Appendix \ref{rso:sec:constants}. {From the closed-form expressions of these polynomials, the fact that the elements of the matrix $\Sigma^{\infty,\lambda}$ scale with $(1-\theta)$ as $\theta \to 1$ can be checked in a straightforward manner.} %This concludes the proof. 
\end{proof}
\subsection{\sa{Proof of Corollary~\ref{coro:cov-rate}}}
\label{sec:cov-rate-proof}
Let $V,J$ denote the Jordan decomposition of $A$. 
For $n\in \N$, let $\widecheck{\Sigma}_n\triangleq V^{-1} \Sigma_n \left(V^{-1}\right)^{\top}$, $\widecheck{\Sigma}^\infty\triangleq V^{-1} \Sigma^\infty \left(V^{-1}\right)^{\top}$, and $\widecheck{R}^i=V^{-1} R^{{}}\left(V^{-1}\right)^{\top}$. 
In view of the recursion~\eqref{rso:eq:cov_recursion}, we have $\widecheck{\Sigma}_{n+1} =J \widecheck{\Sigma}_{n} J+ \widecheck{R}$, and vectorizing again this recursion lead to $\vectorize(\widecheck{\Sigma}_{n+1}) = (J \otimes J) \vectorize(\widecheck{\Sigma}_{n}) + \widecheck{R}$, i.e., $\widecheck{\Sigma}_{n} = (J \otimes J)^{n-1}\; \widecheck{\Sigma}_{1} + \sum_{k=1}^{n-1} (J \otimes J)^{k-1} \vectorize (\widecheck{R})$.
Hence, noting that $\widecheck{\Sigma}^{\infty} = \sum_{k=0}^{\infty} (J \otimes J)^{k} \vectorize (\widecheck{R})$ we obtain
{\small
\[
\begin{split}
    \spectralradius{\Sigma_n - \Sigma^\infty} 
        &= \spectralradius{\widecheck{\Sigma}_n - \widecheck{\Sigma}^\infty} = \spectralradius{\vectorize(\widecheck{\Sigma}_n) - \vectorize( \widecheck{\Sigma}^\infty)} = \spectralradius{(J \otimes J)^{n-1}\; \widecheck{\Sigma}_{1} + \sum_{k=n-1}^{\infty} (J \otimes J)^{k-1} \vectorize (\widecheck{R})} \\
        &\leq \rho{(J \otimes J)}^{n-1} \; \spectralradius{\widecheck{\Sigma}_{1} + \widecheck{\Sigma}^\infty}, \\ 
\end{split}
\]}%
and the {claimed convergence rate} follows from observing that $\rho\left(J \otimes J\right) = \rho(A)^2$. {Note that here $\rho(A)<1$ because by Proposition \ref{rso:prop:limit_covariance_quadratic} we have $\rho(A^{\lambda_i}) < 1$ for every $i$ and $\rho(A)= \max_i \rho(A^{\lambda_i})$}.
%----------------------------------------------------------------------------------------
%----------------------------------------------------------------------------------------
\subsection{Proof of Theorem~\ref{rso:prop:tightness}}\label{rso:sec:proof_tightness}
%----------------------------------------------------------------------------------------
%----------------------------------------------------------------------------------------

We start with proving the lower bound, and then we will proceed to the upper bound.

%==============================================================================
%==============================================================================
\subsubsection{Lower bound}\label{sec:app:lowerbound}
%==============================================================================
%==============================================================================
In view of Theorem~\ref{rso:thm:quadratics}, $z_\infty$ follows a centered Gaussian distribution with covariance matrix $\Sigma^\infty$ as defined in~\eqref{eq:limit_cov_informal_non_zero}. 
Hence, let $X \sim \cN(0, I_d)$ be such that $\squarednormtwo{z_\infty} = X^\top \Sigma^\infty X$. We almost surely have $\squarednormtwo{z_n} \geq \squarednormtwo{X}\; \min \spectrum(\Sigma^\infty) \defineq \psi_1(p, \theta)$. 
By~\citep{inglot2010inequalities}, we have $Q_p(\squarednormtwo{z_\infty-\optz}) = Q_p(\squarednormtwo{z_\infty}) \geq 2d + 2 \log\left(1/(1-p))\right) - 5/2$, {where we used $\optz=(0,0)$}.
Thus, it suffices to show that $\min \spectrum(\Sigma^\infty) = \Theta{(1-\theta)}$ as $\theta \to 1$. 
Given the bloc decomposition of $\Sigma^\infty$ in~\eqref{eq:limit_cov_informal_non_zero}, we have 
{\small 
\[
\begin{split}
    \min \spectrum(\Sigma^\infty) &= \min_{i \in \{1, \ldots, d\}} \spectrum(\Sigma^{\infty, \lambda_i}) = \min_{i \in \{1, \ldots, d\}} \frac{1}{2} \left(\Sigma^{\infty,, \lambda_i}_{11} + \Sigma^{\infty,, \lambda_i}_{22} - \sqrt{\left(\Sigma^{\infty,, \lambda_i}_{11} - \Sigma^{\infty, \lambda_i}_{22} \right)^2 + 4 {\Sigma^\infty_{12}}^2}\right).\\
\end{split}
\]}
We will now show that for all $\lambda \in \spectrum(K)$, $\Sigma^{\infty,, \lambda}_{11} + \Sigma^{\infty,, \lambda}_{22} - \sqrt{\left(\Sigma^{\infty,, \lambda}_{11} - \Sigma^{\infty, \lambda}_{22} \right)^2 + 4 {\Sigma^\infty_{12}}^2} = \Theta(1-\theta)$, as $\theta \to 1$. If $0 \in \spectrum(\Sigma^\infty)$, given~\eqref{eq:limit_cov_informal_0}, we have 
\[
    \begin{split}
        \Sigma_{1,1}^{\infty, 0}+\Sigma_{2,2}^{\infty, 0}
            &- \sqrt{\left(\Sigma_{1,1}^{\infty, 0}-\Sigma_{2,2}^{\infty, 0}\right)^2+4 (\Sigma_{1,2}^{\infty, 0})^2} \\
            &= \frac{\delta^2}{d} \frac{(1-\theta)}{\mu_x^2 \mu_y^2(1+\theta)}
                \left(\theta^2 \mu_y^2+\mu_x^2\left(1+2\left(1-\theta^2\right) \theta\right)-\mid \theta_{\mu_y^2}^2-\mu_x^2\left(1+2\left(1-\theta^2\right) \theta \right) \mid\right) \\
            &= \frac{\delta^2}{d} \frac{2\theta(1-\theta)}{\mu_x^2 \mu_y^2(1+\theta)}
                \min\left(\mu_y^2, \mu_x^2\left(1+2\left(1-\theta^2\right)\right)\right) = \Theta(1-\theta),\\
    \end{split}
\]
where second equality follows from having $a+b - |a-b| = 2\min(a,b)$. 
If $\lambda \neq  0$ is in $\spectrum(K)$, in view of Table~\ref{rso:table:polynomials} {of Appendix \ref{rso:sec:constants}}, we have as $\theta \to 1$,
{\small
\[
\begin{array}{lll}
    P_{1,1}^{\infty, 1}(\theta, \kappa)=-16 \kappa^2+\littleOh(1-\theta), & P_{1,2}^{\infty, 1}(\theta, \kappa)=-8 \kappa^4+\littleOh(1-\theta), &
    P_{2,2}^{\infty, 1}(\theta, \kappa)=-8 \kappa^6+\littleOh(1-\theta), \\
    P_{1,1}^{\infty, 2}(\theta, \kappa)=-8 \kappa^2+\littleOh(1-\theta), & P_{1,2}^{\infty, 2}(\theta, \kappa)=8 \kappa^2+\littleOh(1-\theta), & P_{2,2}^{\infty, 2}(\theta, \kappa)=-16 \kappa^2+\littleOh(1-\theta), \\
\end{array}
\]}%
and $P_c(\theta, \kappa)=-32, \kappa^2+ \littleOh(1-\theta)$, so that 
{\small
\[
\begin{array}{ll}
    \Sigma_{1,1}^{\infty, \lambda} 
        = \frac{(1-\theta)\delta^2}{d (-32\kappa^2) \lambda^2} (-8\kappa^2)\left(\frac{\lambda^2}{\mux^2} + \frac{\lambda^4}{\mux^2 \muy^2}\right) + \littleOh(1-\theta), 
    & \Sigma_{1,2}^{\infty, \lambda} 
        =  \frac{(1-\theta)\delta^2}{d (-32\kappa^2) \lambda^2} (8\kappa^2) \left(\frac{\lambda^3}{\muy^2 \mux} - \frac{\lambda^3}{\mux^2 \muy}\right) + \littleOh(1-\theta), \\
    &\Sigma_{2,2}^{\infty, \lambda} 
        = \frac{(1-\theta)\delta^2}{d (-32\kappa^2) \lambda^2} (-8\kappa^2)\left(\frac{\lambda^2}{\muy^2} + \frac{\lambda^4}{\mux^2 \muy^2}\right) + \littleOh(1-\theta). \\        
\end{array}
\]}%
Hence, we deduce that 
{\small
\[
    \begin{split}
        \Sigma_{1,1}^{\infty, \lambda}+\Sigma_{2,2}^{\infty, \lambda}
        &- \sqrt{\left(\Sigma_{1,1}^{\infty, \lambda}-\Sigma_{2,2}^{\infty, \lambda}\right)^2+4 (\Sigma_{1,2}^{\infty, \lambda})^2} \\
        &= \frac{(1-\theta) \delta^2}{2d \mux^2 \muy^2} \left(\lambda^2 + \mux^2 + \muy^2 - \sqrt{(\mux^2 - \muy^2)^2 + \lambda^2 (\mux-\muy)^2} \right) + \bigOh(1-\theta), \\
    \end{split}
\]}%
and it suffices to show that 
{\small
% \[
    $\lambda^2 + \mux^2 + \muy^2 - \sqrt{(\mux^2 - \muy^2)^2 + \lambda^2 (\mux-\muy)^2}  > 0$.
% \]} %
Now given the identity $a-b = \frac{a^2 - b^2}{a+b}$ for $a+b\neq 0$, we have
{\small
\[
    \lambda^2 + \mux^2 + \muy^2 - \sqrt{(\mux^2 - \muy^2)^2 + \lambda^2 (\mux-\muy)^2}  = \frac{\lambda^2 (\mux-\muy)^2 + \lambda^4}{\lambda^2 + \mux^2 + \muy^2 + \sqrt{(\mux^2 - \muy^2)^2 + \lambda^2 (\mux-\muy)^2}} > 0,
\]}
which completes the proof.

%=========================================================
%=========================================================
\subsubsection{Upper bound}
%=========================================================
%=========================================================

\mg{The CP parametrization corresponds to choosing $\alpha = \frac{1}{2\sigma} - \sqrt{\theta} L_{yy}$ in the matrix inequality \citep[Cor. 1]{zhang2021robust}.} 
Under this parameterization, since $1-\alpha\sigma \geq 1/2$, we have ${\sa{\cD_n} \geq \frac{\theta}{4(1-\theta)} \left(\mu_x {\squarednormtwo{x_n-\optx}} + \muy {\squarednormtwo{y_n-\opty}} \right)}$ {and we have $\optz=(\optx,\opty)=(0,0)$}. \sa{Since $\{z_n\}_{n\geq 0}$ converges in distribution to $z_\infty$, \eqref{rso:eq:high_probability_bound_sapd} implies that} the $p$-quantile of $\norm{z_\infty}^2$ satisfies \sa{$Q_p(\norm{z_\infty}^2) \leq \psi_2(p, \theta)$ for any $p \in (0, 1)$, where} ${\psi_2(p, \theta) \triangleq  \frac{4(1-\theta)}{\theta \min\{\mux, \muy\}}\; 
        \left(
            \Xi^{(1)}_{\tau,\sigma,\theta} 
            + \Xi^{(2)}_{\tau,\sigma,\theta}
                \log\left(
                    \frac{1}{1-p}
                \right) 
        \right).}$
\yassine{Thus, the asymptotic property of our upper bound follows from Lemma~\ref{rso:lem:asymptotic_xi}.}

%--------------------------------------------------------------------------------------------------------------------------------
\subsection{Proof of Proposition~\ref{rso:prop:limit_covariance_quadratic}}\label{proof-of-prop:limit_covariance_quadratic}
%--------------------------------------------------------------------------------------------------------------------------------

We first note that under the parameterization~\eqref{rso:eq:cb_params}, the matrices $\Alam$ and $R^\lambda$ simplify to 
{\small 
\begin{equation}\label{rso:eq:a_lam_r_lam_under_cb}
    \begin{split}
        \Alam &= \left[\begin{array}{cc}
    \theta & -(1-\theta) \frac{\lambda}{\mu_x} \\
    (1-\theta) \theta^2 \frac{\lambda}{\mu_y} & \theta-(1-\theta)^2(1+\theta) \kappa^2
    \end{array}\right], \\
        R^\lambda &= \frac{\delta^2}{d} \frac{(1-\theta)^2}{\mu_x^2 \mu_y^2}\left[\begin{array}{cc}
    \mu_y^2 & \left(1-\theta^2\right)\left(\theta \mu_x+\mu_y\right) \lambda \\
    \left(1-\theta^2\right)\left(\theta \mu_x+\mu_y\right) \lambda & (1-\theta)^2(1+\theta)^2\left(1+2 \theta \frac{\mu_x}{\mu_y}\right) \lambda^2+\mu_x^2\left(1+2\left(1-\theta^2\right) \theta\right)
    \end{array}\right].\\
    \end{split}
\end{equation}
}
If $\lambda = 0$, then $\Alam = \text{Diag}(\theta, \theta)$. 
Hence, using the relation $\vectorize(ABC) = (C^\top \otimes A)\vectorize(B)$, we have 
\[
    \begin{split}
        \tilde \Sigma^{\infty, \lambda}=\Alam \tilde \Sigma^{\infty, \lambda} \Alam+R & \Leftrightarrow \operatorname{Vec}\left(\tilde \Sigma^{\infty, \lambda}\right)=\left(\Alam \otimes \Alam\right) \operatorname{Vec}\left(\tilde \Sigma^{\infty, \lambda}\right)+\operatorname{Vec}(R^\lambda) \\
        & \Leftrightarrow \operatorname{Vec}\left(\tilde \Sigma^{\infty, \lambda}\right)=\left(I-\Alam \otimes \Alam\right)^{-1} \operatorname{Vec}\left(R^\lambda\right).
    \end{split}
\]
Noting that $\left(I-\Alam \otimes \Alam\right)^{-1} = \text{Diag}(\frac{1}{1-\theta^2}, \frac{1}{1-\theta^2}, \frac{1}{1-\theta^2}, \frac{1}{1-\theta^2})$, we obtain $\Sigma^{\infty, 0} = \frac{1}{1-\theta^2} R^0$ \mg{for any $\theta\in [0,1)$}.
\medskip
\mg{It remains to consider the case when $\lambda\neq 0$. We first provide an eigenvalue decomposition to the matrix $\Alam$.}
\begin{lem}\label{rso:lem:airjordan}
    For any $\theta \in \left(\big(\sqrt{1+\klam^2} - 1\big)/\yassine{|\klam|}, 1\right)$ \mg{and $\lambda\neq 0$}, the matrix $\Alam$ introduced in~\eqref{rso:eq:Lyap_eq_reduced} admits the diagonalization \mg{$\Alam = V^\lambda  J^{\lambda}(\Vlam)^{-1}$} where 
    \begin{equation}\label{eq-Jlam}
        \Jlam = \left[\begin{array}{cc}
        \nuonelam & 0 \\
        0 & \nutwolam \\
        \end{array}\right], \quad
        \Vlam \defineq \left[\begin{array}{cc}
        - A_{1,2}^{{}} & - A_{1,2}^{{}} \\
        \theta - \nuonelam & \theta - \nutwolam \\
        \end{array}\right],
    \end{equation}
    with \mg{complex eigenvalues} $\nuonelam \defineq \frac{\left(2 \theta-(1-\theta)^2(1+\theta) \klam^2\right)+i \sqrt{|\Delta|}}{2}$, $\nutwolam \defineq \frac{\left(2 \theta-(1-\theta)^2(1+\theta) \klam^2\right)-i \sqrt{|\dislam|}}{2}$, and $\dislam = (1-\theta)^4(1+\theta)^2 \klam^4-4 \theta^2(1-\theta)^2 \klam^2$.
    \yassine{Moreover, in this case, $\dislam<0$ and $\rho(\Alam) < 1$.}
\end{lem}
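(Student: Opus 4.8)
\textbf{Proof proposal for Lemma~\ref{rso:lem:airjordan}.}

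The plan is to directly compute the characteristic polynomial of $\Alam$ from its explicit form in~\eqref{rso:eq:a_lam_r_lam_under_cb}, verify that its discriminant $\dislam$ is negative in the prescribed range of $\theta$, and then exhibit the eigenvectors explicitly. First I would write $\Alam = \begin{bmatrix} \theta & -(1-\theta)\frac{\lambda}{\mu_x} \\ (1-\theta)\theta^2\frac{\lambda}{\mu_y} & \theta - (1-\theta)^2(1+\theta)\klam^2 \end{bmatrix}$ and compute $\operatorname{tr}(\Alam) = 2\theta - (1-\theta)^2(1+\theta)\klam^2$ and $\det(\Alam) = \theta\big(\theta - (1-\theta)^2(1+\theta)\klam^2\big) + (1-\theta)^2\theta^2 \frac{\lambda^2}{\mu_x\mu_y} = \theta\big(\theta - (1-\theta)^2(1+\theta)\klam^2\big) + (1-\theta)^2\theta^2\klam^2$, using $\klam = \lambda/\sqrt{\mu_x\mu_y}$. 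A short simplification gives $\det(\Alam) = \theta^2 - (1-\theta)^2\theta(1+\theta)\klam^2 + (1-\theta)^2\theta^2\klam^2 = \theta^2 + (1-\theta)^2\theta\klam^2(\theta - (1+\theta)) = \theta^2 - (1-\theta)^2\theta\klam^2$. The discriminant of the characteristic polynomial $t^2 - \operatorname{tr}(\Alam)t + \det(\Alam)$ is $\operatorname{tr}(\Alam)^2 - 4\det(\Alam)$; expanding this should reproduce exactly $\dislam = (1-\theta)^4(1+\theta)^2\klam^4 - 4\theta^2(1-\theta)^2\klam^2$ as claimed, which I would verify by direct algebraic expansion of $\big(2\theta - (1-\theta)^2(1+\theta)\klam^2\big)^2 - 4\big(\theta^2 - (1-\theta)^2\theta\klam^2\big)$ and checking the $\theta^2$-terms cancel while the $\klam^2$ and $\klam^4$ terms match.

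Next I would establish the sign: $\dislam < 0$ is equivalent (after dividing by the positive quantity $(1-\theta)^2\klam^2$) to $(1-\theta)^2(1+\theta)^2\klam^2 < 4\theta^2$, i.e.\ $(1-\theta)(1+\theta)|\klam| < 2\theta$, i.e.\ $(1-\theta^2)|\klam| < 2\theta$. Solving the quadratic $|\klam|\theta^2 + 2\theta - |\klam| > 0$ in $\theta$ gives the root $\theta = \frac{-1 + \sqrt{1+\klam^2}}{|\klam|}$, and since the leading coefficient $|\klam|>0$, the inequality holds precisely for $\theta > \frac{\sqrt{1+\klam^2}-1}{|\klam|}$, which is the hypothesis. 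This shows $\dislam<0$, so the eigenvalues $\nuonelam, \nutwolam$ are complex conjugates with the stated real and imaginary parts. Then $\rho(\Alam)^2 = |\nuonelam|^2 = \det(\Alam) = \theta^2 - (1-\theta)^2\theta\klam^2 < \theta^2 < 1$, since $\theta\in(0,1)$ and the subtracted term is nonnegative; this gives $\rho(\Alam)<1$.

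Finally, for the eigenvector computation: for an eigenvalue $\nu$, the equation $(\Alam - \nu I)v = 0$ with first row $(\theta - \nu, -A_{1,2})$ (writing $A_{1,2} = (1-\theta)\frac{\lambda}{\mu_x}$) gives $v = (-A_{1,2}, \theta - \nu)^\top$ up to scaling — this is exactly the column structure of $\Vlam$ in~\eqref{eq-Jlam}. I would note $\Vlam$ is invertible because $\nuonelam \ne \nutwolam$ (as $\dislam \ne 0$) and $A_{1,2}\ne 0$ (since $\lambda\ne 0$ and $\theta<1$), so $\Alam = \Vlam \Jlam (\Vlam)^{-1}$ follows. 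I do not anticipate a genuine obstacle here; the main care needed is the algebraic bookkeeping in verifying that $\operatorname{tr}(\Alam)^2 - 4\det(\Alam)$ equals $\dislam$ exactly, since a sign or factor slip there would propagate, and in correctly handling the substitution $\lambda^2/(\mu_x\mu_y) = \klam^2$ uniformly. The sign analysis of $\dislam$ reducing to the quadratic threshold on $\theta$ is the conceptually load-bearing step but is elementary once set up.
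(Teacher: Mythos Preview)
Your proposal is correct and follows essentially the same route as the paper's proof: compute the trace and determinant of $\Alam$, identify the discriminant $\dislam=\operatorname{tr}(\Alam)^2-4\det(\Alam)$, reduce $\dislam<0$ to the quadratic inequality $|\klam|\theta^2+2\theta-|\klam|>0$, and read off eigenvectors from the first row of $\Alam-\nu I$. Your argument for $\rho(\Alam)<1$ is in fact cleaner than the paper's: you observe directly that $\rho(\Alam)^2=\det(\Alam)=\theta^2-(1-\theta)^2\theta\klam^2\leq\theta^2<1$, whereas the paper instead locates all $\theta$ with $\det(\Alam)=1$ and checks that these roots lie outside $(0,1)$. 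One minor bookkeeping slip to flag (precisely the kind you anticipated): you set $A_{1,2}=(1-\theta)\lambda/\mu_x$, but the actual $(1,2)$ entry of $\Alam$ --- and the paper's $A_{1,2}^\lambda$ --- is $-(1-\theta)\lambda/\mu_x$; under your sign convention the first-row equation $(\theta-\nu)v_1-A_{1,2}v_2=0$ yields the eigenvector $(A_{1,2},\theta-\nu)^\top$, not $(-A_{1,2},\theta-\nu)^\top$, so the match with the columns of $\Vlam$ in~\eqref{eq-Jlam} only holds if $A_{1,2}$ denotes the paper's (negative-signed) entry.
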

\begin{proof} %[Proof of Lemma~\ref{rso:lem:airjordan}]
    Noting that $\text{Tr}(\Alam) = 2\theta - (1-\theta)^2(1+\theta) \klam^2$ and $\text{Det}(\Alam) = \theta^2 - (1-\theta)^2 \theta \klam^2$, the characteristic polynomial of $\Alam$ has for discriminant $\dislam= \text{Tr}(\Alam) - 4 \text{Det}(\Alam) = (1-\theta)^4(1+\theta)^2 \klam^4-4 \theta^2(1-\theta)^2 \klam^2$.
    Note also that \mg{$\klam \neq 0$ since $\lambda\neq 0$ by assumption, and}
    \[
    \begin{split}
        \dislam < 0 \iff (1-\theta^2)^2 \leq \frac{4\theta^2}{\klam^2} \iff (1-\theta^2) \leq \frac{2 \theta }{\yassine{|\klam|}} \iff \theta \geq \frac{1}{\yassine{|\klam|}} (\sqrt{1 + \klam^2} - 1),
    \end{split}
    \]
    and in such case, \mg{it is straightforward to check that $\Alam $ admits the two complex conjugate values $\nuonelam, \nutwolam$}. 
    Furthermore, \mg{observe that $A_{12}^{(\lambda}\neq 0$ as $\lambda \neq 0$ and $\theta <1$} and for $\nu \in \mathbb{C}$, $x,y\in \mathbb{C}$, 
    \[
        \begin{aligned}
        (\Alam- v I)\left[\begin{array}{l}
                        x \\
                        y
                    \end{array}\right]=0 
        & \Leftrightarrow
            \left\{\begin{array}{l}
                \left(\theta -v\right) x+A_{1,2}^{\lambda} y=0 \\
                \yassine{A_{2,1}^{\lambda} x+\left(A_{2,2}^{\lambda}-v\right) y}=0
            \end{array}\right. 
         \Leftrightarrow y=\frac{-\left(\theta -v\right)}{A_{1,2}^{\lambda}} x \\
        &\Leftrightarrow \quad(x, y) \in \operatorname{Span}\left(\begin{array}{c}
        1 \\
        -\frac{\left(\theta -v\right)}{A_{1,2}^{\lambda}}
        \end{array}\right)=\operatorname{Span}\left(\begin{array}{l}
        -A_{1,2}^{{}} \\
        \theta -v
        \end{array}\right).
        \end{aligned}
    \]
    \mg{Therefore, the columns of the $\Vlam$ matrix are in fact eigenvectors corresponding to the complex conjugate eigenvalues $\nuonelam$ and $\nutwolam$, and we conclude that the eigenvalue decomposition $\Alam = V^\lambda J^{\lambda}(\Vlam)^{-1}$ holds}. Finally, $\rho(\Alam)^2 =|\nuonelam |^2 = \text{Det}(\Alam) = \theta^2 - \theta (1-\theta)^2 \klam^2$ so that we have $\mg{\rho(\Alam)^2-1 =\text{Det}(\Alam) -1}= -(1-\theta)\left(1+\theta\left(1+\klam^2\right)-\theta^2 \klam^2\right),$ and \mg{$\rho(\Alam)^2=1$} if and only if $\theta \in \left\{1, \frac{1}{2} + \frac{1}{2\klam^2}  \pm \frac{1}{2\klam^2} \sqrt{(1+\klam^2)^2 + 4 \klam^2}\right\}$.
    Observing that $\sqrt{(1+\klam^2)^2 + 4 \klam^2} \geq 1 + \klam^2$, we deduce that $\frac{1}{2} + \frac{1}{2\klam^2} + \frac{1}{2\klam^2} \sqrt{(1+\klam^2)^2 + 4 \klam^2} > 1$ and $\frac{1}{2} + \frac{1}{2\klam^2} - \frac{1}{2\klam^2} \sqrt{(1+\klam^2)^2 + 4 \klam^2} < 0$. 
    Hence, we conclude $\rho(\Alam) < 1$ for any $\theta \in \left(\frac{1}{\klam} (\sqrt{1 + \klam^2} - 1), 1\right)$. 
\end{proof}
\mg{In the following lemma, we also provide basic identities satisfied by the eigenvalues $\nuonelam$ and $\nutwolam$ which will be key for the exact computation of $\tilde \Sigma^{\infty, \lambda}$. 
The proof of this lemma is omitted as it follows from straightforward calculations.}

\begin{lem}\label{rso:lem:polynomial_root}
    Let $\nuonelam, \nutwolam$, be the two complex conjugate eigenvalues of \mg{$\Alam$}, as specified in Lemma~\ref{rso:lem:airjordan}.
    Then,
    {\small \[ 
    \begin{split}
        \nuonelam \nutwolam &= \theta^2 - \theta (1-\theta)^2 \klam^2, \\
        \nuonelam + \nutwolam &= 2\theta - (1-\theta)^2(1+\theta) \klam^2, \\
        \nuonelam^2+\nutwolam^2 &= 2 \theta^2-2 \theta(1-\theta)^2(1+2 \theta) \klam^2+(1-\theta)^4(1+\theta)^2 \klam^4, \\
        \nuonelam^3+\nutwolam^3 &= \left(2 \theta-(1-\theta)^2(1+\theta) \klam^2\right)\left(\theta^2-\theta(1-\theta)^2(1+4 \theta) \klam^2+(1-\theta)^4(1+\theta)^2 \klam^4\right), \\
        \nuonelam^4+\nutwolam^4& =  \begin{aligned}[t]
                        & 2 \theta^4 -(1-\theta)^2 \klam^2 \theta^3(4+16 \theta) +(1-\theta)^4 \klam^4 \theta^2\left(6+24 \theta+20 \theta^2\right) \\
                        &- (1-\theta)^6 \klam^6 4 \theta\left(1+4 \theta+5 \theta^2+2 \theta^3\right) + (1-\theta)^8 \klam^8(1+\theta)^4,
                        \end{aligned}\\
    \end{split}        
    \]}
    where $\kappa_\lambda = \frac{\lambda}{\sqrt{\mux \muy}}$.
\end{lem}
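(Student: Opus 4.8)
The plan is to reduce everything to the two elementary symmetric functions of $\nuonelam$ and $\nutwolam$ and then iterate the Newton--Girard recursion. First I would record, exactly as in the proof of Lemma~\ref{rso:lem:airjordan}, the trace and determinant of the $2\times 2$ matrix $\Alam$ written in the form~\eqref{rso:eq:a_lam_r_lam_under_cb}:
\[
    s \defineq \operatorname{Tr}(\Alam) = 2\theta - (1-\theta)^2(1+\theta)\klam^2, \qquad p \defineq \operatorname{Det}(\Alam) = \theta^2 - \theta(1-\theta)^2 \klam^2.
\]
By Vieta's formulas, $\nuonelam + \nutwolam = s$ and $\nuonelam\nutwolam = p$; the latter is precisely the claimed product identity. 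Note that this step expands a $2\times2$ determinant and trace only and does not use $\dislam<0$, so the subsequent formulas are valid whether the eigenvalues are real or complex conjugate.

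Next, to obtain the power sums $P_k \defineq \nuonelam^k + \nutwolam^k$, I would use that each eigenvalue $\nu$ of $\Alam$ satisfies its own characteristic equation $\nu^2 = s\nu - p$; multiplying by $\nu^{k-2}$ and summing over the two eigenvalues gives the linear recursion $P_k = s P_{k-1} - p P_{k-2}$ for $k\ge 2$, with base cases $P_0 = 2$ and $P_1 = s$. Unrolling this three times yields the closed forms $P_2 = s^2 - 2p$, $P_3 = s^3 - 3sp = s(s^2-3p)$, and $P_4 = s^4 - 4s^2 p + 2p^2$, which are the only structural inputs needed.

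The remaining step is the purely algebraic substitution of $s = 2\theta - (1-\theta)^2(1+\theta)\klam^2$ and $p = \theta^2 - \theta(1-\theta)^2\klam^2$ into these expressions, followed by expanding and collecting terms by powers of $\klam^2$. For $P_2$ one checks directly $s^2 - 2p = 2\theta^2 - 2\theta(1-\theta)^2(1+2\theta)\klam^2 + (1-\theta)^4(1+\theta)^2\klam^4$; for $P_3$ one verifies $s^2 - 3p = \theta^2 - \theta(1-\theta)^2(1+4\theta)\klam^2 + (1-\theta)^4(1+\theta)^2\klam^4$ and multiplies back by $s$; for $P_4$ one expands $s^4 - 4s^2 p + 2p^2$ out to the $\klam^8$ term and matches coefficients with the stated polynomial, after pulling out the common powers of $(1-\theta)$. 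I expect the $P_4$ expansion to be the only place where bookkeeping is nontrivial, since $s^4$ alone produces monomials in $\klam^0,\klam^2,\klam^4,\klam^6,\klam^8$ each carrying several powers of $\theta$, and some care is needed to confirm that the $-4s^2p$ and $+2p^2$ contributions combine to give exactly the listed coefficients $2\theta^4$, $-(4+16\theta)\theta^3$, $(6+24\theta+20\theta^2)\theta^2$, $-4\theta(1+4\theta+5\theta^2+2\theta^3)$, and $(1+\theta)^4$ on the successive powers of $\klam$. Beyond this, the computation is mechanical and can equally be confirmed by a symbolic computation tool, which is why the paper leaves it as a routine verification.
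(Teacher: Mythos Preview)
Your proposal is correct and is exactly the kind of straightforward calculation the paper has in mind: the paper itself omits the proof entirely, noting only that it ``follows from straightforward calculations,'' and your Vieta/Newton--Girard reduction to $s=\operatorname{Tr}(\Alam)$ and $p=\operatorname{Det}(\Alam)$ followed by the recursion $P_k = sP_{k-1}-pP_{k-2}$ is precisely the natural way to carry this out.
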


\mg{The following lemma says that the solution $\tilde \Sigma^{\infty, \lambda}$ of~\eqref{rso:eq:Lyap_eq_reduced_lambda} can be computed by solving 4-dimensional linear equations.}

\begin{lem}\label{rso:lem:vectorization}
    For any \mg{$\lambda\neq 0$ and} $\theta \in \bigg(\frac{1}{\yassine{|\klam|}} (\sqrt{1 + \klam^2} - 1), 1\bigg)$, the solution $\tilde \Sigma^{\infty, \lambda}$ of~\eqref{rso:eq:Lyap_eq_reduced_lambda} satisfies 
    \begin{equation}\label{rso:eq:vecorizedsystem}
        \vectorize\left(\widecheck{\Sigma}^{\infty, \lambda}\right) = \left(I_{4} - \Jlam \otimes \Jlam \right)^{-1} \vectorize(\widecheck{R}^\lambda),
    \end{equation}
    where $ \widecheck{\Sigma}^{\infty, \lambda} \defineq (\Vlam)^{-1} \tilde \Sigma^{\infty, \lambda} \big((\Vlam)^{-1}\big)^{\top}$, $\widecheck{R}^\lambda  \defineq (\Vlam)^{-1} \Rlam \left((\Vlam)^{-1}\right)^\top$ and \mg{$\Jlam, \Vlam$ are the matrices arising in the eigenvalue decomposition of $\Alam$}, as in Lemma~\ref{rso:lem:airjordan}.
\end{lem}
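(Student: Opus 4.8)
\textbf{Proof plan for Lemma~\ref{rso:lem:vectorization}.}

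The plan is to apply the standard vectorization identity to the Lyapunov equation~\eqref{rso:eq:Lyap_eq_reduced_lambda} after first performing the change of basis dictated by the eigenvalue decomposition $\Alam = \Vlam \Jlam (\Vlam)^{-1}$ established in Lemma~\ref{rso:lem:airjordan}. Since $\theta$ lies in the prescribed interval, Lemma~\ref{rso:lem:airjordan} guarantees $\rho(\Alam)<1$, so the reduced Lyapunov equation~\eqref{rso:eq:Lyap_eq_reduced_lambda} admits a unique solution $\tilde\Sigma^{\infty,\lambda}$ \citep{laub1990sensitivity,hassibi1999indefinite}.

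First I would substitute $\Alam = \Vlam \Jlam (\Vlam)^{-1}$ into~\eqref{rso:eq:Lyap_eq_reduced_lambda}, obtaining $\tilde\Sigma^{\infty,\lambda} = \Vlam \Jlam (\Vlam)^{-1} \tilde\Sigma^{\infty,\lambda} \big((\Vlam)^{-1}\big)^\top \Jlam (\Vlam)^\top + \Rlam$. Left-multiplying by $(\Vlam)^{-1}$ and right-multiplying by $\big((\Vlam)^{-1}\big)^\top$, and recalling the definitions $\widecheck{\Sigma}^{\infty,\lambda} \defineq (\Vlam)^{-1}\tilde\Sigma^{\infty,\lambda}\big((\Vlam)^{-1}\big)^\top$ and $\widecheck{R}^\lambda \defineq (\Vlam)^{-1}\Rlam\big((\Vlam)^{-1}\big)^\top$, this collapses to the clean fixed-point relation $\widecheck{\Sigma}^{\infty,\lambda} = \Jlam \widecheck{\Sigma}^{\infty,\lambda} \Jlam + \widecheck{R}^\lambda$ (here I use that $\Jlam$ is symmetric, being diagonal with the two eigenvalues $\nuonelam,\nutwolam$ on the diagonal, so $\big(\Vlam\big)^\top$-conjugation of $\Jlam$ stays $\Jlam$). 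Then I would apply the identity $\vectorize(XYZ) = (Z^\top \otimes X)\vectorize(Y)$ to the term $\Jlam \widecheck{\Sigma}^{\infty,\lambda} \Jlam$ — already invoked in the excerpt for the $\lambda=0$ case and in the proof of Corollary~\ref{coro:cov-rate} — which yields $\vectorize(\widecheck{\Sigma}^{\infty,\lambda}) = (\Jlam \otimes \Jlam)\vectorize(\widecheck{\Sigma}^{\infty,\lambda}) + \vectorize(\widecheck{R}^\lambda)$.

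The last step is to argue that $I_4 - \Jlam \otimes \Jlam$ is invertible so that one can solve for $\vectorize(\widecheck{\Sigma}^{\infty,\lambda})$ and obtain~\eqref{rso:eq:vecorizedsystem}. The eigenvalues of $\Jlam\otimes\Jlam$ are the pairwise products $\nu_i^{(\lambda)}\nu_j^{(\lambda)}$ for $i,j\in\{1,2\}$, each of modulus at most $\rho(\Alam)^2 < 1$ by Lemma~\ref{rso:lem:airjordan}; hence $1$ is not an eigenvalue of $\Jlam\otimes\Jlam$ and $I_4 - \Jlam\otimes\Jlam$ is nonsingular. This is essentially routine — the only mild subtlety worth a sentence is that $\Vlam$ (and hence $\widecheck{\Sigma}^{\infty,\lambda}$, $\widecheck{R}^\lambda$) has complex entries because the eigenvalues $\nuonelam,\nutwolam$ are a complex-conjugate pair, so the vectorized system~\eqref{rso:eq:vecorizedsystem} is a $4\times 4$ \emph{complex} linear system; nonetheless $\tilde\Sigma^{\infty,\lambda} = \Vlam \widecheck{\Sigma}^{\infty,\lambda}(\Vlam)^\top$ recovers the real symmetric solution, and I expect no genuine obstacle here — the main work is bookkeeping the conjugation and citing $\rho(\Alam)<1$ from Lemma~\ref{rso:lem:airjordan}. (The genuinely heavy computation is deferred: extracting the closed-form polynomial entries of $\tilde\Sigma^{\infty,\lambda}$ from~\eqref{rso:eq:vecorizedsystem} using the identities of Lemma~\ref{rso:lem:polynomial_root} is what will feed into Proposition~\ref{rso:prop:limit_covariance_quadratic}, but that is not part of this lemma.)
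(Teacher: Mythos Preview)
Your proposal is correct and follows essentially the same approach as the paper: substitute the eigenvalue decomposition from Lemma~\ref{rso:lem:airjordan}, conjugate by $(\Vlam)^{-1}$ to obtain the diagonalized Lyapunov equation, vectorize via $\vectorize(XYZ)=(Z^\top\otimes X)\vectorize(Y)$, and argue invertibility of $I_4-\Jlam\otimes\Jlam$ using $\rho(\Alam)<1$. The only cosmetic difference is that the paper writes $\Jlam\otimes\Jlam=\diag(\nuonelam^2,\nuonelam\nutwolam,\nuonelam\nutwolam,\nutwolam^2)$ and reduces invertibility to $\nuonelam\nutwolam\neq 1$ (exploiting that the eigenvalues are complex conjugates so all four products share modulus $\nuonelam\nutwolam=\rho(\Alam)^2$), whereas you phrase it directly as ``all pairwise products have modulus $\leq\rho(\Alam)^2<1$''---which is the same observation.
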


\begin{proof} %[Proof of Lemma~\ref{rso:lem:vectorization}]
    Given Lemma~\ref{rso:lem:airjordan}, the equation \eqref{rso:eq:Lyap_eq_reduced_lambda} can be rewritten as $\tilde \Sigma^{\infty, \lambda} = \Vlam \Jlam (\Vlam)^{-1} \tilde \Sigma^{\infty, \lambda} ((\Vlam)^{-1})^\top (\Jlam)^\top (\Vlam)^\top + \Rlam$, 
    which amounts to
    \[
        (\Vlam)^{-1}\tilde \Sigma^{\infty, \lambda} ((\Vlam)^{-1})^\top = \Jlam (\Vlam)^{-1} \tilde \Sigma^{\infty, \lambda} ((\Vlam)^{-1})^\top (\Jlam)^\top  + (\Vlam)^{-1} \Rlam ((\Vlam)^{-1})^\top ,
    \]
    or equivalently $\widecheck{\Sigma}^{\infty, \lambda} =  \Jlam \widecheck{\Sigma}^{\infty, \lambda} (\Jlam)^\top + \widecheck{R}^{{}}$.
    Taking $\text{Vec}(\cdot)$ of both sides, and noting the relation $\text{Vec}(ABC) = (C^\top \otimes A) \text{Vec}(B)$, it suffices to show that the $4\times 4$ matrix $I_4 - \Jlam \otimes \Jlam$ is invertible. Observing that $\diag \left[\nuonelam^2, \nuonelam \nutwolam, \nuonelam \nutwolam, \nutwolam^2\right]$,
    \mg{it is sufficient to show that $\nuonelam \nutwolam \neq 1$ for  $\theta \in (\frac{1}{\yassine{|\klam|}} (\sqrt{1 + \klam^2} - 1), 1)$, and this directly follows from the proof of Lemma~\ref{rso:lem:airjordan}}. 
\end{proof} 

\mg{Equipped with the representation \eqref{rso:eq:vecorizedsystem}, we complete the proof of Proposition \ref{rso:prop:limit_covariance_quadratic} in three steps: (I) explicit computation of $\widecheck{R}^\lambda$ , (II) explicit computation of $\widecheck{\Sigma}^{\infty, \lambda}$ from $\widecheck{R}^\lambda$ based on \eqref{rso:eq:vecorizedsystem}, (III) explicit computation of $\tilde \Sigma^{\infty, \lambda}$ from  $\widecheck{\Sigma}^{\infty, \lambda}$ based on the relationship given in Lemma \ref{rso:lem:vectorization}.}
\begin{compactitem}[$\bullet$]
    \item [(I)]\textbf{Computation of $\widecheck{R}^{{}}$.}
        Using the Cramer rule, first observe that $V^{-1}$ satisfies
        \[
            (\Vlam)^{-1}=\frac{1}{A_{1,2}^{{}}\left(\nutwolam-\nuonelam\right)}\left[\begin{array}{ll}
            \left(\theta-\nutwolam\right) & +A_{1,2}^{{}} \\
            -\left(\theta-\nuonelam\right) & -A_{1,2}^{{}}
            \end{array}\right],
        \]
        from which we deduce
        \[
            \widecheck{R}^{(\lambda)} = (\Vlam)^{-1} \Rlam ({(\Vlam)^{-1}})^\top = 
            \frac{\delta^2(1-\theta)^2}{d (A_{1,2}^{\lambda})^2\left(\nuonelam-\nutwolam\right)^2 \mu_x^2 \mu_y^2} \left[\begin{array}{ll}
                % COEFF 1,1
                Q_{1,1}^{{\lambda}}
                % COEFF 1,2
                & Q_{1,2}^{{\lambda}} \\
                Q_{1,2}^{{\lambda}}
                & % COEFF 2,2 
                Q_{2,2}^{{\lambda}}
            \end{array}\right],
        \]
        where 
        {\small \begin{equation}\label{rso:eq:def_matrix_Q}
        \begin{split}
             Q_{1,1}^{{\lambda}} &\defineq   \begin{aligned}[t]
                            & \left(\theta-\nutwolam\right)^2 \mu_y^2 \\
                            & + (A_{1,2}^{{\lambda}})^2 \left( 
                                (1-\theta)^2(1+\theta)^2\left(1+2 \theta \frac{\mu_x}{\mu_y}\right) \lambda^2
                                + \mu_x^2\left(1+2\left(1-\theta^2\right) \theta\right) \right) \\
                            & +2 A_{1,2}^{{\lambda}}\left(\theta-\nutwolam\right)\left(1-\theta^2\right)\left(\theta \mu_x+y_y\right) \lambda,  \\
                        \end{aligned} \\
           Q_{2,2}^{{\lambda}} &\defineq \begin{aligned}[t]
                        & \left(\theta-\nuonelam\right)^2 \mu_y^2 \\
                        & +(A_{1,2}^{{\lambda}})^2\left((1-\theta)^2(1+\theta)^2\left(1+2 \theta \frac{\mu_x}{\mu_y}\right) \lambda^2+\mu_x^2\left(1+2\left(1-\theta^2\right) \theta\right)\right) \\
                        & +2 A_{1,2}^{{\lambda}} \left(\theta-\nuonelam\right)\left(1-\theta^2\right)\left(\theta \mu_x+y_y\right) \lambda, \\
                    \end{aligned} \\
             Q_{1,2}^{{\lambda}} &\defineq \begin{aligned}[t] 
                            & -\left(\theta-\nutwolam\right)\left(\theta-\nuonelam\right) \mu_y^2 \\
                            & -(A_{1,2}^{{\lambda}})^2\left((1-\theta)^2(1+\theta)^2\left(1+2 \theta \frac{\mu_x}{\mu_y}\right) \lambda^2+\mu_x{ }^2\left(1+2\left(1-\theta^2\right) \theta\right)\right)\\
                            & +\left(1-\theta^2\right)\left(\theta \mu_x +\muy\right) \lambda A_{1,2}^{{\lambda}} \left(\nuonelam+\nutwolam-2 \theta\right).
                        \end{aligned} \\
        \end{split}
        \end{equation}}
        \item [(II)] \textbf{Computation of $\widecheck{\Sigma}^{\infty, \lambda}$.}
            From the definition of $J^\lambda$ given in \eqref{eq-Jlam}, observing that $\left(I\mg{_4} - \Jlam \otimes \Jlam \right)^{-1} = \diag \left[\frac{1}{1-\nuonelam^2}, \frac{1}{1-\nuonelam \nutwolam}, \frac{1}{1-\nuonelam \nutwolam}, \frac{1}{1-\nutwolam^2} \right]$, 
            \mg{we deduce from Lemma \ref{rso:lem:vectorization} that}
            {\small
            \[
                \widecheck{\Sigma}^{\infty, \lambda} =  \frac{\delta^2(1-\theta)^2}{d (A_{1,2}^{{\lambda}})^2\left(\nuonelam-\nu_2\right)^2 \mu_x^2 \mu_y^2} \left[\begin{array}{ll}
                    % COEFF 1,1
                        \frac{1}{1-\nuonelam^2}\; Q_{1,1}^{{\lambda}}
                    % COEFF 1,2
                    &   \frac{1}{1-\nuonelam \nu_2}\; Q_{1,2}^{{\lambda}} \\
                        \frac{1}{1-\nuonelam \nu_2}\; Q_{1,2}^{{\lambda}}
                    & % COEFF 2,2 
                        \frac{1}{1-\nutwolam^2}\; Q_{2,2}^{{\lambda}}
                \end{array}\right].
            \]}
     \item [(III)] \textbf{Computation of ${\tilde \Sigma}^{\infty, \lambda}$.} 
            From Lemma \ref{rso:lem:vectorization}, we also have
            {\small
            \begin{equation}
                \tilde \Sigma^{\infty, \lambda} = \Vlam \widecheck{\Sigma}^{\infty, \lambda} (\Vlam)^\top = \frac{\delta^2(1-\theta)^2}{d (A_{1,2}^{{\lambda}})^2\left(\nuonelam-\nu_2\right)^2 \mu_x^2 \mu_y^2}  \left[\begin{array}{ll}
                    % COEFF 1,1
                        S_{1,1}^{{\lambda}}
                    % COEFF 1,2
                    &   S_{1,2}^{{\lambda}} \\
                        S_{1,2}^{{\lambda}}
                    & % COEFF 2,2 
                        S_{2,2}^{{\lambda}} \\
                \end{array}\right],
                \label{eq-formula-Sigma-lambda}
            \end{equation}}
            with 
            {\small
            \begin{equation}\label{eq-S-ij}
            \begin{split}
                S_{1,1}^{{\lambda}} &\defineq (A_{1,2}^{{\lambda}})^2
                                            \left(
                                                \frac{Q_{1,1}^{{\lambda}}}{1-\nuonelam^2}
                                                +\frac{Q_{2,2}^{{\lambda}}}{1-\nutwolam^2}
                                                +2 \frac{Q_{1,2}^{{\lambda}}}{1-\nuonelam\nu_2}
                                            \right), \\
                S_{1,2}^{{\lambda}} &\defineq -A_{1,2}^{{\lambda}} 
                                            \left[
                                                \left(\theta-\nuonelam\right) \frac{Q_{1,1}^{{\lambda}} }{1-\nuonelam^2}
                                                +\left(\theta-\nu_2\right) \frac{Q_{2,2}^{{\lambda}} }{1-\nutwolam^2}
                                                +\left(2 \theta-\left(\nuonelam + \nu_2\right)\right) \frac{Q_{1,2}^{{\lambda}} }{1-\nuonelam\nu_2}
                                            \right], \\
                S_{2,2}^{{\lambda}} &\defineq \left(\theta-\nuonelam\right)^2 \frac{Q_{1,1}^{{\lambda}} }{1-\nuonelam^2}+\left(\theta-\nu_2\right)^2 \frac{Q_{2,2}^{{\lambda}} }{1-\nutwolam^2}+2\left(\theta-\nuonelam\right)\left(\theta-\nu_2\right) \frac{Q_{1,2}^{{\lambda}} }{1-\nuonelam\nu_2}. \\
            \end{split}
            \end{equation}}%
\end{compactitem}
% \vspace{\baselineskip} 

%========================================================================
% Simplification of S_ij was here
%========================================================================

While \eqref{eq-formula-Sigma-lambda} provides a formula for $\Sigma^{\lambda,\infty}$, the dependence of this formula to $\klam$ and $\theta$ are not very clear. 
We provide in Section~\ref{rso:sec:supp:quadratic} of this Appendix a simplification of the terms in \eqref{eq-formula-Sigma-lambda} in terms of their dependence to $\klam$ and $\theta$.
% In particular, combining the identities \eqref{rso:eq:simplification_s11},  \eqref{rso:eq:simplification_s22}, \eqref{rso:eq:simplification_s12} and \eqref{rso:eq:simplification_common_factor} given there together with~\eqref{eq-formula-Sigma-lambda} above, we obtain
As a consequence, in view of~\eqref{eq-formula-Sigma-lambda}, we may write
\[
    \tilde \Sigma^{\infty, \lambda} = \frac{\delta^2(1-\theta)}{d\lambda^2 P_c(\theta, \klam)} \left(\begin{array}{cc}
        \frac{\lambda^2}{\mux^2}\left(\tilde P_{1,1}^{(1)}(\theta, \klam) + \frac{\lambda^2}{\mu_y^2} \tilde P_{1,1}^{(2)}(\theta, \klam) \right)
            & \frac{\lambda}{\mu_x} \left(\tilde P_{1,2}^{(1)}(\theta, \klam) + \frac{\lambda^2}{\mu_y^2} \tilde P_{1,2}^{(2)}(\theta, \klam)\right) \\
        \frac{\lambda}{\mu_x} \left(\tilde P_{1,2}^{(1)}(\theta, \klam) + \frac{\lambda^2}{\mu_y^2} \tilde P_{1,2}^{(2)}(\theta, \klam)\right) 
        & \tilde P_{2,2}^{(1)}(\theta, \klam) + \frac{\lambda^2}{\mu_y^2} \tilde P_{2,2}^{(2)}(\theta, \klam)
    \end{array}\right),
\]
where the $\tilde P_{q,\ell}^{(k)}$ and $P_c$ are polynomials in $\theta, \klam$, defined \ylfourth{in the top part of Table~\ref{rso:table:polynomials}}. 
This proves Proposition~\ref{rso:prop:limit_covariance_quadratic}.%\todo{Again we need a self-contained presentation and avoiding direct references to the equations in the the online material would be better. one possibility is to provide things like $\tilde P_{1,1}^{(1)}(\theta, \klam)$ in a table too but a simpler solution may be preferred. YASSINE: DONE}

%==========================================================================================
\section{Symbols and constants used in the paper}\label{rso:sec:constants}
%==========================================================================================
The convergence analysis of \sapdname~relies on a series of convex inequalities that we wrote in matrix form for compactness. 
All the constants arising in these inequalities (including those mentioned in the statement of %Lemma~\ref{rso:lem:upperbound_hathaty}, 
Lemma~\ref{rso:prop:barbers_lemma} and Lemma\ref{rso:lem:upperbound_hat_variables}) are made explicit as follows in Table~\ref{rso:table:constants_C} and Table~\ref{rso:table:constants_A}. For convenience of the reader, in Section~\ref{rso:sec:supp:constants} of this Appendix, we also provide the expressions of the these constants under the CP parameterization~\eqref{rso:eq:cb_params} which is a particular class of parameters where our complexity results can be achieved. %(see \ylfourth{Table~\ref{rso:table:cb_A} and Table~\ref{rso:table:cb_Q}} in Section~\ref{rso:sec:supp:constants}).
% Furthermore, we provide the expressions of the same matrices under the CP parameterization~\ref{rso:eq:cb_params} in \ylfourth{Table~\ref{rso:table:cb_A} and Table~\ref{rso:table:cb_Q}}, in Section~\ref{rso:sec:supp:constants} of the online supplementary material which arises in the complexity analysis.
%\todo{Do we need to say that there is a table in the online supp. mat about the CP parameterization, if we do not say it would that cause any problems? If not, I prefer not to say it directly because it may confuse the reader wondering why we need such a table. YASSINE : These comments are mostly crucial for the complexity analysis.}
We finally detail in Table~\ref{rso:table:polynomials} the polynomials involved in the entries of the covariance matrix $\Sigma^{\infty, \lambda}$ given in Theorem~\ref{rso:thm:quadratics}. 

% \begin{table}[ht]
% {\small
%     \begin{tabular}{l}
%     \hline \\[-1.ex]
%             \vspace{1em} \makecell[l]{
%                 {$B^x = \frac{4 \tau}{1+\rho}+\frac{\sigma^2(1+\theta)^2 \Lyx^2}{\left(1+\sigma \muy\right)^2} \frac{\rho}{4\left\|A_0\right\|^2(1+\rho)} \frac{3 \tau^2}{\left(1+\tau \mux\right)^2}, 
%                 $}        
%             } \\
%             \vspace{1em} \makecell[l]{
%                 {$C^x= \frac{\tau}{1+\tau \mux}+\frac{\tau \sigma(1+2 \theta) \Lxy}{ {2} \left(1+\tau \mux\right)\left(1+\sigma \muy\right)},  
%                     \quad C_{-1}^x = \frac{\tau \sigma \theta}{{2 \rho}(1+\sigma \muy)} \frac{(1+\theta) \Lyx}{1+\tau \mux}, \quad \cQ_x \defineq B^x + C^x + C_{-1}^x.
%                     $}       
%             }% \\
%            %  \vspace{1em} \makecell[l]{
%            %     \yl{$\cQ_x \defineq B^x + C^x + C_{-1}^x$}%\cQ_y \defineq B^y + B_{-1}^y + C^y + C_{-1}^y + C_{-2}^y
%             %}
%             \\ \hline
%     \end{tabular}}
%     \vspace{.5em}
%     \caption{Summary of the constants $\mathcal{Q}^x, B^x, C^x, C_{-1}^x$ used throughout the analysis.}
%     \label{rso:table:constants_B}
% \end{table}
%
%
\begin{table}[ht]
    {\small
    \begin{tabular}{l}
    \hline \\[-1.ex]
            \vspace{1em} \makecell[l]{
                {$B^x = \frac{4 \tau}{1+\rho}+\frac{\sigma^2(1+\theta)^2 \Lyx^2}{\left(1+\sigma \muy\right)^2} \frac{\rho}{4\left\|A_0\right\|^2(1+\rho)} \frac{3 \tau^2}{\left(1+\tau \mux\right)^2}, \quad C^x= \frac{\tau}{1+\tau \mux}+\frac{\tau \sigma(1+2 \theta) \Lxy}{ {2} \left(1+\tau \mux\right)\left(1+\sigma \muy\right)},  
                    \quad C_{-1}^x = \frac{\tau \sigma \theta}{{2 \rho}(1+\sigma \muy)} \frac{(1+\theta) \Lyx}{1+\tau \mux},
                $}        
            } \\
            % \vspace{1em} \makecell[l]{
            %     {$C^x= \frac{\tau}{1+\tau \mux}+\frac{\tau \sigma(1+2 \theta) \Lxy}{ {2} \left(1+\tau \mux\right)\left(1+\sigma \muy\right)},  
            %         \quad C_{-1}^x = \frac{\tau \sigma \theta}{{2 \rho}(1+\sigma \muy)} \frac{(1+\theta) \Lyx}{1+\tau \mux}, \quad \cQ_x \defineq B^x + C^x + C_{-1}^x.
            %         $}       
            % } \\
            \vspace{1em} \makecell[l]{
                {$B^y= \frac{4(1+\theta)^2 \sigma}{(1+\rho)(1-\alpha \sigma)}
                            {
                            + \frac{3\left\|A_0\right\|^2(1+\rho) \theta^2}{\rho^3}
                            }{\scriptsize
                            \begin{aligned}[t]
                                & + \frac{\rho}{4\left\|A_0\right\|^2(1+\rho)} 
                                \frac{\left(1+\sigma(1+\theta) \Lyy\right)^2}{(1+\sigma \muy)^2} \frac{2 \sigma^2(1+\theta)^2}{\left(1+\sigma \muy\right)^2} \\
                                & + \frac{\sigma^2(1+\theta)^2 \Lyx^2}{\left(1+\sigma \muy\right)^2} \frac{\rho}{4\left\|A_0\right\|^2(1+\rho)} \frac{3 \tau^2 \sigma^2(1+\theta)^2 \Lxy^2}{\left(1+\tau \mux\right)^2\left(1+\sigma \muy\right)^2},
                            \end{aligned}}
                            $}
            }\\
            \vspace{1em} \makecell[l]{
                {$B_{-1}^y= 
                    \frac{4 \sigma \theta^2}{{\rho}(1+\rho)(1-\alpha \sigma)}
                    + \frac{\rho}{4\left\|A_0\right\|^2(1+\rho)} 
                        \frac{\left(1+\sigma(1+\theta) \Lyy\right)^2}{\left(1+\sigma \muy\right)^2} 
                        \frac{2 \sigma^2 \theta^2 \rho^{-1}}{\left(1+\sigma {\muy}\right)^2} 
                    + \frac{\sigma^2(1+\theta)^2 \Lyx^2}{\left(1+\sigma \muy\right)^2} \frac{\rho}{4\left\|A_0\right\|^2(1+\rho)} \frac{3 \tau^2 \sigma^2 \theta^2 \rho^{-1} \Lxy^2}{\left(1+\tau \mux\right)^2\left(1+\sigma \muy\right)^2},
                    $}
            }\\
            \vspace{1em} \makecell[l]{
                {$
                %C^x= \frac{\tau}{1+\tau \mux}+\frac{\tau \sigma(1+2 \theta) \Lxy}{ \ylthird{2} \left(1+\tau \mux\right)\left(1+\sigma \muy\right)},  
                %    \quad C_{-1}^x = \frac{\tau \sigma \theta}{\ylthird{2 \rho}(1+\sigma \muy)} \frac{(1+\theta) \Lyx}{1+\tau \mux}, 
                %    \quad 
                    C^y = \frac{\sigma(1+ \ylthird{2} \theta)(1+\ylthird{\theta})}{1+\sigma \muy}+\frac{\tau \sigma(1+\theta) \Lxy}{\ylthird{2} \left(1+\tau \mux\right)\left(1+\sigma \muy\right)}, \quad C_{-2}^y
                    = \frac{\sigma \theta^2}{{2\rho^2}(1+\sigma \muy)} \left(\frac{1+\sigma(1+\theta) \mathrm{L}_{\mathrm{yy}}}{1+\sigma \muy}+\frac{\tau \sigma(1+\theta) \mathrm{L}_{\mathrm{yx}} \mathrm{L}_{\mathrm{xy}}}{\left(1+\tau \mux\right)\left(1+\sigma \muy\right)}\right),
                    $}
            }\\
            \vspace{1em} \makecell[l]{
                {$C_{-1}^y
                    = \frac{\sigma \theta}{\rho(1+\sigma \muy)} 
                        \left( 
                            \ylthird{1 + 2} \theta 
                            + \frac{\tau}{\ylthird{2}(1+\tau \mux)} \left((1+\theta) \Lyx + \Lxy \right) 
                            + (1+ \ylthird{ \frac{3\theta}{2}})
                            \left(\frac{1+\sigma(1+\theta) \mathrm{L}_{\mathrm{yy}}}{1+\sigma \muy}+\frac{\tau \sigma(1+\theta) \mathrm{L}_{\mathrm{yx}} \mathrm{L}_{\mathrm{xy}}}{\left(1+\tau \mux\right)\left(1+\sigma \muy\right)}\right)
                        \right),
                    $}
            }\\
            \vspace{1em} \makecell[l]{
                {$\cQ_x \defineq B^x + C^x + C_{-1}^x, \quad \cQ_y = B^y + {B_{-1}^y} + C^y + C_{-1}^y + C_{-2}^y.
                    $}
            }\\
            % \vspace{1em} \makecell[l]{
            %     \yl{$\cQ_y = B^y + \ylsecond{B_{-1}^y} + C^y + C_{-1}^y + C_{-2}^y$}
            % }\\
            %\\
             % \vspace{-1.em}   \\
             \hline
    \end{tabular}}%
    \vspace{.5em}
    \caption{{\small Summary of the constants $B^x, C^x, C_{-1}^x, \cQ^x, B^y, B_{-1}^y, C^y, C_{-1}^y, C_{-2}^y$, and $\cQ^y$ used throughout the analysis.}}
    \label{rso:table:constants_C}
\end{table}
\vspace{-1em}
\begin{table}[ht]
{\small
    \begin{tabular}{l}
    \hline \\[-1.ex]
    \vspace{1em}
        \makecell[l]{$\scalemath{0.95}{
            \hat{A}_1 \defineq \begin{bmatrix}
                    \left(1+\tau \Lxx + \frac{\tau \sigma (1+\theta) \Lyx \Lxy}{1 + \sigma \muy}\right) \\  
                    \frac{\tau \Lxy (1 + \sigma (1+\theta)\Lyy)}{1 + \sigma \muy}\\
                    \yl{\sigma} \tau \theta \frac{\Lxy \Lyx }{(1 + \sigma \muy)}\\
                    \yl{\sigma} \tau \theta \frac{\Lxy \Lyy}{(1 + \sigma \muy)} \\
                \end{bmatrix}, 
            \quad 
                \hat{A}_2 \defineq \begin{bmatrix}
                    \sigma (1+\theta) \Lyx \\
                    1 + \sigma (1+\theta) \Lyy \\
                    \yl{\sigma} \theta \Lyx\\
                    \yl{\sigma} \theta \Lyy\\
                \end{bmatrix}}$,} \\
            \vspace{1em} 
            \makecell[l]{$\hat{A}_3 \defineq \scalemath{0.87}
            {\begin{bmatrix} 
                \left[\begin{aligned}[c]
                    \mg{C_{\sigma,\theta}} \sigma (1+\theta) (1+\tau \mux) \Lyx 
                    & + \sigma (1+\theta) \Lyx \Big(\sa{(}1+\tau \Lxx\sa{)} (1+\sigma \muy) + \tau \sigma (1+\theta) \Lyx \Lxy\Big) \\
                    & + \sigma \theta \Lyx (1+\tau \mux) (1+\sigma \muy)
                \end{aligned}\right] \\
                (1+\tau \mux)\mg{C_{\sigma,\theta}^2} 
                + \sigma (1+\theta) \Lyx \tau \Lxy \mg{C_{\sigma,\theta}} + \sigma \theta \Lyy (1+\tau \mux) (1 + \sigma \muy) \\
                \yl{\sigma} \Big( \mg{C_{\sigma,\theta}} \theta \Lyx (1+\tau \mux) + (1+\theta) \tau \sigma \theta \Lxy \Lyx^2 \Big) \\
                \yl{\sigma} \Big( \mg{C_{\sigma,\theta}} \theta \Lyy (1+\tau \mux) +  (1+\theta) \tau \sigma \theta \Lxy \Lyx \Lyy \Big)\\
            \end{bmatrix}},$} \\ 
            \vspace{1em} \makecell[l]{$A \defineq \begin{bmatrix}
                A_1^\top \\
                A_2^\top \\
                A_3^\top \\
            \end{bmatrix} \defineq \yl{\frac{\sqrt{1 + \rho}}{\sqrt{\rho}}} \begin{bmatrix}
        \frac{\ylsecond{4}}{1+\tau \mux} \hat{A}_1^\top \\
        \frac{\ylsecond{4\sqrt{2}}(1+\theta)}{1+\sigma \muy} \hat{A}_2^\top \\
        \frac{\ylsecond{4\sqrt{2}}\theta\sa{\rho^{-1}}}{(1+\sigma \muy)^2 (1+\tau \mux)} \hat{A}_3^\top \\
    \end{bmatrix} 
            \text{Diag}
                \begin{bmatrix}
                    \sqrt{2\rho \tau} \\ \sqrt{2\rho \sigma/(1-\alpha \sigma)} \\ \sqrt{2\rho \tau} \\ \sqrt{2\rho \sigma/(1-\alpha \sigma)}
                    %\frac{\sqrt{2\rho \sigma}}{\sqrt{1-\alpha \sigma}}    
                \end{bmatrix}, \quad C_{\sigma,\theta} \defineq 1 + \sigma (1+\theta) \Lyy
                $.} \\ \hline
    \end{tabular}}%
    \vspace{.5em}
    \caption{{\small Summary of the constants $A_1, A_2, A_3$ used throughout the analysis.}}
    \label{rso:table:constants_A}
\end{table}
%

%
%================================================================================
%
\begin{table}[ht]
{
\begin{tabular}{l} 
\hline \\[-1.ex]
        \vspace{1em} \makecell[l]{
                \scalebox{0.75}{$
                    \tilde{P}_{1,1}^{(1)}(\theta, \kappa) 
                        = \begin{aligned}[t]
                            &-4 \klam^2 \theta^2\left(1+\theta\right)^2 \\
                            &+ \klam^4\left(
                                      \begin{array}{l}
                                         1+2 \theta-\theta^2 -8 \theta^3 \\
                                         -9 \theta^4 +6 \theta^5+\theta^6  \\
                                    \end{array}
                                    \right) \\
                            &+(1-\theta)^2 \klam^6\left(\theta+4 \theta^2+4 \theta^3-\theta^5\right)
                            \end{aligned}, 
                    \quad  
                    \tilde{P}_{1,1}^{(2)} (\theta, \kappa) 
                        = \begin{aligned}[t]
                            & -4 \klam^2 \theta^2\left(1+2 \theta-\theta^2-2 \theta^3+2 \theta^4\right) \\
                            & +(1-\theta)^2 \klam^4
                                \left(\begin{array}{l}
                                     1+4 \theta+4 \theta^2-6 \theta^3 \\
                                     -11 \theta^4+2 \theta^5+2 \theta^6 \\
                                \end{array}\right) \\
                            & +(1-\theta)^4 \klam^6 \theta(1+\theta)^2(1+2 \theta)
                        \end{aligned}
                ,$}
            }\\
        \vspace{1em} \makecell[l]{
                \scalebox{0.75}{$
                    \tilde{P}_{1,2}^{(1)}(\theta, \kappa) 
                        = \begin{aligned}[t]
                            & -4 \klam^4 \theta^2\left(1 + 2\theta - \theta^3\right) \\
                            & +(1-\theta) \klam^6
                                \left(\begin{array}{l}
                                     1+3 \theta+\theta^2-8 \theta^3 \\
                                     -11 \theta^4+\theta^5+\theta^6 \\ 
                                \end{array}\right) \\
                            & +(1-\theta)^3 \klam^8 \theta(1+\theta)^2(1+2 \theta)
                        \end{aligned}, \quad  
                    \tilde{P}_{1,2}^{(2)} (\theta, \kappa) 
                        = \begin{aligned}[t]
                                &\klam^2 4 \theta^4\left(1 + 2\theta - \theta^3\right) \\
                                &-(1-\theta) \klam^4 \theta^2
                                \left(\begin{array}{l}
                                     5+15 \theta+5 \theta^2-20 \theta^3  \\
                                     -11 \theta^4+9 \theta^5+\theta^6 
                                \end{array} \right) \\
                                &+(1-\theta)^3 \klam^6
                                    \left(\begin{array}{l}
                                         1+5 \theta+8 \theta^2-3 \theta^3  \\
                                         -21 \theta^4-14 \theta^5 \\
                                         +2 \theta^6+2 \theta^7 \\ 
                                    \end{array}\right) \\
                                &+(1-\theta)^{5} \klam^8 \theta(1+\theta)^3(1+2 \theta)
                            \end{aligned},$}
            }\\
        \vspace{1em} \makecell[l]{
                \scalebox{0.75}{$
                    \tilde{P}_{2,2}^{(1)}(\theta, \kappa) 
                        = \begin{aligned}[t]
                              & -4 \klam^6 \theta^2\left(1+2 \theta-\theta^2-2 \theta^3+2 \theta^4\right) \\
                              & +(1-\theta)^2 \klam^8
                                  \left(
                                      \begin{array}{l}
                                           1+4 \theta+4 \theta^2-6 \theta^3  \\
                                           -11 \theta^4+2 \theta^5+2 \theta^6
                                      \end{array}
                                      \right) \\
                              &+(1-\theta)^{4} \klam^{10} \theta(1+\theta)^2(1+2 \theta)
                          \end{aligned}, \quad  
                    \tilde{P}_{2,2}^{(2)} (\theta, \kappa) 
                        = \begin{aligned}[t]
                                & \klam^2 4 \theta^2\left(1+\theta\right)^2\left(-1-2 \theta+2 \theta^3\right) \\
                                & + \klam^4
                                    \left(\begin{array}{l}
                                         1+4 \theta+3 \theta^2 -20 \theta^3 \\
                                         -45 \theta^4-2 \theta^5 +53 \theta^6 \\
                                         +20 \theta^7 -20 \theta^8-2 \theta^9 \\ 
                                    \end{array}\right) \\
                                & +(1-\theta)^2 \klam^6 \theta 
                                    \left(
                                        \begin{array}{cc}
                                             3+14 \theta+20 \theta^2\\
                                             -8 \theta^3-47 \theta^4  \\
                                             -30 \theta^5+4 \theta^6+4 \theta^7 \\
                                        \end{array}
                                        \right) \\
                                & +(1-\theta)^{4} \klam^8 2 \theta^2(1+\theta)^3(1+2 \theta)
                            \end{aligned}
                ,$}
            }\\
        \hline\\
        \vspace{1em} \makecell[l]{
                \scalebox{0.75}{$P_{1,1}^{(\infty, 1)}(\theta, \kappa) = \begin{aligned}[t]
                    &  -4 \kappa^2 \theta^4 (1+\theta)^2 \\
                    &+\kappa^4 (\theta^2+10 \theta^3+7 \theta^4-24 \theta^5-17 \theta^6+14 \theta^7+\theta^8) \\
                    & -\kappa^6 (1-\theta)^2 \theta (2+10 \theta+9 \theta^2-24 \theta^3-34 \theta^4+10 \theta^5+3 \theta^6) \\
                    & +\kappa^8 (1-\theta)^4 (1+4 \theta+2 \theta^2-14 \theta^3-21 \theta^4-2 \theta^5+2 \theta^6) \\
                    & +\kappa^{10} (1-\theta)^6 \theta (1+\theta)^2 (1+2 \theta)
                \end{aligned}, \quad  
                P_{1,1}^{(\infty, 2)} (\theta, \kappa) = 
                \begin{aligned}[t]
                    &\kappa^2 \left(\begin{array}{l}
                         -4 \theta^2-8 \theta^3+4 \theta^4 \\
                         +8 \theta^5-8 \theta^6 \\                    
                    \end{array}\right) \\
                    &+ \kappa^4 (1-\theta)^2 
                        \left(
                            \begin{array}{l}
                                1+4 \theta+4 \theta^2-6 \theta^3    \\
                                -11 \theta^4+2 \theta^5+2 \theta^6
                            \end{array} \right) \\
                    &+ \kappa^6 (1-\theta)^4 \theta (1+\theta)^2 (1+2 \theta) \\
                \end{aligned},$}
            }\\
            \vspace{1em} \makecell[l]{
        \scalebox{0.75}{$P_{1,2}^{(\infty, 1)} (\theta, \kappa) = \begin{aligned}[t]
            &+4 \kappa^4 \theta^3 (-1-2 \theta+\theta^3) \\
            &+\kappa^6 (1-\theta) \theta (1+3 \theta+5 \theta^2-15 \theta^4-7 \theta^5+9 \theta^6) \\
            &-\kappa^8 (1-\theta)^3 \theta (1+3 \theta-11 \theta^3-13 \theta^4+2 \theta^5+2 \theta^6) \\
            &-\kappa^{10} (1-\theta)^5 \theta^2 (1+\theta)^2 (1+2 \theta) \\
        \end{aligned}, \quad 
        P_{1,2}^{(\infty, 2)} (\theta, \kappa) =
        \begin{aligned}[t]
            &\kappa^2 (4 \theta^3+12 \theta^4+8 \theta^5-12 \theta^6-16 \theta^7+4 \theta^8+8 \theta^9) \\
            &+\kappa^4 (1-\theta) \theta \left(\begin{array}{l}
                -1-4 \theta-8 \theta^2+5 \theta^3+40 \theta^4+22 \theta^5  \\
                -42 \theta^6-29 \theta^7+19 \theta^8+2 \theta^9
            \end{array} \right) \\
            &+\kappa^6 (1-\theta)^3 \left(\begin{array}{l}
                \theta+2 \theta^2-6 \theta^3-23 \theta^4-13 \theta^5+33 \theta^6  \\
                +32 \theta^7-2 \theta^8-4 \theta^9 \\
            \end{array}\right) \\
            &+\kappa^8 (1-2 \theta) (1-\theta)^5 \theta^2 (1+\theta)^3 (1+2 \theta) \\
        \end{aligned},$}} \\
        \vspace{1em}\makecell[l]{
            \scalebox{0.75}{$P_{2,2}^{\infty, 1}(\theta, \kappa) = \begin{aligned}[t]
                &-4 \kappa^6 \theta^2\left(1+2 \theta-\theta^2-2 \theta^3+2 \theta^4\right) \\
                &+(1-\theta)^2 \kappa^8
                    \left(
                        \begin{array}{l}
                             1+4 \theta+4 \theta^2-6 \theta^3  \\
                             -11 \theta^4+2 \theta^5+2 \theta^6
                        \end{array}
                        \right) \\
                &+(1-\theta)^{4} \kappa^{10} \theta(1+\theta)^2(1+2 \theta)
            \end{aligned}, \quad 
            P_{2,2}^{\infty, 2}(\theta, \kappa) = \begin{aligned}[t]
                    &\kappa^2 4 \theta^2\left(1+\theta\right)^2\left(-1-2 \theta+2 \theta^3\right) \\
                    &+ \kappa^4
                        \left(\begin{array}{l}
                             1+4 \theta+3 \theta^2 -20 \theta^3 -45 \theta^4-2 \theta^5 +53 \theta^6 +20 \theta^7 -20 \theta^8-2 \theta^9 \\ 
                        \end{array}\right) \\
                    &+(1-\theta)^2 \kappa^6 \theta 
                        \left(
                            \begin{array}{cc}
                                 3+14 \theta+20 \theta^2 -8 \theta^3 -47 \theta^4   -30 \theta^5+4 \theta^6+4 \theta^7 \\
                            \end{array}
                            \right) \\
                    &+(1-\theta)^{4} \kappa^8 2 \theta^2(1+\theta)^3(1+2 \theta)
                \end{aligned},$}
        } \\
        \vspace{1em}\makecell[l]{
            \scalebox{0.8}{$P_{c}(\theta, \kappa) = \begin{aligned}[t]
                    &- 4 \kappa^2 \theta^2\left(1+\theta\right)^3 + \kappa^4\left(1 + 3\theta + \theta^2 - 17 \theta^3 - 33 \theta^4 - 3 \theta^5 + 15 \theta^6 + \theta^7\right) \\
                    &+(1-\theta) \kappa^6 \theta\left(3+14 \theta+13 \theta^2-24 \theta^3-35 \theta^4+10 \theta^5+3 \theta^6\right) \\
                    &+(1-\theta)^3 \kappa^8\left(-1-4 \theta-2 \theta^2+14 \theta^3+21 \theta^4+2 \theta^5-2 \theta^6\right) \\
                    &-(1-\theta)^{5} \kappa^{10} \theta(1+\theta)^2(1+2 \theta) \\
                \end{aligned}.$}
        }\\
            \hline
\end{tabular}}
\vspace{.5em}
\caption{\label{rso:table:polynomials} Polynomials involved in the description of the equilibrium covariance matrices $\tilde \Sigma^{\infty}$ of $\lim_{n\to \infty} [x_{n-1}, y_n]$ (top) and $\Sigma^{\infty}$ of $\lim_{n\to \infty} [x_{n}, y_n]$ (bottom).}
\end{table}

\clearpage

% SUPPLEMENTARY ONLINE MATERIAL
% \newpage
%~~\newpage
% \thispagestyle{empty} % Remove page number on the title page
% \begin{center} % Center the title
%   \Large % Set the font size to larger
%   Supplementary Material for the paper titled "High Probability and Risk-Averse Guarantees for a Stochastic Accelerated Primal-Dual Method"
% \end{center}

\makeatletter
\patchcmd{\@sect}{\@Alph\c@section}{\@arabic\c@section}{}{}
\makeatother
%\setcounter{section}{0}

% \setcounter{section}{0}
% \section*{Additional Sections}
% \setcounter{section}{0}
% \renewcommand\thesection{\arabic{section}}

%==========================================================================================
\section{Supplementary derivations for the quadratic case}\label{rso:sec:supp:quadratic}%==========================================================================================
%----------------------------------------------------------------------------------------
%----------------------------------------------------------------------------------------
\mgtwo{In this section, we outline how the constants $S_{1,1}^{{\lambda}}$, $S_{2,2}^{{\lambda}}$ and $S_{1,2}^{{\lambda}}$ that are defined by \eqref{eq-S-ij} in the proof of Proposition \ref{rso:prop:limit_covariance_quadratic} can be simplified and reorganized under a common denominator that is related to the polynomial $P_c(\theta, \klam)$ given in Table \ref{rso:table:polynomials} of Appendix \ref{rso:sec:constants}. This requires tedious but otherwise straightforward computations as follows:}

\paragraph{(I) Simplification of $S_{1,1}^{{\lambda}}$.}
%----------------------------------------------------------------------------------------
%----------------------------------------------------------------------------------------

\mg{Given the expressions \eqref{rso:eq:def_matrix_Q}} and \eqref{eq-S-ij}, we have
\[
\begin{split}
    S_{1,1}^{{\lambda}} = & 
        \frac{(1-\theta)^2 \lambda^2}{\mu_x^2} \frac{1}{\left(1-\nuonelam^2\right)\left(1-\nutwolam^2\right)\left(1-\nuonelam \nu_2\right)} \times \\
        &   \scalemath{0.8}{\left(\begin{array}{c}
                \left(1-\nutwolam^2\right)\left(1-\nuonelam \nu_2\right) 
                    \left(\begin{array}{l}
                        \left(\theta -\nu_2\right)^2 \mu_y^2 \\
                        + (A_{1,2}^{{\lambda}})^2\left((1-\theta)^2(1+\theta)^2\left(1+2 \theta \frac{\mu_x}{\mu_y}\right) \lambda^2+\mu_{x}^2\left(1+2\left(1-\theta^2\right) \theta\right)\right) \\
                        +2 A_{1,2}^{{\lambda}} \left(\theta -\nu_2\right)\left(1-\theta^2\right)\left(\theta \mu_x+\mu_y\right) \lambda \\
                    \end{array}\right) \\
                + 2\left(1-\nuonelam^2\right)\left(1-\nutwolam^2\right)  
                    \left(\begin{array}{l} 
                        -\left(\theta -\nu_2\right)\left(\theta -\nuonelam\right) \mu_y^2 \\
                        - (A_{1,2}^{{\lambda}})^2\left((1-\theta)^2(1+\theta)^2\left(1+2 \theta \frac{\mu_x}{\mu_y}\right) \lambda^2+\mu_{x}^2\left(1+2\left(1-\theta^2\right) \theta\right)\right) \\
                        +\left(1-\theta^2\right)\left(\theta \mu_x + \muy \right) \lambda A_{1,2}^{{\lambda}} \left(\nuonelam+\nutwolam-2 \theta \right)
                    \end{array}\right) \\
                + \left(1-\nuonelam \nutwolam\right)\left(1-\nuonelam^2\right) 
                    \left(\begin{array}{l}
                        \left(\theta -\nuonelam\right)^2 \mu_y^2 \\
                        + (A_{1,2}^{{\lambda}})^2\left((1-\theta)^2(1+\theta)^2\left(1+2 \theta \frac{\mu_x}{\mu_y}\right) \lambda^2+\mu_{x}^2\left(1+2\left(1-\theta^2\right) \theta\right)\right) \\
                        +2 A_{1,2}^{{\lambda}} \left(\theta -\nuonelam\right)\left(1-\theta^2\right)\left(\theta \mu_x+\mu_y\right) \lambda \\
                    \end{array}\right) \\
            \end{array}\right)}, \\
\end{split}
\]
\ylfourth{where $A^\lambda$, defined in~\eqref{rso:eq:a_lam_r_lam}, admits the expression~\eqref{rso:eq:a_lam_r_lam_under_cb} under the CP parameterization, and $\nu_{1, \lambda}, \nu_{2, \lambda}$ are its eigenvalues, provided Lemma~\ref{rso:lem:airjordan}.}
First note that 
\[
    \begin{split}
        & \mu_y^2 \bigg(
                \begin{aligned}[t]
                    & \left(1-\nuonelam \nutwolam\right)
                    \left(\left(\theta -\nutwolam\right)^2\left(1-\nutwolam^2\right)+\left(\theta -\nuonelam\right)^2\left(1-\nuonelam^2\right)\right) \\
                    & -2\left(1-\nuonelam^2\right)\left(1-\nutwolam^2\right)\left(\theta -\nuonelam\right)\left(\theta -\nutwolam\right) \bigg)
                \end{aligned} \\
        & = \mu_y^2 \left(
                \begin{array}{l}
                    \left(\nuonelam^2+\nutwolam^2\right)\left(1+\theta^2\right)-\left(\nuonelam^4+(\nutwolam)^4\right) \\
                    + \nuonelam \nutwolam \left(\begin{array}{l}
                            \left(\nuonelam^2+\nutwolam^2\right) \left(1+\theta^2\right) + \left(\nuonelam^4+(\nutwolam)^4\right) 
                            -2 \theta\left(\nuonelam^3+(\nutwolam)^3\right)  \\
                            -2 (1+\theta^2)
                            -2 \nuonelam \nutwolam  \theta^2
                            +2 \theta \nuonelam \nutwolam \left(\nuonelam+\nutwolam\right)
                            -2\left(\nuonelam \nutwolam\right)^2 \\
                    \end{array}\right)
                \end{array} \right)
             \\
        &=  \mu_y^2 \left(
                \begin{array}{l}
                 -4(1-\theta)^2 \theta^2\left(1-\theta^2\right)^2 \klam^2 \\
                 +(1-\theta)^4\left(1+2 \theta-\theta^2-16 \theta^3-17 \theta^4+14 \theta^5+9 \theta^6\right) \klam^4 \\
                 +(1-\theta)^6 \theta\left(3+14 \theta+16 \theta^2-12 \theta^3-23 \theta^4-6 \theta^5\right) \klam^6 \\
                 +(1-\theta)^8(1+\theta)^2\left(-1-2 \theta+2 \theta^2+8 \theta^3+\theta^4\right) \klam^8 \\
                 -(1-\theta)^{10} \theta(1+\theta)^4 \klam^{10}
                 \end{array} \right),
    \end{split}
\]
where the last line can be deduced from Lemma~\ref{rso:lem:polynomial_root}. 
Second, we observe that
\[
    \begin{split}
        \left(1-\nutwolam^2\right)& \left(1-\nuonelam \nutwolam\right)-2\left(1-\nuonelam^2\right)\left(1-\nutwolam^2\right)+\left(1-\nuonelam \nutwolam\right)\left(1-\nuonelam^2\right) \\
        & = \left(\begin{array}{l}
            (1-\theta)^2 \klam^2\left(2 \theta-2 \theta(1+2 \theta)-2 \theta^3(1+2 \theta)+2 \theta^3\right) \\
    +(1-\theta)^4 \klam^4\left(\left(1+\theta^2\right)(1+\theta)^2+2 \theta^2(1+2 \theta)-2 \theta^2\right) \\
            -(1-\theta)^6 \klam^6 \theta(1+\theta)^2 \\
    \end{array}\right),
    \end{split}
\]
and %\todo{MG: Yassine, we need to remind the reader where all the variables $\kappa_\lambda, A^\lambda_{1,2}.\nu_{1,\lambda}$ etc are defined. YASSINE: I brought the definitions under $S_{1,1}^\lambda$}, except for $\kappa_\lambda$, which I recalled in Lemma 22.
\begin{equation}\label{rso:eq:quad:exp_factor_1}
    \begin{split}
        (A_{1,2}^{{\lambda}})^2 
            &\left((1-\theta)^2(1+\theta)^2\left(1+2 \theta \frac{\mu_x}{\mu_y}\right) \lambda^2+\mu_x^2\left(1+2\left(1-\theta^2\right) \theta\right)
            \right) \\
            &= \mu_y^2\left((1-\theta)^4 \klam^4(1+\theta)^2+\left(2 \theta(1-\theta)^4 \klam^2(1+\theta)^2+(1-\theta)^2(1+2(1-\theta^2) \theta)\right) \frac{\lambda^2}{\mu_y^2}\right), \\     
    \end{split}
\end{equation}
and finally, noting that 
\begin{equation}\label{rso:eq:quad:exp_factor_2}
    A_{1,2}^{{\lambda}} \left(1-\theta^2\right)(\theta \mu_x+ \mu_y) \lambda = - \mu_y^2\; \left((1-\theta)^2(1+\theta) \theta \frac{\lambda^2}{\mu_y^2}+(1-\theta)^2(1+\theta)  \klam^2 \right),
\end{equation}
and using again Lemma~\ref{rso:lem:polynomial_root}, we obtain
{\small \[
    \begin{split}
        &\left(\begin{array}{l}
            \left(1-\nutwolam^2\right)\left(1-\nuonelam \nutwolam\right) \quad 2 A_{1,2}^{{\lambda}} \left(\theta -\nutwolam\right)\left(1-\theta^2\right)\left(\theta \mu_x+\mu_y\right) \lambda \\
             +2\left(1-\nuonelam^2\right)\left(1-\nutwolam^2\right)\left(1-\theta^2\right)\left(\theta \mu_x+ \mu_y\right) \lambda A_{1,2}^{{\lambda}} \left(\nuonelam+\nutwolam-2 \theta \right)   \\
             +\left(1-\nuonelam \nutwolam\right)\left(1-\nuonelam^2\right) \quad 2 A_{1,2}^{{\lambda}} \left(\theta -\nuonelam\right)\left(1-\theta^2\right)\left(\theta \mu_x+ \mu_y\right) \lambda
                \\
        \end{array}\right) \\
        &= 2 A_{1,2}^{{\lambda}} \left(1-\theta^2\right)(\theta \mu_x+ \mu_y) \lambda 
            \left(\begin{array}{l}
                \theta   \left(\nuonelam^2 + \nutwolam^2\right) - 2 \theta \nuonelam \nutwolam + \theta   \nuonelam \nutwolam  \left(\nuonelam^2 + \nutwolam^2\right) \\
                - \nuonelam \nutwolam \left(\nuonelam^3 + (\nutwolam)^3\right)- 2 \theta   \left(\nuonelam \nutwolam\right)^2 + \left(\nuonelam \nutwolam\right)^2   \left(\nuonelam+\nutwolam\right)
            \end{array}\right) \\
        &= \mu_y^2\; \left(2(1-\theta)^2(1+\theta) \theta \frac{\lambda^2}{\mu_y^2}+(1-\theta)^2(1+\theta) 2 \klam^2 \right)  \left(\begin{array}{l}
                - 4 \theta^3 (1-\theta)^3 \klam^2 (1+\theta) \\
                + \klam^4 (1-\theta)^3 \theta (1+\theta - 2 \theta^2-10 \theta^3 + 5 \theta^4 + 5 \theta^5) \\
                +\klam^6 (1-\theta)^6 \theta^2 (2+ 9 \theta+8 \theta^2+\theta^3) \\
                -\klam^8 (1-\theta)^8 \theta (1+\theta)^3 \\
            \end{array}\right).
    \end{split} 
\]}
Hence, grouping together the terms which have a $\lambda^2/\muy^2$ factor and those which do not, we obtain
{\small
\begin{equation}\label{rso:eq:simplification_s11}
    \begin{split}
        & S_{1,1}^{{\lambda}} = 
            \frac{(1-\theta)^6 \lambda^2 \muy^2}{\mu_x^2 \left(1-\nuonelam^2\right)\left(1-\nutwolam^2\right)\left(1-\nuonelam \nutwolam\right)} \times \\
            &\left(
                \underbrace{
                    \left(\begin{array}{l}
                    -4 \klam^2 \theta^2\left(1+\theta\right)^2 \\
                    + \klam^4\left(\begin{array}{l}
                         1+2 \theta-\theta^2 -8 \theta^3 \\
                         -9 \theta^4 +6 \theta^5+\theta^6  \\
                    \end{array}\right) \\
                    +(1-\theta)^2 \klam^6\left(\theta+4 \theta^2+4 \theta^3-\theta^5\right)
                    \end{array}\right)
                }_{\tilde P_{1,1}^{(1)}(\theta, \klam) \defineq} 
            + \frac{\lambda^2}{\mu_y^2} 
                \underbrace{\left(\begin{array}{l}
                    -4 \klam^2 \theta^2\left(1+2 \theta-\theta^2-2 \theta^3+2 \theta^4\right) \\
                    +(1-\theta)^2 \klam^4
                        \left(\begin{array}{l}
                             1+4 \theta+4 \theta^2-6 \theta^3 \\
                             -11 \theta^4+2 \theta^5+2 \theta^6 \\
                        \end{array}\right) \\
                    +(1-\theta)^4 \klam^6 \theta(1+\theta)^2(1+2 \theta)
                \end{array}\right)}_{\tilde P_{1,1}^{(2)}(\theta, \klam) \defineq} 
            \right). \\
    \end{split}
\end{equation}}
\mg{where $\tilde P_{1,1}^{(1)}$ and $\tilde P_{1,1}^{(2)}$ are polynomials in $\theta$ and $\klam$}.

%----------------------------------------------------------------------------------------
%----------------------------------------------------------------------------------------
\paragraph{(II) Simplification of $S_{2,2}^{{\lambda}}$.}
%----------------------------------------------------------------------------------------
%----------------------------------------------------------------------------------------

Similarly, from \eqref{rso:eq:def_matrix_Q} and \eqref{eq-S-ij}, we have
{\small \[
\begin{split}
    S_{2,2}^{{\lambda}} = & 
        \frac{1}{\left(1-\nuonelam^2\right)\left(1-\nutwolam^2\right)\left(1-\nuonelam \nutwolam\right)} \times \\
        &   \scalemath{0.8}{\left(\begin{array}{c}
                \muy^2 
                    \left(\begin{array}{l}
                        \left(\theta-\nuonelam\right)^2\left(1-\nutwolam^2\right)\left(1-\nuonelam \nutwolam\right)\left(\theta-\nutwolam\right)^2 \\
                        \left(\theta-\nutwolam\right)^2\left(1-\nuonelam \nutwolam\right)\left(1-\nuonelam^2\right)\left(\theta-\nuonelam\right)^2 \\
                        -2\left(\theta-\nuonelam\right)\left(\theta-\nutwolam\right)\left(1-\nuonelam^2\right)\left(1-\nutwolam^2\right)\left(\theta-\nutwolam\right)\left(\theta-\nuonelam\right) \\
                    \end{array}\right) \\
                + (A_{1,2}^{{\lambda}})^2\left((1-\theta)^2(1+\theta)^2\left(1+2 \theta \frac{\mu_x}{\mu_y}\right) \lambda^2+\mu_x^2\left(1+2\left(1-\theta^2\right) \theta\right)\right)  
                    \left(\begin{array}{l} 
                        \left(\theta-\nuonelam\right)^2\left(1-\nutwolam^2\right)\left(1-\nuonelam \nutwolam\right)\\
                        +\left(\theta-\nutwolam\right)^2\left(1-\nuonelam \nutwolam\right)\left(1-\nuonelam^2\right) \\
                        -2\left(\theta-\nuonelam\right)\left(\theta-\nutwolam\right)\left(1-\nuonelam^2\right)\left(1-\nutwolam^2\right) \\
                    \end{array}\right) \\
                + A_{1,2}^{{\lambda}} \left(1-\theta^2\right)\left(\theta \mu_x+\mu_y\right) \lambda 
                    \left(\begin{array}{l}
                        2\left(\theta -\nutwolam\right) \quad\left(\theta-\nuonelam\right)^2\left(1-\nutwolam^2\right)\left(1-\nuonelam \nutwolam\right) \\
                        +2\left(\theta -\nuonelam\right)\left(\theta-\nutwolam\right)^2\left(1-\nuonelam \nutwolam\right)\left(1-\nuonelam^2\right) \\
                        +\left(\nuonelam+\nutwolam-2 \theta \right) 2\left(\theta-\nuonelam\right)\left(\theta-\nutwolam\right)\left(1-\nuonelam^2\right)\left(1-\nutwolam^2\right) \\
                    \end{array}\right) \\
            \end{array}\right)}, \\
\end{split}
\]}
which, combined with~\eqref{rso:eq:quad:exp_factor_1} and~\eqref{rso:eq:quad:exp_factor_2}, gives
{\small \[
\begin{split}
    S_{2,2}^{{\lambda}} = & 
        \frac{\muy^2}{\left(1-\nuonelam^2\right)\left(1-\nutwolam^2\right)\left(1-\nuonelam \nutwolam\right)} \times \\
        &   \scalemath{0.8}{\left(\begin{array}{c}
                 \left(\begin{array}{l}
                        \left(\theta-\nuonelam\right)^2\left(\theta-\nutwolam\right)^2 \left(\left(1-\nuonelam \nutwolam\right)\left(2-\left(\nuonelam^2+\nutwolam^2\right)\right)-2\left(1-\nuonelam^2\right)\left(1-\nutwolam^2\right)\right) \\
                    \end{array}\right) \\
                +  
                \left(\begin{array}{l}
                        (1-\theta)^4 \klam^4(1-\theta)^2 \\
                        + \frac{\lambda^2}{\mu_y^2} \left(2 \theta(1-\theta)^4 \klam^2(1+\theta)^2+(1-\theta)^2(1+2(1-\theta) \theta)\right)       
                \end{array} \right)
                    \left(\begin{array}{l} 
                        \left(\theta-\nuonelam\right)^2\left(1-\nutwolam^2\right)\left(1-\nuonelam \nutwolam\right)\\
                        +\left(\theta-\nutwolam\right)^2\left(1-\nuonelam \nutwolam\right)\left(1-\nuonelam^2\right) \\
                        -2\left(\theta-\nuonelam\right)\left(\theta-\nutwolam\right)\left(1-\nuonelam^2\right)\left(1-\nutwolam^2\right) \\
                    \end{array}\right) \\
                - \left((1-\theta)^2(1+\theta) \theta \frac{\lambda^2}{\mu_y^2}+(1-\theta)^2(1+\theta) \klam^2 \right)
                    \left(\begin{array}{l}
                        2\left(\theta-\nutwolam\right) \quad\left(\theta-\nuonelam\right)^2\left(1-\nutwolam^2\right)\left(1-\nuonelam \nutwolam\right) \\
                        +2\left(\theta-\nuonelam\right)\left(\theta-\nutwolam\right)^2\left(1-\nuonelam \nutwolam\right)\left(1-\nuonelam^2\right) \\
                        +\left(\nuonelam+\nutwolam-2 \theta\right) 2\left(\theta-\nuonelam\right)\left(\theta-\nutwolam\right)\left(1-\nuonelam^2\right)\left(1-\nutwolam^2\right) \\
                    \end{array}\right) \\
            \end{array}\right)}. \\
\end{split} 
\]}
Finally, using Lemma~\ref{rso:lem:polynomial_root}, and regrouping terms with a $\lambda^2/\muy^2$ factor, we obtain
{\small
\begin{equation}\label{rso:eq:simplification_s22}
    \begin{split}
        S_{2,2}^{{\lambda}} = &
            \frac{\muy^2 (1-\theta)^6}{\left(1-\nuonelam^2\right)\left(1-\nutwolam^2\right)\left(1-\nuonelam \nutwolam\right)} \times \\
            & \scalemath{0.8}{\left(
                \underbrace{\left(\begin{array}{l}
                    -4 \klam^6 \theta^2\left(1+2 \theta-\theta^2-2 \theta^3+2 \theta^4\right) \\
                    +(1-\theta)^2 \klam^8
                        \left(
                            \begin{array}{l}
                                 1+4 \theta+4 \theta^2-6 \theta^3  \\
                                 -11 \theta^4+2 \theta^5+2 \theta^6
                            \end{array}
                            \right) \\
                    +(1-\theta)^{4} \klam^{10} \theta(1+\theta)^2(1+2 \theta)
                \end{array}\right)}_{\tilde P_{2,2}^{(1)}(\theta, \klam) \defineq}
                +\frac{\lambda^2}{\mu^2}
                    \underbrace{\left(\begin{array}{l}
                        \klam^2 4 \theta^2\left(1+\theta\right)^2\left(-1-2 \theta+2 \theta^3\right) \\
                        + \klam^4
                            \left(\begin{array}{l}
                                 1+4 \theta+3 \theta^2 -20 \theta^3 \\
                                 -45 \theta^4-2 \theta^5 +53 \theta^6 \\
                                 +20 \theta^7 -20 \theta^8-2 \theta^9 \\ 
                            \end{array}\right) \\
                        +(1-\theta)^2 \klam^6 \theta 
                            \left(
                                \begin{array}{cc}
                                     3+14 \theta+20 \theta^2\\
                                     -8 \theta^3-47 \theta^4  \\
                                     -30 \theta^5+4 \theta^6+4 \theta^7 \\
                                \end{array}
                                \right) \\
                        +(1-\theta)^{4} \klam^8 2 \theta^2(1+\theta)^3(1+2 \theta)
                    \end{array}\right)}_{\tilde P_{2,2}^{(2)}(\theta, \klam) \defineq} \right).} \\
    \end{split}
\end{equation}}
%

%----------------------------------------------------------------------------------------
%----------------------------------------------------------------------------------------
\paragraph{(III) Simplification of $S_{1,2}^\lambda$.} By going through similar steps, we obtain
%----------------------------------------------------------------------------------------
%----------------------------------------------------------------------------------------

%
{\small
\begin{equation}\label{rso:eq:simplification_s12}
\begin{split}
    S_{1,2}^{{\lambda}} = & \frac{\mu_y^2}{\mu_x} \frac{(1-\theta)^6 \lambda}{\left(1-\nuonelam^2\right)\left(1-\nutwolam^2\right)\left(1-\nuonelam \nutwolam\right)} \\
    & \scalemath{0.8}{\left(
        \underbrace{\left(\begin{array}{l}
            -4 \klam^4 \theta^2\left(1 + 2\theta - \theta^3\right) \\
            +(1-\theta) \klam^6
                \left(\begin{array}{l}
                     1+3 \theta+\theta^2-8 \theta^3 \\
                     -11 \theta^4+\theta^5+\theta^6 \\ 
                \end{array}\right) \\
            +(1-\theta)^3 \klam^8 \theta(1+\theta)^2(1+2 \theta)
        \end{array}\right)}_{\tilde P_{1,2}^{(1)}(\theta, \klam) \defineq}
        +\frac{\lambda^2}{\mu^2}
        \underbrace{\left(\begin{array}{c}
            \klam^2 4 \theta^4\left(1 + 2\theta - \theta^3\right) \\
            -(1-\theta) \klam^4 \theta^2
                \left(\begin{array}{l}
                     5+15 \theta+5 \theta^2-20 \theta^3  \\
                     -11 \theta^4+9 \theta^5+\theta^6 
                \end{array} \right) \\
            +(1-\theta)^3 \klam^6
                \left(\begin{array}{l}
                     1+5 \theta+8 \theta^2-3 \theta^3  \\
                     -21 \theta^4-14 \theta^5 \\
                     +2 \theta^6+2 \theta^7 \\ 
                \end{array}\right) \\
            +(1-\theta)^{5} \klam^8 \theta(1+\theta)^3(1+2 \theta)
        \end{array}\right)}_{\tilde P_{1,2}^{(2)}(\theta, \klam) \defineq} 
        \right).}
\end{split}
\end{equation}}
Finally, we notice that if we write  $S_{i,j}^{{\lambda}}$ for $i=1,2$ and $j=1,2$ in a common denominator, the following term would arise:
{\small 
\begin{equation}\label{rso:eq:simplification_common_factor}
\begin{split}
\tiny
    & \frac{(1-\theta)^2 \muy^2}{ (A_{1,2}^{{\lambda}})^2\left(\nuonelam-\nutwolam\right)^2 \mu_x^2 \mu_y^2}\frac{1}{\left(1-\nuonelam^2\right)\left(1-\nutwolam^2\right)\left(1-\nuonelam \nutwolam\right)} \\
    &= \frac{\frac{1}{ \lambda^2} }{\left(\nuonelam-\nutwolam\right)^2 \left(1-\nuonelam^2\right)\left(1-\nutwolam^2\right)\left(1-\nuonelam \nutwolam\right)} \\
    &= \frac{1}{ \lambda^2 (1-\theta)^5} 
            \frac{1}{
                \underbrace{\left(\begin{array}{l}
                    - 4 \klam^2 \theta^2\left(1+\theta\right)^3 \\
                    + \klam^4\left(1 + 3\theta + \theta^2 - 17 \theta^3 - 33 \theta^4 - 3 \theta^5 + 15 \theta^6 + \theta^7\right) \\
                    +(1-\theta) \klam^6 \theta\left(3+14 \theta+13 \theta^2-24 \theta^3-35 \theta^4+10 \theta^5+3 \theta^6\right) \\
                    +(1-\theta)^3 \klam^8\left(-1-4 \theta-2 \theta^2+14 \theta^3+21 \theta^4+2 \theta^5-2 \theta^6\right) \\
                    -(1-\theta)^{5} \klam^{10} \theta(1+\theta)^2(1+2 \theta)
                \end{array}\right)}_{P_c(\theta, \klam) \defineq}
    }.
\end{split}
\end{equation}}
%

%==========================================================================================
\section{Proof of Corollary~\ref{rso:thm:complexity_sapd_result}\label{sec-complexity-proof}}
%==========================================================================================
\mgtwo{Our choice of parameters is a special case of the CP parameterization derived in \citep[Corollary 1]{zhang2021robust}.} Let us first assume that $\Lyy > 0$. 
Then, by~\citep[Corollary 1]{zhang2021robust}, $(\rho, \tau, \sigma, \theta, \alpha)$, with $\rho=\theta$ and $\alpha = \frac{1}{2\sigma} - \sqrt{\theta} \Lyy$ is a solution of the matrix inequality~\eqref{rso:eq:matrix_inequality_params}. 
\mgtwo{Thus, our Theorem~\ref{rso:thm:high_probability_bound_sapd}  is applicable}. 
In particular, noting that $(1-\alpha \sigma)/ (2\sigma) \geq 1 / (4\sigma)$, we have
\[
\begin{split}
    \mathcal{D}_n = \frac{1}{2 \tau}\left\|x_n-\optx\right\|^2+\frac{1-\alpha \sigma}{2 \sigma}\left\|y_n-\opty\right\|^2 
        & \geq \frac{1}{4 \tau}\left\|x_n-\optx\right\|^2+\frac{1}{4 \sigma}\left\|y_n-\opty\right\|^2 \\
        & \geq \frac{1}{4} \frac{\theta}{(1-\theta)}\left(\mu_x\left\|x_n-\optx\right\|^2+\mu_y\left\|y_n-\opty\right\|^2\right).
\end{split}
\]
Therefore, Theorem~\ref{rso:thm:high_probability_bound_sapd} \ylsecond{gives}
\[
    \begin{split}
        Q_p\bigg(\mu_x\left\|x_n- \optx \right\|^2 &+ \muy \|y_n- \opty \|^2\bigg) \\
        & \leq 
        \frac{4 (1-\theta)}{\theta} 
            \left(
                \left(\frac{1+\theta}{2}\right)^n \left(\cC_{\tau, \sigma , \theta}D_{\tau, \sigma}  + \Xi_{\tau, \sigma, \theta}^{(1)} \right) 
                + \Xi_{\tau, \sigma, \theta}^{(1)}
                + \Xi_{\tau, \sigma, \theta}^{(2)} \log\left(\frac{1}{1-p}\right)
            \right).
    \end{split}
\]
Let us fix $\varepsilon > 0$. To show the claimed iteration complexity bound, it suffices to show that the right hand-side is at most $\varepsilon$ for the given choice of parameters. 
For this purpose, sufficient conditions are
\begin{align}
     \frac{(1-\theta)}{\theta} \left(\frac{1+\theta}{2}\right)^n  C_{\tau, \sigma, \theta} D_{\tau, \sigma} 
        &\leq \frac{\varepsilon}{16}, \label{rso:eq:complexity_bias_term}\\ 
     \ylsecond{\frac{(1-\theta)}{\theta}}\left(\frac{1+\theta}{2}\right)^n \Xi_{\tau, \sigma, \theta}^{(1)} 
        &\leq \frac{\varepsilon}{16}, \label{rso:eq:complexity_xi1}\\
    \frac{(1-\theta)}{\theta} \Xi_{\tau, \sigma, \theta}^{(2)} 
        &\leq \frac{\varepsilon}{16}, \label{rso:eq:complexity_xi2} \\ 
    \frac{(1-\theta)}{\theta} \Xi_{\tau, \sigma, \theta}^{(3)} \log \left(\frac{1}{1-p}\right) 
        &\leq \frac{\varepsilon}{16}. \label{rso:eq:complexity_xi3} 
\end{align}
\mgtwo{where we recall that 
\begin{equation}\label{eq-constants-in-x-y}
    \Xi_{\tau, \sigma, \theta}^{(i)} = \Xi_{\tau, \sigma, \theta}^{(i, x)} \proxyx^2 + \Xi_{\tau, \sigma, \theta}^{(i, y)} \proxyy^2, \quad \mbox{for} \quad i=1,2,3.
\end{equation}
}
\mgtwo{In the remainder of this proof, we show that each of the above conditions \eqref{rso:eq:complexity_bias_term}-\eqref{rso:eq:complexity_xi3} will be satisfied if $\theta$ is set according to the following two conditions:
\begin{eqnarray}\label{ineq-theta-lb-complexity} 
    \theta 
        &\geq& \mgtwo{\bar{\theta}_x =} 1 - \frac{\mux\varepsilon}{(\left(c_1^x + c_2^x \frac{\Lxy}{\Lyx} +  c_3^x \frac{\Lxy^2}{\Lyx^2}\right) \proxyx^2 (1 + \log(1/(1-p)))},\label{ineq-theta-lb-complexity-x}\\
    \theta 
        &\geq& \mgtwo{\bar{\theta}_y =} 1 - \frac{\muy \varepsilon}{\left(c_1^y + c_2^y \frac{\Lxy}{\Lyx} + + c_3^y \frac{\Lxy^2}{\Lyx^2}\right) \proxyy^2 (1 + \log(1/(1-p)))},\label{ineq-theta-lb-complexity-y}
\end{eqnarray}
for some universal constants $c_1^{x}, c_2^{x}, c_3^{x}, c_1^{y}, c_2^{y}, c_3^{y} \geq 0$ that are large enough. 
Then, we will use these lower bounds on $\theta$ to characterize the number of steps $n$ required to reach $\varepsilon$-accuracy in high probability. 
We next consider each of the four conditions \eqref{rso:eq:complexity_bias_term}-\eqref{rso:eq:complexity_xi3} separately and argue that our choice of stepsize according to \eqref{ineq-theta-lb-complexity-x}- \eqref{ineq-theta-lb-complexity-y} will satisfy each condition.  
}

\paragraph{(I) Satisfying Condition~\eqref{rso:eq:complexity_xi2}.} Based on \eqref{eq-constants-in-x-y}, in order to satisfy \eqref{rso:eq:complexity_xi2} it suffices that $\frac{(1-\theta)}{\theta}\Xi_{\tau, \sigma, \theta}^{(x,2)} \proxyx^2 \leq \frac{\varepsilon}{32}$ and $\frac{(1-\theta)}{\theta}\Xi_{\tau, \sigma, \theta}^{(y,2)} \proxyy^2 \leq \frac{\varepsilon}{32}$. 
Noting that \ylsecond{$\Xi_{\tau, \sigma, \theta}^{(x,2)} = \frac{64}{(1-\theta)} \ylprime{\cQ_x}$}, and using that $\theta \in [1/2, 1]$, we have in view of Lemma~\ref{rso:lem:partial_complexity_bounds},
\[
    \Xi_{\tau, \sigma, \theta}^{(x, 2)} 
        \leq \frac{64}{\mux} \left[ a_1 + a_2 \frac{\Lxy}{\Lyx}\right].
\]
Thus, to ensure that $\frac{(1-\theta)}{\theta}\Xi_{\tau, \sigma, \theta}^{(x,2)} \proxyx^2 \leq \frac{\varepsilon}{32}$, \ylsecond{it suffices to set
\begin{equation}\label{rso:eq:constraint_barbar1}
    \theta \geq  1 - \frac{\mux}{
\left(c_1^x + c_2^x\; \frac{\Lxy}{\Lyx} +  c_3^x\; \frac{\Lxy^2}{\Lyx^2}\right)} \frac{\varepsilon}{\proxyx^2},
\end{equation}
with universal constants $c_1^x, c_2^x>0$ that are large enough and for any constant $c_3^x \geq 0$.
            %where it suffices to take $c_1 \geq \ylthird{\frac{43}{4}} \times 64 \times 64$, $c_2 \geq  \ylthird{\frac{3}{2}} \times 64 \times 64$, and $c_3 \geq 0$. 
            }
            % where $c_1^{(x,2)}, c_2^{(x,2)}$ are large enough constants}. \footnote{Noting that $\theta \in [1/2, 1)$, we may set for instance $c_1^{(x,2)} = 3 \times 2^{14}$, $c_2^{(x,2)} = 3 \times 2^{12}$}
Through similar derivations, one can ensure that $\frac{(1-\theta)}{\theta}\Xi_{\tau, \sigma, \theta}^{(y,2)} \proxyy^2 \leq \frac{\varepsilon}{32}$ can be guaranteed via
\ylsecond{\begin{equation}\label{rso:eq:constraint_barbar2}
    \theta \geq 1 - \frac{\muy}{\left(c_1^y +  c_2^y \frac{\Lxy}{\Lyx} + c_3^y \frac{\Lxy^2}{\Lyx^2}\right)} \frac{\varepsilon}{\proxyy^2}.
\end{equation}
for universal positive constants $c_1^y, c_2^y , c_3^y$ that are large enough. 
We conclude that the choice of stepsize according to \eqref{ineq-theta-lb-complexity-x} and \eqref{ineq-theta-lb-complexity-y} with large enough constants $c_1^x, c_2^x , c_3^x, c_1^y, c_2^y , c_3^y$ will satisfy Condition~\eqref{rso:eq:complexity_xi2}.  
\newline}

%====================================================================================
\paragraph{(II) Satisfying Condition~\eqref{rso:eq:complexity_xi1}.} Observe now that the constraint~\eqref{rso:eq:complexity_xi1} is satisfied if we ensure the following two conditions:
\[
\begin{split}
    \frac{(1-\theta)}{\theta} \Xi_{\tau, \sigma, \theta}^{(x, 1)} &\leq \frac{\varepsilon}{32 \ylsecond{\proxyx^2}}, \\ 
    \frac{(1-\theta)}{\theta} \Xi_{\tau, \sigma, \theta}^{(y, 1)} &\leq \frac{\varepsilon}{32\ylsecond{\proxyy^2}}.
\end{split}
\]
Among the last two constraints, \mgtwo{using the definitions of $\Xi_{\tau, \sigma, \theta}^{(x, 1)}$ given in \eqref{def-constants-main-thm}}, the first constraint will be satisfied whenever
\[
\begin{split}
    \frac{16 (1-\theta)^2}{\theta \mux} \leq \frac{\varepsilon}{64 \proxyx^2} \quad \mbox{and} \quad
    \frac{(1-\theta)}{\theta} 32 \cQ_x \leq \frac{\varepsilon}{64 \proxyx^2}, \\
\end{split}
\]
and using that $\theta \in [\frac{1}{2}, 1)$, it can be checked that 
            %it suffices to have
            % \ylsecond{
            % \[
            % \begin{split}
            %     \theta &\geq 1 - \frac{\mux \varepsilon}{\ylthird{64 \times 16} \proxyx^2}, \\
            %     \cQ_x &\leq \frac{\varepsilon}{\ylthird{64 \times 32 \times 2} \proxyx^2}. \\
            % \end{split}
            % \]
            % }
            %Note finally that 
both constrains are valid under \ylfourth{assumptions of the same form as~\eqref{rso:eq:constraint_barbar1}}.
% whenever~\eqref{rso:eq:constraint_barbar1} is satisfied. 
Similarly, one may observe that $\frac{(1-\theta)}{\theta} \Xi_{\tau, \sigma, \theta}^{(y, 1)} \leq \frac{\varepsilon}{32}$ is also valid whenever \ylfourth{an assumption of the same form as}~\eqref{rso:eq:constraint_barbar2} is satisfied. 
Therefore, the choice of stepsize according to \eqref{ineq-theta-lb-complexity-x} and \eqref{ineq-theta-lb-complexity-y} with large enough constants $c_1^x, c_2^x , c_3^x, c_1^y, c_2^y , c_3^y$ will satisfy Condition~\eqref{rso:eq:complexity_xi2}. \newline

%====================================================================================
\paragraph{(III) Satisfying Condition~\eqref{rso:eq:complexity_xi3}.} For Condition~\eqref{rso:eq:complexity_xi3} to hold, it suffices that
\[
\begin{split}
    \gamma_x \leq \frac{\varepsilon}{\ylsecond{256}\; \proxyx^2 \log\left(\frac{1}{1-p}\right)}, \quad %\\
    \gamma_y \leq \frac{\varepsilon}{\ylsecond{256}\; \proxyy^2 \log\left(\frac{1}{1-p}\right)}. \\
\end{split}
\]
From the definitions of $\gamma_x$ and $\gamma_y$ given in \eqref{eqn-gammas}, we see that the bound on $\gamma_x$ will hold provided that we have
\[
\begin{split}
    \frac{2(1-\theta)}{\mux} &\leq \frac{\varepsilon}{3 \times 256\; \proxyx^2 \log\left(\frac{1}{1-p}\right)}, \\
    16 \cQ_x &\leq \frac{\varepsilon}{3 \times 256\; \proxyx^2 \log\left(\frac{1}{1-p}\right)}, \\
    4 \squarednormtwo{A_1} &\leq \frac{\varepsilon}{3 \times 256\; \proxyx^2 \log\left(\frac{1}{1-p}\right)}, \\
\end{split}
\]
whereas the bound on $\gamma_y$ will hold whenever 
\[
    \begin{aligned}
        \left(\frac{\ylthird{4}(1-\theta)}{\mu_y}\right) 
            &\leq \frac{\varepsilon}{\ylthird{4 \times } \times 256~\proxyy^2 \log \left(\frac{1}{1-\rho}\right)}, \\
        16 Q_y 
            &\leq \frac{\varepsilon}{\ylthird{4 } \times 256~\proxyy^2 \log \left(\frac{1}{1-\rho}\right)}, \\
        4 \left\|A_2\right\|^2 
            &\leq \frac{\varepsilon}{\ylthird{4} \times 256~ \proxyy^2 \log \left(\frac{1}{1-\rho}\right)}, \\
        4 \left\|A_3\right\|^2 
            &\leq \frac{\varepsilon}{\ylthird{4} \times 256 ~ \proxyy^2 \log \left(\frac{1}{1-p}\right)}.
\end{aligned} 
\]
Using Lemma~\ref{rso:lem:partial_complexity_bounds}, after straightforward computations reveals that it suffices to set the stepsize according to \eqref{ineq-theta-lb-complexity-x}- \eqref{ineq-theta-lb-complexity-y} as long as the universal constants $c_1^{x}, c_2^{x}, c_3^{x}, c_1^{y}, c_2^{y}, c_3^{y} \geq 0$ are chosen to be large enough.

\paragraph{(IV) Satisfying Condition~\eqref{rso:eq:complexity_bias_term}.} Finally, consider the last constraint~\eqref{rso:eq:complexity_bias_term}, which is
{\small
\begin{align*}
    &(1-\theta)
    \left(\frac{1+\theta}{2}\right)^n  
    \left(
        4 + \frac{1}{4(1-\theta)} \left(\mux \squarednormtwo{A_1} + \frac{\muy}{2} \left(\squarednormtwo{A_2} + \squarednormtwo{A_3}\right)\right)
    \right) \\
    &\qquad\qquad\qquad\qquad\qquad\qquad\times 
    \frac{1}{2(1-\theta)}
    \left(
        \mux \squarednormtwo{x_0 - \optx} + \muy \squarednormtwo{y_0 - \opty}  
    \right)
        \leq  \frac{\varepsilon}{16}.
\end{align*}}
%
%\todo{MG: Yassine, lets fix this overflowing issue. lets fix overflow issue below and other overflowing issues remaining. i fixed some today}
By Lemma~\ref{rso:lem:partial_complexity_bounds}, we have 
\[
\begin{aligned}
    & \frac{\mu_x\left\|A_1\right\|^2}{\ylsecond{4}(1-\theta)} 
        \leq \frac{1}{4} (a_1 + a_3 \frac{\Lxy^2}{\Lyx^2}), \\
    & \frac{\mu_y\left\|A_2\right\|^2}{\ylsecond{4}(1-\theta)} 
        \leq \frac{1}{4} a_1, \\
    & \frac{\mu_y\left\|A_3\right\|^2}{\ylsecond{4}(1-\theta)} 
        \leq \frac{1}{4}(a_1 + a_3 \frac{\Lxy^2}{\Lyx^2}).
\end{aligned}
\]
\ylsecond{Hence, noting that $\theta \in [1/2, 1]$, it suffices to have
\[
\begin{split}
    \left(\frac{1+\theta}{2}\right)^n \left(32 + 6a_1 + 4a_3 \frac{\Lxy^2}{\Lyx^2}\right)\left(\mu_x\left\|x_0-x^*\right\|^2+\mu_y\left\|y_0-y^*\right\|^2\right) \leq \varepsilon. 
\end{split}
\]
}
Thus, it suffices to set the number of iterations to
\[
    n \geq \frac{-1}{\log\left(1 - \frac{1-\theta}{2}\right)} \log\left(\frac{\left( 32 + 6a_1 +  4a_3 \frac{\Lxy^2}{\Lyx^2} \right) \left(\mu_x\left\|x_0-x^*\right\|^2+\mu_y\left\|y_0-y^*\right\|^2\right)}{\varepsilon}\right), 
\]
and noting that $ \frac{-1}{\log(1 - \frac{1-\theta}{2})} \leq \frac{2}{(1-\theta)}$, we can set $\theta = \max(\bar{\theta}_1, \bar{\theta}_2, \bar{\bar{\theta}}_x, \bar{\bar{\theta}}_y)$. The iteration complexity bound~\eqref{intro-complexity} finally follows from the expressions of $\bar{\bar{\theta}}_x$ and $\bar{\bar{\theta}}_y$ given respectively in~\eqref{ineq-theta-lb-complexity-x} and~\eqref{ineq-theta-lb-complexity-x}, and the identities
\[
     \left(1-\bar{\theta}_1\right)^{-1}=\frac{1}{2}\left(\frac{\Lxx}{\mu_x}+1\right)+\sqrt{\frac{1}{4}\left(\frac{\Lxx}{\mu_x}+1\right)^2+\frac{2 \Lyx^2}{\beta \mu_x \mu_y}}, \quad\left(1-\bar{\theta}_2\right)^{-1}=\frac{1}{2}+\sqrt{\frac{1}{4}+\frac{16 \Lyy^2}{(1-\beta)^2 \mu_y^2}}.
\]
%
%\end{itemize}
The case $L_{yy}=0$ can be treated in the same manner where the only difference will be on how the constant $\bar{\theta}_2$ is selected, the details are omitted for brevity.

%====================================================================================
\subsection{Supplementary Lemmas}\label{sec:asymp}
%====================================================================================

%
\begin{lem}\label{rso:lem:asymptotic_xi}    
    Under CP parameterization \saa{in \eqref{rso:eq:cb_params},} 
    % the constants $\Xi_{\tau, \sigma, \theta}^{(1)}$ and $\Xi_{\tau, \sigma, \theta}^{(2)}$ satisfy 
    $\Xi_{\tau, \sigma, \theta}^{(1)} = \Theta(1)$ and $\Xi_{\tau, \sigma, \theta}^{(2)} =\Theta(1)$ as $\theta \to 1$.
\end{lem}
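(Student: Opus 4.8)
\textbf{Proof proposal for Lemma~\ref{rso:lem:asymptotic_xi}.}

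The plan is to trace the definitions of $\Xi^{(1)}_{\tau,\sigma,\theta}$ and $\Xi^{(2)}_{\tau,\sigma,\theta}$ through \eqref{def-constants-main-thm}, expressing them in terms of the primitive constants $\cQ_x,\cQ_y$ (Table~\ref{rso:table:constants_C}) and $\|A_1\|^2,\|A_2\|^2,\|A_3\|^2$ (Table~\ref{rso:table:constants_A}), and then carry out the asymptotic analysis of each primitive quantity as $\theta\to 1$ under the CP parameterization $\tau=\frac{1-\theta}{\theta\mu_x}$, $\sigma=\frac{1-\theta}{\theta\mu_y}$, $\rho=\theta$. First I would record the relevant pieces of \eqref{def-constants-main-thm}:
\[
\Xi^{(x,1)}_{\tau,\sigma,\theta}=16\frac{1-\rho}{\mu_x}+32\cQ_x,\qquad \Xi^{(y,1)}_{\tau,\sigma,\theta}=32\frac{1-\rho}{\mu_y}+32\cQ_y,
\]
\[
\Xi^{(x,2)}_{\tau,\sigma,\theta}=\frac{64\cQ_x}{1-\rho},\qquad \Xi^{(y,2)}_{\tau,\sigma,\theta}=\frac{64\cQ_y}{1-\rho},
\]
with $\Xi^{(i)}_{\tau,\sigma,\theta}=\Xi^{(i,x)}_{\tau,\sigma,\theta}\proxyx^2+\Xi^{(i,y)}_{\tau,\sigma,\theta}\proxyy^2$. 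Since $\proxyx,\proxyy$ are fixed positive constants independent of $\theta$, it suffices to show $\Xi^{(i,x)}_{\tau,\sigma,\theta}=\Theta(1)$ and $\Xi^{(i,y)}_{\tau,\sigma,\theta}=\Theta(1)$ for $i=1,2$.

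The core of the argument is to establish that, under the CP parameterization, $\cQ_x=\Theta(1-\theta)$ and $\cQ_y=\Theta(1-\theta)$ as $\theta\to 1$. To do this I would examine the constituent constants in Table~\ref{rso:table:constants_C}: $B^x,C^x,C^x_{-1}$ (summing to $\cQ_x$) and $B^y,B^y_{-1},C^y,C^y_{-1},C^y_{-2}$ (summing to $\cQ_y$). The key observation is that every term in these expressions carries at least one factor of $\tau$ or $\sigma$ (explicitly, or through $A_0$), and under CP parameterization $\tau=\Theta(1-\theta)$, $\sigma=\Theta(1-\theta)$, while $\frac{1}{1+\tau\mu_x}\to 1$, $\frac{1}{1+\sigma\mu_y}\to 1$, $\rho=\theta\to 1$. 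One must be slightly careful with the $A_0$-dependent terms in $B^x,B^y,B^y_{-1}$: from Table~\ref{rso:table:constants_A}, every component of $A_0$ carries a $\sigma$ factor, so $\|A_0\|^2=\Theta((1-\theta)^2)$; a term such as $\frac{3\|A_0\|^2(1+\rho)\theta^2}{\rho^3}$ in $B^y$ is then $\Theta((1-\theta)^2)=o(1-\theta)$, while terms such as $\frac{\rho}{4\|A_0\|^2(1+\rho)}\cdot\frac{(\text{stuff})^2\sigma^4}{(\cdots)}$ have numerator scaling like $\sigma^4=\Theta((1-\theta)^4)$ against denominator $\Theta((1-\theta)^2)$, hence are $\Theta((1-\theta)^2)=o(1-\theta)$ as well. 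The dominant contributions are the ``clean'' terms like $\frac{4\tau}{1+\rho}$ in $B^x$, $\frac{\tau}{1+\tau\mu_x}$ in $C^x$, $\frac{4(1+\theta)^2\sigma}{(1+\rho)(1-\alpha\sigma)}$ in $B^y$ (noting $1-\alpha\sigma\in[\tfrac12,1]$ is bounded away from $0$ and $\infty$), and $\frac{\sigma(1+2\theta)(1+\theta)}{1+\sigma\mu_y}$ in $C^y$; each is $\Theta(1-\theta)$ with a strictly positive leading coefficient. Summing, $\cQ_x=\Theta(1-\theta)$ and $\cQ_y=\Theta(1-\theta)$; crucially the leading coefficients are positive (no cancellation is possible since all constituent terms are nonnegative).

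Given $\cQ_x=\Theta(1-\theta)$ and $\cQ_y=\Theta(1-\theta)$, I conclude: $\Xi^{(x,2)}_{\tau,\sigma,\theta}=\frac{64\cQ_x}{1-\theta}=\Theta(1)$ and $\Xi^{(y,2)}_{\tau,\sigma,\theta}=\frac{64\cQ_y}{1-\theta}=\Theta(1)$; hence $\Xi^{(2)}_{\tau,\sigma,\theta}=\Theta(1)$. For $\Xi^{(1)}$: $\Xi^{(x,1)}_{\tau,\sigma,\theta}=16\frac{1-\theta}{\mu_x}+32\cQ_x$; here $16\frac{1-\theta}{\mu_x}=\Theta(1-\theta)=o(1)$ and $32\cQ_x=\Theta(1-\theta)=o(1)$, so $\Xi^{(x,1)}_{\tau,\sigma,\theta}=\Theta(1-\theta)$, and likewise $\Xi^{(y,1)}_{\tau,\sigma,\theta}=\Theta(1-\theta)$. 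This gives $\Xi^{(1)}_{\tau,\sigma,\theta}=\Theta(1-\theta)$, which in particular is $O(1)$ and bounded; I would state the conclusion as $\Xi^{(1)}_{\tau,\sigma,\theta}=O(1)$ (it remains bounded as $\theta\to 1$), which is the property actually invoked in Theorem~\ref{rso:thm:high_probability_bound_sapd} and in the tightness proof (the bias-type term only needs to stay bounded). The main obstacle is the bookkeeping on the $A_0$-dependent terms in $B^x,B^y,B^y_{-1}$: one must verify that neither the $\|A_0\|^2$-in-numerator nor the $\|A_0\|^{-2}$-in-denominator terms blow up or dominate, which requires carefully matching powers of $\tau,\sigma$ against $\|A_0\|^2=\Theta((1-\theta)^2)$; all such terms turn out to be $\Theta((1-\theta)^2)$, i.e. lower order, so they do not affect the $\Theta(1-\theta)$ verdict for $\cQ_x,\cQ_y$.
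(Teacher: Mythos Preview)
Your approach is essentially the paper's: reduce to showing $\cQ_x,\cQ_y$ (and $\|A_i\|^2$) are $O(1-\theta)$ under CP parameterization, and read this off from the simplified expressions in Tables~\ref{rso:table:cb_A} and~\ref{rso:table:cb_Q}.

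One correction in your bookkeeping: the second entry of $A_0$ carries only a $\sqrt{\sigma}$ factor (through $\sqrt{2\rho\sigma}$), not a full $\sigma$, so $\|A_0\|^2=\Theta(1-\theta)$ rather than $\Theta((1-\theta)^2)$ --- this is exactly the lower bound the paper records in the proof of Lemma~\ref{rso:lem:partial_complexity_bounds}. Consequently the $\|A_0\|^2$-in-numerator term $\tfrac{3\|A_0\|^2(1+\rho)\theta^2}{\rho^3}$ in $B^y$ is $\Theta(1-\theta)$, i.e.\ of the \emph{same} order as the ``clean'' terms, not lower; but since it is nonnegative this does not disturb your conclusion that $\cQ_y=\Theta(1-\theta)$. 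The $\|A_0\|^{-2}$ terms become $\Theta((1-\theta)^3)$ with the corrected scaling and remain subdominant.

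Your observation that the analysis actually gives $\Xi^{(1)}_{\tau,\sigma,\theta}=\Theta(1-\theta)$ (hence only $O(1)$, not literally $\Theta(1)$) is accurate and matches what the paper's own proof delivers; only the boundedness is used downstream in the tightness argument.
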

\begin{proof}
    Given the expressions of $\Xi_{\tau, \sigma, \theta}^{(1)}$ and $\Xi_{\tau, \sigma, \theta}^{(1)}$ it suffices to show that $\squarednormtwo{A_1}$, $\squarednormtwo{A_2}$, $\squarednormtwo{A_3}$, $\cQ_x$, and $\cQ_{y}$ are $\bigOh(1-\theta)$ as $\theta \rightarrow 1$, which directly follows from their expression in the CP parameterization, as laid down in Table~\ref{rso:table:cb_A} and Table~\ref{rso:table:cb_Q}.
\end{proof}
\begin{lem}\label{rso:lem:beta_bounds}
    \ylsecond{
        \mgtwo{Consider the choice of \sapdname~parameters in} Theorem~\ref{rso:thm:complexity_sapd_result}. We have
    \[
    \begin{aligned}[t]
        (1-\theta) \frac{\Lxx}{\mux} &\leq 2,
            \quad &(1-\theta) \frac{\Lxy}{\mux} \leq  \frac{\Lxy}{\Lyx}, \\
        (1-\theta) \frac{\Lxy}{\muy} &\leq \frac{\Lxy}{\Lyx},
            \quad & (1-\theta) \frac{\Lyx}{\mux} \leq 1, \\
    \end{aligned}
    \]
    \ylfourth{ and
        \[
            (1-\theta)^2 \frac{\Lyx^2}{\mux \muy} \leq \frac{1}{4}, \quad (1-\theta)^2 \frac{\Lxy \Lyx}{\mux \muy} \leq \ylthird{\frac{1}{4}} \frac{\Lxy}{\Lyx}, \quad (1-\theta)^2 \frac{\Lxy^2}{\mux \muy} \leq \frac{1}{4} \frac{\Lxy^2}{\Lyx^2}.
        \]
    }
    In the case $\Lyy > 0$, we also have $(1-\theta) \Lyy / \muy \leq \frac{1}{4}$.}
\end{lem}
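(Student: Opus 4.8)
The plan is to verify each of the claimed inequalities in Lemma~\ref{rso:lem:beta_bounds} by substituting the explicit choice of $\theta$ from Theorem~\ref{rso:thm:complexity_sapd_result}, namely $\theta = \max\{1/2, \bar\theta_1, \bar\theta_2, \bar{\bar\theta}_x, \bar{\bar\theta}_y\}$, and tracking only the two lower bounds $\bar\theta_1$ and $\bar\theta_2$ that encode the problem geometry (the $\bar{\bar\theta}_x, \bar{\bar\theta}_y$ terms only make $\theta$ larger, hence $1-\theta$ smaller, which can only help). First I would record the two key identities already stated at the end of the complexity proof:
\[
(1-\bar\theta_1)^{-1} = \tfrac12\Big(\tfrac{\Lxx}{\mux}+1\Big) + \sqrt{\tfrac14\Big(\tfrac{\Lxx}{\mux}+1\Big)^2 + \tfrac{2\Lyx^2}{\beta\mux\muy}}, \qquad (1-\bar\theta_2)^{-1} = \tfrac12 + \sqrt{\tfrac14 + \tfrac{16\Lyy^2}{(1-\beta)^2\muy^2}},
\]
with $\beta = \min\{1/2, \mux/\muy, \muy/\mux\}$. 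Since $\theta \geq \bar\theta_1$ gives $1-\theta \leq 1-\bar\theta_1 = (1-\bar\theta_1)^{-1 \cdot(-1)}$ — more precisely $1-\theta \le 1/\big[\tfrac12(\tfrac{\Lxx}{\mux}+1)+\sqrt{\cdots}\big]$ — each bound reduces to an elementary algebraic estimate.

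The individual steps would go as follows. For $(1-\theta)\Lxx/\mux \le 2$: from the $\bar\theta_1$ identity, $(1-\theta)^{-1} \ge \tfrac12(\Lxx/\mux + 1) \ge \tfrac12 \Lxx/\mux$, so $(1-\theta)\Lxx/\mux \le 2$. For $(1-\theta)\Lyx/\mux \le 1$: from $(1-\theta)^{-1} \ge \sqrt{2\Lyx^2/(\beta\mux\muy)} \ge \sqrt{2}\,\Lyx/\sqrt{\mux\muy}$ (using $\beta \le 1$), and since $\beta \le \muy/\mux$ forces $\mux \le \muy/\beta$... actually the cleaner route: $(1-\theta)^{-1} \ge \sqrt{2}\Lyx/\sqrt{\mux\muy} \ge \sqrt{2}\Lyx/\mux$ when $\mux \ge \muy$, and when $\mux < \muy$ use $\beta = \mux/\muy$ so $(1-\theta)^{-1} \ge \sqrt{2\Lyx^2\muy/(\mux^2\muy)} \cdot \sqrt{1/1}$... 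I would handle the two cases $\mux \ge \muy$ and $\mux < \muy$ separately, in each case writing $\beta$ explicitly and bounding below by $\Lyx/\mux$. For the products $(1-\theta)^2\Lyx^2/(\mux\muy) \le 1/4$: square the bound $(1-\theta)^{-1} \ge \sqrt{2\Lyx^2/(\beta\mux\muy)}$, giving $(1-\theta)^2 \le \beta\mux\muy/(2\Lyx^2)$, hence $(1-\theta)^2\Lyx^2/(\mux\muy) \le \beta/2 \le 1/4$. The entries involving $\Lxy$, e.g. $(1-\theta)\Lxy/\mux \le \Lxy/\Lyx$ and $(1-\theta)^2\Lxy\Lyx/(\mux\muy) \le \tfrac14 \Lxy/\Lyx$, follow by writing $\Lxy/\mux = (\Lxy/\Lyx)(\Lyx/\mux)$ and applying the already-established $\Lyx$ bounds; similarly $(1-\theta)^2\Lxy^2/(\mux\muy) = (\Lxy^2/\Lyx^2)(1-\theta)^2\Lyx^2/(\mux\muy) \le \tfrac14 \Lxy^2/\Lyx^2$. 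Finally, for the $\Lyy > 0$ claim $(1-\theta)\Lyy/\muy \le 1/4$: use $\theta \ge \bar\theta_2$ so $(1-\theta)^{-1} \ge \sqrt{16\Lyy^2/((1-\beta)^2\muy^2)} = 4\Lyy/((1-\beta)\muy) \ge 4\Lyy/\muy$, giving the result.

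The only mild subtlety — and the step I expect to require the most care — is the case analysis on $\beta$, since $\beta$ is defined as a minimum of three quantities and the bounds on $\Lyx/\mux$ versus $\Lyx/\muy$ are not symmetric; I would split into $\mux \le \muy$ (so $\beta = \min\{1/2, \mux/\muy\}$) and $\mux > \muy$ (so $\beta = \min\{1/2, \muy/\mux\}$) and in each branch verify that the factor $1/\beta$ appearing under the square root is dominated appropriately by the ratio $\muy/\mux$ or $\mux/\muy$ needed to convert $\sqrt{\mux\muy}$ into $\mux$. Everything else is a one-line substitution into the two stated identities for $(1-\bar\theta_1)^{-1}$ and $(1-\bar\theta_2)^{-1}$ followed by dropping nonnegative terms under the square root and using $\beta \le 1/2$. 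I would present the proof compactly, proving the $\Lyx$-related bounds first and then deriving the $\Lxx$-, $\Lxy$-, and $\Lyy$-related bounds as consequences.
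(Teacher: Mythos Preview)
Your proposal is correct and follows essentially the same route as the paper: both arguments extract from the closed-form identities for $(1-\bar\theta_1)^{-1}$ and $(1-\bar\theta_2)^{-1}$ the key estimates $(1-\theta)\le \sqrt{\beta\mux\muy/2}/\Lyx$ and (when $\Lyy>0$) $(1-\theta)\le (1-\beta)\muy/(4\Lyy)$, and then substitute. The one simplification the paper uses that you may want to adopt is that the case split on $\mux\lessgtr\muy$ is unnecessary: since $\beta=\min\{1/2,\mux/\muy,\muy/\mux\}$ you have \emph{both} $\beta\le\mux/\muy$ and $\beta\le\muy/\mux$ simultaneously, so e.g.\ $(1-\theta)\Lyx/\mux\le\sqrt{\beta\muy/(2\mux)}\le\sqrt{1/2}<1$ directly, with no branching.
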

\begin{proof}
    The bound $(1-\theta) \frac{\Lxx}{\mux} \leq 2$ follows from observing that
    \[
        \frac{1}{1-\theta_1} = \frac{1}{2}\left(\frac{\Lxx}{\mu_x}+1\right)+\sqrt{\frac{1}{4}\left(\frac{\Lxx}{\mu_x}+1\right)^2+\frac{2 \Lyx^2}{\beta \mu_x \mu_y}} \geq \frac{\Lxx}{2\mux}, 
    \]
    together with condition~\eqref{rso:eq:cb_param_complexity}.
    % All the other bounds directly follow from the 
    \ylsecond{By the subadditivity of $t \mapsto \sqrt{t}$, and observing that the definitions of $\theta_1$ and $\theta_2$ in Corollary \ref{rso:thm:complexity_sapd_result}, together with the condition~\eqref{rso:eq:cb_param_complexity} imply that \ylsecond{$(1-\theta) \leq \frac{\sqrt{\beta \mux \muy/2}}{\Lyx}$, \ylfourth{and, in the case $\Lyy > 0$,} $(1-\theta) \leq \frac{(1-\beta) \muy}{4 \Lyy}$}. Hence, for any $\beta \in [0,1)$, we have
    \[
    \begin{aligned}[t]
        (1-\theta) \frac{\Lxx}{\mux} &\leq 2,
            \quad &(1-\theta) \frac{\Lxy}{\muy} \leq \sqrt{\frac{\beta \mux}{2 \muy}} \frac{\Lxy}{\Lyx}, \\
        (1-\theta) \frac{\Lyy}{\muy} &\leq \frac{1-\beta}{4},
            \quad & (1-\theta) \frac{\Lxy}{\mux} \leq \sqrt{\frac{\beta \muy}{2 \mux}} \frac{\Lxy}{\Lyx}, \\
        (1-\theta) \frac{\Lyx}{\muy} &\leq \sqrt{\frac{\beta \mux}{2 \muy}}, 
            \quad & (1-\theta) \frac{\Lyx}{\mux} \leq \sqrt{\frac{\beta \muy}{2 \mux}}, \\
        (1-\theta)^2 \frac{\Lyx^2}{\mux \muy} &\leq \frac{\beta}{2},
            \quad & (1-\theta)^2 \frac{\Lxy \Lyx}{\mux \muy} \leq \frac{\beta}{2} \frac{\Lxy}{\Lyx}.
                % (1-\theta)^2 \frac{\Lyx^2}{\mux^2} \leq \frac{\beta \muy}{2 \mux} \\
        % (1-\theta)^2 \frac{\Lyy^2}{\muy^2} &\leq \frac{(1-\beta)^2}{16},
            % \quad & (1-\theta)^2 \frac{\Lyx^2}{\mux \muy} \leq \frac{\beta}{2} \\
                % (1-\theta)^4 \frac{\Lxy^2 \Lyx^2}{\mux^2 \muy^2} \leq \frac{\beta^2}{4} \frac{\Lxy^2}{\Lyx^2},  
    \end{aligned}
    \]
    The result follows from setting $\beta = \min \{1/2, \mux/\mu_y, \muy/\mu_x\}$.} %\todo{this choice of $\beta$ is different than the one in other lemma}
\end{proof}
\begin{lem}\label{rso:lem:partial_complexity_bounds}
    For any $\tau, \sigma, \theta$ satisfying the parameterization given in~\eqref{rso:eq:cb_param_complexity}, setting $\beta = \min \{\frac{1}{2}, \frac{\mux}{\muy}, \frac{\muy}{\mux}\}$, there exists positive universal constants $a_1, a_2, a_3$ such that we have
    \[
        \cQ_x \leq \frac{1-\theta}{\mux} \left( a_1 + a_2 \frac{\Lxy}{\Lyx} \right)
            , \quad \cQ_y \leq \frac{1-\theta}{\muy} \left( 
                           a_1
                            +a_2 \frac{\Lxy}{\Lyx}
                            + a_3 \frac{\Lxy^2}{\Lyx^2}
                        \right),
    \]
    and 
    \[
        \squarednormtwo{A_1} \leq \frac{1-\theta}{\mux}  \left(a_1+  a_3 \frac{\Lxy^2}{\Lyx^2}\right), \quad
        \squarednormtwo{A_2} \leq \frac{1-\theta}{\muy} a_1, \quad
        \squarednormtwo{A_3} \leq \frac{1-\theta}{\muy}  \left(a_1 + a_3 \frac{\Lxy^2}{\Lyx^2}\right) .
    \]
\end{lem}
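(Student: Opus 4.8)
Looking at Lemma~\ref{rso:lem:partial_complexity_bounds}, this is the final auxiliary lemma used inside the proof of Corollary~\ref{rso:thm:complexity_sapd_result}. The statement asks for upper bounds on $\cQ_x$, $\cQ_y$, $\squarednormtwo{A_1}$, $\squarednormtwo{A_2}$, $\squarednormtwo{A_3}$ of the form $\tfrac{1-\theta}{\mu}\cdot(\text{universal constant combination of }1, \tfrac{\Lxy}{\Lyx}, \tfrac{\Lxy^2}{\Lyx^2})$.

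\textbf{Proof plan.} The plan is a direct, term-by-term estimation exercise. All the relevant constants are given explicitly: $\cQ_x = B^x + C^x + C_{-1}^x$ and $\cQ_y = B^y + B_{-1}^y + C^y + C_{-1}^y + C_{-2}^y$ are spelled out in Table~\ref{rso:table:constants_C}, and $A_1, A_2, A_3$ are given in Table~\ref{rso:table:constants_A}. Under the CP parameterization \eqref{rso:eq:cb_param_complexity}, we have $\tau = \tfrac{1-\theta}{\theta\mux}$, $\sigma = \tfrac{1-\theta}{\theta\muy}$, $\rho = \theta$, $\alpha = \tfrac{1}{2\sigma}-\sqrt\theta\Lyy$ (so $1-\alpha\sigma \ge 1/2$), and $\theta \in [1/2,1)$. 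First I would substitute these into each constituent expression and bound every factor of the form $\tfrac{1}{1+\tau\mux}$, $\tfrac{1}{1+\sigma\muy}$ by $1$, every factor $\theta$, $1+\theta$, $1+2\theta$, $\tfrac{1}{\rho}$ by an absolute constant (using $\theta \ge 1/2$), and $\tfrac{1}{1-\alpha\sigma}$ by $2$. This reduces every term to a sum of monomials in $\tau$, $\sigma$, the Lipschitz constants, and $(1-\theta)$, each of which — after pulling out the $(1-\theta)/\mu$ prefactor — is a product of quantities of the form $(1-\theta)\tfrac{L_{\cdot\cdot}}{\mu}$ or $(1-\theta)^2\tfrac{L_{\cdot\cdot}L_{\cdot\cdot}}{\mux\muy}$. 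At that point Lemma~\ref{rso:lem:beta_bounds} (with $\beta = \min\{1/2,\mux/\muy,\muy/\mux\}$ as prescribed in Corollary~\ref{rso:thm:complexity_sapd_result}) supplies exactly the bounds needed: $(1-\theta)\Lxx/\mux \le 2$, $(1-\theta)\Lyx/\mux \le 1$, $(1-\theta)\Lxy/\mu \le \Lxy/\Lyx$, $(1-\theta)^2\Lyx^2/(\mux\muy)\le 1/4$, $(1-\theta)^2\Lxy\Lyx/(\mux\muy)\le \tfrac14\tfrac{\Lxy}{\Lyx}$, $(1-\theta)^2\Lxy^2/(\mux\muy)\le \tfrac14\tfrac{\Lxy^2}{\Lyx^2}$, and $(1-\theta)\Lyy/\muy\le 1/4$ (when $\Lyy>0$; when $\Lyy=0$ the $\Lyy$-terms vanish). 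One also needs to recall $\squarednormtwo{A_0} = \Theta(1-\theta)$, which follows from the formula for $A_0$ in Lemma~\ref{rso:lem:upperbound_hathaty} combined with the same bounds; the $\tfrac{\rho}{4\squarednormtwo{A_0}(1+\rho)}$ factors appearing in $B^x, B^y, B_{-1}^y$ are therefore $\Theta\!\big(\tfrac{1}{1-\theta}\big)$, which exactly cancels one factor of $(1-\theta)$ in the numerators of those terms, leaving the claimed $\tfrac{1-\theta}{\mu}$ scaling.

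\textbf{Key steps in order.} (i) Plug the CP values into the $A_0$ formula and establish $c(1-\theta) \le \squarednormtwo{A_0} \le C(1-\theta)$ for absolute constants $0<c\le C$ (using $\theta\ge 1/2$ and Lemma~\ref{rso:lem:beta_bounds} for the Lipschitz-ratio terms inside $A_0$). (ii) Bound $B^x$: its first term $\tfrac{4\tau}{1+\rho} \le 4\tau = \tfrac{4(1-\theta)}{\theta\mux} \le \tfrac{8(1-\theta)}{\mux}$, and its second term contains $\tfrac{\sigma^2(1+\theta)^2\Lyx^2}{(1+\sigma\muy)^2}\cdot\tfrac{\rho}{4\squarednormtwo{A_0}(1+\rho)}\cdot\tfrac{3\tau^2}{(1+\tau\mux)^2}$; bounding $\tfrac{1}{\squarednormtwo{A_0}} \le \tfrac{1}{c(1-\theta)}$, and $\sigma\Lyx \le (1-\theta)\tfrac{\Lyx}{\muy}\cdot\tfrac{1}{\theta}$, $\tau = (1-\theta)/(\theta\mux)$, collect to get a bound $\lesssim (1-\theta)\tfrac{\Lyx^2}{\mux^2\muy}\cdot\tfrac{1-\theta}{\mu}$-type expression, then apply $(1-\theta)^2\Lyx^2/(\mux\muy)\le 1/4$ to conclude $\lesssim \tfrac{1-\theta}{\mux}$. (iii) Analogously bound $C^x$ and $C_{-1}^x$ — these involve $\tfrac{\tau}{1+\tau\mux}\le\tau$, $\tfrac{\tau\sigma\Lxy}{(1+\tau\mux)(1+\sigma\muy)}\le\tau\sigma\Lxy$, and $\tfrac{\tau\sigma\Lyx}{\cdots}$; using $\tau\sigma\Lxy = \tfrac{(1-\theta)^2}{\theta^2}\tfrac{\Lxy}{\mux\muy} \le 4(1-\theta)\cdot(1-\theta)\tfrac{\Lxy}{\mux\muy}$ and $(1-\theta)\Lxy/\muy \le \Lxy/\Lyx$, this yields $C^x, C_{-1}^x \lesssim \tfrac{1-\theta}{\mux}(1 + \tfrac{\Lxy}{\Lyx})$. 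Summing gives the $\cQ_x$ bound. (iv) Repeat for $B^y, B_{-1}^y, C^y, C_{-1}^y, C_{-2}^y$; here the cross terms $\tfrac{\tau\sigma(1+\theta)\Lyx\Lxy}{(1+\tau\mux)(1+\sigma\muy)}$ and the $\Lxy^2$-contributions (from squaring $\tfrac{\tau\sigma(1+\theta)\Lxy}{\cdots}$ inside $B^y$) produce the $\tfrac{\Lxy}{\Lyx}$ and $\tfrac{\Lxy^2}{\Lyx^2}$ terms, while $\tfrac{4(1+\theta)^2\sigma}{(1+\rho)(1-\alpha\sigma)} \le 16\sigma \le \tfrac{32(1-\theta)}{\muy}$ gives the constant term; the $\Lyy$-terms are absorbed by $(1-\theta)\Lyy/\muy \le 1/4$ (or vanish). (v) For $A_1, A_2, A_3$: from Table~\ref{rso:table:constants_A}, $A_i = \tfrac{\sqrt{1+\rho}}{\sqrt\rho}\cdot(\text{scalar})\cdot\hat A_i^\top\cdot\mathrm{Diag}(\sqrt{2\rho\tau}, \sqrt{2\rho\sigma/(1-\alpha\sigma)},\ldots)$, so $\squarednormtwo{A_i}$ is a sum of terms of the form $(\text{prefactor})^2\cdot\hat A_{i,j}^2\cdot(2\rho\tau \text{ or } 2\rho\sigma/(1-\alpha\sigma))$; each $\hat A_{i,j}$ is a product of $1$'s and terms $\tau\Lxx$, $\sigma(1+\theta)\Lyx\Lxy$, $\sigma(1+\theta)\Lyy$, etc., all of which are $\bigOh(1-\theta)$ or absolute constants by Lemma~\ref{rso:lem:beta_bounds}; multiplying by the $\rho\tau = \Theta(1-\theta)$ or $\rho\sigma = \Theta(1-\theta)$ factor and the $\tfrac{16}{1+\tau\mux}$-type prefactors (which are $\bigOh(1)$) gives $\squarednormtwo{A_i} \lesssim \tfrac{1-\theta}{\mu}(1 + \tfrac{\Lxy^2}{\Lyx^2})$ for $A_1, A_3$ and $\lesssim \tfrac{1-\theta}{\muy}$ for $A_2$ (since $\hat A_2$ has no $\Lxy$). (vi) Take $a_1, a_2, a_3$ to be the maxima of all the absolute constants that appear.

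\textbf{Main obstacle.} The proof is entirely routine in structure — there is no conceptual difficulty — but the sheer bookkeeping is the challenge: there are on the order of twenty distinct terms spread over $\cQ_x$, $\cQ_y$ and the three $\squarednormtwo{A_i}$, each a product of up to five or six factors, and for each one must track which of the six Lipschitz-ratio bounds from Lemma~\ref{rso:lem:beta_bounds} applies and verify that after pulling out $\tfrac{1-\theta}{\mu}$ the remainder is indeed a bounded function of $\tfrac{\Lxy}{\Lyx}$ alone (and not, say, of $\tfrac{\Lxx}{\mux}$ to a positive power, which would break the claim). The one genuinely non-mechanical point that must be handled carefully is the cancellation in $B^x$, $B^y$, $B_{-1}^y$ between the $\tfrac{1}{\squarednormtwo{A_0}}$ factor (which diverges like $\tfrac{1}{1-\theta}$) and the surrounding $(1-\theta)^2$-order numerators: one must be sure the net power of $(1-\theta)$ is exactly $1$, neither $0$ (which would contradict the claim) nor $2$ (which would be fine but weaker than needed elsewhere). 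Since this bookkeeping is purely computational and follows the template used throughout Appendix~\ref{rso:sec:supp:constants}, the proof can reasonably be left as "straightforward but tedious computations, carried out term by term using the explicit expressions in Tables~\ref{rso:table:constants_C} and~\ref{rso:table:constants_A} together with Lemma~\ref{rso:lem:beta_bounds}," with the $A_0$ cancellation spelled out as the one step warranting a line of explanation.
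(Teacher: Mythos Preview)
Your proposal is correct and follows essentially the same approach as the paper: substitute the CP parameterization into the explicit constant formulas, establish the two-sided bound $c(1-\theta)\le\|A_0\|^2\le C(1-\theta)$ (the paper extracts the lower bound from the second coordinate of $A_0$ and the upper bound by summing all four), and then bound each constituent of $\cQ_x,\cQ_y,\|A_i\|^2$ term by term using Lemma~\ref{rso:lem:beta_bounds}. The only cosmetic difference is that the paper first records the CP-simplified expressions in Tables~\ref{rso:table:cb_A} and~\ref{rso:table:cb_Q} and reads the bounds off those, whereas you propose to substitute directly into Tables~\ref{rso:table:constants_C} and~\ref{rso:table:constants_A}; the computations are the same either way, and you have correctly flagged the $\|A_0\|^{-2}\cdot(1-\theta)^2$ cancellation in $B^x,B^y,B_{-1}^y$ as the one place requiring care.
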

\begin{proof}
Under this choice of parameters, we first observe that the formulas for the constants $\cQ_x, \cQ_y$ and the vectors $A_1, A_2,A_3$ simplify, where the simplified expressions are provided in Table~\ref{rso:table:cb_A} and Table~\ref{rso:table:cb_Q}. 
The proof consists of straightforward algebraic computations and a repeated application of Lemma \ref{rso:lem:beta_bounds} to simplify the bounds. 
\ylfourth{We focus here on the case $\Lyy > 0$, and the result seamlessly extends to the case $L_{yy}=0$, with similar bounds.} %\newline

\ylsecond{In what follows, we systematically apply the \mgtwo{trivial} bounds $\theta \leq 1$, $(1 + \theta) \leq 2$, and $\theta^{-1} \leq 2$ without explicitly mentioning at each time.
\ylthird{First, we observe from the expression of $A_0$ explicitly given in \ylfourth{Lemma~\ref{rso:lem:upperbound_hathaty}} that %$A_0$ admits the bound
\[
    \squarednormtwo{A_0} \geq \frac{1}{(1+\sigma \muy)^2} \frac{2\theta \sigma}{1-\alpha \sigma} (1 + \sigma (1+\theta) \Lyy)^2 \geq \frac{2\theta \sigma}{(1+\sigma \muy)^2} = \frac{2\theta^2 (1-\theta)}{\muy} \geq \frac{1-\theta}{2 \muy}.
\]
}
Hence, in view of Table~\ref{rso:table:cb_Q}, first note that the CP parameterization gives }
\[
\begin{split}
    B^x 
        &\leq \frac{1-\theta}{\mux}
            \left( 8 
            +  \ylthird{\frac{3}{2}} \frac{1-\theta}{\muy \squarednormtwo{A_0}} (1-\theta)^2 \frac{\Lyx^2}{\mux \muy} \right) 
         \leq \frac{1-\theta}{\mux}
            \left( 8 
            +  3 (1-\theta)^2 \frac{\Lyx^2}{\mux \muy} \right).  \\
\end{split}
\]
and applying Lemma~\ref{rso:lem:beta_bounds} gives
\[
    B^x \leq \frac{1-\theta}{\mux} (8+\frac{3}{4}).
\]
Similarly,  using the expressions for $C^x$ and $C_{-1}^x$ from Table~\ref{rso:table:cb_Q} we have 
\[
\begin{split}
    C^x &\leq \frac{1-\theta}{\mux} \left(1 + \frac{{\color{black}{3}}}{2} (1-\theta) \frac{\Lxy}{\muy}\right)  \leq \frac{1-\theta}{\mux} \left(1 + \frac{{\color{black}{3}}}{2} \frac{\Lxy}{\Lyx} \right), \\
    C_{-1}^x &\leq \frac{1-\theta}{\mux} (1-\theta) \frac{\Lyx}{\muy} \leq \frac{1-\theta}{\mux}.\\
\end{split}
\]
Hence, combining these bounds and using Lemma~\ref{rso:lem:beta_bounds}, we obtain
\[
    \cQ_x \mgtwo{= B^x + C^x +  C_{-1}^x}  \leq \frac{1-\theta}{\mux} \left(a_1+ a_2 \frac{\Lxy}{\Lyx} \right).
\]
provided that $a_1 \geq 10 + \frac{3}{4}$ and $a_2 \geq \frac{3}{2}$.
We upperbound $B^y$ using the same rationale. 
We first note that $1-\alpha\sigma = \frac{1}{2} + \frac{(1-\theta) \Lyy}{\sqrt{\theta} \muy} \geq \frac{1}{2}$, and $C_{\sigma, \theta} \leq 1 + 4 (1-\theta) \frac{\Lyy}{\muy} \leq 2$. Thus $C_{\sigma, \theta}^2 \leq 4$, and we have
\[
        \squarednormtwo{A_0} 
            \leq \frac{1-\theta}{\muy} 
                \left(
                    8 (1-\theta)^2 \frac{\Lyx^2}{\mux \muy} 
                    + 4 C_{\tau, \sigma}^2  
                    + 2 (1-\theta)^2 \frac{\Lyx^2}{{\color{black}\mux \muy}} 
                    + 4 (1-\theta)^2 \frac{\Lyy^2}{\muy^2}
                \right)%\\
             %&
             \leq \frac{1-\theta}{\muy} a_4   
           % &\leq \frac{1-\theta}{\muy} 
            %    \left( 2 + 16 + \frac{1}{2} + \frac{1}{4} \right) = \frac{(1-\theta)}{\muy} \color{black}{(18 +\frac{3}{4}) }. \\
\]
for some universal constant $a_4>0$ where we used Lemma \ref{rso:lem:beta_bounds}.
Hence, $B^y$ can be bounded as
\[
    B^y \leq \frac{1-\theta}{\muy} 
        \left(
            a_5
            + 12 \frac{\Lyx^2 \Lxy^2}{\mux^2 \muy^2} 
        \right)%\\
        %&
        \leq \frac{1-\theta}{\muy} 
        \left(
            a_5 + \frac{3}{4} \frac{\Lxy^2}{\Lyx^2}
        \right).\\            
\]
for some constant $a_5>0$, using Lemma \ref{rso:lem:beta_bounds} again.
Similarly, we have the bounds
\[
    \begin{split}
        B_{-1}^y \leq 
            \frac{1-\theta}{\muy}
            \left(
                12 % 8 + \frac{1}{2} \times 2 \times 4
                + \frac{3}{16} \frac{\Lxy^2}{\Lyx^2}
            \right), \quad %\\
        C_y \leq \frac{1-\theta}{\muy} 
                \left(6 + \frac{\Lxy}{\Lyx} \right), \quad %\\
        % C_{-1}^y &
        %     \leq \frac{1-\theta}{\muy} 
        %         \left( 
        %             {\color{black}3 + 1} 
        %             + \frac{1}{2} \frac{\Lxy}{\Lyx} 
        %             + \frac{5}{2} \left(1 + 2 \times \frac{1}{4} + 2 \times \frac{1}{4} \frac{\Lxy}{\Lyx}\right)
        %         \right) 
        %     \leq \frac{1-\theta}{\muy}
        %         \left(
        %             {\color{black}{\frac{31}{4}}} + \frac{3}{2} \frac{\Lxy}{\Lyx}
        %         \right) \\
        C_{-2}^y \leq \frac{1-\theta}{\muy} 
            \left(
                \frac{3}{4} + \frac{1}{4} \frac{\Lxy}{\Lyx}
            \right), \\
    \end{split}
\]
Analogous bounds for $C_{-1}^y$ can also be obtained. Combining all these estimates, 
            %so that, by summation, 
            % \[
            %     \ylthird{Q_y 
            %         \leq 
            %             \frac{1-\theta}{\muy} 
            %             \left( 
            %                 {\color{black}{265}} + 12 + 6 + \frac{31}{4} + \frac{3}{4} 
            %                 + \left(1 + \frac{3}{2} + \frac{1}{4} \right) \frac{\Lxy}{\Lyx} 
            %                 + \left(\frac{3}{4} + \frac{3}{16} \right) \frac{\Lxy^2}{\Lyx^2} 
            %             \right) 
            %         \leq 
            %             \frac{1-\theta}{\muy} 
            %             \left( 
            %                 \frac{{\color{black}{583}}}{2} 
            %                 +\frac{11}{4} \frac{\Lxy}{\Lyx}
            %                 + \frac{15}{16} \frac{\Lxy^2}{\Lyx^2}
            %             \right) }
            % \]
%
\[
    Q_y = B^y + B_y^{-1} + C_y + C_{-1}^y + C_{-2}^y \leq \frac{1-\theta}{\muy} 
            \left( 
                a_1
                +a_2 \frac{\Lxy}{\Lyx}
                +a_3 \frac{\Lxy^2}{\Lyx^2}
            \right) 
\]
as long as $a_1, a_2, a_3$ are positive (universal) constants that are large enough.
The other bounds for $\|A_1\|^2, \|A_2\|^2, \|A_3\|^2$ are obtained using tedious but similar arguments, based on their expression under the CP parameterization provided in Table~\ref{rso:table:cb_A}.
\end{proof}

%==========================================================================================
\section{Tables of constants under the CP parameterization}\label{rso:sec:supp:constants}
%==========================================================================================
\mgtwo{The constants $A_0, A_1, A_2, A_3$ provided in Lemmma~\ref{rso:lem:upperbound_hathaty} and Table~\ref{rso:table:constants_A}} simplify under the CP parameterization specified in~\eqref{rso:eq:cb_params}. Under this particular choice of parameters, the constants $\cQ_x$ and $\cQ_y$ given in Table~\ref{rso:table:constants_C} of the main paper can also be simplified; the details are provided in the following tables.
\ylfourth{These expressions are particularly useful for the derivation of our complexity result, stated in Corollary \ref{rso:thm:complexity_sapd_result}}.
%
%================================================================================
%
%\todo{MG:Yassine, lets make sure there is no big white spaces. using vspace command if necessary.}
%\todo{Lets make sure that the online supp material is.}
\begin{table}[ht]
{
\begin{tabular}{l} 
\hline \\[-1.ex]
        \vspace{1em}
            \makecell[l]{\scalebox{0.9}{\ylthird{$
                \makecell[l]{\scalebox{0.9}{\ylfourth{
                $H \defineq \text{Diag} \left[
                        \sqrt{\frac{2}{\mux}},\;
                        \frac{\sqrt{2/\muy}}{\sqrt{\frac{1}{2} + \frac{(1-\theta) \Lyy}{\sqrt{\theta} \muy}}},\;
                        \sqrt{\frac{2}{\mux}},\;
                        \frac{\sqrt{2/\muy}}{\sqrt{\frac{1}{2} + \frac{(1-\theta) \Lyy}{\sqrt{\theta} \muy}}}
                        \right], \quad A_1= 
                    \ylthird{\sqrt{1-\theta}\; 4 \sqrt{\theta(1+\theta)} \;
                    \ylfourth{H}
                    \left[\begin{array}{l}
                        \begin{aligned}[t]
                                    \bigg( 1 &+\frac{1}{\theta}(1-\theta) \frac{L_{x x}}{\mux} \\
                                      &+\frac{(1+\theta)}{\theta}(1-\theta)^2 \frac{\Lxy \Lyx}{\mux \muy}\bigg)
                                \end{aligned}
                            \\
                        (1-\theta) \frac{\Lxy}{\mux} C_{\sigma, \theta} \\
                        (1-\theta)^2 \frac{\Lxy \Lyx}{\mux \muy}  \\
                        (1-\theta)^2 \frac{\Lxy \Lyy}{\mux \muy}\\
                    \end{array}\right]}$
                }}} $}}} \\
        % \vspace{1em}
        %     \makecell[l]{\scalebox{0.8}{
        %         $A_1= 
        %             \ylthird{\sqrt{1-\theta}\; 4 \sqrt{\theta(1+\theta)} \;
        %             \ylfourth{H}
        %             \left[\begin{array}{l}
        %                 \begin{aligned}[t]
        %                             \bigg( 1 &+\frac{1}{\theta}(1-\theta) \frac{L_{x x}}{\mux} \\
        %                               &+\frac{(1+\theta)}{\theta}(1-\theta)^2 \frac{\Lxy \Lyx}{\mux \muy}\bigg)
        %                         \end{aligned}
        %                     \\
        %                 (1-\theta) \frac{\Lxy}{\mux} C_{\sigma, \theta} \\
        %                 (1-\theta)^2 \frac{\Lxy \Lyx}{\mux \muy}  \\
        %                 (1-\theta)^2 \frac{\Lxy \Lyy}{\mux \muy}\\
        %             \end{array}\right]},
        %             $}} \\        
        \vspace{1em}
            \makecell[l]{\scalebox{0.8}{
                $A_2 = 
                    \ylthird{\sqrt{1-\theta} 4 \sqrt{2} \sqrt{\theta(1+\theta)} (1+\theta)
                    \; \ylfourth{H}
                    \left[\begin{array}{l}
                            \frac{(1+\theta)}{\theta} (1-\theta) \frac{\Lyx}{\muy} \\
                            C_{\sigma, \theta} \\
                            (1-\theta) \frac{\Lyx}{\muy} \\
                            (1-\theta) \frac{\Lyy}{\muy} \\
                    \end{array}\right]},$}}\\
        \vspace{1em}
                \makecell[l]{\scalebox{0.9}{$
                A_3 = 
                    \ylthird{\sqrt{1-\theta} \sqrt{1+\theta} 4 \sqrt{2} \theta^{1/2}
                    \; \ylfourth{H}
                    \left[\begin{array}{l}
                        \bigg( 
                            \begin{aligned}[t]
                                & \left(1 + (1+\theta) +  {\color{black}(1+\theta)} \right) (1-\theta) \frac{\Lyx}{\muy} 
                                   + \frac{(1+\theta)^2}{\theta} (1-\theta)^2 \frac{\Lyx \Lyy}{\muy^2} \\
                                & + \frac{1+\theta}{\theta} (1-\theta)^2 \frac{\Lxx \Lyx}{\mux \muy}
                                  + \frac{(1+\theta)^2}{\theta} (1-\theta)^3 \frac{\Lyx^2 \Lxy}{\mux \muy^2} \bigg) \\
                            \end{aligned}                            
                            \\
                        \bigg(
                                \begin{aligned}[t]
                                     \theta 
                                &+ (1 + 2 (1+\theta)) (1-\theta) \frac{\Lyy}{\muy} 
                                + \frac{(1+\theta)^2}{\theta} (1-\theta)^2 \frac{\Lyy^2}{\muy^2} \\
                                    & + (1-\theta)^2 (1+\theta) \frac{\Lxy \Lyx}{\mux \muy} 
                                +  (1-\theta)^3 \frac{(1+\theta)^2}{\theta} \frac{\Lyx \Lxy \Lyy}{\mux \muy^2} \bigg) 
                                \end{aligned}
                            \\
                        \left(\theta(1-\theta) \frac{L_{y x}}{\mu_y}+(1+\theta)(1-\theta)^2 \frac{L_{y x} L_{y y}}{\mu_y^2}+(1+\theta)(1-\theta)^3 \frac{L_{x y} L_{yx}^2}{\mu_x \mu_y^2}\right)
                            \\
                        \left(\theta(1-\theta) \frac{L_{y y}}{\mu_y}+(1+\theta)(1-\theta)^2 \frac{L_{y y}^2}{\mu_y^2}+(1+\theta)(1-\theta)^3 \frac{L_{x y} L_{yx} \Lyy}{\mu_x \mu_y^2}\right)
                            \\
                    \end{array}\right]}.$}} \\
            \hline
\end{tabular}}
\vspace{.5em}
\caption{\label{rso:table:cb_A} Simplifications of $A_1, A_2, A_3$ from Table~\ref{rso:table:constants_A} under the CP parameterization in~\eqref{rso:eq:cb_params}.} 
\end{table}
%
%================================================================================
%
\begin{table}[ht]
{
\begin{tabular}{l} 
\hline \\[-1.ex]        
    \vspace{1em}\makecell[l]{$
            A_0 = \sqrt{\frac{1-\theta}{\muy}} \left[\begin{array}{l}
                    \sqrt{2}(1+\theta) (1-\theta) \frac{\Lyx}{\sqrt{\mux \muy} } \\
                    \frac{\sqrt{2} \theta}{\sqrt{\frac{1}{2}+\frac{(1-\theta) \Lyy}{\sqrt{\theta} \muy}}} \bigg(1 + \frac{1+\theta}{\theta} (1-\theta) \frac{\Lyy}{\muy} \bigg) \\                        
                    \\
                    \sqrt{2} \ylsecond{\theta} (1-\theta) \frac{\Lyx}{\ylsecond{\sqrt{\mux \muy}}} \\
                    \frac{\sqrt{2} \theta}{\sqrt{\frac{1}{2}+\frac{(1-\theta) \Lyy}{\sqrt{\theta} \muy}}} (1-\theta) \frac{\Lyy}{\muy}
                \end{array}\right]$
    } \\
    \vspace{1em} \makecell[l]{
        $\ylthird{\mgtwo{C_{\sigma, \theta} = \left(1+\frac{(1-\theta)(1+\theta)}{\theta} \frac{\Lyy}{\muy}\right)}, \quad B^x = \frac{1-\theta}{\mux} \left(\frac{4}{\theta (1+\theta)} + \frac{1}{\squarednormtwo{A_0}} \frac{3 \theta (1+\theta) (1-\theta)}{4} (1-\theta)^2 \frac{ \Lyx^2}{\mux \muy^2}\right)},$} \\
        \makecell[l]{$
            \scalemath{0.85}{
                \ylthird{
                    B^y=
                        \frac{1-\theta}{\muy}
                        \bigg(
                            \frac{4(1+\theta)}{\theta \left(\frac{1}{2} + \frac{(1-\theta) \Lyy}{\sqrt{\theta} \muy}\right)} 
                            + \frac{3 \squarednormtwo{A_0}(1+\theta) \muy}{ \theta (1-\theta)} 
                            \begin{aligned}[t]
                                & + \frac{\theta^3 (1+\theta)(1-\theta)}{2 \muy \squarednormtwo{A_0}} 
                                \left(
                                    1 + \frac{2 (1-\theta) (1+\theta)}{\theta \muy} \Lyy + \frac{(1-\theta)^2}{\theta^2 \muy^2} (1+\theta)^2 \Lyy^2
                                \right) \\
                                & + \frac{3 \theta (1+\theta)^3}{4 \squarednormtwo{A_0}} \frac{(1-\theta)^5}{\muy} \frac{\Lyx^2 \Lxy^2}{\mux^2 \muy^2} \bigg),
                            \end{aligned}     
                    }
                }
            $} \\ 
        \makecell[l]{$
            \scalemath{0.85}{
                \ylthird{
                    B_{-1}^y =
                        \frac{1-\theta}{\muy} \bigg(
                            \begin{aligned}[t]
                                \frac{4}{(1+\theta) (1-\alpha \sigma)}
                                &+ \frac{\theta^4 (1-\theta)}{2 \squarednormtwo{A_0} (1+\theta) \muy} \left(
                                    1 + \frac{2 (1-\theta) (1+\theta)}{\theta \muy} \Lyy + \frac{(1-\theta)^2}{\theta^2 \muy^2} (1+\theta)^2 \Lyy^2
                                \right) \\
                                &+ \frac{1-\theta}{\muy} \frac{3(1+\theta) \theta^2}{4 \squarednormtwo{A_0}} (1-\theta)^4 \frac{\Lyx^2 \Lxy^2}{\mux^2 \muy^2} \bigg), \\
                            \end{aligned}                        
                }
            }
            $} \\ 
        \makecell[l]{$
            \scalemath{0.85}{
                \ylthird{C^x = \frac{1-\theta}{\mux} \left(1 + \frac{1+2\theta}{2} (1-\theta) \frac{\Lxy}{\muy}\right)\quad, 
                C_{-1}^x = \frac{1-\theta}{\mux}
                    \left(
                        \frac{1+\theta}{2} (1-\theta) \frac{\Lyx}{\muy}
                    \right), \quad
                    C^y= \frac{1-\theta}{\muy} \left((1+2\theta)(1+\theta) + \frac{1+\theta}{2} (1-\theta) \frac{\Lxy}{\mux}\right),
                }
            }
            $} \\ 
        \makecell[l]{$
            \scalemath{0.85}{
                \ylthird{
                    C_{-1}^y = 
                        \frac{(1-\theta)}{\muy}\left(1+ \ylthird{2\theta} 
                        + \frac{1+\theta}{\ylthird{2}} (1-\theta) \frac{\Lyx}{\mux}
                        + \frac{1}{\ylthird{2}} (1-\theta) \frac{\Lxy}{\mux}
                        +(1+ \frac{\ylthird{3 \theta}}{2} )\left(\theta+\frac{(1-\theta) (1+\theta) \Lyy}{\muy}+\frac{(1-\theta)^2(1+\theta) \Lyx \Lxy}{\mux \muy}\right)\right),                       
                }
            }
            $} \\ 
        \makecell[l]{$
            \scalemath{0.85}{
                \ylthird{
                    C_{-2}^y = 
                        \frac{(1-\theta)}{\muy}
                            \left(
                                \frac{\theta}{\ylthird{2}} 
                                +  \frac{(1+\theta)}{\ylthird{2}} (1-\theta) \frac{\Lyy}{\muy}
                                + \frac{(1+\theta)}{\ylthird{2}} (1-\theta)^2 \frac{\Lyx \Lxy}{\mux \muy}
                            \right),  
                    }
                    %\mgtwo{C_{\sigma, \theta} = \left(1+\frac{(1-\theta)(1+\theta)}{\theta} \frac{\Lyy}{\muy}\right)}.
            }
            $} \\
           \makecell[l]{
           \yl{$\mgtwo{\cQ_x = B^x + C^x + C_{-1}^x, \quad \cQ_y = B^y + \ylsecond{B_{-1}^y} + C^y + C_{-1}^y + C_{-2}^y}$}
        }\\[1.ex]
\hline
\end{tabular}}
\vspace{.5em}
\caption{\label{rso:table:cb_Q} Simplifications of $\cQ_x, \cQ_y$, and $A_0$ under the CP parameterization in~\eqref{rso:eq:cb_params}.}
\end{table}

\end{document}